\documentclass[11pt]{amsart}

\usepackage[letterpaper, margin=1in]{geometry}

\usepackage{amssymb} 
\usepackage{amsmath} 
\usepackage{amsthm}  
\usepackage[mathscr]{euscript} 
\usepackage{stmaryrd} 
\usepackage[dvipsnames]{xcolor} 
\usepackage{tikz} 
\usetikzlibrary{graphs,decorations.pathmorphing,decorations.markings} 
\usetikzlibrary{backgrounds}
\usetikzlibrary{fit} 
\usepackage{float} 
\usepackage{enumerate} 
\usepackage{longtable} 
\usepackage{booktabs} 

\usepackage{todonotes} 
\usepackage{hyperref} 
\usepackage[capitalise]{cleveref} 
\usepackage[style=alphabetic,natbib=true]{biblatex} 
\usepackage{caption}
\usepackage{subcaption} 
\usepackage{tikz}
\usetikzlibrary{arrows,patterns}
\usepackage{tikz-cd}

\newcommand{\rectcolor}{red!40!white}
\newcommand{\rectcolorX}{blue!40!white}
\newcommand{\dotlabel}[1]{$\scriptstyle{#1}$}

\newcommand{\opendot}[1]{\filldraw[fill=white,draw=black] (#1) circle (2pt)}
\newcommand{\greydot}[1]{\filldraw[fill=LightGrey,draw=black] (#1) circle (2pt)}

\newcommand{\token}[4][black]{
    \filldraw[#1] (#2) circle (1.5pt) node[anchor=#3] {\dotlabel{#4}}
}

\newcommand{\identify}[4]{
    \draw[red,dashed] (#1,#2) -- (#1,#4);
    \draw[red,dashed] (#3,#2) -- (#3,#4)
}
\newcommand{\identifyX}[4]{
    \draw[blue,dashed] (#1,#2) -- (#1,#4);
    \draw[blue,dashed] (#3,#2) -- (#3,#4)
}

\newcommand{\deflink}[2]{%
\tikz[baseline=(X.base)]{
\node (X) {$#1$};
\node[anchor=north, font=\scriptsize, overlay] at ([xshift=-3pt,yshift=5pt]X.south east) {(\textup{Def.} \ref{#2})};
}
}

\newcommand{\puncturedtorus}{
		\path[fill=red,opacity=0.1] (-\torwidth+\torpunc,-\torheight)--(\torskew-\torpunc,-\torheight)--(\torskew-\torpunc,-\torheight+\torthickness) -- (-\torwidth+\torpunc,-\torheight+\torthickness)--cycle;
		\path[fill=red,opacity=0.1] (-\torskew+\torpunc,\torheight) -- (\torwidth-\torpunc,\torheight)--(\torwidth-\torpunc,\torheight+\torthickness) -- (-\torskew+\torpunc,\torheight+\torthickness)--cycle;
		\path[fill=blue,opacity=0.1] (-\torwidth+\torpunc,-\torheight+\torpunc*\torgradient)--(-\torskew-\torpunc,\torheight-\torpunc*\torgradient)--(-\torskew-\torpunc,\torheight+\torthickness-\torpunc*\torgradient) -- (-\torwidth+\torpunc,-\torheight+\torthickness+\torpunc*\torgradient)--cycle;
		\path[fill=blue,opacity=0.1] (\torskew+\torpunc,-\torheight+\torpunc*\torgradient) -- (\torwidth-\torpunc,\torheight-\torpunc*\torgradient)--(\torwidth-\torpunc,\torheight+\torthickness-\torpunc*\torgradient) -- (\torskew+\torpunc,-\torheight+\torthickness+\torpunc*\torgradient)--cycle;
        \draw[\rectcolor] (-\torwidth+\torpunc,-\torheight) -- (\torskew-\torpunc,-\torheight);
        \draw[\rectcolor] (-\torwidth+\torpunc,-\torheight+\torthickness) -- (\torskew-\torpunc,-\torheight+\torthickness);
        \draw[dashed,\rectcolor] (-\torskew+\torpunc,\torheight) -- (\torwidth-\torpunc,\torheight);
        \draw[\rectcolor] (-\torskew+\torpunc,\torheight+\torthickness) -- (\torwidth-\torpunc,\torheight+\torthickness);
        \draw[\rectcolor] (-\torwidth+\torpunc,-\torheight) -- (-\torwidth+\torpunc,-\torheight+\torthickness);
        \draw[dashed,\rectcolor] (-\torskew+\torpunc,\torheight) -- (-\torskew+\torpunc,\torheight+\torthickness);
        \draw[\rectcolor] (\torskew-\torpunc,-\torheight) -- (\torskew-\torpunc,-\torheight+\torthickness);
        \draw[\rectcolor] (\torwidth-\torpunc,\torheight) -- (\torwidth-\torpunc,\torheight+\torthickness);
        \draw[\rectcolorX] (-\torwidth+\torpunc,-\torheight+\torthickness+\torpunc*\torgradient) -- (-\torskew-\torpunc,\torheight+\torthickness-\torpunc*\torgradient);
        \draw[dashed,\rectcolorX] (-\torwidth+\torpunc,-\torheight+\torpunc*\torgradient) -- (-\torskew-\torpunc,\torheight-\torpunc*\torgradient);
        \draw[\rectcolorX] (\torskew+\torpunc,-\torheight+\torthickness+\torpunc*\torgradient) -- (\torwidth-\torpunc,\torheight+\torthickness-\torpunc*\torgradient);
        \draw[\rectcolorX] (\torskew+\torpunc,-\torheight+\torpunc*\torgradient) -- (\torwidth-\torpunc,\torheight-\torpunc*\torgradient);
        \draw[\rectcolorX] (-\torwidth+\torpunc,-\torheight+\torpunc*\torgradient) -- (-\torwidth+\torpunc,-\torheight+\torthickness+\torpunc*\torgradient);
        \draw[dashed,\rectcolorX] (-\torskew-\torpunc,\torheight-\torpunc*\torgradient) -- (-\torskew-\torpunc,\torheight+\torthickness-\torpunc*\torgradient);
        \draw[\rectcolorX] (\torskew+\torpunc,-\torheight+\torpunc*\torgradient) -- (\torskew+\torpunc,-\torheight+\torthickness+\torpunc*\torgradient);
        \draw[\rectcolorX] (\torwidth-\torpunc,\torheight-\torpunc*\torgradient) -- (\torwidth-\torpunc,\torheight+\torthickness-\torpunc*\torgradient);
        \draw[black] (-\torskew-\torpunc,\torheight+\torthickness-\torpunc*\torgradient) to[out=0,in=225] (-\torskew+\torpunc,\torheight+\torthickness);
       \draw[dashed,black] (-\torskew-\torpunc,\torheight-\torpunc*\torgradient) to[out=0,in=225] (-\torskew+\torpunc,\torheight);
        \draw[black] (\torwidth-\torpunc,\torheight+\torthickness) to[out=225,in=180] (\torwidth-\torpunc,\torheight+\torthickness-\torpunc*\torgradient);
        \draw[dashed,black] (\torwidth-\torpunc,\torheight) to[out=225,in=180] (\torwidth-\torpunc,\torheight-\torpunc*\torgradient);
        \draw[black] (-\torwidth+\torpunc,-\torheight+\torthickness+\torpunc*\torgradient) to[out=0,in=225] (-\torwidth+\torpunc,-\torheight+\torthickness);
        \draw[dotted,black] (-\torwidth+\torpunc,-\torheight+\torpunc*\torgradient) to[out=0,in=225] (-\torwidth+\torpunc,-\torheight);
        \draw[black] (\torskew-\torpunc,-\torheight+\torthickness) to[out=45,in=180] (\torskew+\torpunc,-\torheight+\torthickness+\torpunc*\torgradient);
        \draw[black] (\torskew-\torpunc,-\torheight) to[out=45,in=180] (\torskew+\torpunc,-\torheight+\torpunc*\torgradient);
}

\tikzset{anchorbase/.style={>=To,baseline={([yshift=-0.5ex]current bounding box.center)}}}
\tikzset{ 
    centerzero/.style={>=To,baseline={([yshift=-0.5ex](#1))}},
    centerzero/.default={0,0}
}
\tikzset{wipe/.style={white,line width=4pt}}

\newcommand{\kk}{\Bbbk}
\newcommand{\Rdaff}{R}

\newcommand{\iu}{\mathrm{i}} 
\newcommand{\NN}{\mathbb{N}}
\newcommand{\ZZ}{\mathbb{Z}}
\newcommand{\QQ}{\mathbb{Q}}
\newcommand{\RR}{\mathbb{R}}
\newcommand{\CC}{\mathbb{C}}

\newcommand{\HH}{\mathbb{H}}

\newcommand{\sigmaop}{\zeta}
\newcommand{\sigmaaut}{\boldsymbol{\sigma}}
\newcommand{\thispaper}[1]{#1}
\newcommand{\thispapernew}[1]{\color{red}{#1}}
\newcommand{\notthispaper}[1]{\color{black}{#1}}
\newcommand{\notthispapernew}[1]{\color{blue}{#1}}

\newcommand{\id}{\textup{id}}
\newcommand{\aff}{\textup{aff}}
\newcommand{\affX}{\textup{aff,X}}
\newcommand{\Daff}{\textup{Daff}}

\DeclareMathOperator{\tr}{\textup{tr}}
\DeclareMathOperator{\opp}{\textup{op}}

\DeclareMathOperator{\Ind}{\textup{Ind}}
\DeclareMathOperator{\Endo}{\textup{End}}
\DeclareMathOperator{\vectspan}{\textup{span}}

\DeclareMathOperator{\gr}{\textup{gr}}
\DeclareMathOperator{\GL}{\mathrm{GL}}
\DeclareMathOperator{\SL}{\mathrm{SL}}

\DeclareMathOperator{\epsilontransform}{\varepsilon}
\DeclareMathOperator{\affine}{\textup{a}}
\DeclareMathOperator{\extaffine}{\textup{ae}}
\DeclareMathOperator{\TT}{\mathbb{T}}
\DeclareMathOperator{\Ad}{\textup{Ad}}

\pgfmathsetmacro{\ra}{1.1547}
\pgfmathsetmacro{\torwidth}{4}
\pgfmathsetmacro{\torheight}{1.9}
\pgfmathsetmacro{\torthickness}{2}
\pgfmathsetmacro{\torskew}{1}
\pgfmathsetmacro{\torpunc}{0.2}
\pgfmathsetmacro{\torpuncratio}{(\torwidth+\torskew-\torpunc)/(\torwidth+\torskew)}
\pgfmathsetmacro{\torgradient}{(2*\torheight-\torthickness)/(\torwidth-\torskew)}
\pgfmathsetmacro{\torangle}{35} 

\theoremstyle{plain}
\newtheorem{theorem}{Theorem}[section]
\newtheorem{proposition}[theorem]{Proposition}
\newtheorem{corollary}[theorem]{Corollary}
\newtheorem{lemma}[theorem]{Lemma}
\theoremstyle{definition}
\newtheorem{definition}[theorem]{Definition}
\newtheorem{example}[theorem]{Example}
\theoremstyle{remark}
\newtheorem{remark}[theorem]{Remark} 
\numberwithin{equation}{section}

\newtoggle{comments}
\newtoggle{details}
\newtoggle{detailsnote}

\toggletrue{detailsnote}   

\iftoggle{comments}{%
  \newcommand{\acomments}[1]{
    \ \\
    {\color{red}
      \textbf{AS:} #1
    }
    \\
    }
  \newcommand{\dcomments}[1]{
    \ \\
    {\color{red}
      \textbf{DR:} #1
    }
    \\
    }
}{%
  \newcommand{\acomments}[1]{}
  \newcommand{\dcomments}[1]{}
}

\iftoggle{details}{%
  \newcommand{\details}[1]{%
    \par\medskip
    {\color{OliveGreen}%
      \noindent\textbf{Details:}\par
      #1\par
    }%
    \medskip
  }%
}{%
  \newcommand{\details}[1]{}%
}

\begin{document}

\definecolor{DarkGreen}{HTML}{006400}
\definecolor{DarkRed}{HTML}{8B0000}
\definecolor{LightGrey}{HTML}{C0C0C0}

\title{Double Affine Frobenius Hecke Algebras}
\author{Niccolò Mattarei}
\date{October 15, 2026}
\address[N.M.]{
  Department of Mathematics and Statistics \\
  University of Ottawa
}
\urladdr{\href{https://sites.google.com/view/niccolo-mattarei/}{sites.google.com/view/niccolo-mattarei/}}
\email{nmatt044@uottawa.ca}

\begin{abstract}
We define double affine Hecke algebras associated to the $\textrm{GL}_n$ and $\textrm{SL}_n$ root data, decorated by a supersymmetric Frobenius superalgebra. When the Frobenius algebra is the group algebra of a finite group, we obtain new double affine versions of the quantum wreath Hecke algebras, and in particular of the affine Yokonuma--Hecke algebras. We prove a basis theorem, describe the centre for generic parameters, and define polynomial representations. We also provide a diagrammatic interpretation, from which we deduce certain involutions and symmetries of the algebra, and use it to describe an $\textrm{SL}_2(\mathbb{Z})$-action by outer automorphisms.
\end{abstract}

\subjclass[2020]{Primary 20C08; Secondary 16S35, 20F36}
\keywords{DAHA, Hecke algebra, Yokonuma--Hecke algebra, symmetric algebra, Frobenius algebra}

\maketitle



\section{Introduction}
Double affine Hecke algebras (DAHAs) were first defined by Ivan Cherednik \cite{DAHAKZMO_1992}. They have a canonical representation on the ring of Laurent polynomials in $n$ variables, in which certain elements act by difference Dunkl operators that have the non-symmetric Macdonald polynomials as eigenvectors, and other elements act by intertwining operators that go between eigenstates. Cherednik used DAHAs to obtain a proof of Macdonald's constant term conjecture in \cite{DAHAMC_1995}.

The representation theory of DAHAs is closely related to harmonic analysis. Macdonald polynomials are $(q,t)$-deformed interpolations of the classical spherical functions on a symmetric space, and the discrete spherical functions on a $p$-adic symmetric space. This connection is reflected in an $\SL_2(\ZZ)$-symmetry of the DAHA that corresponds to interchanging the $X$ and $Y$ generators of the algebra, and plays a role in the study of the $q$-deformed Fourier transform and other applications in harmonic analysis.

The same symmetry has an interpretation in the topology of the torus, in which the $X$ and $Y$ generators of the DAHA correspond to loops around the meridian and longitude of the torus. Since the DAHA can be realised as a quotient of the group algebra of a braid group on a punctured torus, it has been used to study braids and tangles in the torus \cite{DAHAST_2021}. For example, the DAHA-Jones invariant \cite{JPTKDAHA_2013} generalises the Jones polynomial for torus knots, and the DAHA is closely related to algebras appearing in skein theory \cite{DAHAST_2021}.

Beyond harmonic analysis and knot theory, the DAHA and its degenerations have applications to quantum integrable systems and geometry. The commuting quantum Hamiltonians of the multiple-particle Calogero-Moser-Sutherland systems are related to commutative subalgebras of the DAHA and its degenerations \cite{CMSRT_2007}, and the degenerations of the DAHA are used to study the geometry of Hilbert schemes \cite{HSHACSS_2003}.

Clifford-algebra-decorated versions of the trigonometric degeneration of the DAHA and the rational Cherednik algebras have been studied in \cite{DAHASSG_2009} for $\GL_n$. In this paper, we begin a programme in which we systematically extend these constructions by developing the theory of a full, non-degenerate DAHA $\HH_n(A,z,q)$ that is decorated by a supersymmetric Frobenius superalgebra $A$, and its $\SL_n$ counterpart $\HH_{\SL_n}(A,z,q)$. In a future paper, we will study the trigonometric and rational degenerations of $\HH_n(A,z,q)$ and $\HH_{\SL_n}(A,z,q)$, many of whose properties can be deduced from the non-degenerate setting via the degeneration process, and for which $A$ need not be supersymmetric. Moreover, we deliberately work over a general commutative ring $\kk$, systematically identifying the module-theoretic criteria required at each stage of the construction. \Cref{dia:introduction} gives a summary of the intended research programme. The algebras in red are treated in this paper, and the algebras in blue will be studied in an upcoming paper. None of the red or blue algebras seem to appear in this generality in the current literature. The algebra $H_n(A,z)$ is the quantum wreath algebra of \cite[Def.~2.1]{QAWA_2020}. The algebra $H_n^\affX(A,z)$ is the quantum affine wreath algebra of \cite[Def.~2.5]{QAWA_2020}. The algebra $H_n^\textup{degen}(A,z)$ is the affine wreath product algebra of \cite[Def.~3.1]{AWPA_2020}.

\begin{figure} 
\begin{tikzcd}[column sep=large]
\deflink{\thispaper{H_n(A,z)}}{def:AFHAs} \arrow[d, hookrightarrow] \arrow[r, hookrightarrow] 
& \deflink{\thispaper{H_n^\affX(A,z)}}{def:AFHAs} \arrow[d, hookrightarrow] & \\
\deflink{\thispaper{H_n^\aff(A,z)}}{def:AFHAs} \arrow[r, hookrightarrow] \arrow[d, rightsquigarrow, "\gr"']
& \deflink{\thispapernew{\HH_n(A,z,q)}}{thm:double_affine_Frobenius_Hecke_algebra} \arrow[d, rightsquigarrow, "\gr"'] \arrow[r, rightsquigarrow, "\textrm{GL}\; to\; \textrm{SL}"]
& \deflink{\thispapernew{\HH_{\SL_n}(A,z,q)}}{def:DAFHA_SLn} \arrow[d, rightsquigarrow, "\gr"'] \\
\notthispaper{H_n^\textup{degen}(A,z)} \arrow[r, hookrightarrow]
& \notthispapernew{\HH_n^\textup{trig}(A,c,\kappa)} \arrow[d, rightsquigarrow, "\gr"'] \arrow[r, rightsquigarrow, "\textrm{GL}\; to\; \textrm{SL}"] 
& \notthispapernew{\HH_{\SL_n}^\textup{trig}(A,c,\kappa)} \arrow[d, rightsquigarrow, "\gr"'] \\
& \notthispapernew{\HH_n^\textup{rat}(A,c,\kappa)} \arrow[r, rightsquigarrow, "\textrm{GL}\; to\; \textrm{SL}"]
& \notthispapernew{\HH_{\SL_n}^\textup{rat}(A,c,\kappa)};
\end{tikzcd}
\caption{Relationship between the algebras of the intended research programme.} \label{dia:introduction}
\end{figure}

Let $\kk$ be a commutative ground ring and $z \in \kk$. Given a supersymmetric superalgebra $A$, the corresponding Frobenius Hecke algebra $H_n(A,z)$ of \cite{QAWA_2020} is an $A$-decorated version of the $\SL_n$ Iwahori--Hecke algebra, which we recover at $A=\kk=\CC[\tau,\tau^{-1}]$ and $z=\tau-\tau^{-1}$. One can also view the Frobenius Hecke algebra as a $z$-deformed wreath algebra, as $H_n(A,0) = A^{\otimes n} \rtimes S_n$. The affine Frobenius Hecke algebra $H_n^\aff(A,z)$ of \cite{QAWA_2020} recovers at $A=\kk=\CC[\tau,\tau^{-1}]$ and $z=\tau-\tau^{-1}$ the affine Hecke algebra associated to $\GL_n$.

Fix $q \in \kk^\times$. In this paper, we define double affine Frobenius Hecke algebras $\HH_n(A,z,q)$ and $\HH_{\SL_n}(A,z,q)$ for type $\GL_n$ and $\SL_n$, which at $A=\kk=\CC[\tau,\tau^{-1}]$ and $z=\tau-\tau^{-1}$, are Cherednik's usual double affine Hecke algebras for $\GL_n$ and $\SL_n$ respectively. The algebra $\HH_n(A,z,q)$ is a double affine version of $H_n^\aff(A,z)$ from \cite{QAWA_2020}. Consequently, $\HH_n(A,z,q)$ and $\HH_{\SL_n}(A,z,q)$ provide double-affine versions of several existing algebras:
\begin{itemize}
\item When $A=\kk G$ for a finite group $G$, and $z=\tau-\tau^{-1}$ the double affine Frobenius Hecke algebra is a double affine quantum version of Wan and Wang's wreath Hecke algebra from \cite[Definition~2.4]{MRBRWHA_2008},
\item When $\kk=\CC[\tau,\tau^{-1}]$, $A=\kk C_d$ is the group algebra of a finite cyclic group of order $d$, and $z=(\tau-\tau^{-1})/d$, the double affine Frobenius Hecke algebra is a double affine version of the affine Yokonuma--Hecke algebra from \cite[Section~3.1]{RTYHA_2014},
\item When $A$ is the zig-zag algebra of \cite{CFAR_2001}, the double affine Frobenius Hecke algebra is a double affine quantum version of an even version of the algebra from \cite[Sec. 10.3]{HCHS_2012}.
\end{itemize}

The above specialisations of $\HH_n(A,z,q)$ do not seem to appear in the literature. Moreover, when $A$ is commutative, a trigonometric degeneration of $\HH_n(A,z,q)$ recovers the algebra from \cite[p. 13]{HSHACSS_2003}, which is used to study the cohomology of Hilbert schemes of points on a smooth projective surface $X$. When $\kk=\CC$ and $A=\Endo_\kk(\CC^r)$ for some integer $r > 0$, a rational degeneration of $\HH_{\SL_n}(A,z,q)$ recovers an algebra from \cite[Def. 2.4]{RMSQGV_2021}, which is used to study representations of quantized Gieseker varieties. These two degenerations will be studied in an upcoming paper.

We give an overview of the results of this paper. In \cref{sec:DAFHA_presentations}, we define the $\GL_n$ double affine Frobenius Hecke algebras, with two presentations (\cref{thm:double_affine_Frobenius_Hecke_algebra} and \cref{thm:DAFHA_bernstein_presentation}). In \cref{sec:DAFHA_basis}, we prove a basis theorem for $\HH_n(A,z,q)$, \cref{thm:DAFHA_basis_theorem}. In \cref{sec:centre_for_DAFHA}, we describe the centre for $\HH_n(A,z,q)$ when $q^m-1$ is not a zero divisor in $\kk$ for all non-zero $m \in \ZZ$, in \cref{thm:dafha_centre}. In \cref{sec:DAFHA_poly_rep_phi}, we define polynomial representations for $\HH_n(A,z,q)$ and its subalgebra $H_n^\aff(A,z)$ as pullbacks of existing polynomial representations, and in \cref{sec:DAFHA_poly_rep_phi_A} we define $A$-decorated polynomial representations of $\HH_n(A,z,q)$ and its subalgebra $H_n^\aff(A,z)$ as induced representations, at $z=0$. In \cref{sec:diagrammatic_interpretation}, we give a brief overview of how $\HH_n(A,z,q)$ can be interpreted diagrammatically, and in \cref{sec:symmetries_for_DAFHA}, this interpretation is used to deduce some symmetries of the double affine Frobenius Hecke algebra in \cref{thm:DAFHA_geometric_isomorphisms}, as well as a $B_3$ braid group action on $\HH_n(A,z,q)$ by Dehn twists in \cref{thm:B_3_action}. The $B_3$ action is shown to factor, modulo inner automorphisms, to an $\SL_2(\ZZ)$ action in \cref{thm:SL_action}, and an example relating this action to the Frobenius structure is given in \cref{ex:combining_modular_actions}. Finally, in \cref{sec:GLn_to_SLn_DAFHA} we define the $\SL_n$ double affine Frobenius Hecke algebra, giving a procedure for obtaining it from the $\GL_n$ version. This procedure yields two presentations (in \cref{thm:double_affine_Frobenius_Hecke_algebra_SLn} and \cref{thm:DAFHA_bernstein_presentation_SLn}) and a basis in \cref{thm:DAFHA_SLn_bases}, of $\HH_{\SL_n}(A,z,q)$.

\subsection*{Further directions}

In an upcoming paper, we describe the $z \to 0$, $q \to 1$ degeneration process of the superalgebra $\HH_n(A,z,q)$ and obtain a trigonometric Frobenius double affine Hecke algebra that coincides, for certain choices of $A$, with an algebra defined to study Hilbert schemes of points in \cite{HSHACSS_2003}. The further rational degeneration yields a Frobenius rational Cherednik algebra, which generalises an algebra used to construct representations of Gieseker varieties in \cite{RMSQGV_2021}. Both $\GL_n$ and $\SL_n$ versions will be defined, as in \cref{dia:introduction}.

The diagrammatic interpretation from \cref{sec:diagrammatic_interpretation} is used in this paper to describe symmetries of $\HH_n(A,z,q)$, and the resultant $\SL_2(\ZZ)$ actions of \cref{thm:SL_action} suggest that constructions based on the $\SL_2(\ZZ)$-symmetry of the DAHA can be extended to the Frobenius generalisation. For example, this suggests the possibilities of developing Frobenius analogues of constructions involving DAHA-Jones invariants, torus knots and tangles, and quantum integrable systems. Crucially, the diagrammatic framework can help make clear which properties of Cherednik's DAHA are likely to extend to the double affine Frobenius Hecke algebra.

\iftoggle{detailsnote}{
\subsection*{Hidden details}
For the interested reader, the tex file of the arXiv version of this paper includes hidden details of some straightforward computations and arguments that are omitted in the pdf file.  These details can be displayed by switching the \texttt{details} toggle to true in the tex file and recompiling.
}{}

\subsection*{Use of AI tools}

The author used OpenAI's GPT-6 Astra and Anthropic's Opus 5.5 to assist with proofreading the manuscript and to obtain feedback on possible omissions or errors in the proofs. No AI-generated text was incorporated into the manuscript. The author takes full responsibility for the content of this paper.

\subsection*{Acknowledgements}

The author would like to thank Alistair Savage for his guidance during this project, and for always taking time to provide valuable feedback and support. The author would also like to thank José Simental for helpful conversations and for pointing out some useful papers. This research was supported by the Natural Sciences and Engineering Research Council of Canada (NSERC), funding reference number RGPIN-2023-03842.

\section{The Double Affine Frobenius Hecke Algebra \texorpdfstring{$\HH_n(A,z,q)$}{ℍₙ(A,z,q)}} \label{sec:DAFHA_presentations}

For $n \geq 2$, denote the extended affine Weyl groups for the $\GL_n$ root datum as follows, with the convention $s_n=s_0$.
\begin{align*}
W_{\GL_n}^{\extaffine}
&= \left\langle s_0, s_1, \dotsc, s_{n-1}, \pi \,\middle|\,
\begin{aligned}
&s_i^2=1,\quad \pi s_i\pi^{-1}=s_{i+1}, \quad &&\forall 0 \leq i \leq n-1\\
&s_is_j=s_js_i, \quad &&\forall 0 \leq i,j \leq n-1,\; j \not\equiv i \pm 1 \;(\textup{mod }n)\\
&s_is_{i+1}s_i=s_{i+1}s_is_{i+1}, \quad &&\forall 1\leq i\leq n-2
\end{aligned}
\right\rangle.
\end{align*}
For the $\SL_n$ root datum, define $W_{\SL_n}^{\extaffine}=W_{\GL_n}^{\extaffine}/\langle \pi^n=1 \rangle$.
For $n \geq 3$, the defining relations of $W_{\GL_n}^{\extaffine}$ and $W_{\SL_n}^{\extaffine}$ imply $s_0 s_1 s_0 = s_1 s_0 s_1$ and $s_{n-1}s_0s_{n-1} = s_0s_{n-1}s_0$, whereas for $n=2$, no braid relation is imposed. The symmetric group $S_n$ is isomorphic to the subgroup of $W_{\GL_n}^{\extaffine}$ or of $W_{\SL_n}^{\extaffine}$ generated by $s_1,\dotsc,s_{n-1}$, and the affine symmetric group $S_n^{\affine}$ is isomorphic to the subgroup of $W_{\GL_n}^{\extaffine}$ or of $W_{\SL_n}^{\extaffine}$ generated by $s_0,s_1,\dotsc,s_{n-1}$. Denoting the image of $\pi$ in $W_{\SL_n}^{\extaffine}$ by $\pi_{\SL}$, we have $W_{\GL_n}^{\extaffine} \cong \langle \pi \rangle \ltimes S_n^{\affine} \cong \ZZ \ltimes S_n^{\affine}$ and $W_{\SL_n}^{\extaffine} \cong  \langle \pi_{\SL} \rangle  \ltimes S_n^{\affine} \cong \ZZ/n\ZZ \ltimes S_n^{\affine}$.
%

Throughout this document, let $\kk$ be a commutative ring with characteristic not equal to $2$, and fix a supersymmetric Frobenius $\kk$-superalgebra $A$ that is free over $\kk$, with a parity-preserving trace map $\tr \colon A \to \kk$. Fix a basis $B$ of $A$ and let $\{b^\vee : b \in B\}$ denote the left dual basis with respect to $\tr$, which satisfies $\tr(b^\vee c) = \delta_{b,c}$ for $b,c \in B$ by definition. \details{Given $a \in A$ and $1 \leq i \leq n$, denote $a_i:= 1 \otimes \dotsb \otimes a \otimes \dotsb \otimes 1 \in A^{\otimes n}$ to have $a$ in the $i$-th position.} Let
\[
t_{i,j}:= \sum_{b \in B}b_ib_j^\vee \in A^{\otimes n}, \quad 1 \leq i,j \leq n,
\]
for $b_i=1^{\otimes i-1} \otimes b \otimes 1^{\otimes n-i} \in A^{\otimes n}$. It can be verified that $t_{i,j}$ is independent of the choice of the basis $B$. Throughout this document, let $S_n$ act on $A^{\otimes n}$ by superpermutation,
\begin{equation} \label{eq:superpermute_A}
s_i(a_1 \otimes \dotsb \otimes a_n) = (-1)^{\bar{a_i}\bar{a}_{i+1}}a_1 \otimes  \dotsb  \otimes a_{i-1} \otimes a_{i+1} \otimes a_i \otimes a_{i+2} \otimes \dotsb \otimes a_n
\end{equation}
for homogeneous $a_1, \dotsc ,a_n \in A$. Let this action extend to $S_n^\textup{a}$ and $W_{\GL_n}^{\extaffine}$ by letting $s_0$ act by $(1 \; n) \in S_n$ and $\pi$ act by $\sigma = (1 \; 2 \; \dotsc \; n) \in S_n$.

\begin{definition} \label{thm:double_affine_Frobenius_Hecke_algebra}
Let $z \in \kk$, $q \in \kk^\times$ and $n \geq 2$. The \emph{double affine Frobenius Hecke algebra} $\HH_n(A,z,q)$ is the quotient of the $\kk$-superalgebra
\[
A^{\otimes n} \ast \kk [X_1^{\pm 1}, \dotsc ,X_n^{\pm 1}] \ast \langle \pi^{\pm 1} \rangle  \ast \langle T_i \colon 0 \leq i \leq n-1 \rangle,
\]
by the relations
\begin{align}
T_iT_j&=T_jT_i, && 0 \leq i,j \leq n-1,\; j \not\equiv i \pm 1 \;(\textup{mod }n), \label{dafha_rel_1}\\
T_iT_{i+1}T_i&=T_{i+1}T_iT_{i+1}, && 1 \leq i \leq n-2, \label{dafha_rel_2}\\
T_i^2&=zt_{i,i+1}T_i+1, && 0 \leq i \leq n-1, \label{dafha_rel_3}\\
T_i\mathbf{a}&=s_i(\mathbf{a})T_i, && \mathbf{a} \in A^{\otimes n},\; 0 \leq i \leq n-1, \label{dafha_rel_4}\\
\pi T_i \pi^{-1} &= T_{i+1}, && 0 \leq i \leq n-1, \label{dafha_rel_5}\\
\pi \mathbf{a} &= \sigma(\mathbf{a}) \pi, && \mathbf{a} \in A^{\otimes n}, \label{dafha_rel_6}\\
T_iX_j&=X_jT_i, && 0 \leq i \leq n-1,\; 1 \leq j \leq n,\; j \not\equiv i, i+1 \;(\textup{mod }n), \label{dafha_rel_7}\\
T_iX_iT_i&=X_{i+1}, && 1 \leq i \leq n-1, \label{dafha_rel_8}\\
T_0X_nT_0&=q^{-1}X_1, &&\label{dafha_rel_8b}\\
X_i\mathbf{a}&=\mathbf{a}X_i, && \mathbf{a} \in A^{\otimes n},\; 1 \leq i \leq n, \label{dafha_rel_9}\\
\pi X_i \pi^{-1} &= X_{i+1}, && 1 \leq i \leq n-1,\label{dafha_rel_10}\\
\pi X_n \pi^{-1} &= q^{-1}X_1. &&\label{dafha_rel_10b}
\end{align}
In the above, we define $t_{0,1}=t_{n,1}$, $T_0=T_n$, $X_0=X_n$ and $\sigma = (1 \; 2 \; \dotsb \; n) \in S_n$. For any permutation $w \in S_n$ with reduced decomposition $w=s_{i_1} \dotsb s_{i_k}$ into simple transpositions, define $T_w:=T_{i_1} \dotsb T_{i_k}$. This is independent of the reduced expression by Matsumoto's Theorem, as the generators $T_i$ satisfy the braid relations. We have $T_i^{-1}=T_i-zt_{i,i+1}$ for all $0 \leq i \leq n-1$. All generators not in $A^{\otimes n}$ have parity $0$.
\end{definition}

Note that for $n=2$, Relation \eqref{dafha_rel_2} is empty, so there is no braid relation between $T_0$ and $T_1$. For $n \geq 3$, we can deduce
\[
T_0T_1T_0 \overset{\eqref{dafha_rel_5}}{=} \pi^{-1}T_1T_2T_1\pi \overset{\eqref{dafha_rel_2}}{=} \pi^{-1}T_2T_1T_2\pi \overset{\eqref{dafha_rel_5}}{=} T_1T_0T_1.
\]
Thus for $n \geq 3$, given a reduced decomposition $w^\textup{a} = s_{i_1} \dotsb s_{i_k}\in S_n^{\affine}$, Matsumoto's Theorem implies that $T_{w^{\affine}}:=T_{i_1} \dotsb T_{i_k} \in \HH_n(A,z,q)$ is well-defined.  For $n=2$, reduced words in $S_2^{\affine}$ are unique, so the same convention can be used. Given $w^{\extaffine} = \pi^r w^\textup{a} \in W_{\GL_n}^{\extaffine}=\langle \pi \rangle \ltimes S_n^{\affine}$, let $T_{w^{\extaffine}} = \pi^r T_{w^{\affine}} \in \HH_n(A,z,q)$.

\begin{example} \label{ex:Cherednik_DAHA}
Let $\kk=\CC$, $k \in \QQ$, $q \in \CC^\times$, $t=q^k$ and $z=t^{\tfrac{1}{2}}-t^{-\tfrac{1}{2}}$. Then $\HH_n(\CC,z,q)$ is precisely the $\GL_n$ double affine Hecke algebra given in \cite[(3.7.1)]{DAHA_2005}, with identical notation. The generator $T_0$ is omitted in Cherednik's presentation, but can be recovered by $T_0 := \pi T_{n-1} \pi^{-1}$.
\end{example}

\begin{proposition} \label{thm:DAFHA_bernstein_presentation}
Let $z \in \kk$, $q \in \kk^\times$ and $n \geq 2$. The \emph{double affine Frobenius Hecke algebra} $\HH_n(A,z,q)$ has a presentation as the quotient $\HH_n(A,z,q)'$ of the free product
\[
A^{\otimes n} \ast \kk [X_1^{\pm 1}, \dotsc ,X_n^{\pm 1}] \ast \kk [Y_1^{\pm 1}, \dotsc ,Y_n^{\pm 1}] \ast \langle T_i \colon 1 \leq i \leq n-1 \rangle
\]
by the relations
\begin{align}
T_iT_j&=T_jT_i, && 1 \leq i,j \leq n-1,\; |i-j|>1, \label{qwa_rel_1}\\
T_iT_{i+1}T_i&=T_{i+1}T_iT_{i+1}, && 1 \leq i \leq n-2, \label{qwa_rel_2}\\
T_i^2&=zt_{i,i+1}T_i+1, && 1 \leq i \leq n-1, \label{qwa_rel_3}\\
T_i\mathbf{a}&=s_i(\mathbf{a})T_i, && \mathbf{a} \in A^{\otimes n},\; 1 \leq i \leq n-1, \label{qwa_rel_4}\\
T_iX_j=X_jT_i &, \qquad T_iY_j=Y_jT_i, && 1 \leq i \leq n-1,\; 1 \leq j \leq n,\; j \notin \{i,i+1\}, \label{daqwa_sym_rel_1}\\
T_iX_iT_i=X_{i+1} &, \qquad T_iY_{i+1}T_i=Y_i, && 1 \leq i \leq n-1, \label{daqwa_sym_rel_2}\\
X_i\mathbf{a}=\mathbf{a}X_i &, \qquad Y_i\mathbf{a}=\mathbf{a}Y_i, && \mathbf{a} \in A^{\otimes n},\; 1 \leq i \leq n, \label{daqwa_sym_rel_3}\\
X_1^{-1}Y_2X_1Y_2^{-1}&=T_1^2, && \label{daqwa_sym_rel_4}\\
Y_1X_1 \dotsb X_n &= q^{-1}X_1 \dotsb X_nY_1, && \label{daqwa_sym_rel_5}
\end{align}
via the map $\alpha \colon \HH_n(A,z,q) \to \HH_n(A,z,q)'$ sending $X_1^{\pm 1}, \dotsc, X_n^{\pm 1},T_1^{\pm 1}, \dotsc, T_{n-1}^{\pm 1}$ and $\mathbf{a} \in A^{\otimes n}$ to themselves, and sending
\begin{equation}
\pi \mapsto Y_1 T_1^{-1} \dotsb T_{n-1}^{-1} \qquad \text{and} \qquad T_0 \mapsto Y_n^{-1}Y_1 T_1^{-1} \dotsb T_{n-1}^{-1} \dotsb T_1^{-1}, \label{eq:pi}
\end{equation}
where all generators not in $A^{\otimes n}$ have parity $0$. The inverse is a $\kk$-superalgebra homomorphism \linebreak ${\alpha' \colon \HH_n(A,z,q)' \to \HH_n(A,z,q)}$ sending
\[
Y_i \mapsto T_{i-1}^{-1} \dotsb T_1^{-1}\pi T_{n-1} \dotsb T_{i}.
\]
\end{proposition}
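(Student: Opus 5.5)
We prove the statement by showing that $\alpha$ and $\alpha'$ are well-defined $\kk$-superalgebra homomorphisms and that they are mutually inverse. Both maps fix $A^{\otimes n}$, the $X_i^{\pm 1}$ and the $T_i$ for $1 \leq i \leq n-1$. Both sides are free products modulo relations, so well-definedness reduces to checking that the images of the generators satisfy the defining relations.

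\textbf{Step 1: $\alpha'$ is well-defined.} Write $\widetilde{Y}_i := T_{i-1}^{-1}\dotsb T_1^{-1}\pi T_{n-1}\dotsb T_i$. The relations \eqref{qwa_rel_1}--\eqref{qwa_rel_4} and the $X$-halves of \eqref{daqwa_sym_rel_1}--\eqref{daqwa_sym_rel_3} are already among \eqref{dafha_rel_1}--\eqref{dafha_rel_9}. For the remaining relations we check the following.
\begin{itemize}
\item[(a)] The $\widetilde{Y}_i$ commute pairwise and are invertible. This is the standard Bernstein--Lusztig computation: $\widetilde{Y}_1=\pi T_{n-1}\dotsb T_1$, and $\widetilde{Y}_{i+1}=T_i^{-1}\widetilde{Y}_iT_i^{-1}$. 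The argument uses the braid relations, \eqref{dafha_rel_5} to move $\pi$ past the $T$'s, and $T_iT_j^{-1}=T_j^{-1}T_i$ type manipulations.
\item[(b)] The $\widetilde{Y}$-halves of \eqref{daqwa_sym_rel_1} and \eqref{daqwa_sym_rel_2}. The second is immediate from the recursion in (a). The first follows from braid relations and \eqref{dafha_rel_5}.
\item[(c)] $\widetilde{Y}_i\mathbf{a}=\mathbf{a}\widetilde{Y}_i$. By \eqref{dafha_rel_4} and \eqref{dafha_rel_6}, moving $\mathbf{a}$ through $\widetilde{Y}_i$ applies $s_{i-1}\dotsb s_1\sigma s_{n-1}\dotsb s_i$ to $\mathbf{a}$, and this permutation is the identity. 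The inverses $T_j^{-1}=T_j-zt_{j,j+1}$ also twist by $s_j$, since $s_j(t_{j,j+1})=t_{j,j+1}$ by supersymmetry.
\item[(d)] \eqref{daqwa_sym_rel_5} follows from \eqref{dafha_rel_7}, \eqref{dafha_rel_8}, \eqref{dafha_rel_10} and \eqref{dafha_rel_10b}. We move $X_1\dotsb X_n$ through $T_{n-1}\dotsb T_1$; each $T_j$ commutes with $X_jX_{j+1}$ because $T_jX_jX_{j+1}=X_{j+1}T_j^{-1}X_{j+1}=X_{j+1}X_jT_j$ from \eqref{dafha_rel_8}. Then conjugation by $\pi$ gives the factor $q^{-1}$.
\item[(e)] \eqref{daqwa_sym_rel_4} reads $X_1^{-1}T_1^{-1}\pi T_{n-1}\dotsb T_2X_1T_2^{-1}\dotsb T_{n-1}^{-1}\pi^{-1}T_1=T_1^2$. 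The $T_j$ with $j\geq 2$ commute with $X_1$, so the middle reduces to $T_1^{-1}\pi X_1\pi^{-1}T_1=T_1^{-1}X_2T_1=X_1T_1^2$, which gives the relation.
\end{itemize}

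\textbf{Step 2: $\alpha$ is well-defined.} Define $\widetilde\pi=Y_1T_1^{-1}\dotsb T_{n-1}^{-1}$ and $\widetilde T_0=\widetilde\pi T_{n-1}\widetilde\pi^{-1}$. This $\widetilde T_0$ agrees with the stated formula after expanding $\widetilde\pi^{-1}$ using \eqref{daqwa_sym_rel_2} to rewrite $Y_n^{-1}$. We then verify the relations of \cref{thm:double_affine_Frobenius_Hecke_algebra}.

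The key relations are \eqref{dafha_rel_10} and \eqref{dafha_rel_10b}, namely $\widetilde\pi X_i\widetilde\pi^{-1}=X_{i+1}$ and $\widetilde\pi X_n\widetilde\pi^{-1}=q^{-1}X_1$. This is the main obstacle, because it must be extracted from the single cross relation \eqref{daqwa_sym_rel_4} together with \eqref{daqwa_sym_rel_5}.

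The plan is to first derive $Y_1X_iY_1^{-1}$ for $i\geq 2$ from \eqref{daqwa_sym_rel_4}, as in Cherednik's argument. We conjugate \eqref{daqwa_sym_rel_4} by $T$'s and use \eqref{daqwa_sym_rel_1}--\eqref{daqwa_sym_rel_2} to obtain relations of the form $X_i^{-1}Y_jX_iY_j^{-1}$ equal to explicit $T$-words. From these, the conjugates $\widetilde\pi X_i\widetilde\pi^{-1}=X_{i+1}$ for $i<n$ follow.

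The case $i=n$ is then forced by \eqref{daqwa_sym_rel_5}. Indeed, $\widetilde\pi$ conjugates $X_1\dotsb X_n$ to $q^{-1}X_1\dotsb X_n$, since the $T_j$ commute with the full product $X_1\dotsb X_n$. Dividing by the known factors $X_2\dotsb X_n$ gives $\widetilde\pi X_n\widetilde\pi^{-1}=q^{-1}X_1$.

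The remaining relations are handled as follows.
\begin{itemize}
\item \eqref{dafha_rel_5} for $1\leq i\leq n-2$ amounts to $\widetilde\pi T_i\widetilde\pi^{-1}=T_{i+1}$. This follows from \eqref{daqwa_sym_rel_1}--\eqref{daqwa_sym_rel_2} and braid relations; the case $i=n-1$ holds by the definition of $\widetilde T_0$, and $i=0$ needs $\widetilde\pi^nT_{n-1}\widetilde\pi^{-n}=T_{n-1}$, which follows since $\widetilde\pi^n$ is central-like ($\widetilde\pi^n=Y_1\dotsb Y_n$, commuting with all $T_i$).
\item \eqref{dafha_rel_6}, that is $\widetilde\pi\mathbf{a}=\sigma(\mathbf{a})\widetilde\pi$, holds by \eqref{qwa_rel_4} and \eqref{daqwa_sym_rel_3}.
\item Relations involving $\widetilde T_0$, such as \eqref{dafha_rel_3}, \eqref{dafha_rel_4}, \eqref{dafha_rel_7}, \eqref{dafha_rel_8b} and the commutation in \eqref{dafha_rel_1}, are transported by conjugation from the corresponding $T_{n-1}$ relations via $\widetilde\pi$. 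For example, \eqref{dafha_rel_8b} is the $\widetilde\pi$-conjugate of $T_{n-1}X_{n-1}T_{n-1}=X_n$, using \eqref{dafha_rel_10b}; and \eqref{dafha_rel_3} uses $\sigma(t_{n-1,n})=t_{n,1}$.
\end{itemize}

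\textbf{Step 3: the maps are mutually inverse.} It suffices to check this on generators. We have $\alpha'\alpha(\pi)=\widetilde Y_1T_1^{-1}\dotsb T_{n-1}^{-1}=\pi$, and $\alpha'\alpha(T_0)=\pi T_{n-1}\pi^{-1}=T_0$ by \eqref{dafha_rel_5}. In the other direction, $\alpha\alpha'(Y_i)=T_{i-1}^{-1}\dotsb T_1^{-1}Y_1T_1^{-1}\dotsb T_{i-1}^{-1}$, which equals $Y_i$ by iterating \eqref{daqwa_sym_rel_2}. Both maps are parity-preserving, since all generators outside $A^{\otimes n}$ are even.
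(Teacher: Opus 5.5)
Your overall architecture is the paper's. You check the relations in both directions and then compose on generators. Step 1, Step 3, the deduction of \eqref{dafha_rel_10b} from \eqref{daqwa_sym_rel_5}, and the transport of the $T_0$-relations by $\widetilde\pi$-conjugation all agree with it. The first weak point is \eqref{dafha_rel_10} for $i<n$, which you call the main obstacle but only sketch. The sketch also points the wrong way: $Y_1X_iY_1^{-1}$ for $i\ge 2$ is messy and is not what is required. Since $Y_1=T_1\dotsb T_iY_{i+1}T_i\dotsb T_1$, one has $\widetilde\pi X_i\widetilde\pi^{-1}=T_1\dotsb T_i\,(Y_{i+1}X_iY_{i+1}^{-1})\,T_i^{-1}\dotsb T_1^{-1}$. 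So what you actually need is $Y_{i+1}X_iY_{i+1}^{-1}=X_iT_i^2$ for all $1\le i\le n-1$. This follows from \eqref{daqwa_sym_rel_4} by induction on $i$, using $Y_{i+2}=T_{i+1}^{-1}Y_{i+1}T_{i+1}^{-1}$, $X_{i+1}=T_iX_iT_i$ and $(T_{i+1}T_i)^{-1}T_i(T_{i+1}T_i)=T_{i+1}$, so your route can be completed. The paper's route is shorter. It proves only the case $i=1$, namely $\widetilde\pi X_1\widetilde\pi^{-1}=T_1Y_2X_1Y_2^{-1}T_1^{-1}=T_1X_1T_1=X_2$, which is your Step 1(e) read backwards. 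It then propagates this using $\widetilde\pi T_i\widetilde\pi^{-1}=T_{i+1}$, which you prove anyway, together with $X_{i+1}=T_iX_iT_i$.

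The second gap is the $i=0$ case of \eqref{dafha_rel_5}. Your reduction to $\widetilde\pi^{n}T_{n-1}\widetilde\pi^{-n}=T_{n-1}$ is fine. However, you then use $\widetilde\pi^{n}=Y_1\dotsb Y_n$ in $\HH_n(A,z,q)'$ without proof. In the paper this identity is \cref{thm:pi_n}, which is proved in $\HH_n(A,z,q)$ only after the isomorphism. Its induction moves $\pi$ past $T_1$ to produce $T_0=\pi^{-1}T_1\pi$ and uses \eqref{dafha_rel_1} with $T_0$, which is exactly the relation you are trying to establish. So you cannot borrow it, and as written the step is unjustified.

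You have two ways to fix it:
\begin{enumerate}
\item Do what the paper does: compute $\widetilde\pi\,\widetilde T_0\,\widetilde\pi^{-1}=T_1$ directly, pushing $Y_1^{\pm1}$ and $Y_n^{\pm1}$ through with \eqref{daqwa_sym_rel_2} and then using braid relations.
\item Prove the identity inside $\HH_n(A,z,q)'$. Show by induction that $Y_1\dotsb Y_k=\widetilde\pi^{k}(T_{n-k}\dotsb T_1)(T_{n-k+1}\dotsb T_2)\dotsb(T_{n-1}\dotsb T_k)$. In the inductive step, first use far commutations to rewrite the $T$-word as $(T_{n-k}\dotsb T_2)\dotsb(T_{n-1}\dotsb T_{k+1})\,T_1T_2\dotsb T_k$. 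Then $T_1\dotsb T_k$ cancels against the prefix $T_k^{-1}\dotsb T_1^{-1}$ of $Y_{k+1}$. After that, $\widetilde\pi$ only has to pass generators $T_j$ with $2\le j\le n-1$, for which $T_j\widetilde\pi=\widetilde\pi T_{j-1}$ is already established.
\end{enumerate}
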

\begin{proof}
We begin by verifying that $\alpha$ is well-defined. Only relations of $\HH_n(A,z,q)$ involving $\pi$ or $T_0$ need be checked, as the rest are trivially true. For $1 \leq i \leq n-2$, using Relation \labelcref{qwa_rel_2} rewritten as $T_i^{-1}T_{i+1}^{-1}T_iT_{i+1}T_i=T_{i+1}$, we have
\begin{multline*}
\alpha(\pi)\alpha(T_i)\alpha(\pi)^{-1} = Y_1T_1^{-1} \dotsb T_{n-1}^{-1}T_iT_{n-1} \dotsb T_1Y_1^{-1}\\ \overset{\labelcref{qwa_rel_1}}{=} Y_1T_1^{-1} \dotsb T_{i-1}^{-1}T_i^{-1}T_{i+1}^{-1}T_iT_{i+1}T_iT_{i-1} \dotsb T_1Y_1^{-1}\\
\overset{\labelcref{qwa_rel_2}}{=} Y_1T_1^{-1} \dotsb T_{i-1}^{-1}T_{i+1}T_{i-1} \dotsb T_1Y_1^{-1} \overset{\labelcref{qwa_rel_1}}{=} T_{i+1} = \alpha(T_{i+1}),
\end{multline*}
and
\begin{multline*}
\alpha(\pi)\alpha(T_{n-1})\alpha(\pi)^{-1} = Y_1T_1^{-1} \dotsb T_{n-1}^{-1}T_{n-1}T_{n-1} \dotsb T_1Y_1^{-1}
= Y_1T_1^{-1} \dotsb T_{n-2}^{-1} T_{n-1} T_{n-2} \dotsb T_1Y_1^{-1}\\
\overset{\labelcref{daqwa_sym_rel_2}}{=} Y_1T_1^{-1} \dotsb T_{n-2}^{-1} Y_n^{-1}T_{n-1}^{-1} T_{n-2}^{-1} \dotsb T_1^{-1} \overset{\labelcref{daqwa_sym_rel_1}}{=} Y_n^{-1}Y_1T_1^{-1} \dotsb T_{n-2}^{-1}T_{n-1}^{-1} T_{n-2}^{-1} \dotsb T_1^{-1} = T_0 = \alpha(T_0),
\end{multline*}
and, using Relation \labelcref{qwa_rel_2} rewritten as $T_iT_{i+1}T_i^{-1}=T_{i+1}^{-1}T_iT_{i+1}$, we have
\begin{multline*}
\alpha(\pi)\alpha(T_0)\alpha(\pi)^{-1} = (Y_1T_1^{-1} \dotsb T_{n-1}^{-1})(Y_n^{-1}Y_1T_1^{-1} \dotsb T_{n-2}^{-1}T_{n-1}^{-1} T_{n-2}^{-1} \dotsb T_1^{-1})(T_{n-1} \dotsb T_1Y_1^{-1})\\
\overset{\labelcref{daqwa_sym_rel_2}}{=} (Y_1Y_1^{-1}T_1 \dotsb T_{n-1})(Y_1T_1^{-1} \dotsb T_{n-2}^{-1}T_{n-1}^{-1} T_{n-2}^{-1} \dotsb T_1^{-1})(Y_n^{-1}T_{n-1}^{-1} \dotsb T_1^{-1})\\
\overset{\labelcref{daqwa_sym_rel_2}}{=} (T_1 \dotsb T_{n-1})(T_1 \dotsb T_{n-2}T_{n-1}T_{n-2}^{-1} \dotsb T_1^{-1}Y_n)(Y_n^{-1}T_{n-1}^{-1} \dotsb T_1^{-1})\\
\overset{\labelcref{qwa_rel_2}}{=} (T_1 \dotsb T_{n-1})(T_1 \dotsb T_{n-3}T_{n-1}^{-1}T_{n-2}T_{n-1}T_{n-3}^{-1} \dotsb T_1^{-1}Y_n)(Y_n^{-1}T_{n-1}^{-1} \dotsb T_1^{-1})\\
\overset{\labelcref{qwa_rel_1}}{=} (T_1 \dotsb T_{n-2})(T_1 \dotsb T_{n-3}T_{n-2}T_{n-3}^{-1} \dotsb T_1^{-1})(T_{n-2}^{-1} \dotsb T_1^{-1}) = \dots =  T_1 T_1 T_1^{-1} = T_1 = \alpha(T_1).
\end{multline*}

Thus Relation \eqref{dafha_rel_5} is preserved by $\alpha$ for all $0 \leq i \leq n-1$. We verify \eqref{dafha_rel_6} by
\[
\alpha(\pi)\alpha(\mathbf{a})=Y_1 T_1^{-1} \dotsb T_{n-1}^{-1}\mathbf{a} \underset{\eqref{daqwa_sym_rel_3}}{\overset{\eqref{qwa_rel_4}}{=}} \sigma(\mathbf{a})Y_1 T_1^{-1} \dotsb T_{n-1}^{-1} = \alpha(\sigma(\mathbf{a}))\alpha(\pi).
\]
The remaining relations involving $T_0$ may either be verified by hand, or using Relations \eqref{dafha_rel_5} and \eqref{dafha_rel_6}, by conjugating by $\alpha(\pi)$. For example, if $n \geq 3$,
\[
\alpha(T_0)\alpha(T_1)\alpha(T_0) = \alpha(\pi)^{-1}\alpha(T_1)\alpha(T_2)\alpha(T_1)\alpha(\pi) = \alpha(\pi)^{-1}\alpha(T_2)\alpha(T_1)\alpha(T_2)\alpha(\pi) = \alpha(T_1)\alpha(T_0)\alpha(T_1).
\]
Relation \eqref{dafha_rel_2} for $i=n-1$ can be verified this way, as can Relations \eqref{dafha_rel_1}, \eqref{dafha_rel_3}, \eqref{dafha_rel_4} and \eqref{dafha_rel_7}. To verify Relation \eqref{dafha_rel_10}, we check it for $i=1$,
\begin{multline*}
\alpha(\pi)\alpha(X_1)\alpha(\pi)^{-1}=(Y_1 T_1^{-1} \dotsb T_{n-1}^{-1})X_1(T_{n-1} \dotsb T_1 Y_1^{-1})\\
\overset{\eqref{daqwa_sym_rel_1}}{=} (Y_1 T_1^{-1} X_1 T_2^{-1} \dotsb T_{n-1}^{-1})(T_{n-1} \dotsb T_1 Y_1^{-1}) = Y_1 T_1^{-1} X_1 T_1 Y_1^{-1}\\
\overset{\eqref{daqwa_sym_rel_2}}{=} T_1Y_2X_1Y_2^{-1}T_1^{-1} \overset{\eqref{daqwa_sym_rel_4}}{=} T_1(X_1T_1^2)T_1^{-1}
= T_1X_1T_1 \overset{\eqref{daqwa_sym_rel_2}}{=} X_2 = \alpha(X_2),
\end{multline*}
and use the already verified Relations \eqref{dafha_rel_5} and \eqref{dafha_rel_8} to prove \eqref{dafha_rel_10} for any $1 \leq i \leq n-1$:
\begin{multline*}
\alpha(\pi)\alpha(X_i)\alpha(\pi)^{-1} = \alpha(\pi)\alpha(T_{i-1}) \dotsb \alpha(T_1)\alpha(X_1)\alpha(T_1)\dotsb \alpha(T_{i-1}) \alpha(\pi)^{-1}\\
\overset{\eqref{dafha_rel_5}}{=} \alpha(T_i) \dotsb \alpha(T_2)\alpha(\pi)\alpha(X_1)\alpha(\pi)^{-1}\alpha(T_2)\dotsb \alpha(T_i)\\
\overset{\eqref{dafha_rel_10}}{=}\alpha(T_i) \dotsb \alpha(T_2)\alpha(X_2)\alpha(T_2)\dotsb \alpha(T_i)
\overset{\eqref{dafha_rel_8}}{=} \alpha(X_{i+1}).
\end{multline*}
To verify Relation \eqref{dafha_rel_10b}, denote $\tilde{X}:=X_1\dotsb X_n \in \HH_n(A,z,q)'$. Note that since $T_iX_iX_{i+1}=T_iX_i(T_iX_iT_i)=(T_iX_iT_i)X_iT_i=X_{i+1}X_iT_i$ by Relation \eqref{daqwa_sym_rel_2}, and $T_iX_j=X_jT_i$ for all $j \notin \{i,i+1\}$ by Relation \eqref{daqwa_sym_rel_1}, the element $\tilde{X}$ commutes with all $T_i$, and likewise, with all $T_i^{-1}$, for $1 \leq i \leq n-1$. Thus,
\begin{multline*}
X_2 \dotsb X_n \alpha(\pi)\alpha(X_n)\alpha(\pi)^{-1} = \alpha(\pi)\alpha(X_1)\alpha(\pi)^{-1} \dotsb \alpha(\pi)X_{n-1}\alpha(\pi)^{-1} \alpha(\pi)\alpha(X_n)\alpha(\pi)^{-1}\\
= \alpha(\pi)\alpha(X_1)\dotsb \alpha(X_n)\alpha(\pi)^{-1} = (Y_1 T_1^{-1} \dotsb T_{n-1}^{-1})\tilde{X}(T_{n-1} \dotsb T_1 Y_1^{-1})\\
 = (Y_1 \tilde{X} T_1^{-1} \dotsb T_{n-1}^{-1})(T_{n-1} \dotsb T_1 Y_1^{-1}) = Y_1 \tilde{X}Y_1^{-1} \overset{\eqref{daqwa_sym_rel_5}}{=} q^{-1}\tilde{X} Y_1 Y_1^{-1} = q^{-1} X_1 \dotsb X_n.
\end{multline*}
Multiplying on the left by $X_2^{-1} \dotsb X_n^{-1}$ yields the required Relation \eqref{dafha_rel_10b},
\[
\alpha(\pi)\alpha(X_n)\alpha(\pi)^{-1}=q^{-1} X_1 = q^{-1}\alpha(X_1).
\]
This can then be used to check Relation \eqref{dafha_rel_8b},
\begin{multline*}
\alpha(T_0)\alpha(X_n)\alpha(T_0) \underset{\eqref{dafha_rel_5}}{\overset{\eqref{dafha_rel_10}}{=}} \alpha(\pi)\alpha(T_{n-1})\alpha(\pi)^{-1}\alpha(\pi)\alpha(X_{n-1})\alpha(\pi)^{-1}\alpha(\pi)\alpha(T_{n-1})\alpha(\pi)^{-1}\\
= \alpha(\pi)\alpha(T_{n-1})\alpha(X_{n-1})\alpha(T_{n-1})\alpha(\pi)^{-1} = \alpha(\pi)T_{n-1}X_{n-1}T_{n-1}\alpha(\pi)^{-1}\\
\overset{\eqref{daqwa_sym_rel_2}}{=} \alpha(\pi)X_n\alpha(\pi)^{-1} = \alpha(\pi)\alpha(X_n)\alpha(\pi)^{-1} \overset{\eqref{dafha_rel_10b}}{=} q^{-1}\alpha(X_1).
\end{multline*}
We have verified all relations necessary to ensure that $\alpha$ is a well-defined  homomorphism of $\kk$-superalgebras. It is clearly parity-preserving. We now verify that $\alpha'$ is well-defined by ensuring the generating relations of $\HH_n(A,z,q)'$ hold on the images of the generators under $\alpha'$. Only relations of $\HH_n(A,z,q)'$ involving $Y_i$ for $1 \leq i \leq n$ need be checked, as the rest are trivially true. Firstly, $\alpha'(Y_i)$ is clearly invertible for each $1 \leq i \leq n$. Relation \eqref{daqwa_sym_rel_1} is shown in two cases, for $1 \leq i \leq n-1$ and $j \notin \{i,i+1\}$. If $1 \leq i < j-1$, then
\begin{multline*}
\alpha'(T_i)\alpha'(Y_j) = T_i(T_{j-1}^{-1} \dotsb T_1^{-1} \pi T_{n-1} \dotsb T_j) \overset{\eqref{dafha_rel_2}}{=} T_{j-1}^{-1} \dotsb T_1^{-1} T_{i+1}\pi T_{n-1} \dotsb T_j\\
\overset{\eqref{dafha_rel_5}}{=} T_{j-1}^{-1} \dotsb T_1^{-1} \pi T_i T_{n-1} \dotsb T_j \overset{\eqref{dafha_rel_1}}{=} T_{j-1}^{-1} \dotsb T_1^{-1} \pi T_{n-1} \dotsb T_j T_i = \alpha'(Y_j)\alpha'(T_i),
\end{multline*}
In the above, Relation \eqref{dafha_rel_2} is used in the form $T_i(T_{i+1}^{-1}T_i^{-1})=(T_{i+1}^{-1}T_i^{-1})T_{i+1}$. If $j < i \leq n-1$, then
\begin{multline*}
\alpha'(T_i)\alpha'(Y_j) = T_i(T_{j-1}^{-1} \dotsb T_1^{-1} \pi T_{n-1} \dotsb T_j) \overset{\eqref{dafha_rel_1}}{=} T_{j-1}^{-1} \dotsb T_1^{-1} T_i\pi T_{n-1} \dotsb T_j\\
\overset{\eqref{dafha_rel_5}}{=} T_{j-1}^{-1} \dotsb T_1^{-1} \pi T_{i-1} T_{n-1} \dotsb T_j \overset{\eqref{dafha_rel_2}}{=} T_{j-1}^{-1} \dotsb T_1^{-1} \pi T_{n-1} \dotsb T_j T_i = \alpha'(Y_j)\alpha'(T_i).
\end{multline*}
Relation \eqref{daqwa_sym_rel_2} is easily verified, as for $1 \leq i \leq n-1$,
\begin{multline*}
\alpha'(T_i)\alpha'(Y_{i+1})\alpha'(T_i)=T_i(T_i^{-1} \dotsb T_1^{-1} \pi T_{n-1} \dotsb T_{i+1})T_i = T_{i-1}^{-1} \dotsb T_1^{-1} \pi T_{n-1} \dotsb T_i = \alpha'(Y_i).
\end{multline*}
Relation \eqref{daqwa_sym_rel_3} holds, as for $1 \leq i \leq n$,
\begin{multline*}
\alpha'(Y_i)\alpha'(\mathbf{a}) = (T_{i-1}^{-1} \dotsb T_1^{-1} \pi T_{n-1} \dotsb T_i) \mathbf{a}\\
= (s_{i-1} \circ \dotsb \circ s_1 \circ \sigma \circ s_{n-1} \circ \dotsb \circ s_i)(\mathbf{a})(T_{i-1}^{-1} \dotsb T_1^{-1} \pi T_{n-1} \dotsb T_i) = \mathbf{a} \alpha'(Y_i) = \alpha'(\mathbf{a}) \alpha'(Y_i),
\end{multline*}
as $s_{i-1} \circ \dotsb \circ s_1 \circ \sigma \circ s_{n-1} \circ \dotsb \circ s_i = \textup{id} \in S_n$. Relation \eqref{daqwa_sym_rel_4} holds in the image of $\alpha'$:
\begin{multline*}
\alpha'(X_1)^{-1}\alpha'(Y_2)\alpha'(X_1)\alpha'(Y_2)^{-1} = X_1^{-1} (T_1^{-1} \pi T_{n-1} \dotsb T_2) X_1 (T_2^{-1} \dotsb T_{n-1}^{-1} \pi^{-1} T_1)\\
\overset{\eqref{dafha_rel_7}}{=} X_1^{-1} T_1^{-1} \pi X_1 \pi^{-1} T_1 \overset{\eqref{dafha_rel_10}}{=} X_1^{-1} T_1^{-1} X_2 T_1 \overset{\eqref{dafha_rel_8}}{=} T_1^2 = \alpha'(T_1)^2.
\end{multline*}
Relation \eqref{daqwa_sym_rel_5} holds. Using once again that $\tilde{X}=X_1 \dotsc X_n$ commutes with all $T_i$, we have
\begin{multline*}
\alpha'(Y_1)\alpha'(X_1)\dotsb \alpha'(X_n) = (\pi T_{n-1} \dotsb T_1) \tilde{X} = \pi \tilde{X} T_{n-1} \dotsb T_1\\
\underset{\eqref{dafha_rel_10}}{\overset{\eqref{dafha_rel_10b}}{=}} q^{-1}\tilde{X}(\pi T_{n-1} \dotsb T_1) = q^{-1}\alpha'(X_1)\dotsb \alpha'(X_n) \alpha'(Y_1).
\end{multline*}
One can use the relations proved thus far to show that the elements $\alpha'(Y_i)$ for $1 \leq i \leq n$ are pairwise commutative, by proving that $\alpha'(Y_1)\alpha'(Y_2)=\alpha'(Y_2)\alpha'(Y_1)$ and using Relations \eqref{daqwa_sym_rel_1} and \eqref{daqwa_sym_rel_2} to show the rest. We conclude that $\alpha'$ is also a well-defined $\kk$-superalgebra homomorphism. It is the inverse of $\alpha$, as for $1 \leq i \leq n$, we have
\begin{align*}
\alpha(\alpha'(Y_i))&=\alpha(T_{i-1}^{-1} \dotsb T_1^{-1} \pi T_{n-1} \dotsb T_i) = T_{i-1}^{-1} \dotsb T_1^{-1} (Y_1 T_1^{-1} \dotsb T_{n-1}^{-1}) T_{n-1} \dotsb T_i\\
&= T_{i-1}^{-1} \dotsb T_1^{-1}Y_1T_1^{-1} \dotsb T_{i-1}^{-1} \overset{\eqref{daqwa_sym_rel_2}}{=} Y_i,\\
\alpha'(\alpha(\pi))&= \alpha'(Y_1 T_1^{-1} \dotsb T_{n-1}^{-1}) = (\pi T_{n-1} \dotsb T_1) T_1^{-1} \dotsb T_{n-1}^{-1} = \pi,\\
\alpha'(\alpha(T_0))&=\alpha'(\alpha(\pi^{-1} T_1 \pi)) = \alpha'(\alpha(\pi))^{-1}T_1\alpha'(\alpha(\pi)) = \pi^{-1} T_1 \pi = T_0. \qedhere
\end{align*}
\end{proof}

Henceforth we identify $\HH_n(A,z,q)'$ and $\HH_n(A,z,q)$ by the isomorphism in \cref{thm:DAFHA_bernstein_presentation}, with the identification
\begin{equation} \label{eq:Y_i_formula}
Y_i = T_{i-1}^{-1} \dotsb T_1^{-1}\pi T_{n-1} \dotsb T_{i},
\end{equation}
for $1 \leq i \leq n$.

\begin{lemma} \label{thm:pi_n}
In $\HH_n(A,z,q)$, we have $Y_1 \dotsb Y_n=\pi^n$.
\end{lemma}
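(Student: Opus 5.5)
We argue directly from the identification \eqref{eq:Y_i_formula}, $Y_i = T_{i-1}^{-1} \dotsb T_1^{-1}\pi T_{n-1} \dotsb T_{i}$, and prove by induction on $k$ that
\[
Y_1 \dotsb Y_k = \pi^k \, T_{n-k} \dotsb T_{n-1}\; \dotsb\; (\text{suitable word}),
\]
or more cleanly, that $Y_1\dotsb Y_k = \pi^k\, W_k$, where $W_k$ is a product of $T_j$'s with indices in $\{1,\dots,n-1\}$ which becomes empty at $k=n$. The relations used are \eqref{dafha_rel_5}, written as $T_{i}\pi = \pi T_{i-1}$ and $T_i^{-1}\pi = \pi T_{i-1}^{-1}$ (indices taken mod $n$, so $T_1\pi=\pi T_0$), together with the braid relations \eqref{dafha_rel_1}, \eqref{dafha_rel_2}.

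We first compute $Y_1Y_2 = \pi T_{n-1}\dotsb T_1 \cdot T_1^{-1}\pi T_{n-1}\dotsb T_2 = \pi T_{n-1}\dotsb T_2\,\pi\, T_{n-1}\dotsb T_2$. Pushing the second $\pi$ leftwards through $T_{n-1}\dotsb T_2$ lowers each index by one, giving $\pi^2 T_{n-2}\dotsb T_1 T_{n-1}\dotsb T_2$. In general we claim
\[
Y_1\dotsb Y_k = \pi^k \,(T_{n-k}\dotsb T_{1})(T_{n-k+1}\dotsb T_2)\dotsb(T_{n-1}\dotsb T_k),
\]
a product of $k$ descending strings, the $j$-th being $T_{n-k+j-1}\dotsb T_j$. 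This is verified by induction. Multiplying on the right by $Y_{k+1} = T_k^{-1}\dotsb T_1^{-1}\pi T_{n-1}\dotsb T_{k+1}$, we first cancel the inverses $T_k^{-1}\dotsb T_1^{-1}$ against the tail of the existing word. The tail $T_{n-1}\dotsb T_k$ absorbs $T_k^{-1}$ directly. For the remaining inverses, braid relations should let us commute or cancel $T_{k-1}^{-1},\dots,T_1^{-1}$ through the earlier strings, with each earlier string losing its last letter. Equivalently, the word is a reduced expression of a permutation, and $T_w T_{s}^{-1} = T_{ws}$ whenever $\ell(ws)<\ell(w)$, so the cancellation can be phrased via lengths. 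Next we move $\pi$ leftward through the shortened word, which shifts all indices down by one, and then append $T_{n-1}\dotsb T_{k+1}$. This recovers the formula for $k+1$.

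At $k=n$, the strings $T_{n-k+j-1}\dotsb T_j = T_{j-1}\dotsb T_j$ are all empty, and the first string involves $T_0$, which is in fact empty as well. The claim should therefore be stated so that at $k=n$ every string has length zero, giving $Y_1\dotsb Y_n=\pi^n$.

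\textbf{Main obstacle.} The delicate step is the bookkeeping in the inductive cancellation: verifying that the $T_j^{-1}$ factors cancel exactly one letter from each string without producing $T_i^2$ terms, which would require the quadratic relation and introduce $t_{i,i+1}$. I would handle this by working in terms of permutations. The word $W_k$ is $T_{w_k}$ for an explicit $w_k\in S_n$ (a Grassmannian-type permutation). I would check that $w_k s_1\dotsb s_k$ has length $\ell(w_k)-k$, and that conjugation by $\pi$ realises the index shift. Alternatively, a slicker route is to write $Y_i=\pi\, T_{i-2}^{-1}\dotsb T_0^{-1} T_{n-1}\dotsb T_i$ and telescope. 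If the bookkeeping becomes messy, I would fall back on verifying the formula in the extended affine Weyl group, where $y_1\dotsb y_n=\pi^n$ is the translation by $(1,\dots,1)$, and use that all words involved are reduced, so that lifts via $T_{w}$ are multiplicative.
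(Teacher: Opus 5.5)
Your proposal is correct and is essentially the paper's proof: the paper establishes exactly your formula $Y_1\dotsb Y_k=\pi^k(T_{n-k}\dotsb T_1)(T_{n-k+1}\dotsb T_2)\dotsb(T_{n-1}\dotsb T_k)$ by induction on $k$ using only \eqref{dafha_rel_5} and \eqref{dafha_rel_1}, starting from $Y_1=\pi T_{n-1}\dotsb T_1$ and then setting $k=n$. The order in which you cancel and shift $\pi$ does not matter, and your stated obstacle does not arise: once the later strings have lost their last letters, their indices are all at least $j+2$, so each $T_j^{-1}$ commutes past them by \eqref{dafha_rel_1} and meets its partner $T_j$ directly, with no quadratic relation needed.
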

\begin{proof}
This is a standard DAHA relation. It is not difficult to prove the identity
\begin{equation} \label{eq:pi_n}
Y_1 \dotsb Y_k=\pi^k (T_{n-k} \dotsb T_1) (T_{n-k+1} \dotsb T_2) \dotsb (T_{n-2} \dotsb T_{k-1})(T_{n-1} \dotsb T_k)
\end{equation}
inductively for all $1 \leq k \leq n$ using Relations \eqref{dafha_rel_5} and \eqref{dafha_rel_1}, and taking $k=n$ concludes the proof. The base case $Y_1=\pi T_{n-1} \dotsb T_1$ is simply the identity \eqref{eq:Y_i_formula}.\details{ Assuming \eqref{eq:pi_n} holds for some $1 \leq k < n$, we have
\begin{align*}
Y_1 \dotsb Y_k Y_{k+1} &\overset{\eqref{eq:Y_i_formula}}{=} \pi^k (T_{n-k} \dotsb T_1) (T_{n-k+1} \dotsb T_2) \\
&\qquad \dotsb (T_{n-2} \dotsb T_{k-1})(T_{n-1} \dotsb T_k) (T_{k}^{-1} \dotsb T_1^{-1}\pi T_{n-1} \dotsb T_{k+1})\\
&\overset{\eqref{dafha_rel_5}}{=} \pi^{k+1} (T_{n-k-1} \dotsb T_0) (T_{n-k} \dotsb T_1)\\
&\qquad \dotsb (T_{n-3} \dotsb T_{k-2})(T_{n-2} \dotsb T_{k-1}) (T_{k-1}^{-1} \dotsb T_0^{-1} T_{n-1} \dotsb T_{k+1})\\
&= \pi^{k+1} (T_{n-k-1} \dotsb T_1)T_0(T_{n-k} \dotsb T_2)T_1\\
&\qquad \dotsb (T_{n-3} \dotsb T_{k-1})T_{k-2}(T_{n-2} \dotsb T_k)T_{k-1} (T_{k-1}^{-1} \dotsb T_0^{-1} T_{n-1} \dotsb T_{k+1})\\
&\overset{\eqref{dafha_rel_1}}{=}\pi^{k+1} (T_{n-k-1} \dotsb T_1)(T_{n-k} \dotsb T_2)\\
&\qquad \dotsb (T_{n-3} \dotsb T_{k-1})T_{k-1}(T_{n-2} \dotsb T_k)T_0 \dotsb T_{k-1} (T_{k-1}^{-1} \dotsb T_0^{-1} T_{n-1} \dotsb T_{k+1})\\
&=\pi^{k+1} (T_{n-k-1} \dotsb T_1)(T_{n-k} \dotsb T_2)\\
&\qquad \dotsb (T_{n-3} \dotsb T_{k-1})T_{k-1}(T_{n-2} \dotsb T_k)(T_{n-1} \dotsb T_{k+1}),
\end{align*}
concluding the induction step.}
\end{proof}

There are some additional relations that can be deduced from the relations in \cref{thm:DAFHA_bernstein_presentation}.

\begin{lemma} \label{thm:extra_twist_relations}
In $\HH_n(A,z,q)$, we have
\[
Y_iX_1 \dotsb X_n = q^{-1}X_1 \dotsb X_nY_i,
\]
for all $1 \leq i \leq n$, and
\[
X_jY_1 \dotsb Y_n = qY_1 \dotsb Y_n X_j,
\]
for all $1 \leq j \leq n$.
\end{lemma}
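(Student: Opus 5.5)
The plan is to reduce both identities to the relations already available: Relation \eqref{daqwa_sym_rel_5}, the fact that $\tilde{X}:=X_1\dotsb X_n$ commutes with every $T_i^{\pm1}$ for $1\le i\le n-1$ (shown in the proof of \cref{thm:DAFHA_bernstein_presentation}), and \cref{thm:pi_n}.

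For the first identity, argue by induction on $i$; the case $i=1$ is exactly Relation \eqref{daqwa_sym_rel_5}. Suppose $Y_i\tilde{X}=q^{-1}\tilde{X}Y_i$ for some $1\le i\le n-1$. By Relation \eqref{daqwa_sym_rel_2} we have $Y_{i+1}=T_i^{-1}Y_iT_i^{-1}$. Since $T_i^{\pm1}$ commutes with $\tilde{X}$, we then get
\[
Y_{i+1}\tilde{X}=T_i^{-1}Y_i\tilde{X}T_i^{-1}=q^{-1}T_i^{-1}\tilde{X}Y_iT_i^{-1}=q^{-1}\tilde{X}Y_{i+1}.
\]
This completes the induction.

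For the second identity, I would use \cref{thm:pi_n} to replace $Y_1\dotsb Y_n$ by $\pi^n$. It then suffices to show $\pi^n X_j\pi^{-n}=q^{-1}X_j$, which is equivalent to $X_j\pi^n=q\pi^nX_j$. By Relations \eqref{dafha_rel_10} and \eqref{dafha_rel_10b}, conjugation by $\pi$ sends $X_k\mapsto X_{k+1}$ for $k<n$ and $X_n\mapsto q^{-1}X_1$. Applying this $n$ times to $X_j$ moves the index through $j,j+1,\dots,n,1,\dots,j$. The index crosses the wrap $n\to1$ exactly once, so we pick up exactly one factor $q^{-1}$ and obtain $\pi^nX_j\pi^{-n}=q^{-1}X_j$. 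Rearranging gives $X_jY_1\dotsb Y_n=X_j\pi^n=q\pi^nX_j=qY_1\dotsb Y_nX_j$.

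Alternatively, one could obtain the second identity by an argument symmetric to the first, but the $\pi^n$ route is shorter. No step is a real obstacle. The only care needed is the bookkeeping of the single factor $q^{-1}$ in the $n$-fold conjugation, and confirming that the commutation of $\tilde{X}$ with $T_i^{-1}$, and not just with $T_i$, is available. It is, since $T_i^{-1}=T_i-zt_{i,i+1}$ and $\tilde{X}$ commutes with $A^{\otimes n}$ by Relation \eqref{daqwa_sym_rel_3}.
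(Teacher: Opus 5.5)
Your proof is correct and follows the paper's argument; the second identity is obtained exactly as in the paper, from $X_j\pi^n=q\pi^nX_j$ together with \cref{thm:pi_n}. For the first identity you induct on $i$ from the base relation \eqref{daqwa_sym_rel_5} via $Y_{i+1}=T_i^{-1}Y_iT_i^{-1}$, whereas the paper substitutes \eqref{eq:Y_i_formula} and moves $\tilde{X}$ past $\pi$ using $\pi\tilde{X}=q^{-1}\tilde{X}\pi$. This difference is cosmetic, since both arguments rest on $\tilde{X}$ commuting with the $T_i^{\pm1}$.
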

\begin{proof}
This is a standard DAHA relation. Since $\tilde{X}:=X_1\dotsb X_n \in \HH_n(A,z,q)$ commutes with $T_i$ for all $0 \leq i \leq n-1$ as shown in the proof of \cref{thm:DAFHA_bernstein_presentation}, and $\pi \tilde{X} = q^{-1}\tilde{X}\pi$ by Relations \eqref{dafha_rel_10} and \eqref{dafha_rel_10b}, we have
\[
Y_i\tilde{X} \overset{\eqref{eq:Y_i_formula}}{=} (T_{i-1}^{-1} \dotsb T_1^{-1}\pi T_{n-1} \dotsb T_{i}) \tilde{X} \underset{\eqref{dafha_rel_10b}}{\overset{\eqref{dafha_rel_10}}{=}} q^{-1}\tilde{X} (T_{i-1}^{-1} \dotsb T_1^{-1}\pi T_{n-1} \dotsb T_{i}) = q^{-1}\tilde{X} Y_i,
\]
for any $1 \leq i \leq n$, proving the first identity. Relations \eqref{dafha_rel_10} and \eqref{dafha_rel_10b} give us $X_j\pi^n=q \pi^n X_j$, and since $\pi^n=Y_1 \dotsc Y_n$ by \cref{thm:pi_n}, we have $X_jY_1 \dotsb Y_n = qY_1 \dotsb Y_n X_j$ for all $1 \leq j \leq n$, proving the second identity.
\end{proof}


Recall that for a $\kk$-superalgebra $C$, the opposite superalgebra $C^{\opp}$ has the same underlying $\kk$-supermodule as $C$, and multiplication given for homogeneous $b,c \in C$ by $b^{\opp} c^{\opp} := (-1)^{\bar{b}\bar{c}}(cb)^{\opp}$.

\begin{lemma} \label{thm:XY_automorphisms}
There is an isomorphism of $\kk$-superalgebras defined by
\begin{gather} 
\sigmaop \colon \HH_n(A,z,q) \to \HH_n(A^\textup{op},z,q)^\textup{op}, \label{eq:XY_antiautomorphism}\\
\mathbf{a} \mapsto \mathbf{a}^\textup{op}, \qquad T_i \mapsto T_i, \qquad X_j \mapsto Y_j^{-1}, \qquad Y_j \mapsto X_j^{-1}, \nonumber
\end{gather}
for $\mathbf{a} \in A^{\otimes n}$, $1 \leq i \leq n-1$ and $1 \leq j \leq n$, and an isomorphism of $\kk$-superalgebras defined by
\begin{gather} 
\epsilontransform \colon \HH_n(A,z,q) \to \HH_n(A,-z,q^{-1}), \label{eq:XY_automorphism}\\
\mathbf{a} \mapsto \mathbf{a}, \qquad T_i \mapsto T_i^{-1}, \qquad X_j \mapsto Y_j, \qquad Y_j \mapsto X_j, \nonumber
\end{gather}
for $\mathbf{a} \in A^{\otimes n}$, $1 \leq i \leq n-1$ and $1 \leq j \leq n$.
\end{lemma}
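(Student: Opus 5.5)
The plan is to work entirely with the Bernstein-type presentation $\HH_n(A,z,q)'$ of \cref{thm:DAFHA_bernstein_presentation}. There, to define a superalgebra homomorphism out of $\HH_n(A,z,q)$ it suffices to give images of the generators $\mathbf{a}$, $X_j^{\pm1}$, $Y_j^{\pm1}$, $T_i$ and check Relations \eqref{qwa_rel_1}--\eqref{daqwa_sym_rel_5}. For $\sigmaop$, a homomorphism into $\HH_n(A^{\opp},z,q)^{\opp}$ is the same as an anti-homomorphism into $\HH_n(A^{\opp},z,q)$ (with the super sign). Since all generators other than $\mathbf{a}$ are even, the sign only enters through $A^{\otimes n}$. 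There it is absorbed by the identification $(A^{\otimes n})^{\opp}\cong (A^{\opp})^{\otimes n}$, with $\mathbf{a}\mapsto\mathbf{a}^{\opp}$ a superalgebra isomorphism. So each relation must be checked read backwards in $\HH_n(A^{\opp},z,q)$.

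For $\sigmaop$ the checks go as follows.

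\textbf{Braid relations.} Relations \eqref{qwa_rel_1}, \eqref{qwa_rel_2} are palindromic.

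\textbf{Quadratic relation.} For \eqref{qwa_rel_3}, we need $T_i^2 = T_i\,t_{i,i+1}^{\opp}z+1$ in the target. By \eqref{qwa_rel_4}, $T_i t = s_i(t)T_i$. Supersymmetry of $\tr$ together with independence of the basis should give that $t_{i,i+1}^{\opp}$ computed in $A^{\opp}$ equals the Casimir $t_{i,i+1}$ of $A^{\opp}$, and that this element is $s_i$-invariant. This is the first point needing care: the dual basis of $A^{\opp}$ and the sign $(-1)^{\bar b\bar b}$ must be matched with the superpermutation.

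\textbf{Relation \eqref{qwa_rel_4}.} Reversed, it reads $\mathbf{a}T_i=T_i s_i(\mathbf{a})$, which is equivalent to the original since $s_i^2=1$ and $s_i$ is compatible with taking opposites.

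\textbf{The $X$--$Y$ relations.} Relations \eqref{daqwa_sym_rel_1}--\eqref{daqwa_sym_rel_3} swap into each other. For example, $T_iX_iT_i=X_{i+1}$ maps to $T_iY_i^{-1}T_i=Y_{i+1}^{-1}$, which is the inverse of $T_i^{-1}Y_iT_i^{-1}=Y_{i+1}$, i.e.\ of $Y_i=T_iY_{i+1}T_i$. Dually, $T_iY_{i+1}T_i=Y_i$ maps to $T_iX_{i+1}^{-1}T_i=X_i^{-1}$, the inverse of \eqref{daqwa_sym_rel_2}.

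\textbf{Relation \eqref{daqwa_sym_rel_4}.} It maps to $X_2^{-1}Y_1^{-1}X_2Y_1=T_1^2$, read in the non-opposite algebra. Using $X_2=T_1X_1T_1$ and $Y_1=T_1Y_2T_1$, I will rewrite this as a conjugate of the original relation \eqref{daqwa_sym_rel_4}, or of its equivalent form $Y_2X_1Y_2^{-1}=X_1T_1^2$. I expect this to be the main computational step, possibly also needing $X_1T_1^{\pm}$-manipulations and the commutation $X_1X_2=X_2X_1$.

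\textbf{Relation \eqref{daqwa_sym_rel_5}.} It maps to $Y_1^{-1}\cdots Y_n^{-1}X_1^{-1}=q^{-1}X_1^{-1}Y_1^{-1}\cdots Y_n^{-1}$, which follows from \cref{thm:extra_twist_relations}. Conversely, the inverse map is given by the analogous formulas from $\HH_n(A^{\opp},z,q)^{\opp}$ back, using $(A^{\opp})^{\opp}=A$. The two maps are mutually inverse on generators, which gives bijectivity.

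For $\epsilontransform$ the procedure is the same. The relation $T_i^2=zt T_i+1$ becomes $T_i^{-2}=-zt T_i^{-1}+1$, which is equivalent after multiplying by $T_i^2$ and using that $t$ commutes with $T_i$. Relations \eqref{daqwa_sym_rel_2} swap: $T_i^{-1}Y_iT_i^{-1}=Y_{i+1}$ is equivalent to $T_iY_{i+1}T_i=Y_i$. Relation \eqref{daqwa_sym_rel_4} becomes $Y_1^{-1}X_2Y_1X_2^{-1}=T_1^{-2}$, which again must be reduced to the original by conjugation, as above. Relation \eqref{daqwa_sym_rel_5} becomes $X_1Y_1\cdots Y_n=qY_1\cdots Y_nX_1$ with parameter $q^{-1}$, which is exactly \cref{thm:extra_twist_relations}. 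The map $\epsilontransform$ is its own inverse up to the parameter change, which completes the argument.
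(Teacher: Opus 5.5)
Your strategy matches the paper's: verify the relations of the presentation in \cref{thm:DAFHA_bernstein_presentation}, use \cref{thm:extra_twist_relations} for \eqref{daqwa_sym_rel_5}, reduce \eqref{daqwa_sym_rel_4} by conjugating with $T_1$, and get bijectivity from involutivity. However, the one relation the paper actually has to compute is set up incorrectly for $\sigmaop$.

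Reversing the word $X_1^{-1}Y_2X_1Y_2^{-1}$ gives $Y_2^{-1}X_1Y_2X_1^{-1}$. Substituting $Y_2^{-1}\mapsto X_2$, $X_1\mapsto Y_1^{-1}$, $Y_2\mapsto X_2^{-1}$, $X_1^{-1}\mapsto Y_1$ then gives $X_2Y_1^{-1}X_2^{-1}Y_1=T_1^2$, not $X_2^{-1}Y_1^{-1}X_2Y_1=T_1^2$.

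The relation you wrote is false, so no conjugation argument can prove it. Using $X_2=T_1X_1T_1$, $Y_1=T_1Y_2T_1$ and $Y_2^{-1}X_1T_1^2Y_2=X_1$, one finds $X_2^{-1}Y_1^{-1}X_2Y_1=T_1^{-1}X_1^{-1}T_1^{-2}X_1T_1$. This equals $T_1^2$ if and only if $T_1^{-2}X_1=X_1T_1^2$. For $A=\kk$, a short computation with $T_1X_1=X_2T_1^{-1}$ and $T_1^{-1}=T_1-z$ gives $T_1^{-2}X_1-X_1T_1^2=-z(X_1+X_2)T_1^{-1}$, which is nonzero by \cref{thm:DAFHA_basis} whenever $z\neq0$.

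With the correct image the check takes two lines, using the inverse $T_1^{-2}=Y_2X_1^{-1}Y_2^{-1}X_1$ of \eqref{daqwa_sym_rel_4}:
\[
X_2Y_1^{-1}X_2^{-1}Y_1=T_1\bigl(X_1Y_2^{-1}T_1^{-2}X_1^{-1}Y_2\bigr)T_1=T_1X_1Y_2^{-1}\bigl(Y_2X_1^{-1}Y_2^{-1}X_1\bigr)X_1^{-1}Y_2T_1=T_1^2 .
\]
This is exactly the paper's computation.

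The rest of your proposal is fine. Your treatment of the quadratic relation under $\sigmaop$ works: the left dual basis of $\{b^{\opp}\}$ in $A^{\opp}$ is $\{(b^\vee)^{\opp}\}$, so the teleporter of $A^{\opp}$ is $t_{i,i+1}^{\opp}$, and $s_i(t_{i,i+1})=t_{i,i+1}$. The $\epsilontransform$-image of \eqref{daqwa_sym_rel_4} is correct and reduces as you say.

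One notational caution: your displayed images $T_i^{-2}=-zt_{i,i+1}T_i^{-1}+1$ and $X_1Y_1\cdots Y_n=qY_1\cdots Y_nX_1$ are right only if $z,q$ there denote the target's parameters. In the source's parameters, the identities to check in $\HH_n(A,-z,q^{-1})$ are $T_i^{-2}=zt_{i,i+1}T_i^{-1}+1$ and $X_1Y_1\cdots Y_n=q^{-1}Y_1\cdots Y_nX_1$, and both do hold there.
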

\begin{proof}
Most relations are easy to check for both maps. In particular, Relation \eqref{daqwa_sym_rel_5} follows immediately from \cref{thm:extra_twist_relations}. For Relation \eqref{daqwa_sym_rel_4} under the map $\sigmaop$,
\begin{multline*}
X_2Y_1^{-1}X_2^{-1}Y_1=T_1T_1^{-1}(X_2Y_1^{-1}X_2^{-1}Y_1)T_1^{-1}T_1 \overset{\eqref{daqwa_sym_rel_2}}{=} T_1(X_1Y_2^{-1}T_1^{-2}X_1^{-1}Y_2)T_1\\
\overset{\eqref{daqwa_sym_rel_4}}{=} T_1X_1Y_2^{-1}(Y_2X_1^{-1}Y_2^{-1}X_1)X_1^{-1}Y_2T_1 = T_1^2,
\end{multline*}
as required. For Relation \eqref{daqwa_sym_rel_4} under the map $\epsilontransform$,
\begin{multline*}
Y_1^{-1}X_2Y_1X_2^{-1}=T_1^{-1}T_1(Y_1^{-1}X_2Y_1X_2^{-1})T_1T_1^{-1} \overset{\eqref{daqwa_sym_rel_2}}{=} T_1^{-1}(Y_2^{-1}X_1T_1^2Y_2X_1^{-1})T_1^{-1} \\
\overset{\eqref{daqwa_sym_rel_4}}{=} T_1^{-1}(Y_2^{-1}X_1(X_1^{-1}Y_2X_1Y_2^{-1})Y_2X_1^{-1})T_1^{-1} = T_1^{-2},
\end{multline*}
as required. Bijectivity of $\sigmaop$ follows from noting that
\[
\HH_n(A,z,q) \overset{\sigmaop}{\to} \HH_n(A^\textup{op},z,q)^\textup{op} \overset{\sigmaop}{\to} (\HH_n((A^\textup{op})^{\opp},z,q)^\textup{op})^{\opp}=\HH_n(A,z,q)
\]
is the identity, and $\epsilontransform \colon \HH_n(A,-z,q^{-1}) \to \HH_n(A,z,q)$ provides its own inverse.
\end{proof}

\begin{example} \label{ex:double_affine_Wan_Wang}
Let $\kk$ have characteristic not equal to $2$ and let $\tau \in \kk^\times$. Let $A=\kk G$, for $G$ a finite group, with the trace map $\sum_{g \in G} C_g g \mapsto C_{e_G}$, and let $z=(\tau-\tau^{-1})$. The algebra $\HH_n(\kk G,(\tau-\tau^{-1}),q)$ is a double affine version of the non-degenerate version \cite[Introduction,~Example~(b)]{QAWA_2020} of the wreath Hecke algebra of Wan and Wang \cite[Definition~2.4]{MRBRWHA_2008}. This algebra seems not to have appeared in the current literature.
\end{example}

\begin{example} \label{ex:double_affine_Yokonuma_Hecke}
This example is a special case of \cref{ex:double_affine_Wan_Wang}, with a rescaled parameter. Let $\kk=\CC[\tau,\tau^{-1}]$, $A=\kk C_d$, for $C_d$ the cyclic group of order $d$, with trace as in \cref{ex:double_affine_Wan_Wang}, and let $z=(\tau-\tau^{-1})/d$. We call the algebra $\HH_n(\kk C_d,(\tau-\tau^{-1})/d,q)$ the \emph{double affine Yokonuma--Hecke algebra}, as it is a double affine version of the affine Yokonuma--Hecke algebra from \cite[Section~3.1]{RTYHA_2014}. This algebra seems not to have appeared in the current literature.
\end{example}

\begin{definition} \label{def:AFHAs}
Let $z \in \kk$. The \emph{affine Frobenius Hecke algebras} are the subalgebras
\begin{align*}
H_n^\aff(A,z) &:= \langle \mathbf{a}, Y_1^{\pm 1}, \dotsc, Y_n^{\pm 1}, T_1^{\pm 1}, \dotsc, T_{n-1}^{\pm 1}\rangle = \langle \mathbf{a}, \pi^{\pm 1}, T_0^{\pm 1}, \dotsc, T_{n-1}^{\pm 1}\rangle \subsetneq \HH_n(A,z,q),\\
H_n^\affX(A,z) &:= \langle \mathbf{a}, X_1^{\pm 1}, \dotsc, X_n^{\pm 1}, T_1^{\pm 1}, \dotsc, T_{n-1}^{\pm 1}\rangle \subsetneq \HH_n(A,z,q),
\end{align*}
where $\mathbf{a} \in A^{\otimes n}$. The \emph{Frobenius Hecke algebra} is the subalgebra
\[
H_n(A,z) := \langle \mathbf{a}, T_1^{\pm 1}, \dotsc, T_{n-1}^{\pm 1}\rangle \subsetneq \HH_n(A,z,q),
\]
where $\mathbf{a} \in A^{\otimes n}$.
\end{definition}

The wreath product algebra of \cite[Def.~2.1]{QAWA_2020} clearly surjects onto $H_n(A,z)$ as a $\kk$-superalgebra by $\mathbf{a} \mapsto \mathbf{a}$ and $T_i \mapsto T_i$. The quantum affine wreath product algebra of \cite[Def.~2.5]{QAWA_2020} surjects onto $H_n^\affX(A,z)$ by $\mathbf{a} \mapsto \mathbf{a}$, $X_j \mapsto X_j$ and $T_i \mapsto T_i$, and onto $H_n^\aff(A,z)$ by $\mathbf{a} \mapsto \mathbf{a}$, $X_j \mapsto Y_j^{-1}$ and $T_i \mapsto T_i$. These surjections will be shown to be isomorphisms in \cref{thm:AFHA_isomorphisms}.

\section{Basis for \texorpdfstring{$\HH_n(A,z,q)$}{ℍₙ(A,z,q)}} \label{sec:DAFHA_basis}

We now prove basis theorems for $\HH_n(A,z,q)$, namely \cref{thm:DAFHA_basis_theorem} and \cref{thm:DAFHA_basis}. In the $A=\kk$ case, the polynomial representation as in \cref{ex:DAHA_poly_rep} is faithful at generic parameters, and can therefore be used to prove a basis theorem for the double affine Hecke algebra $\HH_n(\kk,\tau-\tau^{-1},q)$. We define polynomial representations of $\HH_n(A,z,q)$ in \cref{sec:DAFHA_poly_rep_phi}, but these are in general not faithful, and in \cref{sec:DAFHA_poly_rep_phi_A}, but these are only defined for $z=0$. We therefore take a different approach in this section; in \cref{thm:DAFHA_representation}, we explicitly construct a representation of $\HH_n(A,z,q)$ by defining the action of the generators on a suitable $\kk$-module and verifying that the defining relations are satisfied. We will then use this representation to prove a basis theorem for $\HH_n(A,z,q)$ for any supersymmetric Frobenius superalgebra $A$.

Recall that $S_n$ acts on $\ZZ^n$ by permutation, and let $\sigma$ denote $(1\;2 \;...\;n) \in S_n$. Extend the action of $S_n$ by permutation on $\kk[X_1^{\pm 1},\dotsc,X_n^{\pm 1}]$ to an action of $W_{\GL_n}^{\extaffine}$, by
\begin{align}
s_0(X^\mu) &= q^{\mu_1-\mu_n}X^{(n \; 1) \cdot(\mu)}= q^{\mu_1-\mu_n}X^{(\mu_n,\mu_2,\cdots,\mu_{n-1},\mu_1)} = (qX_nX_1^{-1})^{\mu_1-\mu_n}X^\mu, \label{eq:s_0_operator}\\
\pi(X^\mu) &= q^{-\mu_n}X^{\sigma(\mu)}=q^{-\mu_n}X^{(\mu_n,\mu_1,\dotsc,\mu_{n-1})}, \label{eq:pi_operator}
\end{align}
for $\mu=(\mu_1,\dotsc,\mu_n) \in \ZZ^n$. Note that if $q^m=1$ for some $m \in \ZZ \setminus \{0\}$, then $\pi^{mn}$ acts trivially. In this section, we make no assumptions on $q \in \kk$ other than its invertibility. For $0 \leq i \leq n-1$, define the \emph{Demazure operators}
\begin{equation}
\Delta_i \colon \kk[X_1^{\pm 1},\dotsc,X_n^{\pm 1}] \to \kk[X_1^{\pm 1},\dotsc,X_n^{\pm 1}], \qquad X^\mu \mapsto \frac{s_i(X^\mu)-X^\mu}{X_is_i(X_i)^{-1}-1}, \label{eq:Demazure_operators}
\end{equation}
where $X_0:=X_n$. These can be extended to $A^{\otimes n}$-linear operators on $A^{\otimes n} \otimes \kk[X_1^{\pm 1},\dotsc,X_n^{\pm 1}]$, and for $1 \leq i \leq n-1$, they then coincide with the Demazure operators of \cite[(3.2)]{QAWA_2020}. However, in order to keep the actions of $W_{\GL_n}^{\extaffine}$ on $A^{\otimes n}$ and on $\kk[X_1^{\pm 1},\dotsc,X_n^{\pm 1}]$ separate, the present paper avoids this $A^{\otimes n}$-linear extension. Nonetheless, the identities \cite[(3.3)-(3.9)]{QAWA_2020} hold for the present definition, simply by specialising those results to $A=\kk$, and noting that the proofs also work for $i=0$. The Demazure operators are well-defined, as for $1 \leq i \leq n-1$,
\begin{align}
\Delta_i(X^\mu)= \frac{(X_iX_{i+1}^{-1})^{\mu_{i+1}-\mu_i}X^\mu-X^\mu}{X_i(X_{i+1})^{-1}-1} = \frac{(X_iX_{i+1}^{-1})^{\mu_{i+1}-\mu_i}-1}{(X_iX_{i+1}^{-1})-1}X^\mu, \label{eq:Delta_i_well_def}\\ 
\Delta_0(X^\mu)= \frac{(qX_nX_1^{-1})^{\mu_1-\mu_n}X^\mu-X^\mu}{X_n(q^{-1}X_1)^{-1}-1} = \frac{(qX_nX_1^{-1})^{\mu_1-\mu_n}-1}{(qX_nX_1^{-1})-1}X^\mu, \label{eq:Delta_0_well_def}
\end{align}
are both clearly in $\kk[X_1^{\pm 1},\dotsc,X_n^{\pm 1}]$. We have
\begin{align}
\Delta_0(X_j)&=\frac{s_0(X_j)-X_j}{X_n s_0(X_n)^{-1}-1}=0, \qquad \qquad 1<j<n, \label{eq:Delta_0_X_j}\\
\Delta_0(X_n)&=\frac{s_0(X_n)-X_n}{X_n s_0(X_n)^{-1}-1}=\frac{q^{-1}X_1-X_n}{X_n(q^{-1}X_1)^{-1}-1}=q^{-1}X_1\frac{X_1-qX_n}{qX_n-X_1}=-q^{-1}X_1, \label{eq:Delta_0_X_n}\\
\Delta_0(X_1)&=\frac{s_0(X_1)-X_1}{X_n s_0(X_n)^{-1}-1}=\frac{qX_n-X_1}{X_n(q^{-1}X_1)^{-1}-1}=X_1\frac{qX_n-X_1}{qX_n-X_1}=X_1. \label{eq:Delta_0_X_1}
\end{align}

\begin{lemma} \label{thm:lusztig_relation_Daff}
In $\HH_n(A,z,q)$, for $\mathbf{a} \in A^{\otimes n}$, $0 \leq i \leq n-1$ and $f \in \kk[X_1^{\pm 1},\dotsc,X_n^{\pm 1}]$ we have
\begin{equation}
T_i\mathbf{a}f-s_i(\mathbf{a})s_i(f)T_i=zt_{i,i+1}\mathbf{a}\Delta_i(f). \label{eq:lusztig_relation_Daff}
\end{equation}
\end{lemma}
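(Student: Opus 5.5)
First we reduce to $\mathbf{a}=1$. Relation \eqref{dafha_rel_4} gives $T_i\mathbf{a}=s_i(\mathbf{a})T_i$, and Relation \eqref{dafha_rel_9} says $A^{\otimes n}$ commutes with every $X_j$, hence with $f$ and $\Delta_i(f)$. So if we know $T_if-s_i(f)T_i=zt_{i,i+1}\Delta_i(f)$, then
\[
T_i\mathbf{a}f-s_i(\mathbf{a})s_i(f)T_i=s_i(\mathbf{a})\bigl(T_if-s_i(f)T_i\bigr)=zs_i(\mathbf{a})t_{i,i+1}\Delta_i(f).
\]
It remains to check $s_i(\mathbf{a})t_{i,i+1}=t_{i,i+1}\mathbf{a}$. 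This is the standard "teleporting" property of $t_{i,j}$ in a supersymmetric Frobenius algebra, used in \cite{QAWA_2020}. It is applied there for $1\le i\le n-1$, and the same argument works for the pair $(n,1)$ with $s_0$ acting as $(1\;n)$. With this the $A$-decoration is no longer an issue.

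Next we prove $T_if-s_i(f)T_i=zt_{i,i+1}\Delta_i(f)$ for $f\in\kk[X_1^{\pm1},\dotsc,X_n^{\pm1}]$. Both sides are $\kk$-linear in $f$, so it suffices to check monomials. The operator $D_i(f):=T_if-s_i(f)T_i$ satisfies the twisted Leibniz rule
\[
D_i(fg)=D_i(f)g+s_i(f)D_i(g).
\]
The right-hand side has the same behaviour. Here $t_{i,i+1}$ is central relative to polynomials by \eqref{dafha_rel_9}, and $\Delta_i$ satisfies $\Delta_i(fg)=\Delta_i(f)g+s_i(f)\Delta_i(g)$, which is among the identities \cite[(3.3)--(3.9)]{QAWA_2020} that the paper notes hold also for $i=0$. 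So it is enough to verify the identity on the generators $X_j^{\pm1}$. Since $D_i(1)=0$, the Leibniz rule gives $D_i(X_j^{-1})=-s_i(X_j^{-1})D_i(X_j)X_j^{-1}$, and similarly for $\Delta_i$. So only the $X_j$ themselves need checking.

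For $1\le i\le n-1$ there are two kinds of generator.
\begin{itemize}
\item For $j\ne i,i+1$, \eqref{dafha_rel_7} gives $D_i(X_j)=0=\Delta_i(X_j)$.
\item For $j=i$, use \eqref{dafha_rel_8} and $T_i^{-1}=T_i-zt_{i,i+1}$:
\[
T_iX_i=X_{i+1}T_i^{-1}=X_{i+1}T_i-zt_{i,i+1}X_{i+1}.
\]
This must match $\Delta_i(X_i)=-X_{i+1}$, which we confirm from \eqref{eq:Delta_i_well_def}.
\item For $j=i+1$, we have $T_iX_{i+1}=T_i^2X_iT_i=(zt_{i,i+1}T_i+1)X_iT_i$. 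Since $T_iX_iT_i=X_{i+1}$, this equals $zt_{i,i+1}X_{i+1}+X_iT_i$, and $\Delta_i(X_{i+1})=X_{i+1}$.
\end{itemize}
For $i=0$ the case $1<j<n$ follows from \eqref{dafha_rel_7} and \eqref{eq:Delta_0_X_j}. For $j=n$ and $j=1$ we repeat the computation using \eqref{dafha_rel_8b}, $T_0X_nT_0=q^{-1}X_1$. This yields $T_0X_n=q^{-1}X_1T_0-zt_{n,1}q^{-1}X_1$ and $T_0X_1=qX_nT_0+zt_{n,1}X_1$. These match \eqref{eq:Delta_0_X_n} and \eqref{eq:Delta_0_X_1}, since $s_0(X_n)=q^{-1}X_1$ and $s_0(X_1)=qX_n$.

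The main obstacle is the bookkeeping rather than any real difficulty. Two points need care. The first is confirming the teleporting identity $s_i(\mathbf{a})t_{i,i+1}=t_{i,i+1}\mathbf{a}$ with the correct super signs, including for $i=0$. The second is checking that the sign and $q$-conventions in $\Delta_0$ match the relation \eqref{dafha_rel_8b}. I would first verify the twisted Leibniz rule for $\Delta_0$ directly from \eqref{eq:Demazure_operators}. This rests on $s_0$ being a ring automorphism of the Laurent polynomials, which holds since \eqref{eq:s_0_operator} is multiplicative in $\mu$.
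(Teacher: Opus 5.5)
Your proposal is correct and follows essentially the same route as the paper. The paper defers the general reduction and the $1 \le i \le n-1$ cases to \cite[Lemma 3.3]{QAWA_2020}, then checks exactly your three $i=0$ computations for $T_0X_j$ ($1<j<n$), $T_0X_n$ and $T_0X_1$; you simply write out the deferred steps (the teleporter identity, the twisted Leibniz rule and the generator checks) explicitly.
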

\begin{proof}
A version of this statement is proved for $H_n^\affX(A,z)$ in \cite[Lemma 3.3]{QAWA_2020}. The same proof works for \cref{thm:lusztig_relation_Daff}, once we verify that for $j \notin \{1, n\}$,
\begin{align*}
T_0X_j &\overset{\eqref{dafha_rel_7}}{=} X_jT_0 \overset{\eqref{eq:Delta_0_X_j}}{=} X_jT_0+zt_{n,1}\Delta_0(X_j),\\
T_0X_n &\overset{\eqref{dafha_rel_8b}}{=} q^{-1}X_1T_0^{-1} \overset{\eqref{dafha_rel_3}}{=} q^{-1}X_1(T_0-zt_{n,1})=q^{-1}X_1T_0-q^{-1}zt_{n,1}X_1 \overset{\eqref{eq:Delta_0_X_n}}{=} q^{-1}X_1T_0+zt_{n,1}\Delta_0(X_n),\\
T_0X_1 &\overset{\eqref{dafha_rel_3}}{=} (T_0^{-1}+zt_{n,1})X_1 \overset{\eqref{dafha_rel_8b}}{=} qX_nT_0+zt_{n,1}X_1 \overset{\eqref{eq:Delta_0_X_1}}{=} qX_nT_0+zt_{n,1}\Delta_0(X_1). \qedhere
\end{align*}
\end{proof}

\begin{lemma} \label{thm:pi_dem_pi}
As operators on $\kk[X_1^{\pm 1},\dotsc,X_n^{\pm 1}]$, $s_i$ from \eqref{eq:s_0_operator}, $\pi$ from \eqref{eq:pi_operator} and $\Delta_i$ from \eqref{eq:Demazure_operators} satisfy
\begin{align} 
\pi \circ s_i \circ \pi^{-1} &= s_{i+1}, \label{eq:pi_s_i_pi}\\
\pi \circ \Delta_i \circ \pi^{-1} &= \Delta_{i+1}, \label{eq:pi_dem_pi}
\end{align}
taking indices $0 \leq i \leq n-1$ modulo $n$.
\end{lemma}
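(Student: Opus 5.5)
We verify \eqref{eq:pi_s_i_pi} and \eqref{eq:pi_dem_pi} directly on the monomial basis $\{X^\mu : \mu \in \ZZ^n\}$ of $\kk[X_1^{\pm 1},\dotsc,X_n^{\pm 1}]$. Since all operators involved are $\kk$-linear, this suffices. First we record the inverse of $\pi$: from \eqref{eq:pi_operator}, $\pi^{-1}(X^\mu) = q^{\mu_1}X^{(\mu_2,\dotsc,\mu_n,\mu_1)}$. Indeed, applying $\pi$ to this gives $q^{\mu_1}q^{-\mu_1}X^\mu$. On generators, $\pi(X_j)=X_{j+1}$ for $j<n$ and $\pi(X_n)=q^{-1}X_1$. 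Also, $\pi$ is a ring automorphism, because $\mu\mapsto -\mu_n$ is additive. Likewise every $s_i$ is a ring automorphism: for $s_0$, the factor $q^{\mu_1-\mu_n}$ is additive in $\mu$. Hence $s_0(X_1)=qX_n$, $s_0(X_n)=q^{-1}X_1$, and $s_0$ fixes $X_j$ for $1<j<n$.

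For \eqref{eq:pi_s_i_pi}, both sides are ring automorphisms, so it suffices to compare them on the generators $X_j$. For $1\le i\le n-2$ this is the usual permutation computation: $\pi s_i\pi^{-1}$ swaps $X_{i+1},X_{i+2}$ and fixes the others, with the $q$-factors cancelling whenever $X_n$ or $X_1$ passes through $\pi^{\pm1}$ unchanged. The two cases needing care are $i=n-1$ and $i=0$, where powers of $q$ appear.

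\begin{itemize}
\item For $i=n-1$: we have $\pi^{-1}(X_1)=qX_n$, then $s_{n-1}(qX_n)=qX_{n-1}$, and $\pi(qX_{n-1})=qX_n=s_0(X_1)$. Similarly, $\pi s_{n-1}\pi^{-1}(X_n)=\pi s_{n-1}(X_{n-1})=\pi(X_n)=q^{-1}X_1=s_0(X_n)$.
\item For $i=0$: we have $\pi s_0\pi^{-1}(X_2)=\pi s_0(X_1)=\pi(qX_n)=X_1$, and $\pi s_0\pi^{-1}(X_1)=\pi s_0(qX_n)=\pi(X_1)=X_2$. The remaining generators are fixed. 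This matches $s_1$.
\end{itemize}

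For \eqref{eq:pi_dem_pi}, write $\Delta_i(f)=\dfrac{s_i(f)-f}{X_is_i(X_i)^{-1}-1}$. This formula is valid for all $f$ and not only monomials, since both sides are linear and the denominator is independent of $f$. We understand it as division inside the Laurent ring, which is legitimate by \eqref{eq:Delta_i_well_def} and \eqref{eq:Delta_0_well_def}. Because $\pi$ is a ring automorphism, it passes through quotients. Using \eqref{eq:pi_s_i_pi}, we get
\[
\pi\Delta_i\pi^{-1}(f)=\frac{s_{i+1}(f)-f}{\pi(X_i)\,\pi(s_i(X_i))^{-1}-1}.
\]
Now $\pi(s_i(X_i))=s_{i+1}(\pi(X_i))$, so it remains to check that $\pi(X_i)\,s_{i+1}(\pi(X_i))^{-1}=X_{i+1}s_{i+1}(X_{i+1})^{-1}$, with the convention $X_0=X_n$.

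\begin{itemize}
\item For $1\le i\le n-1$, $\pi(X_i)=X_{i+1}$ and the identity is immediate.
\item For $i=0$, $\pi(X_n)=q^{-1}X_1$, and $q^{-1}X_1\,s_1(q^{-1}X_1)^{-1}=X_1X_2^{-1}$, since the scalars cancel. This is the denominator of $\Delta_1$.
\end{itemize}

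The main obstacle is justifying the division step rigorously. The quotient must be read in a ring where the denominator is a non-zero-divisor, since $\kk$ is a general ring. To handle this, we note that the denominator $u-1$ (with $u$ a Laurent monomial times a unit) is a non-zero-divisor in the Laurent ring, because its extreme terms have unit coefficients. So $\Delta_i(f)$ is the unique $g$ with $(X_is_i(X_i)^{-1}-1)g=s_i(f)-f$. Applying the automorphism $\pi$ to this defining equation then gives the claim without any localisation.
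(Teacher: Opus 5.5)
Your proof is correct, but it is organised differently from the paper's. The paper checks \eqref{eq:pi_s_i_pi} and \eqref{eq:pi_dem_pi} separately by direct computation on a monomial $X^\mu$. It uses $\pi^{-1}(X^\mu)=q^{\mu_1}X^{\sigma^{-1}(\mu)}$ and the closed forms \eqref{eq:Delta_i_well_def}--\eqref{eq:Delta_0_well_def}, and handles the cases $1\le i<n-1$, $i=n-1$ and $i=0$ one at a time. There the factor $\frac{u^m-1}{u-1}$ is carried through $\pi$ as a geometric-sum polynomial, so no division or cancellation argument is needed.

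You instead observe that $\pi$ and every $s_i$ are ring automorphisms, which reduces \eqref{eq:pi_s_i_pi} to a check on the generators $X_j$. You then deduce \eqref{eq:pi_dem_pi} from \eqref{eq:pi_s_i_pi}, needing only that $\pi$ sends $X_is_i(X_i)^{-1}-1$ to $X_{i+1}s_{i+1}(X_{i+1})^{-1}-1$. The cost is that you must characterise $\Delta_i(f)$ as the unique solution of $(X_is_i(X_i)^{-1}-1)g=s_i(f)-f$. This needs the denominator to be a non-zero-divisor in the Laurent ring over an arbitrary commutative $\kk$. Your justification is sound: a binomial $cX^\alpha-1$ with $c\in\kk^\times$ and $\alpha\neq0$ has a unit leading coefficient for any group order on $\ZZ^n$, so it cannot annihilate a nonzero element.

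Your route is uniform in $i$ and shows that the Demazure identity is a formal consequence of the reflection identity. The paper's route is more pedestrian but self-contained, and it avoids the zero-divisor question altogether.
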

\begin{proof}
Relation \eqref{eq:pi_s_i_pi} is trivially true for $1 \leq i < n-1$, and needs only a simple verification for $i=n-1$ and $i=0$. \details{For $i=n-1$, we have
\begin{multline*}
\pi \circ s_{n-1} \circ \pi^{-1}(X^\mu)=\pi \circ s_{n-1} \left(q^{\mu_1}X^{\sigma^{-1}(\mu)}\right) = \pi \left(q^{\mu_1}X^{(n-1 \; n) \circ \sigma^{-1}(\mu)}\right) = q^{\mu_1}q^{-\mu_n}X^{\sigma \circ (n-1 \; n) \circ \sigma^{-1}(\mu)}\\ = q^{\mu_1-\mu_n}X^{(n \; 1) \cdot (\mu)} = s_0(X^\mu).
\end{multline*}
For $i=0$, we have
\begin{multline*}
\pi \circ s_0 \circ \pi^{-1}(X^\mu)=\pi \circ s_0 \left(q^{\mu_1}X^{\sigma^{-1}(\mu)}\right) = \pi \left(q^{\mu_1}q^{\mu_2-\mu_1}X^{(n \; 1) \circ \sigma^{-1}(\mu)}\right) = q^{\mu_1}q^{\mu_2-\mu_1}q^{-\mu_2}X^{\sigma \circ (n \; 1) \circ \sigma^{-1}(\mu)}\\ = X^{(1 \; 2) \cdot (\mu)} = s_1(X^\mu).
\end{multline*}}
To prove Relation \eqref{eq:pi_dem_pi}, we similarly verify the same cases. \details{Let $1 \leq i < n-1$ and $\mu=(\mu_1,\dotsc,\mu_n) \in \ZZ^n$. Then,
\begin{multline*}
\pi \circ \Delta_i \circ \pi^{-1} (X^\mu) = \pi \circ \Delta_i(q^{\mu_1} X^{\sigma^{-1}(\mu)}) = \pi \left(q^{\mu_1}\frac{(X_iX_{i+1}^{-1})^{\mu_{i+2}-\mu_{i+1}}-1}{(X_iX_{i+1}^{-1})-1}X^{\sigma^{-1}(\mu)} \right)\\= q^{\mu_1}q^{-\mu_1}\frac{(X_{i+1}X_{i+2}^{-1})^{\mu_{i+2}-\mu_{i+1}}-1}{(X_{i+1}X_{i+2}^{-1})-1}X^{\sigma \circ \sigma^{-1}(\mu)} = \frac{(X_{i+1}X_{i+2}^{-1})^{\mu_{i+2}-\mu_{i+1}}-1}{(X_{i+1}X_{i+2}^{-1})-1}X^{\mu} = \Delta_{i+1}(X^\mu).
\end{multline*}
For $i=n-1$, we have
\begin{multline*}
\pi \circ \Delta_{n-1} \circ \pi^{-1} (X^\mu) = \pi \circ \Delta_{n-1}(q^{\mu_1} X^{\sigma^{-1}(\mu)}) = \pi \left(q^{\mu_1}\frac{(X_{n-1}X_n^{-1})^{\mu_{1}-\mu_{n}}-1}{(X_{n-1}X_n^{-1})-1}X^{\sigma^{-1}(\mu)} \right)\\ = q^{\mu_1}q^{-\mu_1}\frac{(X_n(q^{-1}X_{1})^{-1})^{\mu_{1}-\mu_{n}}-1}{(X_n(q^{-1}X_1)^{-1})-1}X^{\sigma \circ \sigma^{-1}(\mu)} = \frac{(qX_nX_{1}^{-1})^{\mu_{1}-\mu_{n}}-1}{(qX_nX_1^{-1})-1}X^{\mu} = \Delta_{0}(X^\mu).
\end{multline*}
For $i=0$, we have
\begin{multline*}
\pi \circ \Delta_{0} \circ \pi^{-1} (X^\mu) = \pi \circ \Delta_{0}(q^{\mu_1} X^{\sigma^{-1}(\mu)}) = \pi \left(q^{\mu_1}\frac{(qX_{n}X_1^{-1})^{\mu_{2}-\mu_{1}}-1}{(qX_nX_1^{-1})-1}X^{\sigma^{-1}(\mu)} \right)\\ =  q^{\mu_1}q^{-\mu_1}\frac{(qq^{-1}X_1X_{2}^{-1})^{\mu_{2}-\mu_{1}}-1}{(qq^{-1}X_1X_2^{-1})-1}X^{\sigma \circ \sigma^{-1}(\mu)} =  \frac{(X_1X_2^{-1})^{\mu_{2}-\mu_{1}}-1}{(X_1X_2^{-1})-1}X^{\mu} = \Delta_{1}(X^\mu). \qedhere
\end{multline*}}
\end{proof}

\begin{proposition} \label{thm:DAFHA_representation}
Let $V$ be the $\kk$-module $A^{\otimes n} \otimes \kk[X_1^{\pm 1},\dotsc,X_n^{\pm 1}] \otimes \kk W_{\GL_n}^{\extaffine}$. Then $V$ is an $\HH_n(A,z,q)$-module, with the action $\beta:\HH_n(A,z,q) \to \Endo_\kk(V)$ given by
\begin{align}
\mathbf{a} &\mapsto \mathbf{a} \otimes 1 \otimes 1, \nonumber\\
X_j &\mapsto \textbf{1} \otimes X_j \otimes 1, \nonumber\\
\beta(T_i)(\mathbf{a} \otimes X^\mu \otimes w) &= \left(s_i(\mathbf{a}) \otimes s_i(X^\mu) \otimes s_i w \right) \label{eq:T_i_basic_action}\\
&\quad + \begin{cases} \left(z t_{i,i+1}\mathbf{a} \otimes \Delta_i(X^\mu) \otimes w\right), &\textup{ if }l(s_iw)>l(w),\\
\left(z t_{i,i+1}\mathbf{a} \otimes X_{i+1}^{-1}\Delta_i(X_{i+1}X^\mu) \otimes w\right), &\textup{ if } l(s_iw)<l(w),
\end{cases} \nonumber\\
\beta(\pi)(\mathbf{a} \otimes X^\mu \otimes w) &= \sigma(\mathbf{a}) \otimes \pi(X^{\mu}) \otimes \pi w = \sigma(\mathbf{a}) \otimes q^{-\mu_n}X^{\sigma(\mu)} \otimes \pi w, \label{eq:pi_basic_action}
\end{align}
for $1 \leq j \leq n$, $0 \leq i \leq n-1$, $\mathbf{a} \in A^{\otimes n}$, $\mu \in \ZZ^n$ and $w \in W_{\GL_n}^{\extaffine}$. In the above, we define $s_0(X^\mu) := q^{\mu_1-\mu_n}X^{s_0(\mu)}$, $t_{0,1}:=t_{n,1}$ and $X_0:=X_n$. See \eqref{eq:superpermute_A} for the action of $s_0$ and of $\sigma = (1\;2 \;...\;n)$ on $A^{\otimes n}$ by superpermutation. The length of an element $w \in W_{\GL_n}^{\extaffine}$ is denoted $l(w)$, with $l(\pi)=0$.
\end{proposition}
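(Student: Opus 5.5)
The plan is to check directly that the operators $\beta(\mathbf{a})$, $\beta(X_j^{\pm1})$, $\beta(\pi^{\pm 1})$ and $\beta(T_i)$ satisfy the defining relations \eqref{dafha_rel_1}--\eqref{dafha_rel_10b} of \cref{thm:double_affine_Frobenius_Hecke_algebra}. We also need $\beta(\pi)$ to be invertible, which it is: its inverse is $\mathbf{a}\otimes X^\mu\otimes w\mapsto \sigma^{-1}(\mathbf{a})\otimes \pi^{-1}(X^\mu)\otimes \pi^{-1}w$.

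We record three facts first.
\begin{enumerate}[(i)]
\item The standard Frobenius identities $t_{i,j}\mathbf{a}=s_{ij}(\mathbf{a})t_{i,j}$, $s_i(t_{i,i+1})=t_{i,i+1}$, and that $t_{i,i+1}$ is even and central in the relevant sense. These use supersymmetry of $\tr$, as in \cite{QAWA_2020}.
\item The twisted Leibniz rule $\Delta_i(fg)=\Delta_i(f)g+s_i(f)\Delta_i(g)$, together with $\Delta_i(X_{i+1})=X_{i+1}$. These give
\[
X_{i+1}^{-1}\Delta_i(X_{i+1}f)=f+X_iX_{i+1}^{-1}\Delta_i(f),
\]
so the second branch of \eqref{eq:T_i_basic_action} is a shifted Demazure operator.
\item \cref{thm:pi_dem_pi}, which is the operator-level form of $\pi$-equivariance.
\end{enumerate}

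\textbf{Easy relations.} Relations \eqref{dafha_rel_6}, \eqref{dafha_rel_9}, \eqref{dafha_rel_10} and \eqref{dafha_rel_10b} follow immediately from the formulas, using that $\pi(X^\mu)$ is the $q$-twisted cyclic shift. For \eqref{dafha_rel_4}, compare $\beta(T_i)\beta(\mathbf{b})$ with $\beta(s_i\mathbf{b})\beta(T_i)$. The first summands agree because $s_i$ is an algebra automorphism of $A^{\otimes n}$. The correction terms agree by $t_{i,i+1}\mathbf{b}=s_i(\mathbf{b})t_{i,i+1}$.

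For \eqref{dafha_rel_5}, observe that $l(\pi w)=l(w)$ and that $l(s_{i+1}\pi w)>l(\pi w)$ if and only if $l(s_iw)>l(w)$, since $\pi s_i\pi^{-1}=s_{i+1}$. Then \cref{thm:pi_dem_pi} together with $\sigma(t_{i,i+1})=t_{i+1,i+2}$ and $\pi(X_{i+1})\propto X_{i+2}$ gives $\beta(\pi)\beta(T_i)\beta(\pi)^{-1}=\beta(T_{i+1})$ in both branches.

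This $\pi$-equivariance is used throughout. Once \eqref{dafha_rel_5} holds, every remaining relation involving $T_0$ (including \eqref{dafha_rel_8b}, and \eqref{dafha_rel_2} for the pair $(n-1,0)$) is obtained by conjugating a relation that involves only $T_1,\dots,T_{n-1}$. Relation \eqref{dafha_rel_8b} is the conjugate of \eqref{dafha_rel_8}, combined with \eqref{dafha_rel_10b}.

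\textbf{Relations involving $X$.} Relations \eqref{dafha_rel_7} and \eqref{dafha_rel_8} are best handled through \cref{thm:lusztig_relation_Daff}-type identities on $V$. Applying $\beta(T_i)$ to $X_j\cdot v$ and using the Leibniz rule (ii), we expect
\[
\beta(T_i)\beta(f)-\beta(s_if)\beta(T_i)=zt_{i,i+1}\Delta_i(f)
\]
as operators, in both length cases. This recovers \eqref{dafha_rel_7} (since $\Delta_i(X_j)=0$ there). It also gives $T_iX_i=X_{i+1}T_i^{-1}$, which is \eqref{dafha_rel_8}, once \eqref{dafha_rel_3} is known.

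\textbf{Quadratic relation \eqref{dafha_rel_3}.} Apply $\beta(T_i)$ twice. Since $w\mapsto s_iw\mapsto w$ switches between the two branches, we compute both compositions. The terms landing in $\otimes\, w$ must reduce to
\[
v+zt_{i,i+1}\cdot(\text{first application}),
\]
using $t_{i,i+1}^2$-type identities and (ii). The terms landing in $\otimes\, s_iw$ must cancel or match, which again uses (ii).

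\textbf{Main obstacle: the braid relations.} The hard part is \eqref{dafha_rel_2} (and \eqref{dafha_rel_1}) for $1\le i\le n-2$. The operator $\beta(T_i)$ depends on $w$ through the length condition, so a case analysis is needed over the position of $w$ relative to the parabolic subgroup $\langle s_i,s_{i+1}\rangle\cong S_3$: the six cosets representatives $uw_0$, with $w_0$ minimal in $\langle s_i,s_{i+1}\rangle w$.

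I would first write, on each coset, $\beta(T_i)$ as ``$T_i$ acting by the Demazure--Lusztig formula on $A^{\otimes n}\otimes\kk[X^{\pm1}]$'' tensored with left multiplication on $\kk\langle s_i,s_{i+1}\rangle w_0$. The second branch is to be rewritten via (ii) as a conjugate by $X_{i+1}$. The aim is to reduce the braid relation to the known braid relation for Demazure--Lusztig operators \cite[(3.3)--(3.9)]{QAWA_2020} plus the regular representation of the finite Hecke algebra. If this identification fails, I would fall back on checking the six cases by hand.
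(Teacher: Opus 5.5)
Your treatment of \eqref{dafha_rel_5} via \cref{thm:pi_dem_pi}, of every relation involving $T_0$ by conjugating with $\beta(\pi)$, and of \eqref{dafha_rel_6}, \eqref{dafha_rel_9}, \eqref{dafha_rel_10}, \eqref{dafha_rel_10b} matches the paper. Your direct checks of \eqref{dafha_rel_3}, \eqref{dafha_rel_4}, \eqref{dafha_rel_7}, \eqref{dafha_rel_8} via the operator identity $\beta(T_i)\beta(f)-\beta(s_if)\beta(T_i)=zt_{i,i+1}\Delta_i(f)$ are also correct. (In \eqref{dafha_rel_3}, besides (ii), you actually need $\Delta_i^2=\Delta_i$, $\Delta_is_i=-\Delta_i$ and $s_i\Delta_i=X_iX_{i+1}^{-1}\Delta_i$; the $z^2t_{i,i+1}^2$ terms occur on both sides, so no $t_{i,i+1}^2$ identity is involved.)

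The gap is the step you flag yourself: \eqref{dafha_rel_2}, and \eqref{dafha_rel_1} for $1\le i,j\le n-1$, are never verified. Localising to a coset $W_Jd$ with $W_J=\langle s_i,s_{i+1}\rangle$ and $d$ minimal is legitimate, since $l(ud)=l(u)+l(d)$ for $u\in W_J$ means the branch conditions depend only on $u$. But this only reduces the problem to an explicit identity on $A^{\otimes n}\otimes\kk[X_1^{\pm1},\dotsc,X_n^{\pm1}]\otimes\kk W_J$, and that identity is the whole content.

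The shortcut you propose for that identity is false as stated. $\beta(T_i)$ is not a Demazure--Lusztig operator tensored with left multiplication on $\kk W_Jd$: its first summand sends $w\mapsto s_iw$, while its correction term stays at $w$ with a branch-dependent operator. Under $\mathbf{a}\otimes X^\mu\otimes w\leftrightarrow X^\mu\otimes\mathbf{a}T_w$ it is instead the sum $s_i\otimes L_{T_i}+z\Delta_i\otimes L_{t_{i,i+1}}$, where $L_{T_i}$ is the expected left-regular action of $T_i$ on $A^{\otimes n}\otimes\kk W_{\GL_n}^{\extaffine}$. The braid relation for such sums needs both the braid relation for the operators $L_{T_i}$ (itself a basis-type statement) and mixed identities among $s_i$, $s_{i+1}$, $\Delta_i$, $\Delta_{i+1}$ and the teleporters. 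The reduction is neither correctly formulated nor carried out, and ``check six cases by hand'' is a promise, not a proof.

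The paper avoids this computation entirely. Take minimal-length representatives $D$ for $S_n\backslash W_{\GL_n}^{\extaffine}$, so that $l(s_iwd)-l(wd)=l(s_iw)-l(w)$ for $w\in S_n$, $d\in D$ and $1\le i\le n-1$. Then $V=\bigoplus_{d\in D}V'd$. On each summand, $\beta(\mathbf{a})$, $\beta(X_j)$ and $\beta(T_1),\dotsc,\beta(T_{n-1})$ act exactly as on the $H_n^{\affX}(A,z)$-module $V'=A^{\otimes n}\otimes\kk[X_1^{\pm1},\dotsc,X_n^{\pm1}]\otimes\kk S_n$ of \cite[Prop.~3.8]{QAWA_2020}. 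There, all relations among these generators, in particular \eqref{dafha_rel_1} and \eqref{dafha_rel_2}, are already established.

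This is your parabolic-coset trick with $S_n$ in place of $W_J$, combined with a citation. With it, the rest of your plan goes through as in the paper. Without it, or an actual rank-two computation, your argument does not show that $\beta$ respects the braid relations.
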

\begin{proof}
We must verify the relations of \cref{thm:double_affine_Frobenius_Hecke_algebra} to ensure that $\beta$ is well-defined. In \cite[Prop.3.8]{QAWA_2020}, the $\kk$-module $V':=A^{\otimes n} \otimes \kk[X_1^{\pm 1},\dotsc,X_n^{\pm 1}] \otimes H_n(\kk,z) \cong_\kk A^{\otimes n} \otimes \kk[X_1^{\pm 1},\dotsc,X_n^{\pm 1}] \otimes \kk S_n$ is shown to be an $H_n^\affX (A,z)$-module, where $\mathbf{a}, X_1^{\pm 1}, \dotsc, X_n^{\pm 1},T_1^{\pm 1}, \dotsc, T_{n-1}^{\pm 1}$ act very similarly to their actions in our proposed module, albeit restricted to the submodule $V'$. Indeed, consider the right-coset decomposition $W_{\GL_n}^{\extaffine} = \bigsqcup_{d \in D}S_n d$ for some set $D$ of coset representatives of $S_n \backslash W_{\GL_n}^{\extaffine}$ that are minimal by length, and hence $V=\bigoplus_{d \in D} V' d$. By minimality of length of elements of $D$, for all $1 \leq i \leq n-1$, $w \in S_n$ and $d \in D$, we have $l(s_i wd)-l(wd)=l(s_i w)-l(w)$. Consequently, each $\kk$-module isomorphism
\[
V' \to V'd, \qquad \mathbf{a} \otimes X^{\mu} \otimes w \mapsto \mathbf{a} \otimes X^{\mu} \otimes wd
\]
preserves the cases that determine the action of $T_i$ \eqref{eq:T_i_basic_action}. It is then easily seen that the action of $H_n^\affX(A,z)$ on $V'$ described in \cite[Prop.3.8]{QAWA_2020} is identical to the action given by $\beta$ on each summand $V'd$ for $d \in D$. Hence all required relations in the image of $\beta$ that do not involve $\beta(\pi)$ or $\beta(T_0)$ follow from \cite[Prop.3.8]{QAWA_2020}. For example, the verifications of Relations \labelcref{dafha_rel_1,dafha_rel_2,dafha_rel_3,dafha_rel_4,dafha_rel_7,dafha_rel_8,dafha_rel_9}, for $i \neq 0$, need not be repeated. We thus start with Relation \eqref{dafha_rel_5}. Considering $0 \leq i \leq n-1$, let $\mathbf{a} \in A^{\otimes n}$, $\mu=(\mu_1, \dotsc , \mu_n) \in \ZZ^n$ and $w \in W_{\GL_n}^{\extaffine}$ such that $l(s_i\pi^{-1}w)<l(\pi^{-1}w)$. Then, taking indices modulo $n$,
\begin{align*}
&\; \beta(\pi)\beta(T_i)\beta(\pi)^{-1}(\mathbf{a} \otimes X^{\mu} \otimes w)\\
&= \beta(\pi)\beta(T_i)\left(\sigma^{-1}(\mathbf{a}) \otimes \pi^{-1}(X^\mu) \otimes \pi^{-1} w \right) \\
&= \beta(\pi)\Big( (s_i \circ \sigma^{-1})(\mathbf{a}) \otimes (s_i \circ \pi^{-1})(X^\mu) \otimes s_i \pi^{-1} w \displaybreak[1]\\
&\qquad + zt_{i,i+1}\sigma^{-1}(\mathbf{a}) \otimes X_{i+1}^{-1}\Delta_i(X_{i+1}\pi^{-1}(X^\mu)) \otimes \pi^{-1} w \Big)\\
&= (\sigma \circ s_i \circ \sigma^{-1})(\mathbf{a}) \otimes (\pi \circ s_i \circ \pi^{-1})(X^\mu) \otimes \pi s_i \pi^{-1} w \displaybreak[1]\\
&\qquad + zt_{i+1,i+2}(\sigma \circ \sigma^{-1})(\mathbf{a}) \otimes \pi(X_{i+1}^{-1}\Delta_i(X_{i+1}\pi^{-1}(X^\mu))) \otimes \pi \pi^{-1} w \displaybreak[1]\\
&\overset{\eqref{dafha_rel_10}}{=} (\sigma \circ s_i \circ \sigma^{-1})(\mathbf{a}) \otimes (\pi \circ s_i \circ \pi^{-1})(X^\mu) \otimes \pi s_i \pi^{-1} w\\
&\qquad + zt_{i+1,i+2}\mathbf{a} \otimes X_{i+2}^{-1}(\pi \circ \Delta_i \circ \pi^{-1})(X_{i+2}X^\mu) \otimes \pi \pi^{-1} w\\
&\underset{\eqref{eq:pi_dem_pi}}{\overset{\eqref{eq:pi_s_i_pi}}{=}} (s_{i+1})(\mathbf{a}) \otimes s_{i+1}(X^\mu) \otimes s_{i+1} w + zt_{i+1,i+2}\mathbf{a} \otimes X_{i+2}^{-1}\Delta_{i+1}(X_{i+2}X^{\mu}) \otimes w\\
&= \beta(T_{i+1})(\mathbf{a} \otimes X^\mu \otimes w).
\end{align*}
In the case $i=n-1$ of the above verification, the step using Relation \eqref{dafha_rel_10} uses Relation \eqref{dafha_rel_10b} instead, and the resulting factors of $q$ cancel immediately. The case $l(s_i\pi^{-1}w)>l(\pi^{-1}w)$ is similar.\details{
\begin{align*}
&\; \beta(\pi)\beta(T_i)\beta(\pi)^{-1}(\mathbf{a} \otimes X^{\mu} \otimes w)\\
&= \beta(\pi)\beta(T_i)\left(\sigma^{-1}(\mathbf{a}) \otimes \pi^{-1}(X^\mu) \otimes \pi^{-1} w \right) \\
&= \beta(\pi)\Big( (s_i \circ \sigma^{-1})(\mathbf{a}) \otimes (s_i \circ \pi^{-1})(X^\mu) \otimes s_i \pi^{-1} w \\
&\qquad + zt_{i,i+1}\sigma^{-1}(\mathbf{a}) \otimes (\Delta_i \circ \pi^{-1})(X^\mu) \otimes \pi^{-1} w \Big)\\
&= (\sigma \circ s_i \circ \sigma^{-1})(\mathbf{a}) \otimes (\pi \circ s_i \circ \pi^{-1})(X^\mu) \otimes \pi s_i \pi^{-1} w\\
&\qquad + zt_{i+1,i+2}(\sigma \circ \sigma^{-1})(\mathbf{a}) \otimes (\pi \circ \Delta_i \circ \pi^{-1})(X^\mu) \otimes \pi \pi^{-1} w\\
&\underset{\eqref{eq:pi_dem_pi}}{\overset{\eqref{eq:pi_s_i_pi}}{=}} (s_{i+1})(\mathbf{a}) \otimes s_{i+1}(X^\mu) \otimes s_{i+1} w + zt_{i+1,i+2}\mathbf{a} \otimes \Delta_{i+1}(X^{\mu}) \otimes w\\
&= \beta(T_{i+1})(\mathbf{a} \otimes X^\mu \otimes w).
\end{align*}}
Thus, $\beta(\pi)\beta(T_i)\beta(\pi)^{-1} = \beta(T_{i+1})$, and Relation \labelcref{dafha_rel_5} is verified. An expression in $\beta(T_0)$ can be rewritten using the shifting operator $\pi$ into an expression in the operators $\beta(T_i)$, for $1 \leq i \leq n-1$, in order to prove the $i=0$ cases of Relations \labelcref{dafha_rel_1,dafha_rel_2,dafha_rel_3,dafha_rel_4,dafha_rel_7}. For example, for $1 \leq j \leq n-1$ with $j \notin \{n-1,0,1\}$, Relation \labelcref{dafha_rel_1} holds for $i=0$, as
\begin{multline*}
\beta(T_0)\beta(T_j) = \beta(\pi)^{-1}\beta(T_1)\beta(\pi)\beta(T_j) = \beta(\pi)^{-1}\beta(T_1)\beta(T_{j+1})\beta(\pi)\\
= \beta(\pi)^{-1}\beta(T_{j+1})\beta(T_1)\beta(\pi) = \beta(\pi)^{-1}\beta(T_{j+1})\beta(\pi)\beta(T_0)
= \beta(T_j)\beta(T_0).
\end{multline*}
\details{We verify Relations \labelcref{dafha_rel_2,dafha_rel_3,dafha_rel_4,dafha_rel_7} for $i=0$ similarly.
For $\mathbf{a} \in A^{\otimes n}$:
\begin{align*}
\beta(T_0)\beta(T_1)\beta(T_0) &= \beta(\pi)^{-1}\beta(T_1)\beta(\pi)\beta(\pi)^{-1}\beta(T_2)\beta(\pi)\beta(\pi)^{-1}\beta(T_1)\beta(\pi)\\
&= \beta(\pi)^{-1}\beta(T_1)\beta(T_2)\beta(T_1)\beta(\pi) = \beta(\pi)^{-1}\beta(T_2)\beta(T_1)\beta(T_2)\beta(\pi)\\
&= \beta(\pi)^{-1}\beta(T_2)\beta(\pi)\beta(\pi)^{-1}\beta(T_1)\beta(\pi)\beta(\pi)^{-1}\beta(T_2)\beta(\pi) = \beta(T_1)\beta(T_0)\beta(T_1),\\
\beta(T_0)^2 &= (\beta(\pi)^{-1}\beta(T_1)\beta(\pi))^2 = \beta(\pi)^{-1}\beta(T_1)^2\beta(\pi)\\
&= \beta(\pi)^{-1}(z\beta(t_{1,2})\beta(T_1)+1)\beta(\pi) = z\beta(\pi)^{-1}\beta(t_{1,2})\beta(T_1)\beta(\pi)+1\\
&= z\beta(\sigma^{-1}(t_{1,2}))\beta(\pi)^{-1}\beta(T_1)\beta(\pi)+1 = z\beta(t_{0,1})\beta(T_0)+1,\\
\beta(T_0)\beta(\mathbf{a}) &= \beta(\pi)^{-1}\beta(T_1)\beta(\pi)\beta(\mathbf{a}) = \beta(\pi)^{-1}\beta(T_1)\beta(\sigma(\mathbf{a}))\beta(\pi)\\
&= \beta(\pi)^{-1}\beta((s_1 \circ \sigma)(\mathbf{a}))\beta(T_1)\beta(\pi) = \beta((\sigma^{-1} \circ s_1 \circ \sigma)(\mathbf{a}))\beta(\pi)^{-1}\beta(T_1)\beta(\pi)\\
&= \beta(s_0(\mathbf{a}))\beta(T_0).\\
\beta(T_0)\beta(X_j) &= \beta(\pi)^{-1}\beta(T_1)\beta(\pi)\beta(X_j) = \beta(\pi)^{-1}\beta(T_1)\beta(X_{j+1})\beta(\pi)\\
&=\beta(\pi)^{-1}\beta(X_{j+1})\beta(T_1)\beta(\pi) = \beta(X_j)\beta(\pi)^{-1}\beta(T_1)\beta(\pi)
=\beta(X_j)\beta(T_0).
\end{align*}
Note that the first of these is only possible and necessary for $n \geq 3$. It is clear from \eqref{eq:pi_basic_action} that Relation \labelcref{dafha_rel_6} holds for the operators, and we use this fact in the second of the four proofs above.}
Relations \labelcref{dafha_rel_10} and \labelcref{dafha_rel_10b} hold for the operators, as for $\mathbf{a} \in A^{\otimes n}$, $\mu = (\mu_1, \dotsc, \mu_n) \in \ZZ^n$ and $w \in W_{\GL_n}^{\extaffine}$, we have
\begin{align*}
\beta(\pi)\beta(X_i)\beta(\pi)^{-1}(\mathbf{a} \otimes X^\mu \otimes w) &= \beta(\pi)\beta(X_i)(\sigma^{-1}(\mathbf{a}) \otimes q^{\mu_1}X^{\sigma^{-1}(\mu)} \otimes \pi^{-1}w)\\
&=\beta(\pi)(\sigma^{-1}(\mathbf{a}) \otimes q^{\mu_1}X^{\sigma^{-1}(\mu)+\epsilon_i} \otimes \pi^{-1}w)\\
&=\mathbf{a} \otimes q^{\mu_1-\mu_1-\delta_{i,n}}X^{\mu+\epsilon_{i+1}} \otimes  w\\
&=\begin{cases}\beta(X_{i+1})(\mathbf{a} \otimes X^\mu \otimes w) &\text{if } 1 \leq i \leq n-1,\\ q^{-1}\beta(X_1)(\mathbf{a} \otimes X^\mu \otimes w) &\text{if } i=n,\end{cases}
\end{align*}
where $\epsilon_i=(0,\dotsc,0,1,0,\dotsc,0) \in \ZZ^n$ is non-zero only in the $i$-th component, and $i$ is reduced modulo $n$. Finally, Relation \labelcref{dafha_rel_8b} follows from Relation \labelcref{dafha_rel_10b} by
\begin{align*}
\beta(T_0)\beta(X_n)\beta(T_0)&=\beta(\pi)^{-1}\beta(T_1)\beta(\pi)\beta(X_n)\beta(\pi)^{-1}\beta(T_1)\beta(\pi) = q^{-1}\beta(\pi)^{-1}\beta(T_1)\beta(X_1)\beta(T_1)\beta(\pi)\\
&= q^{-1}\beta(\pi)^{-1}\beta(X_2)\beta(\pi) = q^{-1}\beta(X_1). \qedhere
\end{align*}
\end{proof}

\begin{theorem} \label{thm:DAFHA_basis_theorem}
The following $\kk$-module homomorphism is an isomorphism of $\HH_n(A,z,q)$-modules:
\begin{align} \label{eq:DAFHA_basis}
V=A^{\otimes n} \otimes \kk[X_1^{\pm 1},\dotsc,X_n^{\pm 1}] \otimes \kk W_{\GL_n}^{\extaffine} &\to \HH_n(A,z,q),\\
\mathbf{a} \otimes X^\mu \otimes w &\mapsto \mathbf{a} X^\mu T_w, \nonumber
\end{align}
for $\mathbf{a} \in A^{\otimes n}$, $\mu \in \ZZ^n$ and $w \in W_{\GL_n}^{\extaffine}$.
\end{theorem}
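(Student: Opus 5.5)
We prove this by the standard spanning-plus-representation argument: the map, which we call $\Phi$, is surjective by a straightening argument inside $\HH_n(A,z,q)$, and injective because the module $V$ of \cref{thm:DAFHA_representation} separates the images. Write $\Phi(\mathbf{a}\otimes X^\mu\otimes w)=\mathbf{a}X^\mu T_w$. Here $T_w=\pi^rT_{w^\affine}$ for $w=\pi^rw^\affine$, as defined after \cref{thm:double_affine_Frobenius_Hecke_algebra}.

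\emph{Surjectivity.} Let $S$ denote the $\kk$-span of all $\mathbf{a}X^\mu T_w$. Since $1\in S$, it suffices to show that $S$ is stable under left multiplication by the generators $\mathbf{a}$, $X_j^{\pm1}$, $\pi^{\pm1}$ and $T_i$ for $0\le i\le n-1$.
\begin{itemize}
\item Stability under $\mathbf{a}$ and $X_j^{\pm 1}$ is immediate from \eqref{dafha_rel_9}.
\item For $\pi$, relations \eqref{dafha_rel_6}, \eqref{dafha_rel_10} and \eqref{dafha_rel_10b} give $\pi\,\mathbf{a}X^\mu T_w=q^{-\mu_n}\sigma(\mathbf{a})X^{\sigma(\mu)}\pi T_w$. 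Moreover $\pi T_w=T_{\pi w}$, since $\pi\cdot\pi^rT_{w^\affine}=\pi^{r+1}T_{w^\affine}$. The case $\pi^{-1}$ is the same.
\item For $T_i$, \cref{thm:lusztig_relation_Daff} gives $T_i\mathbf{a}X^\mu T_w=s_i(\mathbf{a})s_i(X^\mu)T_iT_w+zt_{i,i+1}\mathbf{a}\Delta_i(X^\mu)T_w$. It remains to handle $T_iT_w$. Write $w=\pi^rw^\affine$, so $T_iT_w=\pi^rT_{i-r}T_{w^\affine}$ by \eqref{dafha_rel_5}. If $l(s_{i-r}w^\affine)>l(w^\affine)$, this equals $T_{s_iw}$. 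Otherwise, writing $T_{w^\affine}=T_{i-r}T_{s_{i-r}w^\affine}$ and using \eqref{dafha_rel_3}, we get $T_{i-r}T_{w^\affine}=zt\,T_{w^\affine}+T_{s_{i-r}w^\affine}$. The factor $t\in A^{\otimes n}$ must then be moved left past $\pi^r$, which is done with \eqref{dafha_rel_6}.
\end{itemize}
All resulting terms lie in $S$, so $\Phi$ is surjective.

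\emph{Injectivity and module structure.} We use the representation $\beta$ of \cref{thm:DAFHA_representation}, acting on the vector $v_0=1\otimes1\otimes e\in V$. The key claim is
\[
\beta(\mathbf{a}X^\mu T_w)(v_0)=\mathbf{a}\otimes X^\mu\otimes w .
\]
Given this, the composite $V\xrightarrow{\Phi}\HH_n(A,z,q)\xrightarrow{h\mapsto\beta(h)v_0}V$ is the identity, so $\Phi$ is injective and hence bijective.

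To prove the claim, it suffices to show $\beta(T_w)v_0=1\otimes1\otimes w$; the factors $\mathbf{a}$ and $X^\mu$ then act by left multiplication in the first two tensor factors. We argue by induction on $l(w^\affine)$ along a reduced word, acting first with $\pi^r$, which only changes $e$ to $\pi^r$.
\begin{itemize}
\item Each step applies $T_{i_j}$ to a vector $1\otimes1\otimes u$ with $l(s_{i_j}u)>l(u)$, so the first case of \eqref{eq:T_i_basic_action} applies.
\item Since $\Delta_i(1)=0$, $s_i(1)=1$ and $s_i(\mathbf{1})=\mathbf{1}$, that case yields exactly $1\otimes1\otimes s_{i_j}u$.
\end{itemize}
This gives the claim, and the lengths increase at each step precisely because the word is reduced.

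Finally, $\Phi$ is $\HH_n(A,z,q)$-linear, where $\HH_n(A,z,q)$ acts on itself by left multiplication. Indeed, the map $h\mapsto\beta(h)v_0$ is evidently a module map, and $\Phi$ is its inverse. Hence $\Phi$ is an isomorphism of $\HH_n(A,z,q)$-modules.

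The main obstacle is the surjectivity step for $T_iT_w$ with $i=0$ and $w$ involving $\pi$. There one must track the shift of indices under $\pi$ and confirm that the lengths of $s_iw$ in $W_{\GL_n}^{\extaffine}$ match those of $s_{i-r}w^\affine$ in $S_n^\affine$. This holds because $l(\pi)=0$ and conjugation by $\pi$ preserves length.
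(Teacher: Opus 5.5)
Your proposal is correct and follows the paper's own proof. Both arguments show that the elements $\mathbf{a}X^\mu T_w$ span, and both use $h\mapsto\beta(h)(1\otimes1\otimes1)$ as an inverse, which forces injectivity and module-linearity; you additionally supply the straightening details that the paper dismisses as clear. One small point of phrasing: since $T_w=\pi^rT_{w^\affine}$, the operator $\beta(\pi^r)$ acts last on $v_0$, not first, unless you first rewrite $T_w=T_{i_1+r}\dotsb T_{i_k+r}\pi^r$ via \eqref{dafha_rel_5}; either reading gives $1\otimes1\otimes w$.
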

\begin{proof}
If $B$ is a basis of $A$, then $B^{\otimes n} = \{a_1 \otimes \dotsb \otimes a_n \in A^{\otimes n}: a_1,\dotsc,a_n \in B\}$ is a basis of $A^{\otimes n}$. Then
\[
\mathcal{B}_1 := \{\mathbf{b} \otimes X^\mu \otimes w: \mathbf{b} \in B^{\otimes n}, \mu \in \ZZ^n, w \in W_{\GL_n}^{\extaffine}\},
\]
is a basis of $V$. By considering the relations in \cref{thm:double_affine_Frobenius_Hecke_algebra}, it is clear that the set
\[
\mathcal{B}_2 := \{\mathbf{b} X^\mu \pi^k T_w: \mathbf{b} \in B^{\otimes n}, \mu \in \ZZ^n, k \in \ZZ, w \in S_n^{\affine}\}
\]
spans $\HH_n(A,z,q)$. The $\HH_n(A,z,q)$-module homomorphism $\Psi \colon \HH_n(A,z,q) \to V$ defined by $h \mapsto \beta(h)(1 \otimes 1 \otimes 1)$ maps $\mathbf{b} X^\mu \pi^k T_w \in \mathcal{B}_2$ to $\mathbf{b} \otimes X^\mu \otimes \pi^k w \in \mathcal{B}_1$. Thus, the set $\mathcal{B}_2$ is linearly independent, and so a basis of $\HH_n(A,z,q)$. Consequently, as $\Psi$ maps the elements of $\mathcal{B}_2$ one-to-one to elements of $\mathcal{B}_1$, it is an isomorphism. It is clearly inverse to the required homomorphism of \eqref{eq:DAFHA_basis}.
\end{proof}

\begin{lemma} \label{thm:AFHA_isomorphisms}
Let $z \in \kk$, and let $\mathcal{H}_n(A,z)$ and $\mathcal{H}_n^\affX(A,z)$ denote the quantum wreath algebra and the quantum affine wreath algebra of \cite[Def.~2.1, Def.~2.5]{QAWA_2020} respectively.
\begin{enumerate}[i)]
\item The assignments $\mathbf{a} \mapsto \mathbf{a}$, $T_i \mapsto T_i$ and $X_i \mapsto X_i$ define an isomorphism $\mathcal{H}_n^\affX(A,z) \to H_n^\affX(A,z)$ that restricts to an isomorphism $\mathcal{H}_n(A,z) \to H_n(A,z)$.
\item The assignments $\mathbf{a} \mapsto \mathbf{a}$, $X_i \mapsto X_i^{-1}$ and $T_i \mapsto T_i^{-1}$ define an isomorphism $H_n^\affX(A,z) \to H_n^\affX(A,-z)$.
\item The assignments $\mathbf{a} \mapsto \mathbf{a}$, $X_i \mapsto Y_i^{-1}$ and $T_i \mapsto T_i$ define an isomorphism $H_n^\affX(A,z) \to H_n^\aff(A,z)$.
\end{enumerate}
\end{lemma}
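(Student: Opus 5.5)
The plan is to prove (i) first, using the basis theorem \cref{thm:DAFHA_basis_theorem}, and then obtain (ii) and (iii) by combining (i) with presentation-level arguments.

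For (i), the paper has already noted that the assignments define surjections $\mathcal{H}_n^\affX(A,z) \to H_n^\affX(A,z)$ and $\mathcal{H}_n(A,z) \to H_n(A,z)$: the defining relations of \cite[Def.~2.5]{QAWA_2020} are among \eqref{dafha_rel_1}--\eqref{dafha_rel_9} for $1 \leq i \leq n-1$. For injectivity I will use the basis theorem of \cite{QAWA_2020}, which says that $\{\mathbf{b} X^\mu T_w : \mathbf{b} \in B^{\otimes n}, \mu \in \ZZ^n, w \in S_n\}$ is a $\kk$-basis of $\mathcal{H}_n^\affX(A,z)$. Its image is $\{\mathbf{b} X^\mu T_w\}$ with $w \in S_n \subset W_{\GL_n}^{\extaffine}$. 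This is a subset of the basis $\mathcal{B}_2$ of $\HH_n(A,z,q)$ from \cref{thm:DAFHA_basis_theorem}, so it is linearly independent. Hence the map is injective and therefore an isomorphism. Restricting to the elements with $\mu = 0$ gives the statement for $\mathcal{H}_n(A,z) \to H_n(A,z)$.

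For (ii), I will work inside $\mathcal{H}_n^\affX(A,z)$, which is legitimate by (i), and check its defining relations on the proposed images. The only nontrivial checks are the following.
\begin{itemize}
\item The quadratic relation: $(T_i^{-1})^2 = -z t_{i,i+1} T_i^{-1} + 1$, using $T_i^{-1} = T_i - z t_{i,i+1}$ together with the facts that $t_{i,i+1}$ commutes with $T_i$ and $t_{i,i+1}^2 = \dots$. I expect this to follow directly from $T_i^2 = z t_{i,i+1} T_i + 1$ multiplied by $T_i^{-2}$.
\item The braid relations: these are preserved by inversion.
\item The relation $T_i \mathbf{a} = s_i(\mathbf{a}) T_i$: this is preserved by inversion.
\item The relation $T_i X_i T_i = X_{i+1}$: inverting both sides gives $T_i^{-1} X_i^{-1} T_i^{-1} = X_{i+1}^{-1}$.
\end{itemize}
This yields a homomorphism $\mathcal{H}_n^\affX(A,z) \to \mathcal{H}_n^\affX(A,-z)$. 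The same map with $-z$ in place of $z$ is inverse to it. Transporting along (i) gives (ii).

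For (iii), surjectivity was already noted. For injectivity I will compose with the automorphism $\epsilontransform$ of \cref{thm:XY_automorphisms}, or more simply argue as follows. The map in (iii) sends $\mathbf{b} X^\mu T_w$ to $\mathbf{b} Y^{-\mu} T_w$, so it suffices to show that $\{\mathbf{b} Y^{\lambda} T_w : \lambda \in \ZZ^n, w \in S_n\}$ is linearly independent in $\HH_n(A,z,q)$. This is the main obstacle, because the basis of \cref{thm:DAFHA_basis_theorem} is expressed in the $\pi, T_{w}$ form rather than in terms of $Y$. I would handle it by applying $\epsilontransform \colon \HH_n(A,z,q) \to \HH_n(A,-z,q^{-1})$. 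This sends $\mathbf{b} Y^\lambda T_w$ to $\mathbf{b} X^\lambda T_{w^{-1}}^{-1}$, with the factors reversed appropriately. By a triangularity argument (expanding $T_w^{-1}$ in the $T_v$, $v \leq w$, with leading coefficient $1$ on $T_{w^{-1}}$-type terms, and using that $A^{\otimes n}$ normalises), the span and independence of these elements reduce to those of $\mathbf{b} X^\lambda T_v$, $v \in S_n$. That set is independent by \cref{thm:DAFHA_basis_theorem}. Alternatively, I would compose the map in (iii) with the isomorphism of (ii) and with $\epsilontransform$: the composite $H_n^\affX(A,z) \to H_n^\affX(A,-z) \to \dots$ should send $X_i \mapsto X_i$ and $T_i \mapsto T_i$ up to inversions. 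This exhibits injectivity directly, since $\epsilontransform$ maps $H_n^\aff(A,z)$ into $H_n^\affX(A,-z)$ via $Y_i^{-1} \mapsto X_i^{-1}$ and $T_i \mapsto T_i^{-1}$. By (ii), $H_n^\affX(A,-z) \cong H_n^\affX(A,z)$, and composing gives the identity on generators. This second route is the one I would actually write.
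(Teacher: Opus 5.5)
Your proposal is correct and follows the paper's proof: (i) by sending the basis of \cite[Cor.~3.11]{QAWA_2020} into the basis of \cref{thm:DAFHA_basis_theorem}; (ii) by checking relations in the presentation furnished by (i); and (iii) by combining (ii) with the restriction of $\epsilontransform$. The only difference is cosmetic: the paper builds the isomorphism in (iii) directly as the composite of (ii) with $\epsilontransform|_{H_n^\affX(A,-z)}$, while your second route uses that composite as a left inverse of the already-known surjection. In either form one needs, as the paper states explicitly, that $H_n^\affX(A,-z)$ does not depend on $q$ (which follows from (i)), since $\epsilontransform$ replaces $q$ by $q^{-1}$.
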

\begin{proof}
\begin{enumerate}[i)]
\item The assignments clearly define surjective homomorphisms. As they map the basis from \cite[Cor.~3.11]{QAWA_2020} to linearly independent elements by \cref{thm:DAFHA_basis_theorem}, these are isomorphisms.
\item By i), the Relations \eqref{qwa_rel_1}--\eqref{daqwa_sym_rel_3} not containing $Y_j$ are sufficient to present $H_n^\affX(A,z)$. It is then easy to verify that the assignments define a homomorphism. For example, Relation \eqref{qwa_rel_3} maps to itself, but with $-z$ instead of $z$, and \eqref{daqwa_sym_rel_2} is mapped to its own inverse. The assignment is an isomorphism, as the same assignment $H_n^\affX(A,-z) \to H_n^\affX(A,z)$ gives its inverse.
\item By considering Relations \eqref{qwa_rel_1}--\eqref{daqwa_sym_rel_3}, the algebra $H_n^\affX(A,z)$ does not depend on $q$. Thus, $\epsilontransform$ clearly restricts to an isomorphism $H_n^\affX(A,-z) \to H_n^\aff(A,z)$. Composing this with the isomorphism from ii), we obtain the isomorphism from iii).
\end{enumerate}
\end{proof}

\begin{remark} \label{rem:embed_AHA}
The isomorphism iii) from \cref{thm:AFHA_isomorphisms} extends to the automorphism $\sigmaaut$ of $\HH_n(A,z,q)$ given in \eqref{eq:sigma}, by the proof of \cref{thm:B_3_action}. Note that \cref{thm:extra_twist_relations} prevents extension by the assignment $Y_i \mapsto X_i^{-1}$.
\end{remark}

\begin{corollary} \label{thm:DAFHA_basis}
The sets
\[
\{\mathbf{b} X^\mu T_w: \mathbf{b} \in B^{\otimes n}, \mu \in \ZZ^n, w \in W_{\GL_n}^{\extaffine}\} \quad \text{and} \quad \{\mathbf{b} X^\mu Y^\lambda T_w: \mathbf{b} \in B^{\otimes n}, \mu \in \ZZ^n, \lambda \in \ZZ^n, w \in S_n\}
\]
are $\kk$-bases of $\HH_n(A,z,q)$.
\end{corollary}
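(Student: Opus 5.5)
The first basis is immediate from \cref{thm:DAFHA_basis_theorem}. The set $\mathcal{B}_1$ of pure tensors $\mathbf{b}\otimes X^\mu\otimes w$ is a basis of $V$. The isomorphism \eqref{eq:DAFHA_basis} sends it to $\{\mathbf{b}X^\mu T_w\}$, which is therefore a basis of $\HH_n(A,z,q)$. Here $T_w=\pi^rT_{w^{\affine}}$ for $w=\pi^rw^{\affine}$, as fixed after \cref{thm:double_affine_Frobenius_Hecke_algebra}.

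For the second set, the plan is to reduce to the first set via a triangularity argument.

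\textbf{Step 1: the $H_n^\aff$ part.} By \cref{thm:AFHA_isomorphisms} iii), the map $X_i\mapsto Y_i^{-1}$, $T_i\mapsto T_i$ is an isomorphism $H_n^\affX(A,z)\to H_n^\aff(A,z)$. The basis of \cite[Cor.~3.11]{QAWA_2020}, transported through part i), shows that $\{\mathbf{b}X^\lambda T_w : w\in S_n\}$ is a basis of $H_n^\affX(A,z)$. Applying the isomorphism, $\{\mathbf{b}Y^\lambda T_w:\lambda\in\ZZ^n, w\in S_n\}$ is a basis of $H_n^\aff(A,z)$.

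On the other hand, $H_n^\aff(A,z)$ is generated by $\mathbf{a},\pi^{\pm1},T_0,\dotsc,T_{n-1}$. Using relations \eqref{dafha_rel_4}--\eqref{dafha_rel_6} to move $A^{\otimes n}$ to the left, it is spanned by $\{\mathbf{b}T_w : w\in W_{\GL_n}^{\extaffine}\}$. These elements are linearly independent by the first basis, so they also form a basis of $H_n^\aff(A,z)$.

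\textbf{Step 2: multiplication map.} Consider $\kk[X^{\pm1}]\otimes H_n^\aff(A,z)\to\HH_n(A,z,q)$, $f\otimes h\mapsto fh$. Since $X^\mu$ commutes with $A^{\otimes n}$ by \eqref{dafha_rel_9}, we have $X^\mu\mathbf{b}T_w=\mathbf{b}X^\mu T_w$. So this map sends the basis $\{X^\mu\otimes\mathbf{b}T_w\}$ of the source to the first basis of $\HH_n(A,z,q)$, and hence is a $\kk$-module isomorphism. Transporting instead the basis $\{X^\mu\otimes\mathbf{b}Y^\lambda T_w : w\in S_n\}$ of the source gives the basis $\{X^\mu\mathbf{b}Y^\lambda T_w\}=\{\mathbf{b}X^\mu Y^\lambda T_w\}$ of $\HH_n(A,z,q)$.

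\textbf{Main obstacle.} The only real point is that the two descriptions of $H_n^\aff(A,z)$ in Step 1 agree as $\kk$-modules. This requires the identification $X_i\mapsto Y_i^{-1}$ together with \cite[Cor.~3.11]{QAWA_2020}, transported through \cref{thm:AFHA_isomorphisms}. It also requires checking that $\{\mathbf{b}T_w\}$ actually spans $H_n^\aff(A,z)$ inside $\HH_n(A,z,q)$. For the spanning, products of $T_i$ reduce to $A^{\otimes n}$-combinations of $T_w$ via \eqref{dafha_rel_3}, exactly as in the spanning argument for $\mathcal{B}_2$ in the proof of \cref{thm:DAFHA_basis_theorem}. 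No freeness assumptions are needed beyond $A$ being free over $\kk$.
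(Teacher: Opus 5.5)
Your proof is correct and follows the paper's own argument. The first basis makes the multiplication map $\kk[X_1^{\pm1},\dotsc,X_n^{\pm1}]\otimes H_n^\aff(A,z)\to\HH_n(A,z,q)$ a $\kk$-module isomorphism, and the basis of \cite[Cor.~3.11]{QAWA_2020}, transported through parts i) and iii) of \cref{thm:AFHA_isomorphisms} (up to $\lambda\mapsto-\lambda$), then yields the second basis; you additionally verify that $\{\mathbf{b}T_w\}$ spans $H_n^\aff(A,z)$, which the paper leaves implicit, and despite your announced plan no triangularity argument is actually used or needed.
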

\begin{proof}
The first set is a basis by \cref{thm:DAFHA_basis_theorem}, with the indexing by the group $W_{\GL_n}^{\extaffine}$ described in the comment after \cref{thm:double_affine_Frobenius_Hecke_algebra}. This implies that multiplication gives an isomorphism of $\kk$-modules $\kk[X_1^{\pm 1},\dotsc,X_n^{\pm 1}] \otimes H_n^\aff(A,z) \to \HH_n(A,z,q)$. By the isomorphisms from \textit{i)} and \textit{iii)} of \cref{thm:AFHA_isomorphisms}, the basis from \cite[Cor.~3.11]{QAWA_2020} maps to the elements $\{\mathbf{b} Y^{-\lambda} T_w: \mathbf{b} \in B^{\otimes n}, \lambda \in \ZZ^n, w \in S_n\}$, which therefore form a basis of $H_n^\aff(A,z)$. The second basis of this corollary follows from these two observations.
\end{proof}

\begin{corollary} \label{thm:AFHA_coxeter_basis}
The sets
\[
\{\mathbf{b} T_w: \mathbf{b} \in B^{\otimes n}, w \in W_{\GL_n}^{\extaffine}\} \quad \text{and} \quad \{\mathbf{b} Y^\lambda T_w: \mathbf{b} \in B^{\otimes n}, \lambda \in \ZZ^n, w \in S_n\}
\]
are $\kk$-bases of $H_n^\aff(A,z)$.
\end{corollary}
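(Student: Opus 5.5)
The first basis should come from restricting \cref{thm:DAFHA_basis_theorem} to $\mu=0$. The set $\{\mathbf{b} T_w : \mathbf{b}\in B^{\otimes n}, w\in W_{\GL_n}^{\extaffine}\}$ is a subset of the basis $\{\mathbf{b}X^\mu T_w\}$ of $\HH_n(A,z,q)$ from \cref{thm:DAFHA_basis}, so it is linearly independent. It remains to show that it spans $H_n^\aff(A,z)$. By \cref{def:AFHAs}, $H_n^\aff(A,z)$ is generated by $A^{\otimes n}$, $\pi^{\pm1}$ and $T_0^{\pm1},\dotsc,T_{n-1}^{\pm1}$, so I will show that the span $M$ of these elements contains $1$ and is stable under left multiplication by each generator.

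For the stability checks I will use the following relations:
\begin{itemize}
\item Relation \eqref{dafha_rel_6} gives $\pi\mathbf{b}T_w=\sigma(\mathbf{b})\pi T_w$, and writing $w=\pi^rw^{\affine}$ we have $\pi T_w=T_{\pi w}$.
\item Relation \eqref{dafha_rel_4} moves $T_i$ past $\mathbf{b}$.
\item For $T_iT_w=T_i\pi^rT_{w^{\affine}}=\pi^rT_{i-r}T_{w^{\affine}}$, the standard Iwahori--Matsumoto argument in $S_n^{\affine}$ applies. If $l(s_jw^{\affine})>l(w^{\affine})$, then $T_jT_{w^{\affine}}=T_{s_jw^{\affine}}$. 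Otherwise, write $T_{w^{\affine}}=T_jT_{s_jw^{\affine}}$ and use \eqref{dafha_rel_3} to get $T_j^2T_{s_jw^{\affine}}=zt_{j,j+1}T_jT_{s_jw^{\affine}}+T_{s_jw^{\affine}}$.
\item The factor $t_{j,j+1}$ is pushed left past $\pi^r$ using \eqref{dafha_rel_6}, landing in $A^{\otimes n}$.
\item Inverses are handled by $T_i^{-1}=T_i-zt_{i,i+1}$.
\end{itemize}
Together these show that $M=H_n^\aff(A,z)$.

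For the second set, I will use the isomorphism $\mathcal{H}_n^\affX(A,z)\to H_n^\affX(A,z)\to H_n^\aff(A,z)$ obtained by composing i) and iii) of \cref{thm:AFHA_isomorphisms}. As in the proof of \cref{thm:DAFHA_basis}, it maps the basis of \cite[Cor.~3.11]{QAWA_2020} to $\{\mathbf{b}Y^{-\lambda}T_w\}$, which is therefore a basis of $H_n^\aff(A,z)$. Reindexing by $\lambda\mapsto-\lambda$ gives the stated set.

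The only step needing care is the spanning argument for the first basis: handling the $\pi$-conjugation of indices and the $A^{\otimes n}$-coefficients that appear in the quadratic relation. I expect this to be routine.
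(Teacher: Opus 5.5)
Your proposal is correct and essentially matches the paper's proof. For the first set, linear independence comes from \cref{thm:DAFHA_basis}, and your left-multiplication check is exactly the spanning argument the paper leaves as ``simple to verify''.

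For the second set you transport the basis of \cite[Cor.~3.11]{QAWA_2020} through i) and iii) of \cref{thm:AFHA_isomorphisms}, rather than checking spanning directly and citing the second basis of \cref{thm:DAFHA_basis}. This is the same observation the paper already makes in its proof of \cref{thm:DAFHA_basis}, and it spares you a hand check that the $\mathbf{b}Y^\lambda T_w$ span $H_n^\aff(A,z)$.
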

\begin{proof}
It is simple to verify that the proposed bases span $H_n^\aff(A,z)$. Linear independence follows from \cref{thm:DAFHA_basis}.
\end{proof}

The analogue to the second basis of \cref{thm:AFHA_coxeter_basis} is given for $H_n^\affX(A,z)$ in \cite[Corollary 3.11]{QAWA_2020}.

\begin{corollary} \label{thm:HA_basis_general}
The set
\[
\{\mathbf{b} T_w: \mathbf{b} \in B^{\otimes n}, w \in S_n\} \subsetneq H_n(A,z)
\]
is a $\kk$-basis of $H_n(A,z)$.
\end{corollary}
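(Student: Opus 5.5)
The plan follows the proof of \cref{thm:AFHA_coxeter_basis}: show that the proposed set spans $H_n(A,z)$, then get linear independence from the bases already established in $\HH_n(A,z,q)$.

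\emph{Spanning.} By \cref{def:AFHAs}, $H_n(A,z)$ is generated by $A^{\otimes n}$ and $T_1^{\pm 1},\dotsc,T_{n-1}^{\pm 1}$. Since $T_i^{-1}=T_i-zt_{i,i+1}$ with $t_{i,i+1}\in A^{\otimes n}$, the inverses are redundant, so every element is a linear combination of words in elements $\mathbf{a}\in A^{\otimes n}$ and letters $T_i$ with $1\le i\le n-1$. Relation \eqref{qwa_rel_4} moves every $\mathbf{a}$ to the left of all $T$'s, at the cost of superpermuting it. This leaves sums of terms $\mathbf{a}T_{i_1}\dotsb T_{i_k}$.

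\emph{Reduction of $T$-words.} I would induct on $k$ to show that $\mathbf{a}\,T_{i_1}\dotsb T_{i_k}$ lies in the $\kk$-span $M$ of $\{\mathbf{b}T_w : \mathbf{b}\in B^{\otimes n},\ w\in S_n\}$. It suffices to prove $M T_i\subseteq M$. Take $\mathbf{b}T_wT_i$. If $l(ws_i)>l(w)$, then $T_wT_i=T_{ws_i}$ by Matsumoto's Theorem. Otherwise write $w=w's_i$ with $l(w')=l(w)-1$. Then
\[
T_wT_i=T_{w'}T_i^2=T_{w'}(zt_{i,i+1}T_i+1)=z\,w'(t_{i,i+1})\,T_{w'}T_i+T_{w'},
\]
using \eqref{qwa_rel_3} and \eqref{qwa_rel_4}; here $w'(t_{i,i+1})$ is the superpermuted element. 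Since $T_{w'}T_i=T_w$, both terms lie in $M$, after expanding coefficients in $B^{\otimes n}$. This bookkeeping is the only step requiring any care, and it is standard.

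\emph{Linear independence.} The set in question is a subset of the first basis of \cref{thm:DAFHA_basis}: take $\mu=0$ and $w\in S_n\subseteq W_{\GL_n}^{\extaffine}$. For $w\in S_n$, the element $T_w$ defined by reduced words in $s_1,\dotsc,s_{n-1}$ agrees with the element $T_w$ indexed by $W_{\GL_n}^{\extaffine}$ there. A subset of a basis is linearly independent, which completes the proof. Alternatively, one could cite part i) of \cref{thm:AFHA_isomorphisms} together with the basis of $\mathcal{H}_n(A,z)$ from \cite{QAWA_2020}, but the direct argument above is self-contained. I do not expect a genuine obstacle anywhere in this argument.
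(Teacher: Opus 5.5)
Your proposal is correct and follows the paper's own proof: spanning, then linear independence from \cref{thm:DAFHA_basis}. The only difference is that you write out the routine reduction $M T_i \subseteq M$, via \eqref{qwa_rel_3} and \eqref{qwa_rel_4}, which the paper leaves as ``simple to verify.''
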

\begin{proof}
It is simple to verify that the proposed basis spans $H_n(A,z)$. Linear independence follows from \cref{thm:DAFHA_basis}.
\end{proof}

\section{Centre for \texorpdfstring{$\HH_n(A,z,q)$}{ℍₙ(A,z,q)}} \label{sec:centre_for_DAFHA}

We will find the centre of $\HH_n(A,z,q)$ in the cases where $q^m-1$ is not a zero divisor in $\kk$ for all $m \in \ZZ \setminus \{0\}$. In the standard $A=\kk$ case, this is well-known. By contrast, for $q=1$, if $A=\kk=\CC$ and $z=\tau-\tau^{-1}$ for some invertible $\tau \in \CC$ that is not a root of unity, \cite[Theorem 5.1]{DAHACMS_2004} gives an isomorphism from the centre of the DAHA $\HH_n(\CC,\tau-\tau^{-1},1)$ to the spherical subalgebra $e\HH_n(\CC,\tau-\tau^{-1},1) e$ of the DAHA, where $e \in H_n(\CC,z)$ is a symmetrising idempotent satisfying $T_ie=eT_i=\tau e$. Such an idempotent does not seem to exist in $H_n(A,\tau-\tau^{-1})$ for general $A$, and such a `Satake isomorphism' is unlikely to describe the centre of $\HH_n(A,z,q)$ for most Frobenius algebras $A$.

Given a subalgebra $S$ of $A$, let $P_n(S)$ refer to the subalgebra of $\HH_n(A,z,q)$ generated by $S^{\otimes n}$, $X_1^{\pm 1},\dotsb,X_n^{\pm 1}$ and $Y_1^{\pm 1},\dotsb,Y_n^{\pm 1}$. Let $P_n^X(S)$ refer to the subalgebra of $\HH_n(A,z,q)$ generated by $S^{\otimes n}$ and $X_1^{\pm 1},\dotsb,X_n^{\pm 1}$. Let $P_n^Y(S)$ refer to the subalgebra of $\HH_n(A,z,q)$ generated by $S^{\otimes n}$ and $Y_1^{\pm 1},\dotsb,Y_n^{\pm 1}$. Let $P_n:=P_n(\kk)$, $P_n^X:=P_n^X(\kk)$ and $P_n^Y:=P_n^Y(\kk)$.

Note that the strong Bruhat order on $S_n$ extends naturally to the affine symmetric group $S_n^{\affine}$, as this is also a Coxeter group. We define the action of $W_{\GL_n}^{\extaffine}$ on $P_n^X(S)$ as acting by superpermutation on the generators $S^{\otimes n}$ with $s_0$ acting by $(1 \; n)$ and $\pi$ acting by $(1 \; 2 \; \dotsb \; n)$, and on the generators $X_1^{\pm 1},\dotsb,X_n^{\pm 1}$ as in \eqref{eq:s_0_operator} and \eqref{eq:pi_operator}.


The proof given in \cite[Section 3.3]{QAWA_2020} for the centre of $H_n^\affX(A,z)$ needs some adaptation to also work for $\HH_n(A,z,q)$. Throughout this section, we define the supercentre of a superalgebra $D=D_0 \oplus D_1$ to be $Z(D)=Z(D)_0 \oplus Z(D)_1$, where
\[
Z(D)_{\epsilon} := \{d \in D_{\epsilon} \mid dc = (-1)^{\epsilon\bar{c}}cd \textup{ for all homogeneous } c \in D\}, \qquad \epsilon \in \{0,1\}.
\]
Similarly, for a subset $S \subseteq D$, we define its supercentraliser in $D$ by $C_D(S)=C_D(S)_0 \oplus C_D(S)_1$, where
\[
C_D(S)_{\epsilon} := \{d \in D_{\epsilon} \mid ds_\eta=(-1)^{\epsilon\eta}s_\eta d \textup{ for all } s=s_0+s_1 \in S \textup{ and } \eta \in \{0,1\}\}, \qquad \epsilon \in \{0,1\},
\]
with $s=s_0+s_1$ denoting the decomposition of $s \in S$ into homogeneous elements $s_\eta \in D_\eta$.

\begin{lemma} \label{thm:decompose_supercentre}
Let $A$ be a $\kk$-superalgebra that is a free $\kk$-module over a homogeneous basis. If the supercentre $Z(A)$ is a direct summand of $A$, then $Z(A^{\otimes n})=Z(A)^{\otimes n}$.
\end{lemma}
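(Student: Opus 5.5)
We argue by induction on $n$, reducing everything to a statement about two factors: if $C$ and $D$ are $\kk$-superalgebras that are free over homogeneous bases, and $Z(C)$ is a direct summand of $C$ as a $\kk$-module, then $Z(C \otimes D) = Z(C) \otimes Z(D)$. Taking $C = A$ and $D = A^{\otimes (n-1)}$ and using the inductive hypothesis $Z(A^{\otimes(n-1)}) = Z(A)^{\otimes(n-1)}$ then gives the lemma. Here $Z(C)\otimes Z(D)$ is regarded as a submodule of $C\otimes D$; this is legitimate because $Z(C)$ is a direct summand of $C$, so tensoring preserves injectivity.

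\textbf{Easy inclusion.} The inclusion $Z(C)\otimes Z(D) \subseteq Z(C\otimes D)$ is a direct sign check. We use the super tensor product multiplication
\[
(c\otimes d)(c'\otimes d') = (-1)^{\bar d \bar c'} cc' \otimes dd'.
\]
It suffices to test supercommutation against the generators $c'\otimes 1$ and $1\otimes d'$. A homogeneous $x \in Z(C\otimes D)$ is then characterised by supercommuting with $C\otimes 1$ and with $1 \otimes D$.

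\textbf{Reverse inclusion.} Choose a homogeneous basis $\{d_k\}$ of $D$ and write $x = \sum_k c_k \otimes d_k$, with the $c_k \in C$ uniquely determined.
\begin{itemize}
\item Supercommuting $x$ with $c'\otimes 1$ and comparing coefficients of each $d_k$, using freeness of $D$, shows that each $c_k$ supercommutes with $c'$ up to the sign $(-1)^{\bar d_k \bar c'}$. The parity of $x$ determines $\bar c_k + \bar d_k$, so the total sign works out exactly as the supercentre condition for $c_k$. Hence every $c_k \in Z(C)$, and so $x \in Z(C)\otimes D$.
\item Now use the direct summand hypothesis to write $C = Z(C)\oplus M$. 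Choosing a basis $\{e_j\}$ of $Z(C)$ is awkward over a general $\kk$, since $Z(C)$ need only be projective. Instead, take a $\kk$-linear projection $p\colon C\to Z(C)$ and the maps $\phi\otimes \id_D$ for $\phi \in \mathrm{Hom}_\kk(Z(C),\kk)$. Alternatively, write $x = \sum_j e_j \otimes y_j$ using a finite expression with linearly independent $e_j$ in $Z(C)$; these stay independent in $C$, so the $y_j$ are determined.
\item Supercommuting $x$ with $1\otimes d'$ and using that $e_j$ is central in $C$ gives $\sum_j e_j \otimes (y_j d' \mp d' y_j) = 0$. Applying $\phi\otimes\id$ for enough functionals $\phi$ separating the $e_j$ forces each $y_j \in Z(D)$, again with the parity bookkeeping.
\end{itemize}

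\textbf{Main obstacle.} The delicate step is this second half over an arbitrary commutative ring $\kk$. To make it rigorous, I would use that $Z(C)$, being a direct summand of the free module $C$, is projective. Functionals on $Z(C)$ then separate points, via a dual basis of the projective module, and $Z(C)\otimes D \to C\otimes D$ is injective. The sign bookkeeping for odd elements is routine but must be done carefully. For the induction we also need $Z(A)^{\otimes (n-1)}$ to be a direct summand of $A^{\otimes(n-1)}$ when applying the two-factor statement with the roles arranged so that the summand hypothesis sits on the factor where it is used. I would place $A$ as the factor $C$, which avoids needing it for $D$ at all, though $D = A^{\otimes(n-1)}$ must still be free over a homogeneous basis, which it is.
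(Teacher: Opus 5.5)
Your argument is correct, but it is organised differently from the paper's. The paper does not induct on $n$. Working directly in $A^{\otimes n}$, it shows that the supercentraliser of the $i$-th tensor factor $A_i$ is $A^{\otimes(i-1)}\otimes Z(A)\otimes A^{\otimes(n-i)}$, by expanding in the basis $B$ in the slots other than $i$ and splitting slot $i$ as $Z(A)\oplus M$. It then intersects over $i$. The intersection is $Z(A)^{\otimes n}$ because all $n$ subspaces are sums of summands of the single decomposition of $A^{\otimes n}$ obtained by placing $Z(A)$ or $M$ in each slot.

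Your first bullet is the two-factor version of that centraliser step. As you note, it uses only freeness of $D$; the same is true of the paper's step, where the summand hypothesis is really needed only for the final intersection. The genuine difference is your second step: you extract the $D$-components of $x$ with functionals on $Z(C)$ instead of intersecting two centralisers. This gives you a reusable statement $Z(C\otimes D)=Z(C)\otimes Z(D)$ with the summand hypothesis on one factor only. The cost is that you must handle $Z(C)$ being merely projective, which the paper's direct-sum bookkeeping avoids.

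To make your second step rigorous, two points need fixing.
\begin{itemize}
\item Writing $x=\sum_j e_j\otimes y_j$ with linearly independent $e_j$ is not available in general, since the projective module $Z(C)$ need not be free (this already happens over $\kk=\ZZ/6\ZZ$).
\item ``Functionals separate points'' is not quite the fact you need. What you need is the dual-basis identity $x=\sum_j e_j\otimes(\phi_j\otimes\id)(x)$, a finite sum for each $x$, for a homogeneous dual basis $\{(e_j,\phi_j)\}$ of $Z(C)$. You combine it with $(\phi_j\otimes\id)(x)\in Z(D)$, which follows by applying $\phi_j\otimes\id$ to the relation you derive, lifted uniquely to $Z(C)\otimes D$.
\end{itemize}
Also, centrality of the $e_j$ plays no role in deriving that relation; only the sign rule of the super tensor product does.

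There is a shorter route that avoids dual bases and is closer in spirit to the paper. Since $C$ is free too, expand $x=\sum_i b_i\otimes w_i$ in a homogeneous basis of $C$. Running your first bullet with the roles reversed gives each $w_i\in Z(D)$. Now let $p\colon C\to Z(C)$ be the projection and $j$ the inclusion, and apply the idempotent $jp\otimes\id_D$, which fixes $x\in Z(C)\otimes D$. This yields $x=\sum_i p(b_i)\otimes w_i\in Z(C)\otimes Z(D)$.
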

\begin{proof}
Suppose $A=Z(A) \oplus M$ for some $\kk$-supermodule $M$. For $1 \leq i \leq n$, denote $A_i := 1^{\otimes(i-1)} \otimes \dotsb \otimes A \otimes \dotsb \otimes 1^{\otimes(n-i)} \subseteq A^{\otimes n}$. Then
\[
A^{\otimes(i-1)} \otimes \dotsb \otimes Z(A) \otimes \dotsb \otimes A^{\otimes(n-i)} \subseteq C_{A^{\otimes n}}(A_i)
\]
is obvious. We will show the reverse inclusion, noting that
\begin{equation} \label{eq:decompose_supercentre}
A^{\otimes n} = \left(A^{\otimes(i-1)} \otimes \dotsb \otimes Z(A) \otimes \dotsb \otimes A^{\otimes(n-i)} \right) \oplus \left(A^{\otimes(i-1)} \otimes \dotsb \otimes M \otimes \dotsb \otimes A^{\otimes(n-i)} \right).
\end{equation}
Any homogeneous $\mathbf{z} \in C_{A^{\otimes n}}(A_i)$ can thus be decomposed uniquely by this direct sum as $\mathbf{z}=\mathbf{z}_Z + \mathbf{z}_M$. Fix a homogeneous basis $B$ of $A$, and expand $\mathbf{z}_M$ by this basis in positions other than $i$, into a sum indexed by a set $S$, with $m_s \in M$ in position $i$ of the $s$-th summand. By comparing summands, we can deduce that each $m_s$ supercommutes with all homogeneous $a \in A$.\details{ Concretely, we can write $\mathbf{z}_M \in A^{i-1} \otimes \dotsb \otimes M \otimes \dotsb \otimes A^{n-i}$ as a unique sum 
\begin{equation} \label{eq:z_M}
\mathbf{z}_M=\sum_{s \in S}b_{1,s} \otimes \dotsb \otimes b_{i-1,s} \otimes m_s \otimes b_{i+1,s} \otimes \dotsb \otimes b_{n,s}
\end{equation}
indexed by $S=B^{n-1}$, for homogeneous $m_s \in M$ for $s \in S$, and $b_{j,s} \in B$ for $s \in S$ and $j \neq i$. Given any homogeneous $a \in A$, both $\mathbf{z}$ and $\mathbf{z}_Z$ supercommute with $a_i=1^{i-1} \otimes \dotsb \otimes a \otimes \dotsb \otimes 1^{n-i} \in A_i$. Thus, so must $\mathbf{z}_M$, whence $\mathbf{z}_Ma_i=(-1)^{\bar{\mathbf{z}}_M\bar{a}}a_i\mathbf{z}_M$. Defining $l_s:=\sum_{1 \leq j < i}\bar{b}_{j,s}$ and $r_s:=\sum_{i < j \leq n}\bar{b}_{j,s}$ for each $s \in S$, by uniqueness of the expansion  \eqref{eq:z_M} with respect to $B$ in summands other than $i$,
\begin{align*}
&(b_{1,s} \otimes \dotsb \otimes b_{i-1,s} \otimes m_s \otimes b_{i+1,s} \otimes \dotsb \otimes b_{n,s})a_i\\
=&(-1)^{\bar{a}r_s}(b_{1,s} \otimes \dotsb \otimes b_{i-1,s} \otimes m_s a \otimes b_{i+1,s} \otimes \dotsb \otimes b_{n,s})
\end{align*}
must equal
\begin{align*}
&(-1)^{\bar{\mathbf{z}}_M\bar{a}} \left( a_i(b_{1,s} \otimes \dotsb \otimes b_{i-1,s} \otimes m_s \otimes b_{i+1,s} \otimes \dotsb \otimes b_{n,s}) \right)\\
= &(-1)^{\bar{a}(\bar{\mathbf{z}}_M+l_s)} (b_{1,s} \otimes \dotsb \otimes b_{i-1,s} \otimes a m_s \otimes b_{i+1,s} \otimes \dotsb \otimes b_{n,s}).
\end{align*}
Therefore, using $\bar{\mathbf{z}}_M=l_s+\bar{m}_s+r_s$ to simplify,
\begin{align*}
(-1)^{\bar{a}r_s}m_sa=(-1)^{\bar{a}(\bar{\mathbf{z}}_M+l_s)}am_s\\
\Leftrightarrow (-1)^{\bar{a}r_s}m_sa=(-1)^{\bar{a}(l_s+\bar{m}_s+r_s+l_s)}am_s=(-1)^{\bar{a}(\bar{m}_s+r_s)}am_s\\
\Leftrightarrow m_sa = (-1)^{\bar{a}\bar{m}_s}am_s,
\end{align*}
showing that for all $s \in S$, $m_s$ supercommutes with all homogeneous $a \in A$.} Thus $m_s \in Z(A) \cap M = \{0\}$ for all $s \in S$, and so $\mathbf{z}=\mathbf{z}_Z + 0$ lies in the left-hand set of \eqref{eq:decompose_supercentre}. Finally, ranging over $1 \leq j \leq n$, the subalgebras $A_j \subseteq A^{\otimes n}$ generate $A^{\otimes n}$, so $Z(A^{\otimes n})=\bigcap_{1 \leq j \leq n} C_{A^{\otimes n}}(A_j) = Z(A)^{\otimes n}$.
\end{proof}

\begin{lemma} \label{thm:triangularity}
For $w \in S_n^{\affine}$, $k \in \ZZ$ and $f \in P_n^X(A)$,
\begin{equation} \label{eq:commute_T_and_f}
\pi^kT_wf=((\pi^k w) \cdot f)\pi^kT_w+\sum_{v < w}f_v\pi^kT_v,
\end{equation}
for some $f_v \in P_n^X(A)$ and $v \in S_n^{\affine}$, where $<$ denotes the strong Bruhat order on $S_n^{\affine}$.
\end{lemma}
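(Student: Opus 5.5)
The plan is to reduce to the case $k=0$ and then induct on the length $l(w)$, using \cref{thm:lusztig_relation_Daff} as the basic commutation step.

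\textbf{Reduction to $k=0$.} By Relations \eqref{dafha_rel_6}, \eqref{dafha_rel_10} and \eqref{dafha_rel_10b}, we have $\pi g = (\pi\cdot g)\pi$ for every $g \in P_n^X(A)$; here $\pi$ acts on $A^{\otimes n}$ by $\sigma$ and on Laurent polynomials by \eqref{eq:pi_operator}. Iterating gives $\pi^k g=(\pi^k\cdot g)\pi^k$ for all $k\in\ZZ$. So once \eqref{eq:commute_T_and_f} is known for $k=0$, we multiply it on the left by $\pi^k$ and move $\pi^k$ past each coefficient. The leading term becomes $((\pi^k w)\cdot f)\pi^k T_w$, since the $W_{\GL_n}^{\extaffine}$-action is an action. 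The lower terms become $(\pi^k\cdot f_v)\pi^kT_v$, and $\pi^k \cdot f_v\in P_n^X(A)$.

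\textbf{Base case.} The case $w=1$ is trivial. For $w=s_i$ with $0\le i\le n-1$, \cref{thm:lusztig_relation_Daff} gives, for $f=\mathbf{a}g$ with $g$ a Laurent polynomial,
\[
T_i f = (s_i\cdot f)T_i + zt_{i,i+1}\mathbf{a}\Delta_i(g).
\]
The correction term lies in $P_n^X(A)$ and is a multiple of $T_1$ with $1<s_i$. This works because $\Delta_i$ maps Laurent polynomials to Laurent polynomials, by \eqref{eq:Delta_i_well_def} and \eqref{eq:Delta_0_well_def}. By linearity the same holds for all $f\in P_n^X(A)$, which is spanned by elements $\mathbf{a}g$.

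\textbf{Induction step.} Take a reduced expression $w=s_iw'$ with $l(w')=l(w)-1$. Then
\[
T_wf=T_iT_{w'}f=T_i\Big((w'\cdot f)T_{w'}+\sum_{v<w'}f_vT_v\Big).
\]
Apply the base case to $T_i(w'\cdot f)$. This gives $(s_iw'\cdot f)T_iT_{w'}=(w\cdot f)T_w$, plus a term $gT_{w'}$ with $w'<w$. For each $v<w'$, apply the base case to $T_if_v$, producing $(s_i\cdot f_v)T_iT_v + g_vT_v$.

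If $l(s_iv)>l(v)$, then $T_iT_v=T_{s_iv}$. If $l(s_iv)<l(v)$, then by \eqref{dafha_rel_3},
\[
T_iT_v=T_i^2T_{s_iv}=zt_{i,i+1}T_v+T_{s_iv}.
\]
The coefficient $t_{i,i+1}\in A^{\otimes n}\subseteq P_n^X(A)$ appears on the left, as required.

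\textbf{Main obstacle.} The main point to check is the Bruhat order bookkeeping: that $v<w'$ implies both $s_iv<w$ and $v<w$ when $l(s_iw')>l(w')$. This is the standard lifting property of Bruhat order in the Coxeter group $S_n^{\affine}$. If $s_iv>v$, then $s_iv\le s_iw'=w$, and $s_iv\neq w$ since $v\neq w'$. If $s_iv<v$, then $s_iv<v<w'<w$. In both cases we also have $v<w'<w$.

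Collecting terms gives \eqref{eq:commute_T_and_f}. For $n=2$ there is no braid relation between $T_0$ and $T_1$, but reduced words in $S_2^{\affine}$ are unique, so $T_w=T_iT_{w'}$ still holds and the argument is unaffected.
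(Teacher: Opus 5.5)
Your proposal is correct and follows essentially the same route as the paper: induction on $l(w)$ via \cref{thm:lusztig_relation_Daff} (with \eqref{dafha_rel_3} handling $T_iT_v$ when $l(s_iv)<l(v)$), then moving $\pi^k$ past the coefficients using \eqref{dafha_rel_6}, \eqref{dafha_rel_10} and \eqref{dafha_rel_10b}. Your explicit Bruhat-order bookkeeping via the lifting property supplies details that the paper leaves to \cite[Lemma 3.4]{QAWA_2020}; the only blemish is cosmetic: in the base case, write $T_{\textup{id}}=1$ rather than $T_1$, which clashes with the generator $T_1$.
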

\begin{proof}
The proof in \cite[Lemma 3.4]{QAWA_2020} works here with slight modification. By considering Relation \eqref{eq:lusztig_relation_Daff} from \cref{thm:lusztig_relation_Daff} and performing induction on the length of $w$, we obtain
\begin{equation} \label{eq:commute_T_and_f_proof}
\pi^kT_wf=\pi^k w(f)T_w+\sum_{v < w}\pi^kf_v'T_v
\end{equation}
for some $f_v' \in P_n^X(A)$. By Relations \eqref{dafha_rel_10} and \eqref{dafha_rel_10b}, we have $\pi^k\mathbf{a}X_i=(\pi^k \cdot(\mathbf{a}X_{i}))\pi^k$, where $\pi^k \cdot (\mathbf{a}X_{i})$ is defined as in \eqref{eq:pi_operator}. In particular, given $f_v' \in P_n^X(A)$, we can write $\pi^k f_v' = f_v \pi^k$ for some $f_v \in P_n^X(A)$. Applying this to Equation \eqref{eq:commute_T_and_f_proof} yields the identity \eqref{eq:commute_T_and_f}.
\end{proof}

\begin{lemma} \label{thm:X_centraliser}
For $q \in \kk^\times$ such that $q^m-1$ is not a zero divisor in $\kk$ for all $m \in \ZZ \setminus \{0\}$, the supercentraliser of $P_n^X$ in $\HH_n(A,z,q)$ is the subalgebra $P_n^X(A)$.
\end{lemma}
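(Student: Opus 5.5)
We prove the two inclusions separately. The inclusion $P_n^X(A) \subseteq C_{\HH_n(A,z,q)}(P_n^X)$ is immediate. The $X_i$ commute with $A^{\otimes n}$ by Relation \eqref{dafha_rel_9}, and with each other. The elements of $P_n^X$ are all even, so supercommuting with them is just ordinary commuting.

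For the reverse inclusion, let $h$ be a homogeneous element of the supercentraliser. By \cref{thm:DAFHA_basis_theorem}, together with the fact that $\pi^k P_n^X(A) = P_n^X(A)\pi^k$, we may write $h$ uniquely as
\[
h=\sum_{k \in \ZZ}\sum_{w \in S_n^{\affine}} f_{k,w}\pi^kT_w, \qquad f_{k,w} \in P_n^X(A),
\]
with finitely many nonzero terms. Suppose some coefficient $f_{k,w}$ with $(k,w)\neq(0,e)$ is nonzero. Among the pairs with $f_{k,w}\neq 0$, choose one with $w$ maximal in the strong Bruhat order. Fix $g \in P_n^X$, commute $g$ past each term using \cref{thm:triangularity}, and compare coefficients of $\pi^kT_w$ in $hg=gh$. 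The terms $f_{k,v}\pi^kT_v$ with $v<w$ cannot contribute, by maximality of $w$, and different $k$ give independent basis elements. Since $P_n^X(A)\cong A^{\otimes n}\otimes P_n^X$ is commutative in the $X$-variables, we obtain
\[
f_{k,w}\,\big((\pi^kw)\cdot g - g\big)=0 \qquad \text{for all } g \in P_n^X .
\]

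It remains to find $g$ for which this forces $f_{k,w}=0$. Write $u=\pi^kw \neq 1$ in $W_{\GL_n}^{\extaffine}$. It acts on monomials by $u\cdot X^\mu = q^{c(u,\mu)}X^{\bar u(\mu)}$, where $\bar u \in S_n$ is the finite part of $u$ and $c(u,\mu)$ is a linear form in $\mu$.

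\emph{Case $\bar u\neq 1$.} Choose $\mu$ with $\bar u(\mu)\neq\mu$, say $\mu=\epsilon_j$ for a $j$ moved by $\bar u$. Then $f_{k,w}(q^cX^{\bar u(\mu)}-X^\mu)=0$. Since $A^{\otimes n}\otimes \kk[X^{\pm1}]$ is free over $A^{\otimes n}$ with basis the monomials, and the two monomials are distinct, multiplication by this binomial is injective. Hence $f_{k,w}=0$.

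\emph{Case $\bar u=1$.} Then $u$ is a nontrivial translation by some $\lambda\neq 0$, possibly together with $\pi^{n}$-powers; note that $\pi^n$ acts as $X^\mu\mapsto q^{-|\mu|}X^\mu$. Here $u\cdot X^\mu=q^{\langle\lambda',\mu\rangle}X^\mu$ for some nonzero integer vector $\lambda'$. Choose $\mu$ with $m=\langle\lambda',\mu\rangle\neq 0$. This gives $(q^m-1)f_{k,w}X^\mu=0$, and since $q^m-1$ is not a zero divisor and $A^{\otimes n}\otimes\kk[X^{\pm1}]$ is free over $\kk$, we get $f_{k,w}=0$.

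In either case we have a contradiction, so only $(k,w)=(0,e)$ survives, and $h=f_{0,e}\in P_n^X(A)$.

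The main obstacle is the bookkeeping in the second case. One must check that every nontrivial $u$ with trivial finite part genuinely acts by a nonzero $q$-power character on $P_n^X$. This includes elements such as $\pi^n$, and combinations of $\pi^k$ with $S_n^{\affine}$ whose finite part cancels. To handle it, write $u$ as $\pi^k w$ and use the semidirect decomposition $W_{\GL_n}^{\extaffine}\cong \ZZ^n\rtimes S_n$: the translation part acts by $q^{\pm\langle\lambda,\mu\rangle}$ via \eqref{eq:s_0_operator} and \eqref{eq:pi_operator}, and this action is faithful, so $\lambda'\neq 0$. A secondary point is to confirm that the ordering argument via \cref{thm:triangularity} is compatible with the uniqueness of coefficients in the basis $\{f\,\pi^kT_w\}$; this follows from \cref{thm:DAFHA_basis_theorem}.
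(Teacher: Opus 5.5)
Your proposal is correct and is essentially the paper's own argument. You expand $h$ over $P_n^X(A)$ in the elements $\pi^kT_w$, take $w$ Bruhat-maximal, and use \cref{thm:triangularity} to extract $f_{k,w}\bigl((\pi^kw)\cdot g-g\bigr)=0$. You then conclude from faithfulness of the $W_{\GL_n}^{\extaffine}$-action on monomials together with the non-zero-divisor hypothesis. Your split on whether the finite part $\bar u$ is trivial matches the paper's split on whether $(\pi^kw)\cdot X_j=q^mX_l$ has $l\neq j$, or $l=j$ with $m\neq 0$.

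Two minor wording points. First, it is triangularity, not maximality, that stops terms with $v<w$ (or $v$ incomparable to $w$) from contributing to the coefficient of $\pi^kT_w$; maximality is what rules out terms with $v>w$. Second, if the maximal $w$ is $e$, you should pick a pair with $k\neq 0$, so that $u\neq 1$.
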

\begin{proof}
We follow a similar proof as \cite[Lemma 3.14]{QAWA_2020}. Note that $P_n^X$ is purely even. Firstly, elements of $P_n^X(A)$ clearly supercommute with elements of $P_n^X$. Let $\alpha = \sum_{w \in S_n^{\affine}, k \in \ZZ}f_{w,k} \pi^kT_w \in \HH_n(A,z,q)$, for some $f_{w,k} \in P_n^X(A)$, and suppose that $\alpha$ centralises $P_n^X$. Let $v \in S_n^{\affine}$ be a maximal element with respect to the Bruhat order such that ${K:=\{k \in \ZZ : f_{v,k} \neq 0\}} \neq \emptyset$. Choose $k' \in K$. By \cref{thm:triangularity}, for any $f \in P_n^X$, we have
\[
f \alpha - \alpha f = (f - (\pi^{k'} v) \cdot f)f_{v,k'}\pi^{k'} T_v + \sum_{k \in K, k \neq k'}g_{v,k}\pi^kT_v + \sum_{u \not\geq v, l \in \ZZ}g_{u,l}\pi^lT_u,
\]
for some $g_{v,k},g_{u,l} \in P_n^X(A)$. As $\alpha$ centralises $P_n^X$, we have $f \alpha - \alpha f = 0$ for all $f \in P_n^X$. By \cref{thm:DAFHA_basis_theorem}, this implies $(f - (\pi^{k'} v) \cdot f)f_{v,k'}=0$. Assume towards a contradiction that $k' \neq 0$ or $v \neq 1$, so that $\pi^{k'} v \neq 1$ in $W_{\GL_n}^{\extaffine}$. Since $q^m \neq 1$ for all $m \in \ZZ  \setminus \{0\}$, the group $W_{\GL_n}^{\extaffine}$ acts faithfully on monomials by \eqref{eq:s_0_operator} and \eqref{eq:pi_operator}. Thus, there must be some $1 \leq j \leq n$ such that $(\pi^{k'} v) \cdot X_j = q^mX_l$ for some $m \in \ZZ$ and $1 \leq l \leq n$, with either $m \neq 0$ or $l \neq j$. Taking $f=X_j$, we get $(X_j - (\pi^{k'} v) \cdot X_j)f_{v,k'}=0$. If $l \neq j$, this immediately implies by \cref{thm:DAFHA_basis} that $f_{v,k'}=0$, a contradiction. If $l=j$ and $m \neq 0$, then $(X_j - q^mX_j)f_{v,k'}=0$, and since $1-q^m$ is not a zero-divisor in $\kk$, this also implies by \cref{thm:DAFHA_basis} that $f_{v,k'}=0$, a contradiction. Therefore, $k' = 0$ and $v = 1$, by which $\alpha \in P_n^X(A)$.
\end{proof}

\begin{lemma} \label{thm:XA_centraliser}
For $q \in \kk^\times$ such that $q^m-1$ is not a zero divisor in $\kk$ for all $m \in \ZZ \setminus \{0\}$, the supercentraliser of $P_n^X(A)$ in $\HH_n(A,z,q)$ is $Z(A^{\otimes n}) \otimes P_n^X$. If $A$ and $Z(A)$ satisfy the conditions of \cref{thm:decompose_supercentre}, then this is equal to $P_n^X(Z(A))$.
\end{lemma}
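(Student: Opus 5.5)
Since $P_n^X \subseteq P_n^X(A)$, any element supercommuting with $P_n^X(A)$ in particular supercommutes with $P_n^X$. By \cref{thm:X_centraliser} it therefore lies in $P_n^X(A)$. So the plan is to compute the supercentraliser of $P_n^X(A)$ inside $P_n^X(A)$ itself, and then specialise using \cref{thm:decompose_supercentre}.

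First, I would establish that multiplication gives a $\kk$-module isomorphism $A^{\otimes n} \otimes \kk[X_1^{\pm1},\dotsc,X_n^{\pm1}] \to P_n^X(A)$. It is surjective because $\mathbf{a}$ and $X_j$ commute by \eqref{dafha_rel_9}. It is injective by \cref{thm:DAFHA_basis_theorem}, taking $w=1$. Hence every $\alpha \in P_n^X(A)$ can be written uniquely as $\alpha = \sum_{\mu} \mathbf{a}_\mu X^\mu$ with $\mathbf{a}_\mu \in A^{\otimes n}$, finitely many nonzero. Since the $X_j$ are even and central in $P_n^X(A)$, the parity decomposition of $\alpha$ is obtained coefficientwise.

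Next, I would take a homogeneous $\alpha$ in the supercentraliser. The $X^\mu$ commute with $\alpha$ automatically. For homogeneous $\mathbf{c} \in A^{\otimes n}$, the condition $\alpha\mathbf{c} = (-1)^{\bar\alpha\bar{\mathbf{c}}}\mathbf{c}\alpha$ becomes
\[
\sum_\mu \bigl(\mathbf{a}_\mu\mathbf{c} - (-1)^{\bar\alpha\bar{\mathbf{c}}}\mathbf{c}\mathbf{a}_\mu\bigr)X^\mu = 0 .
\]
By uniqueness of the expansion, each $\mathbf{a}_\mu$ supercommutes with $\mathbf{c}$, so $\mathbf{a}_\mu \in Z(A^{\otimes n})$. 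Thus $\alpha \in Z(A^{\otimes n}) \otimes P_n^X$, meaning the image of this tensor product under multiplication. The reverse inclusion is clear: elements of $Z(A^{\otimes n})$ supercommute with $A^{\otimes n}$ and commute with the $X_j$, while the $X^\mu$ are even and commute with everything in $P_n^X(A)$.

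Finally, if $A$ and $Z(A)$ satisfy the hypotheses of \cref{thm:decompose_supercentre}, then $Z(A^{\otimes n}) = Z(A)^{\otimes n}$. The subalgebra $P_n^X(Z(A))$ is by definition generated by $Z(A)^{\otimes n}$ and the $X_j^{\pm1}$, which commute. It therefore equals $Z(A)^{\otimes n}\cdot P_n^X$, giving the second claim.

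The only point needing care is the coefficientwise comparison in the second step. It requires the injectivity of $A^{\otimes n}\otimes \kk[X^{\pm1}] \to \HH_n(A,z,q)$, which \cref{thm:DAFHA_basis_theorem} supplies, and a correct treatment of signs for the super-structure when $\alpha$ is not homogeneous. For the latter, I would reduce to homogeneous components using the definition of $C_D(S)$.
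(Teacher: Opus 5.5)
Your proposal is correct and follows the paper's proof. You reduce to $P_n^X(A)$ via Lemma~\ref{thm:X_centraliser}, compute the supercentre of $P_n^X(A)\cong A^{\otimes n}\otimes P_n^X$, and finish with Lemma~\ref{thm:decompose_supercentre}. The only difference is that you justify $Z(A^{\otimes n}\otimes P_n^X)=Z(A^{\otimes n})\otimes P_n^X$ by a coefficientwise comparison using the basis from Theorem~\ref{thm:DAFHA_basis_theorem}, whereas the paper simply asserts it.
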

\begin{proof}
The supercentraliser of $P_n^X(A)$ in $\HH_n(A,z,q)$ is contained in the supercentraliser of $P_n^X$, which is $P_n^X(A)$ by \cref{thm:X_centraliser}. Thus, the supercentraliser of $P_n^X(A)$ is equal to
\[
Z(P_n^X(A))\overset{\eqref{dafha_rel_9}}{=} Z(A^{\otimes n} \otimes P_n^X)=Z(A^{\otimes n}) \otimes Z(P_n^X).
\]
If $A$ and $Z(A)$ satisfy the conditions of \cref{thm:decompose_supercentre}, then \cref{thm:decompose_supercentre} implies $Z(A^{\otimes n}) \otimes Z(P_n^X) = Z(A)^{\otimes n} \otimes P_n^X\overset{\eqref{dafha_rel_9}}{=} P_n^X(Z(A))$.
\end{proof}

\begin{corollary} \label{thm:Y_YA_centraliser}
For $q \in \kk^\times$ such that $q^m-1$ is not a zero divisor in $\kk$ for all $m \in \ZZ \setminus \{0\}$, the supercentraliser of $P_n^Y$ in $\HH_n(A,z,q)$ is $P_n^Y(A)$. The supercentraliser of $P_n^Y(A)$ in $\HH_n(A,z,q)$ is $Z(A^{\otimes n}) \otimes P_n^Y$. If $A$ and $Z(A)$ satisfy the conditions of \cref{thm:decompose_supercentre}, then this is equal to $P_n^Y(Z(A))$.
\end{corollary}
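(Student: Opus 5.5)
The plan is to transport \cref{thm:X_centraliser} and \cref{thm:XA_centraliser} along one of the isomorphisms of \cref{thm:XY_automorphisms}, rather than redo the triangularity argument with $Y$ in place of $X$. The natural choice is $\epsilontransform \colon \HH_n(A,z,q) \to \HH_n(A,-z,q^{-1})$. It fixes $A^{\otimes n}$ pointwise, and it swaps $X_j$ and $Y_j$. It is an isomorphism of $\kk$-superalgebras and is parity-preserving, so it carries supercentralisers to supercentralisers: for any subset $S$, we have $\epsilontransform(C_{\HH_n(A,z,q)}(S)) = C_{\HH_n(A,-z,q^{-1})}(\epsilontransform(S))$.

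The key steps, in order, are as follows.
\begin{enumerate}[(1)]
\item Check that the hypothesis transfers. If $q^m-1$ is not a zero divisor for all $m \neq 0$, then neither is $q^{-m}-1 = -q^{-m}(q^m-1)$, since $q$ is a unit. So the lemmas above apply to $\HH_n(A,-z,q^{-1})$. They were proved for arbitrary $z \in \kk$, and \cref{thm:DAFHA_basis_theorem} holds there too.
\item Compute the images of the subalgebras. We have $\epsilontransform(P_n^Y) = P_n^X$ and $\epsilontransform(P_n^Y(A)) = P_n^X(A)$, both inside $\HH_n(A,-z,q^{-1})$. This holds because $\epsilontransform$ sends generators to generators, and its inverse $\epsilontransform \colon \HH_n(A,-z,q^{-1}) \to \HH_n(A,z,q)$ does the reverse.
\item Apply \cref{thm:X_centraliser} in $\HH_n(A,-z,q^{-1})$ and pull back. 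This gives $C(P_n^Y) = \epsilontransform^{-1}(P_n^X(A)) = P_n^Y(A)$.
\item Apply \cref{thm:XA_centraliser} in the same way. This gives $C(P_n^Y(A)) = \epsilontransform^{-1}(Z(A^{\otimes n}) \otimes P_n^X)$. Since $\epsilontransform$ fixes $A^{\otimes n}$ and maps $P_n^X$ onto $P_n^Y$, this equals $Z(A^{\otimes n}) \otimes P_n^Y$. Here the tensor notation means the subalgebra generated by, or equivalently the product of, the two commuting pieces.
\item Under the hypotheses of \cref{thm:decompose_supercentre}, the same computation gives $P_n^Y(Z(A))$, using $Z(A^{\otimes n}) = Z(A)^{\otimes n}$.
\end{enumerate}

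The only point needing care, and the step I expect to be the main (mild) obstacle, is making sense of the identification ``$Z(A^{\otimes n}) \otimes P_n^Y$'' as a subalgebra of $\HH_n(A,z,q)$. This is the analogue of the $X$-side statement. It relies on $A^{\otimes n}$ commuting with the $Y_i$, which is Relation \eqref{daqwa_sym_rel_3}. It also relies on the multiplication map $A^{\otimes n} \otimes P_n^Y \to \HH_n(A,z,q)$ being injective, which follows from the second basis in \cref{thm:DAFHA_basis}. Once this is noted, $\epsilontransform$ restricted to $A^{\otimes n}P_n^X$ visibly corresponds to $A^{\otimes n}P_n^Y$, compatibly with the tensor decomposition, and the statement follows.

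As a fallback, $\sigmaop$ could be used instead of $\epsilontransform$. It sends $Y_j \mapsto X_j^{-1}$, but it passes through $A^{\opp}$ and opposite algebras, which makes it messier. For that reason I would use $\epsilontransform$.
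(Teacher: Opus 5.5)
Your proposal is correct and follows the paper's own proof. Like the paper, you transport \cref{thm:X_centraliser} and \cref{thm:XA_centraliser} along the parity-preserving isomorphism $\epsilontransform$, which fixes $A^{\otimes n}$, interchanges $P_n^X$ and $P_n^Y$, and turns the hypothesis on $q$ into the same hypothesis on $q^{-1}$.
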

\begin{proof}
The two statements follow from \cref{thm:X_centraliser} and \cref{thm:XA_centraliser} respectively, by the symmetry of the $X$ and the $Y$ generators of $\HH_n(A,z,q)$. In particular, the $\kk$-superalgebra isomorphism $\epsilontransform$ from \cref{thm:XY_automorphisms} maps $P_n^X$ to $P_n^Y$, $P_n^X(A)$ to $P_n^Y(A)$ and $P_n^X(Z(A))$ to $P_n^Y(Z(A))$, whilst changing the parameter $z$ to $-z$ and $q$ to $q^{-1}$. The definitions of the supercentralisers are independent of the parameters, and $q^{-m}-1$ is not a zero divisor in $\kk$ if $q^m-1$ is not one.
\end{proof}

\begin{proposition} \label{thm:dafha_centre}
For $q \in \kk^\times$ such that $q^m-1$ is not a zero divisor in $\kk$ for all $m \in \ZZ \setminus \{0\}$, the centre of $\HH_n(A,z,q)$ is $Z\left(A^{\otimes n} \right)^{S_n}$. If $A$ and $Z(A)$ satisfy the conditions of \cref{thm:decompose_supercentre}, then this is equal to $\left(Z(A)^{\otimes n} \right)^{S_n}$.
\end{proposition}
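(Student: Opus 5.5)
The plan is to intersect the two supercentralisers already computed, and then impose commutation with the $T_i$. A central element $c$ supercommutes with $P_n^X(A)$, so by \cref{thm:XA_centraliser} we have $c \in Z(A^{\otimes n}) \otimes P_n^X$. Likewise $c$ supercommutes with $P_n^Y(A)$, so by \cref{thm:Y_YA_centraliser} we have $c \in Z(A^{\otimes n}) \otimes P_n^Y$. The first key step is to show
\[
\bigl(Z(A^{\otimes n}) \otimes P_n^X\bigr) \cap \bigl(Z(A^{\otimes n}) \otimes P_n^Y\bigr) = Z(A^{\otimes n}).
\]
For this I would use the second basis $\{\mathbf{b} X^\mu Y^\lambda T_w\}$ of \cref{thm:DAFHA_basis}. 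An element of the left factor is expanded with $\lambda=0$ and $w=1$, and an element of the right factor with $\mu=0$ and $w=1$. Comparing coefficients forces $\mu=\lambda=0$, so $c=\mathbf{z}$ for some $\mathbf{z} \in Z(A^{\otimes n})$.

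The main subtlety is that $Z(A^{\otimes n})$ need not be spanned by elements of $B^{\otimes n}$. To handle this, I would write $c=\sum_\mu \mathbf{a}_\mu X^\mu$ with $\mathbf{a}_\mu \in A^{\otimes n}$, and similarly $c=\sum_\lambda \mathbf{a}'_\lambda Y^\lambda$. Expanding each $\mathbf{a}_\mu$ and $\mathbf{a}'_\lambda$ in $B^{\otimes n}$ then gives the comparison, using only freeness of $A^{\otimes n}$.

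The next step is to impose $T_i c = c T_i$ for $1 \le i \le n-1$. Relation \eqref{dafha_rel_4} gives $T_i\mathbf{z} = s_i(\mathbf{z})T_i$, so
\[
\bigl(s_i(\mathbf{z})-\mathbf{z}\bigr)T_i = 0.
\]
By the first basis of \cref{thm:DAFHA_basis}, this forces $s_i(\mathbf{z})=\mathbf{z}$. Hence $\mathbf{z} \in Z(A^{\otimes n})^{S_n}$.

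For the converse, I would check that each $\mathbf{z} \in Z(A^{\otimes n})^{S_n}$ is central. Parity needs care here: an odd element supercommuting with everything is the correct notion. The element $\mathbf{z}$ supercommutes with $A^{\otimes n}$ by definition. It commutes with the $X_j$ and $Y_j$ by \eqref{daqwa_sym_rel_3}. It commutes with the $T_i$ for $1\le i\le n-1$ by \eqref{qwa_rel_4} together with $S_n$-invariance, and these $T_i$ are even. By \cref{thm:DAFHA_bernstein_presentation}, these elements generate $\HH_n(A,z,q)$, so $\mathbf{z}$ is central.

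The final clause follows directly from \cref{thm:decompose_supercentre}. I expect the intersection step to be the main point requiring care, because the proof must use the basis theorem rather than just the subalgebra descriptions. The rest is routine.
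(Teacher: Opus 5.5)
Your proposal is correct and follows essentially the same route as the paper: you intersect the supercentralisers from \cref{thm:XA_centraliser} and \cref{thm:Y_YA_centraliser}, use the basis of \cref{thm:DAFHA_basis} to cut this intersection down to $Z(A^{\otimes n})$, impose $S_n$-invariance via Relation \eqref{dafha_rel_4}, and finish with \cref{thm:decompose_supercentre}. The only difference is that you spell out the coefficient comparison and the converse inclusion, which the paper treats as clear.
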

\begin{proof}
It is clear that elements of $Z\left(A^{\otimes n} \right)^{S_n}$ are central in $\HH_n(A,z,q)$. By \cref{thm:XA_centraliser} and \cref{thm:Y_YA_centraliser}, the centre of $\HH_n(A,z,q)$ must lie in $\left( Z\left(A^{\otimes n} \right) \otimes P_n^X \right) \cap \left( Z\left(A^{\otimes n} \right) \otimes P_n^Y \right) = Z(A^{\otimes n})$, where the equality follows from \cref{thm:DAFHA_basis}. By Relation \eqref{dafha_rel_4}, elements of the centre must be $S_n$-invariant, hence in $Z\left(A^{\otimes n} \right)^{S_n}$. If $A$ and $Z(A)$ satisfy the conditions of \cref{thm:decompose_supercentre}, then \cref{thm:decompose_supercentre} implies $Z\left(A^{\otimes n} \right)^{S_n} = \left(Z(A)^{\otimes n} \right)^{S_n}$.
\end{proof}

\section{Polynomial Representations} \label{sec:DAFHA_poly_rep_phi}

We assume throughout the section that $A$ has a one-dimensional representation $\phi_A \colon A \to \kk$ that is an even homomorphism of $\kk$-superalgebras, and we let $\phi^{\otimes n} \colon A^{\otimes n} \to \kk$ be the representation of $A^{\otimes n}$ defined by $a_1 \otimes \dotsb \otimes a_n \mapsto \phi(a_1) \dotsb \phi(a_n)$. We also assume that the quadratic equation
\begin{equation}
r^2=z\phi^{\otimes n}(t_{i,i+1})r+1, \label{eq:r_quadratic}
\end{equation}
has a solution $r \in \kk$, and fix this solution. We define projection maps of the Frobenius Hecke algebras defined in this document, onto the usual $A=\kk$ Hecke algebras. It is easily verified by checking Relations \eqref{qwa_rel_1} to \eqref{qwa_rel_4} that this defines a homomorphism of $\kk$-superalgebras
\begin{align} 
\Phi \colon H_n(A,z) \to H_n(\kk,\phi^{\otimes n}(t_{1,2})z), \label{eq:one_dim_r_rep} \\
\mathbf{a} \mapsto \phi^{\otimes n}(\mathbf{a}),  \qquad
T_i \mapsto T_i, \nonumber
\end{align}
for $\mathbf{a} \in A^{\otimes n}$ and $1 \leq i \leq n-1$. This map is surjective, as all the generators $T_i$ are mapped onto. Moreover, it is easily verified that $\Phi$ extends to surjections of $\kk$-superalgebras,
\begin{gather} 
\Phi^\aff \colon H_n^\aff(A,z) \to H_n^\aff(\kk,\phi^{\otimes n}(t_{1,2})z), \nonumber \\
\mathbf{a} \mapsto \phi^{\otimes n}(\mathbf{a}), \nonumber \qquad
T_i \mapsto T_i, \nonumber \qquad
Y_j \mapsto Y_j, \nonumber
\end{gather}
and
\begin{gather} 
\Phi^\affX \colon H_n^\affX(A,z) \to H_n^\affX(\kk,\phi^{\otimes n}(t_{1,2})z), \nonumber \\
\mathbf{a} \mapsto \phi^{\otimes n}(\mathbf{a}), \nonumber \qquad
T_i \mapsto T_i, \nonumber \qquad
X_j \mapsto X_j, \nonumber
\end{gather}
which in turn extend to a surjection of $\kk$-superalgebras,
\begin{gather} 
\Phi^\Daff \colon \HH_n(A,z,q) \to \HH_n(\kk,\phi^{\otimes n}(t_{1,2})z,q), \nonumber \\
\mathbf{a} \mapsto \phi^{\otimes n}(\mathbf{a}), \nonumber \qquad
T_i \mapsto T_i, \nonumber \qquad
Y_j \mapsto Y_j, \nonumber \qquad
X_j \mapsto X_j, \nonumber
\end{gather}
for $\mathbf{a} \in A^{\otimes n}$, $1 \leq i \leq n-1$ and $1 \leq j \leq n$.

\begin{remark} \label{rem:usual_poly_rep}
We recall the definition of the usual polynomial representation of $H_n^\aff(\kk,\tau-\tau^{-1})$. As $H_n(\kk,\tau-\tau^{-1})$ is an ordinary Iwahori--Hecke algebra, it acts on $\kk$ by $T_i \mapsto \tau$ for $1 \leq i \leq n-1$, which induces a $\kk$-linear representation $\gamma^\aff$ of $H_n^\aff(\kk,\tau-\tau^{-1})$ with underlying space $\kk[Y_1^{\pm 1},\dotsc,Y_n^{\pm 1}]$, called the polynomial representation. The action of $T_i$ in the polynomial representation follows from \cref{thm:lusztig_relation_Daff} with $A=\kk$ and $z=\tau-\tau^{-1}$.
\end{remark}

By choosing $\tau=r$ satisfying \eqref{eq:r_quadratic}, we may identify $H_n^\aff(\kk,\phi^{\otimes n}(t_{1,2})z)$ with $H_n^\aff(\kk,\tau-\tau^{-1})$, and pull back the usual polynomial representation $\gamma^\aff$ from \cref{rem:usual_poly_rep} by $\Phi^\aff$ to obtain a polynomial representation of $H_n^\aff(A,z)$ that depends on the choice of $\phi$. This can be done similarly with the usual polynomial representation $\gamma^\affX$ of $H_n^\affX(\kk,\tau-\tau^{-1})$.

\begin{definition} \label{def:affine_poly_rep_phi}
The \emph{$\phi$-polynomial representation of $H_n^\aff(A,z)$} is the representation with underlying space $\kk[Y_1^{\pm 1},\dotsc,Y_n^{\pm 1}]$, given by
\begin{align}
\gamma_\phi^\aff := \gamma^\aff \circ \Phi^\aff \colon H_n^\aff(A,z) &\to \Endo_\kk(\kk[Y_1^{\pm 1},\dotsc,Y_n^{\pm 1}]), \nonumber\\
\mathbf{a} &\mapsto \phi^{\otimes n}(\mathbf{a}), \label{eq:affine_poly_rep_phi_X}\\
T_i &\mapsto rs_i + z \phi^{\otimes n}(t_{i,i+1}) \frac{s_i-1}{Y_i^{-1}Y_{i+1}-1}, \nonumber\\
Y_j &\mapsto Y_j, \nonumber
\end{align}
for $\mathbf{a} \in A^{\otimes n}$, $1 \leq i \leq n-1$ and $1 \leq j \leq n$. The \emph{$\phi$-polynomial representation of $H_n^\affX(A,z)$} has underlying space $\kk[X_1^{\pm 1},\dotsc,X_n^{\pm 1}]$, and is given by
\begin{align}
\gamma_\phi^\affX := \gamma^\affX \circ \Phi^\affX \colon H_n^\affX(A,z) &\to \Endo_\kk(\kk[X_1^{\pm 1},\dotsc,X_n^{\pm 1}]), \nonumber\\
\mathbf{a} &\mapsto \phi^{\otimes n}(\mathbf{a}), \nonumber\\
T_i &\mapsto rs_i + z \phi^{\otimes n}(t_{i,i+1}) \frac{s_i-1}{X_iX_{i+1}^{-1}-1}, \nonumber\\
X_j &\mapsto X_j, \nonumber
\end{align}
for $\mathbf{a} \in A^{\otimes n}$, $1 \leq i \leq n-1$ and $1 \leq j \leq n$.
\end{definition}

\begin{remark} \label{rem:usual_DAHA_poly_rep}
We recall the definition of the polynomial representation of $\HH_n(\kk,\tau-\tau^{-1},q)$, which is an ordinary double affine Hecke algebra. As $H_n^\aff(\kk,\tau-\tau^{-1})$ acts on $\kk$ by $T_i \mapsto \tau$ for $0 \leq i \leq n-1$ and $\pi \mapsto 1$ (or equivalently, $Y_i \overset{\eqref{eq:Y_i_formula}}{\mapsto} \tau^{n+1-2i}$), this induces a $\kk$-linear representation $\gamma$ of $\HH_n(\kk,\tau-\tau^{-1},q)$ with underlying space $\kk[X_1^{\pm 1},\dotsc,X_n^{\pm 1}]$, called the polynomial representation. The operators $\gamma(T_1),\dotsc,\gamma(T_{n-1})$ are analogous to those in \cref{def:affine_poly_rep_phi}, the action of the operator $\gamma(T_0)$ can be immediately deduced using \cref{thm:lusztig_relation_Daff}, and the action of $\gamma(\pi)$ can be deduced using Relations \eqref{dafha_rel_10} and \eqref{dafha_rel_10b}.
\end{remark}

By choosing $\tau=r$ satisfying \eqref{eq:r_quadratic}, we may identify $\HH_n(\kk,\phi^{\otimes n}(t_{1,2})z,q)$ with $\HH_n(\kk,\tau-\tau^{-1},q)$, and pull back the usual polynomial representation $\gamma$ from \cref{rem:usual_DAHA_poly_rep} by $\Phi^\Daff$ to obtain a polynomial representation of $\HH_n(A,z,q)$ that depends on the choice of $\phi$.

\begin{definition} \label{def:DAFHA_poly_rep_phi}
Let $q \in \kk^\times$ be an invertible element. The \emph{$\phi$-polynomial representation} of $\HH_n(A,z,q)$ has underlying space $\kk[X_1^{\pm 1},\dotsc,X_n^{\pm 1}]$, and is given by
\begin{align}
\gamma_\phi \colon \HH_n(A,z,q) &\to \Endo_\kk(\kk[X_1^{\pm 1}, \dotsc ,X_n^{\pm 1}]), \nonumber\\
\mathbf{a} &\mapsto \phi^{\otimes n}(\mathbf{a}), \nonumber\\
X_j &\mapsto X_j, \label{eq:DAFHA_poly_rep_phi}\\
T_i &\mapsto rs_i + z\phi^{\otimes n}(t_{i,i+1})\frac{s_i-1}{X_is_i(X_i)^{-1}-1}, \nonumber\\
\pi &\mapsto \pi, \nonumber
\end{align}
for $\mathbf{a} \in A^{\otimes n}$, $0 \leq i \leq n-1$ and $1 \leq j \leq n$. In the above, we define $t_{0,1}:=t_{n,1}$ and $X_0:=X_n$. The action by $s_0$ and $\pi$ on $\kk[X_1^{\pm 1},\dotsc,X_n^{\pm 1}]$ involves multiplication by a power of $q$, as described in \eqref{eq:s_0_operator} and \eqref{eq:pi_operator}.
\end{definition}

The operators
\begin{equation} \label{eq:Dunkl_operators}
\gamma_\phi(Y_i) \overset{\eqref{eq:Y_i_formula}}{=} \gamma_\phi(T_{i-1}^{-1})\dotsb \gamma_\phi(T_1^{-1})\gamma_\phi(\pi)\gamma_\phi(T_{n-1})\dotsb \gamma_\phi(T_i),
\end{equation}
are the usual $q$-difference Dunkl operators.

\begin{definition} \label{def:phi_basic_representation}
Let $q \in \kk^\times$ be an invertible element. The \emph{$\phi$-Cherednik's basic representation} of $H_n^\aff(A,z)$ is the restriction $\gamma_\phi |_{H_n^\aff(A,z)} \colon H_n^\aff(A,z) \to \Endo_\kk(\kk[X_1^{\pm 1},\dotsc,X_n^{\pm 1}])$. When $r=\tau \in \kk^\times$, this is the pullback of the usual basic representation by $\Phi^\aff$. Note that $\gamma_\phi$ and this restriction depend on the choice of $q$, though $H_n^\aff(A,z)$ does not. 
\end{definition}

\begin{example} \label{ex:DAHA_poly_rep}
Let $A=\kk$, $z=\tau-\tau^{-1}$. Then $r^2=(\tau-\tau^{-1})r+1$ has solutions $r_1=\tau$ and $r_2=-\tau^{-1}$. Take $r=r_1=\tau$. Let $\phi = \id_{\kk} \colon \kk \to \kk$. Then the polynomial representations from \cref{def:affine_poly_rep_phi} and from \cref{def:DAFHA_poly_rep_phi} are the usual polynomial representation of the affine Hecke algebra and of the usual DAHA, respectively. The basic representation from \cref{def:phi_basic_representation} is the usual basic representation.
\end{example}

\begin{example} \label{ex:Wan_Wang_poly_rep}
Let $\kk$ have characteristic $p \neq 2$ and let $\tau \in \kk^\times$. Let $A=\kk G$, for $G$ a finite group with $p \nmid |G|$, with trace as in \cref{ex:double_affine_Wan_Wang}. Take $z=(\tau-\tau^{-1})/|G|$, which is rescaled from the parameter of \cref{ex:double_affine_Wan_Wang} by the factor $\tfrac{1}{|G|}$, as in \cref{ex:double_affine_Yokonuma_Hecke}. For $g \in G$, denote $g_i=1 \otimes \dotsb \otimes 1 \otimes g \otimes 1 \otimes \dotsb \otimes 1 \in A^{\otimes n}$ with $g$ in the $i$-th factor. Then the teleporter is 
\[
t_{i,i+1} = \sum_{g \in G} g_i g_{i+1}^{-1}.
\]
Given a linear character of $G$, which extends to a one-dimensional representation $\phi \colon A \to \kk$, we have $\phi_{A^{\otimes n}}(t_{i,i+1}) = \sum_{g \in G} \phi(g) \phi(g^{-1}) = \sum_{g \in G} \phi(g) \phi(g)^{-1} = |G|$. The quadratic equation \eqref{eq:r_quadratic} is thus $r^2=(\tau-\tau^{-1})r+1$ in this example, with solutions $r_1=\tau$ and $r_2=-\tau^{-1}$. Thus, having chosen a linear character of $G$ and either $r_1$ or $r_2$, \cref{def:affine_poly_rep_phi} and \cref{def:phi_basic_representation} are polynomial representations of the non-degenerate versions $H_n^\affX(\kk G,(\tau-\tau^{-1})/|G|)$ and $H_n^\aff(\kk G,(\tau-\tau^{-1})/|G|)$ of Wan and Wang's algebra from \cite[Introduction,~Example~(b)]{QAWA_2020}, with a rescaled parameter. The representation from \cref{def:DAFHA_poly_rep_phi} is of the algebra $\HH_n(\kk G,(\tau-\tau^{-1})/|G|,q)$, which is defined in \cref{ex:double_affine_Wan_Wang} with a rescaled parameter. Neither polynomial representation seems to have appeared in the current literature.
\end{example}

\begin{example} \label{ex:Yokonuma_poly_rep}
This example is a special case of \cref{ex:Wan_Wang_poly_rep}. Let $\kk=\CC[\tau,\tau^{-1}]$, $A=\kk C_d$, for $C_d$ the cyclic group of order $d$ and generated by $g \in C_d$, and $z=(\tau-\tau^{-1})/d$, with trace as in \cref{ex:double_affine_Wan_Wang}. Denoting $g_i=1 \otimes \dotsb \otimes 1 \otimes g \otimes 1 \otimes \dotsb \otimes 1 \in A^{\otimes n}$ with $g$ in the $i$-th factor, the teleporter is 
\[
t_{i,i+1} = \sum_{k=0}^{d-1} g_i^k g_{i+1}^{-k}.
\]
The one-dimensional representations of $A$ are precisely of the form 
\[
\phi_j \colon A \to \kk, \qquad g \mapsto e^{2\pi \iu j/d},
\]
for $0 \leq j \leq d-1$, where $\iu$ is the imaginary unit. We have $\phi_j^{\otimes n}(t_{i,i+1}) = \sum_{k=0}^{d-1} e^{2\pi \iu jk/d} e^{2\pi \iu j(-k)/d} = d$. The quadratic equation $r^2=(\tau-\tau^{-1})r+1$ has solutions $r_1=\tau$ and $r_2=-\tau^{-1}$. Having chosen $j$ and either $r_1$ or $r_2$, \cref{def:affine_poly_rep_phi} and \cref{def:DAFHA_poly_rep_phi} give us one of $2d$ distinct polynomial representations of the affine Yokonuma--Hecke algebra $H_n^\affX(\kk C_d,(\tau-\tau^{-1})/d)$ from \cref{ex:double_affine_Yokonuma_Hecke} and of the double affine Yokonuma--Hecke algebra $\HH_n(\kk C_d,(\tau-\tau^{-1})/d,q)$ from \cref{ex:double_affine_Yokonuma_Hecke}, respectively. Neither polynomial representation seems to have appeared in the current literature.
\end{example}

\section{\texorpdfstring{$A$}{A}-decorated Polynomial Representations at \texorpdfstring{$z=0$}{z=0}} \label{sec:DAFHA_poly_rep_phi_A}

The representations of \cref{sec:DAFHA_poly_rep_phi} are defined given a $\kk$-linear representation $\phi \colon A \to \kk$, as pullbacks of the polynomial representations of the usual $A=\kk$ affine Hecke algebras and DAHAs. When $z=0$, there are additionally non-collapsed analogues of these representations, that do not depend on the existence of such $\phi$. We can obtain these so-called $A$-decorated polynomial representations much like the $A=\kk$ versions from \cref{rem:usual_poly_rep} and \cref{rem:usual_DAHA_poly_rep}, by defining representations of $H_n(A,0)$ and $H_n^\aff(A,0)$ on $A^{\otimes n}$ and inducing to $H_n^\aff(A,0)$ and $\HH_n(A,0,q)$ respectively. As $H_n^\aff(\kk,0)$ appears as a subalgebra of the trigonometric degeneration of the usual DAHA, the representations from \cref{def:affine_poly_rep_A} will appear as restrictions of a polynomial representation of the algebra $\HH_n^\textup{trig}(A,c,\kappa)$ from \cref{dia:introduction} in future work.

It is easily verified by checking Relations \eqref{qwa_rel_1} to \eqref{qwa_rel_4} that we can define a representation of the $z=0$ Frobenius Hecke algebra $H_n(A,0)$, given by
\begin{align} 
\dot{\gamma}_A \colon H_n(A,0) \to \Endo_\kk(A^{\otimes n}), \qquad  \mathbf{a} \mapsto \mathbf{a}, \qquad T_i \mapsto s_i, \label{eq:one_dim_r_rep_A}
\end{align}
for $\mathbf{a} \in A^{\otimes n}$ and $1 \leq i \leq n-1$. As there is a canonical map of $\kk$-superalgebras, $H_n(A,0) \hookrightarrow{} H_n^\aff(A,0)$, which is an embedding by \cref{thm:AFHA_coxeter_basis}, we may induce a representation of $H_n^\aff(A,0)$ from $\dot{\gamma}_A$. Moreover, \cref{thm:lusztig_relation_Daff} can be used to deduce a description of this representation, which is analogous to \cref{def:affine_poly_rep_phi}.

\begin{definition} \label{def:affine_poly_rep_A}
The \emph{$A$-decorated polynomial representation of $H_n^\aff(A,0)$} is the representation $\Ind_{H_n(A,0)}^{H_n^\aff(A,0)}(\dot{\gamma}_A)$, given by
\begin{align}
\gamma_A^\aff \colon H_n^\aff(A,0) &\to \Endo_\kk(\kk[Y_1^{\pm 1},\dotsc,Y_n^{\pm 1}] \otimes A^{\otimes n}), \nonumber\\
\mathbf{a} &\mapsto 1 \otimes \mathbf{a}, \label{eq:affine_poly_rep_A}\\
T_i &\mapsto s_i \otimes s_i, \nonumber\\
Y_j &\mapsto Y_j \otimes 1, \nonumber
\end{align}
for $\mathbf{a} \in A^{\otimes n}$, $1 \leq i \leq n-1$ and $1 \leq j \leq n$. The \emph{$A$-decorated polynomial representation of $H_n^\affX(A,0)$} is $\Ind_{H_n(A,0)}^{H_n^\affX(A,0)}(\dot{\gamma}_A)$, given by
\begin{align}
\gamma_A^\affX \colon H_n^\affX(A,0) &\to \Endo_\kk(\kk[X_1^{\pm 1},\dotsc,X_n^{\pm 1}] \otimes A^{\otimes n}), \nonumber\\
\mathbf{a} &\mapsto 1 \otimes \mathbf{a}, \nonumber\\
T_i &\mapsto s_i \otimes s_i, \nonumber\\
X_j &\mapsto X_j \otimes 1, \nonumber
\end{align}
for $\mathbf{a} \in A^{\otimes n}$, $1 \leq i \leq n-1$ and $1 \leq j \leq n$.
\end{definition}

It is easily verified that $\dot{\gamma}_A$ can be extended to the representation
\begin{align} 
\gamma_A' \colon H_n^\aff(A,0) \to \Endo_\kk(A^{\otimes n}), \qquad \mathbf{a} \mapsto \mathbf{a}, \qquad T_i \mapsto s_i, \qquad \pi \mapsto \sigma, \label{eq:one_dim_r_rep_AY}
\end{align}
for $\mathbf{a} \in A^{\otimes n}$ and $0 \leq i \leq n-1$. Note that $\gamma_A'(Y_i) \overset{\eqref{eq:Y_i_formula}}{=} 1$, since $s_{i-1} \dotsb s_1 \sigma s_{n-1} \dotsb s_i = 1 \in S_n$. Using this, we can define $A$-decorated analogues to \cref{def:DAFHA_poly_rep_phi} and \cref{def:phi_basic_representation} for $z=0$.

\begin{definition} \label{def:double_affine_poly_rep_A}
The \emph{$A$-decorated polynomial representation of $\HH_n(A,0,q)$} is the representation $\Ind_{H_n^\aff(A,0)}^{\HH_n(A,0,q)}(\gamma_A')$, given by
\begin{align}
\gamma_A \colon \HH_n(A,0,q) &\to \Endo_\kk(\kk[X_1^{\pm 1},\dotsc,X_n^{\pm 1}] \otimes A^{\otimes n}), \nonumber\\
\mathbf{a} &\mapsto 1 \otimes \mathbf{a}, \nonumber\\
T_i &\mapsto s_i \otimes s_i, \label{eq:double_affine_poly_rep_A}\\
X_j &\mapsto X_j \otimes 1, \nonumber\\
\pi &\mapsto \pi \otimes \sigma, \nonumber
\end{align}
for $\mathbf{a} \in A^{\otimes n}$, $0 \leq i \leq n-1$ and $1 \leq j \leq n$. The action by $s_0$ and $\pi$ on $\kk[X_1^{\pm 1},\dotsc,X_n^{\pm 1}]$ involves a factor of a power of $q$, as described in \eqref{eq:s_0_operator} and \eqref{eq:pi_operator}. We denote $\sigma = (1 \; 2 \; \dotsb \; n) \in S_n$.
\end{definition}

\begin{definition} \label{def:basic_representation_A}
Let $q \in \kk^\times$ be an invertible element. The \emph{$A$-decorated Cherednik's basic representation} of $H_n^\aff(A,0)$ is the restriction $\gamma_A |_{H_n^\aff(A,0)} \colon H_n^\aff(A,0) \to \Endo_\kk(\kk[X_1^{\pm 1},\dotsc,X_n^{\pm 1}] \otimes A^{\otimes n})$. Note that $\gamma_A$ and this restriction depend on the choice of $q$, though $H_n^\aff(A,0)$ does not.
\end{definition}

\section{Diagrammatic Interpretation of \texorpdfstring{$\HH_n(A,z,q)$}{ℍₙ(A,z,q)}} \label{sec:diagrammatic_interpretation}

In this section, we give a heuristic diagrammatic interpretation of $\HH_n(A,z,q)$, which forms a useful motivation for the symmetries of \cref{sec:symmetries_for_DAFHA}, and a framework for future work. We do not, however, prove the faithfulness of this interpretation. The interpretation can be restricted to diagrammatic interpretations of the subalgebras $H_n(A,z)$, $H_n^\aff(A,z)$ and $H_n^\affX(A,z)$, for which the corresponding diagrams can be realised as endomorphisms in suitable monoidal categories. We can view the monomial elements of $\HH_n(A,z,q)$ as string diagrams lying on a thickened punctured torus $\mathbb{T}^2_\text{punc} \times [0,1]$, with arbitrary elements of $\HH_n(A,z,q)$ represented by formal $\kk$-linear sums of such diagrams. To illustrate this, consider the diagram
\begin{equation} \label{dia:torus_3D_diagram}
T_{\text{id}} =
    \begin{tikzpicture}[anchorbase,scale=0.5]
    	\draw (-\torwidth/4,-\torheight/4) to[out=up,in=down] (-\torwidth/4,-\torheight/4+\torthickness);
        \draw (-\torwidth/2,-\torheight/2) to[out=up,in=down] (-\torwidth/2,-\torheight/2+\torthickness);
        \draw (0,0) to[out=up,in=down] (0,\torthickness);
        \draw (\torwidth/2,\torheight/2) to[out=up,in=down] (\torwidth/2,\torheight/2+\torthickness);
        \draw (\torwidth/4,\torheight/4) to[out=up,in=down] (\torwidth/4,\torheight/4+\torthickness);
        \draw[DarkGreen,dashed] (-\torwidth,-\torheight+\torthickness) -- (\torwidth,\torheight+\torthickness);
        \draw[DarkGreen,dashed] (-\torwidth,-\torheight) -- (\torwidth,\torheight);
		\puncturedtorus
    \end{tikzpicture} \; ,
\end{equation}
representing the identity element of $\HH_5(A,z,q)$. The red faces are to be identified point-by-point to each other, as are the blue faces, to make a thickened torus, with a puncture where the corners of the above diagram are. The strings are placed on the diagonal, as illustrated by the dashed line, so that this three-dimensional image can be considered in plan-view from either the front red face or the right-hand-side blue face of Diagram \eqref{dia:torus_3D_diagram}, to describe certain braids more clearly. Viewing from the front or the right face, for an elementary tensor $\mathbf{a} = a_1 \otimes \dotsb \otimes a_n \in A^{\otimes n}$, let
\begin{equation} \label{eq:notation_tilde_a}
\mathbf{a} := \begin{tikzpicture}[centerzero]
        \draw (0,-0.3) node[anchor=north] {\dotlabel{1}} -- (0,0.3);
        \token{0,0}{west}{a_1};
        \draw (1,-0.3) node[anchor=north] {\dotlabel{2}} -- (1,0.3);
        \token{1,0}{west}{a_2};
    \end{tikzpicture}
    \ \hdots \
    \begin{tikzpicture}[centerzero]
        \draw (0,-0.3) node[anchor=north] {\dotlabel{n-1}} -- (0,0.3);
        \token{0,0}{west}{a_{n-1}};
        \draw (1,-0.3) node[anchor=north] {\dotlabel{n}} -- (1,0.3);
        \token{1,0}{west}{a_n};
    \end{tikzpicture}\; ,
\end{equation}
and for $1 \leq i \leq n-1$, let
\begin{equation} \label{eq:notation_tilde_T_i}
T_i := \begin{tikzpicture}[centerzero]
        \draw (0.3,-0.3) node[anchor=north] {\dotlabel{1}} -- (0.3,0.3);
    \end{tikzpicture}
    \ \hdots \
	\begin{tikzpicture}[centerzero]
		\draw (-0.7,-0.3) -- (-0.7,0.3);
        \draw (-0.3,-0.3) node[anchor=north] {\dotlabel{i}} -- (0.3,0.3);
        \draw[wipe] (0.3,-0.3) -- (-0.3,0.3);
        \draw (0.3,-0.3) node[anchor=north] {\dotlabel{i+1}} -- (-0.3,0.3);
        \draw (0.7,-0.3) -- (0.7,0.3);
    \end{tikzpicture}
    \ \hdots \
    \begin{tikzpicture}[centerzero]
        \draw (0.3,-0.3) node[anchor=north] {\dotlabel{n}} -- (0.3,0.3);
    \end{tikzpicture}
    \qquad \text{and} \qquad 
    T_i^{-1} := \begin{tikzpicture}[centerzero]
        \draw (0.3,-0.3) node[anchor=north] {\dotlabel{1}} -- (0.3,0.3);
    \end{tikzpicture}
    \ \hdots \
    \begin{tikzpicture}[centerzero]
    	\draw (-0.7,-0.3) -- (-0.7,0.3);
        \draw (0.3,-0.3) node[anchor=north] {\dotlabel{i+1}} -- (-0.3,0.3);
        \draw[wipe] (-0.3,-0.3) -- (0.3,0.3);
        \draw (-0.3,-0.3) node[anchor=north] {\dotlabel{i}} -- (0.3,0.3);
        \draw (0.7,-0.3) -- (0.7,0.3);
    \end{tikzpicture}
    \ \hdots \
    \begin{tikzpicture}[centerzero]
        \draw (0.3,-0.3) node[anchor=north] {\dotlabel{n}} -- (0.3,0.3);
    \end{tikzpicture}\; .
\end{equation}
Note that these define the same braids in the three-dimensional image whether it is viewed from the front red face or from the right-hand blue face of Diagram \eqref{dia:torus_3D_diagram}. We use a dot $\begin{tikzpicture}[centerzero]
        \draw (0,-0.3) -- (0,0.3);
        \opendot{0,0} node[anchor=west] {\dotlabel{1}};
    \end{tikzpicture}$ on a string to denote a loop around the meridian of the torus, through the red faces of Diagram \eqref{dia:torus_3D_diagram}, and we use a dot $\begin{tikzpicture}[centerzero]
        \draw (0,-0.3) -- (0,0.3);
        \greydot{0,0} node[anchor=west] {\dotlabel{1}};
    \end{tikzpicture}$ on a string to denote a loop around the longitude of the torus, through the blue faces of the diagram. For example, in $H_5^\aff(A,z)$ we have, viewing from the right-hand-side blue face, 
\[
    \begin{tikzpicture}[centerzero]
    	\draw (-0.8,-0.5) -- (-0.8,0.5);
        \draw (-0.4,-0.5) -- (-0.4,0.5);
        \draw (0,-0.5) -- (0,0.5);
        \draw (0.4,-0.5) -- (0.4,0.5);
        \draw (0.8,-0.5) -- (0.8,0.5);
        \opendot{0,0} node[anchor=west] {\dotlabel{1}};
    \end{tikzpicture}
    \ = \
    \begin{tikzpicture}[centerzero,xscale=-1]
        \draw (0.8,-0.5) -- (0.8,0.5);
        \draw (0.4,-0.5) -- (0.4,0.5);
        \draw (0,-0.5) to[out=up,in=-20] (-1.2,0);
        \draw[wipe] (1.2,0) to[out=160,in=down] (0,0.5);
        \draw (1.2,0) to[out=160,in=down] (0,0.5);
        \draw[wipe] (-0.4,-0.5) -- (-0.4,0.5);
        \draw (-0.4,-0.5) -- (-0.4,0.5);
        \draw[wipe] (-0.8,-0.5) -- (-0.8,0.5);
        \draw (-0.8,-0.5) -- (-0.8,0.5);
        \identify{-1.2}{-0.5}{1.2}{0.5};
    \end{tikzpicture}\;
    \qquad \text{and} \qquad
    \begin{tikzpicture}[centerzero]
    	\draw (-0.8,-0.5) -- (-0.8,0.5);
        \draw (-0.4,-0.5) -- (-0.4,0.5);
        \draw (0,-0.5) -- (0,0.5);
        \draw (0.4,-0.5) -- (0.4,0.5);
        \draw (0.8,-0.5) -- (0.8,0.5);
        \opendot{0,0} node[anchor=west,xshift=-4pt,yshift=4pt] {\dotlabel{-1}};
    \end{tikzpicture}
    \ = \
    \begin{tikzpicture}[centerzero,xscale=-1]
    	\draw (0.8,-0.5) -- (0.8,0.5);
        \draw (0.4,-0.5) -- (0.4,0.5);
        \draw[wipe] (0,-0.5) to[out=up,in=200] (1.2,0);
        \draw (0,-0.5) to[out=up,in=200] (1.2,0);
        \draw (-1.2,0) to[out=20,in=down] (0,0.5);
        \draw[wipe] (-0.4,-0.5) -- (-0.4,0.5);
        \draw (-0.4,-0.5) -- (-0.4,0.5);
        \draw[wipe] (-0.8,-0.5) -- (-0.8,0.5);
        \draw (-0.8,-0.5) -- (-0.8,0.5);
        \identify{-1.2}{-0.5}{1.2}{0.5};
    \end{tikzpicture}\; ,
\]
where the dotted red lines represent the red faces of Diagram \eqref{dia:torus_3D_diagram}. In $H_5^\affX(A,z)$ we have, viewing from the front red face,
\[
    \begin{tikzpicture}[centerzero]
    	\draw (-0.8,-0.5) -- (-0.8,0.5);
        \draw (-0.4,-0.5) -- (-0.4,0.5);
        \draw (0,-0.5) -- (0,0.5);
        \draw (0.4,-0.5) -- (0.4,0.5);
        \draw (0.8,-0.5) -- (0.8,0.5);
        \greydot{0,0} node[anchor=west] {\dotlabel{1}};
    \end{tikzpicture}
    \ = \
    \begin{tikzpicture}[centerzero]
    	\draw (0.8,-0.5) -- (0.8,0.5);
        \draw (0.4,-0.5) -- (0.4,0.5);
        \draw[wipe] (0,-0.5) to[out=up,in=200] (1.2,0);
        \draw (0,-0.5) to[out=up,in=200] (1.2,0);
        \draw (-1.2,0) to[out=20,in=down] (0,0.5);
        \draw[wipe] (-0.4,-0.5) -- (-0.4,0.5);
        \draw (-0.4,-0.5) -- (-0.4,0.5);
        \draw[wipe] (-0.8,-0.5) -- (-0.8,0.5);
        \draw (-0.8,-0.5) -- (-0.8,0.5);
        \identifyX{-1.2}{-0.5}{1.2}{0.5};
    \end{tikzpicture}\;
    \qquad \text{and} \qquad
    \begin{tikzpicture}[centerzero]
    	\draw (-0.8,-0.5) -- (-0.8,0.5);
        \draw (-0.4,-0.5) -- (-0.4,0.5);
        \draw (0,-0.5) -- (0,0.5);
        \draw (0.4,-0.5) -- (0.4,0.5);
        \draw (0.8,-0.5) -- (0.8,0.5);
        \greydot{0,0} node[anchor=west,xshift=-4pt,yshift=4pt] {\dotlabel{-1}};
    \end{tikzpicture}
    \ = \
    \begin{tikzpicture}[centerzero]
        \draw (0.8,-0.5) -- (0.8,0.5);
        \draw (0.4,-0.5) -- (0.4,0.5);
        \draw (0,-0.5) to[out=up,in=-20] (-1.2,0);
        \draw[wipe] (1.2,0) to[out=160,in=down] (0,0.5);
        \draw (1.2,0) to[out=160,in=down] (0,0.5);
        \draw[wipe] (-0.4,-0.5) -- (-0.4,0.5);
        \draw (-0.4,-0.5) -- (-0.4,0.5);
        \draw[wipe] (-0.8,-0.5) -- (-0.8,0.5);
        \draw (-0.8,-0.5) -- (-0.8,0.5);
        \identifyX{-1.2}{-0.5}{1.2}{0.5};
    \end{tikzpicture}\;.
\]
For any $\lambda = (\lambda_1,\dotsc,\lambda_n) \in \ZZ^n$, let
\begin{equation} \label{eq:notation_Y_lambda}
Y^\lambda : = \begin{tikzpicture}[centerzero]
        \draw (0.3,-0.3) node[anchor=north] {\dotlabel{1}} -- (0.3,0.3);
        \opendot{0.3,0} node[anchor=west] {\dotlabel{\lambda_1}};
    \end{tikzpicture}
    \ \hdots \
    \begin{tikzpicture}[centerzero]
    	\draw (-1,-0.3) node[anchor=north] {\dotlabel{i-1}} -- (-1,0.3);
    	\opendot{-1,0} node[anchor=west] {\dotlabel{\lambda_{i-1}}};
		\draw (0,-0.3) node[anchor=north] {\dotlabel{i}} -- (0,0.3);
		\opendot{0,0} node[anchor=west] {\dotlabel{\lambda_i}};
        \draw (1,-0.3) node[anchor=north] {\dotlabel{i+1}} -- (1,0.3);
        \opendot{1,0} node[anchor=west] {\dotlabel{\lambda_{i+1}}};
    \end{tikzpicture}
    \ \hdots \
    \begin{tikzpicture}[centerzero]
        \draw (0.3,-0.3) node[anchor=north] {\dotlabel{n}} -- (0.3,0.3);
        \opendot{0.3,0} node[anchor=west] {\dotlabel{\lambda_n}};
    \end{tikzpicture}\; ,
\end{equation}
For any $\mu = (\mu_1,\dotsc,\mu_n) \in \ZZ^n$, let
\begin{equation} \label{eq:notation_X_mu}
X^\mu : = \begin{tikzpicture}[centerzero]
        \draw (0.3,-0.3) node[anchor=north] {\dotlabel{1}} -- (0.3,0.3);
        \greydot{0.3,0} node[anchor=west] {\dotlabel{\mu_1}};
    \end{tikzpicture}
    \ \hdots \
    \begin{tikzpicture}[centerzero]
    	\draw (-1,-0.3) node[anchor=north] {\dotlabel{i-1}} -- (-1,0.3);
    	\greydot{-1,0} node[anchor=west] {\dotlabel{\mu_{i-1}}};
		\draw (0,-0.3) node[anchor=north] {\dotlabel{i}} -- (0,0.3);
		\greydot{0,0} node[anchor=west] {\dotlabel{\mu_i}};
        \draw (1,-0.3) node[anchor=north] {\dotlabel{i+1}} -- (1,0.3);
        \greydot{1,0} node[anchor=west] {\dotlabel{\mu_{i+1}}};
    \end{tikzpicture}
    \ \hdots \
    \begin{tikzpicture}[centerzero]
        \draw (0.3,-0.3) node[anchor=north] {\dotlabel{n}} -- (0.3,0.3);
        \greydot{0.3,0} node[anchor=west] {\dotlabel{\mu_n}};
    \end{tikzpicture}\; .
\end{equation}
Diagrams \labelcref{eq:notation_tilde_a,eq:notation_tilde_T_i,eq:notation_Y_lambda,eq:notation_X_mu} give a full diagrammatic interpretation of the generators from \cref{thm:DAFHA_bernstein_presentation}. Composition corresponds to gluing the inside of one thickened punctured torus to the outside of another, such that the punctures align, and the braids within them are vertically stacked, bottom to top. For example, in $\HH_5(A,z,q)$,
\[
X_4 Y_3 T_1 =
\begin{tikzpicture}[anchorbase,scale=0.8]
        \draw (\torwidth/2,\torheight/2) to[out=up,in=down] (\torwidth/2,\torheight/2+\torthickness);
        \filldraw[blue,opacity=0.2] (\torwidth-\torskew,\torheight/2+2*\torthickness/5) circle (2pt);
		\draw[wipe] (\torwidth/4,\torheight/4) to[out=up,in=left,looseness=1] (\torwidth-\torskew,\torheight/2+2*\torthickness/5);        
        \draw[cyan] (\torwidth/4,\torheight/4) to[out=up,in=left,looseness=1] (\torwidth-\torskew,\torheight/2+2*\torthickness/5);
        \filldraw[red,opacity=0.2] (\torwidth/5,\torheight+\torthickness/8) circle (2pt);
        \draw[purple] (0,0) to[out=up,in=-90-\torangle] (\torwidth/5,\torheight+\torthickness/8);
        \draw[wipe] (-\torwidth+1.5*\torskew,\torheight/2+\torthickness/5) to[out=right,in=down,looseness=1] (\torwidth/4,\torheight/4+\torthickness);
        \draw[cyan] (-\torwidth+1.5*\torskew,\torheight/2+\torthickness/5) to[out=right,in=down,looseness=1] (\torwidth/4,\torheight/4+\torthickness);
        \filldraw[blue,opacity=0.2] (-\torwidth+1.5*\torskew,\torheight/2+\torthickness/5) circle (2pt);
        \draw[wipe] (-\torwidth/5,-\torheight+\torthickness/8) to[out=90-\torangle,in=down, looseness=0.5] (0,3*\torthickness/4) to[out=up,in=down] (0,\torthickness);
        \draw[purple] (-\torwidth/5,-\torheight+\torthickness/8) to[out=90-\torangle,in=down, looseness=0.5] (0,3*\torthickness/4) to[out=up,in=down] (0,\torthickness);
        \filldraw[red,opacity=0.2] (-\torwidth/5,-\torheight+\torthickness/8) circle (2pt);
        \draw[wipe] (-\torwidth/2,-\torheight/2) to[out=up,in=down] (-\torwidth/4,-\torheight/4+\torthickness);
        \draw (-\torwidth/2,-\torheight/2) to[out=up,in=down] (-\torwidth/4,-\torheight/4+\torthickness);
        \draw[wipe] (-\torwidth/4,-\torheight/4) to[out=up,in=down] (-\torwidth/2,-\torheight/2+\torthickness);
        \draw (-\torwidth/4,-\torheight/4) to[out=up,in=down] (-\torwidth/2,-\torheight/2+\torthickness);
        \draw[DarkGreen,dashed] (-\torwidth,-\torheight+\torthickness) -- (\torwidth,\torheight+\torthickness);
        \draw[DarkGreen,dashed] (-\torwidth,-\torheight) -- (\torwidth,\torheight);
		\puncturedtorus
    \end{tikzpicture}
\]
Any strings which route around the meridian or longitude of the torus are coloured for clarity. The generators from \cref{thm:double_affine_Frobenius_Hecke_algebra} can also be written diagrammatically, using the identification \eqref{eq:Y_i_formula}. Indeed,
    \begin{equation} \label{str:pi_on_cylinder}
    \pi = Y_1 T_1^{-1} \dotsb T_{n-1}^{-1} \ =  \
	\begin{tikzpicture}[centerzero]
		\draw (0.8,-1) node[anchor=135 ,yshift=-1] {\dotlabel{n}} to[out=up,in=down] (-0.8,-0.2) to[out=up,in=180] (1.2,0.6);
		\draw (-1.2,0.6) to[out=0,in=down] (-0.8,1);
		\draw[wipe] (-0.8,-1) node[anchor=north] {\dotlabel{1}} to[out=up,in=down] (-0.4,-0.2) to[out=up,in=down] (-0.4,1);
        \draw (-0.8,-1) node[anchor=north] {\dotlabel{1}} to[out=up,in=down] (-0.4,-0.2) to[out=up,in=down] (-0.4,1);
        \draw[wipe] (0.4,-1) node[anchor=north] {\dotlabel{n-1}} to[out=up,in=down] (0.8,-0.2) to[out=up,in=down] (0.8,1);
        \draw (0.4,-1) node[anchor=north] {\dotlabel{n-1}} to[out=up,in=down] (0.8,-0.2) to[out=up,in=down] (0.8,1);
        \identify{-1.2}{-1}{1.2}{1};
        \node at (0.2,0) {$\hdots$};
    \end{tikzpicture}
    \ = \
    \begin{tikzpicture}[centerzero]
        \draw (-0.8,-1) node[anchor=north] {\dotlabel{1}} to[out=up,in=down] (-0.4,1);
        \draw (0.4,-1) node[anchor=north] {\dotlabel{n-1}} to[out=up,in=down] (0.8,1);
		\draw (0.8,-1) node[anchor=135 ,yshift=-1] {\dotlabel{n}} to[out=up,in=225] (1.2,0);
		\draw (-1.2,0) to[out=45,in=down] (-0.8,1);
        \identify{-1.2}{-1}{1.2}{1};
        \node at (0,0) {$\hdots$};
    \end{tikzpicture}\;
    \in \HH_n(A,z,q)\;,
    \end{equation}
    for each $n \geq 2$, and
    \begin{equation} \label{str:T_0_on_cylinder}
    T_0 := Y_n^{-1}Y_1 T_1^{-1} \dotsb T_{n-1}^{-1} \dotsb T_1^{-1} = \begin{tikzpicture}[centerzero]
    	\draw (1,-1) node[anchor=135 ,yshift=-1] {\dotlabel{n}} to[out=up,in=down,looseness=0.5] (1,-0.6) to[out=up,in=down,looseness=0.5] (-1,-0.2) to[out=up,in=left,looseness=0.5] (1.4,0.2);
    	\draw (-1.4,0.2) to[out=right,in=down] (-1,1);
    	\draw[wipe] (0.6,-1) node[anchor=north] {\dotlabel{n-1}} to[out=up,in=down] (0.2,-0.6) to[out=up,in=down] (0.6,-0.2) to[out=up,in=down] (0.6,1);
    	\draw (0.6,-1) node[anchor=north] {\dotlabel{n-1}} to[out=up,in=down] (0.2,-0.6) to[out=up,in=down] (0.6,-0.2) to[out=up,in=down] (0.6,1);
    	\draw[wipe] (-0.6,-1) node[anchor=north] {\dotlabel{2}} to[out=up,in=down] (-1,-0.6) to[out=up,in=down] (-0.6,-0.2) to[out=up,in=down] (-0.6,1);
    	\draw (-0.6,-1) node[anchor=north] {\dotlabel{2}} to[out=up,in=down] (-1,-0.6) to[out=up,in=down] (-0.6,-0.2) to[out=up,in=down] (-0.6,1);
    	\draw[wipe] (-1,-1) node[anchor=north] {\dotlabel{1}} to[out=up,in=195,looseness=0.5] (0.6,-0.6) to[out=15,in=down,looseness=1] (1,-0.2) to[out=up,in=down,looseness=0.5] (1,0.2) to[out=up,in=right,looseness=0.5] (-1.4,0.6);
    	\draw (-1,-1) node[anchor=north] {\dotlabel{1}} to[out=up,in=195,looseness=0.5] (0.6,-0.6) to[out=15,in=down,looseness=1] (1,-0.2) to[out=up,in=down,looseness=0.5] (1,0.2) to[out=up,in=right,looseness=0.5] (-1.4,0.6);
    	\draw (1.4,0.6) to[out=left,in=down] (1,1);
        \identify{-1.4}{-1}{1.4}{1};
        \node at (0,0) {$\hdots$};
    \end{tikzpicture}
    \ = \
    \begin{tikzpicture}[centerzero]
    	\draw (1,-1) node[anchor=135 ,yshift=-1] {\dotlabel{n}} to[out=up,in=down,looseness=0.5] (1,-0.2) to[out=up,in=left] (1.4,0.2);
    	\draw (-1.4,0.2) to[out=right,in=down] (-1,1);
    	\draw (0.6,-1) node[anchor=north] {\dotlabel{n-1}} to[out=up,in=down] (0.6,1);
    	\draw (-0.6,-1) node[anchor=north] {\dotlabel{2}} to[out=up,in=down] (-0.6,1);
		\draw[wipe] (-1,-1) node[anchor=north] {\dotlabel{1}} to[out=up,in=down,looseness=0.5] (-1,0.2) to[out=up,in=right,looseness=1] (-1.4,0.6);
    	\draw (-1,-1) node[anchor=north] {\dotlabel{1}} to[out=up,in=down,looseness=0.5] (-1,0.2) to[out=up,in=right,looseness=1] (-1.4,0.6);
    	\draw (1.4,0.6) to[out=left,in=down] (1,1);
        \identify{-1.4}{-1}{1.4}{1};
        \node at (0,0) {$\hdots$};
    \end{tikzpicture}\;
    \in \HH_n(A,z,q)\;.
    \end{equation}
	This diagrammatic description motivates both the identity \eqref{eq:Y_i_formula} and the existence of two presentations \cref{thm:DAFHA_bernstein_presentation} and \cref{thm:double_affine_Frobenius_Hecke_algebra} of $\HH_n(A,z,q)$. In three dimensions, for $\HH_5(A,z,q)$, we have
\[
\pi =
\begin{tikzpicture}[anchorbase,scale=0.8]
		\filldraw[red,opacity=0.2] (3*\torwidth/4,\torheight+\torthickness/2) circle (2pt);
        \draw[purple] (\torwidth/2,\torheight/2) to[out=up,in=-90-\torangle] (3*\torwidth/4,\torheight+\torthickness/2);
        \draw (\torwidth/4,\torheight/4) to[out=up,in=down] (\torwidth/2,\torheight/2+\torthickness);
        \draw (0,0) to[out=up,in=down] (\torwidth/4,\torheight/4+\torthickness);
        \draw (-\torwidth/4,-\torheight/4) to[out=up,in=down] (0,\torthickness);
        \draw (-\torwidth/2,-\torheight/2) to[out=up,in=down] (-\torwidth/4,-\torheight/4+\torthickness);
		\filldraw[red,opacity=0.2] (0,-\torheight+\torthickness/2) circle (2pt);
		\draw[wipe] (0,-\torheight+\torthickness/2) to[out=\torangle,in=down] (-\torwidth/2,-\torheight/2+\torthickness);
        \draw[purple] (0,-\torheight+\torthickness/2) to[out=\torangle,in=down] (-\torwidth/2,-\torheight/2+\torthickness);
        \draw[DarkGreen,dashed] (-\torwidth,-\torheight+\torthickness) -- (\torwidth,\torheight+\torthickness);
        \draw[DarkGreen,dashed] (-\torwidth,-\torheight) -- (\torwidth,\torheight);
		\puncturedtorus
    \end{tikzpicture}
    \quad \text{and} \quad
T_0 =
\begin{tikzpicture}[anchorbase,scale=0.8]
		\filldraw[red,opacity=0.2] (3*\torwidth/4-0.75*\torwidth,\torheight+0.35*\torthickness) circle (2pt);
        \draw[cyan] (\torwidth/2,\torheight/2) to[out=up,in=-90-\torangle,looseness=0.75] (3*\torwidth/4-0.75*\torwidth,\torheight+0.35*\torthickness);
        \draw[wipe] (\torwidth/4,\torheight/4) to[out=up,in=down] (\torwidth/4,\torheight/4+\torthickness);
        \draw (\torwidth/4,\torheight/4) to[out=up,in=down] (\torwidth/4,\torheight/4+\torthickness);
        \draw (0,0) to[out=up,in=down] (0,\torthickness);
        \draw (-\torwidth/4,-\torheight/4) to[out=up,in=down] (-\torwidth/4,-\torheight/4+\torthickness);
        \draw[wipe] (-\torwidth/2,-\torheight/2) to[out=up,in=90-\torangle] (0,-\torheight+0.65*\torthickness);
        \draw[purple] (-\torwidth/2,-\torheight/2) to[out=up,in=90-\torangle] (0,-\torheight+0.65*\torthickness);
        \draw[purple] (3*\torwidth/4,\torheight+0.65*\torthickness) to[out=-90-\torangle,in=down,looseness=0.75] (\torwidth/2,\torheight/2+\torthickness);
        \filldraw[red,opacity=0.2] (3*\torwidth/4,\torheight+0.65*\torthickness) circle (2pt);
        \filldraw[red,opacity=0.2] (0,-\torheight+0.65*\torthickness) circle (2pt);
		\draw[wipe] (-0.75*\torwidth,-\torheight+0.35*\torthickness) to[out=90-\torangle,in=down] (-\torwidth/2,-\torheight/2+\torthickness);
        \draw[cyan] (-0.75*\torwidth,-\torheight+0.35*\torthickness) to[out=90-\torangle,in=down] (-\torwidth/2,-\torheight/2+\torthickness);
        \filldraw[red,opacity=0.2] (-0.75*\torwidth,-\torheight+0.35*\torthickness) circle (2pt);
        \draw[DarkGreen,dashed] (-\torwidth,-\torheight+\torthickness) -- (\torwidth,\torheight+\torthickness);
        \draw[DarkGreen,dashed] (-\torwidth,-\torheight) -- (\torwidth,\torheight);
		\puncturedtorus
    \end{tikzpicture}\; .
\]

All relations of $\HH_n(A,z,q)$ of either presentation can be written diagrammatically. For example, using \eqref{str:pi_on_cylinder}, Relations \eqref{dafha_rel_10} and \eqref{dafha_rel_10b} can be interpreted on the punctured torus as
\begin{equation} \label{str:DAFHA_rel_1}
\begin{tikzpicture}[centerzero]
        \draw (-0.8,-0.5) node[anchor=north] {\dotlabel{1}} to[out=up,in=down] (-0.4,0.5);
        \draw (0.4,-0.5) node[anchor=north] {\dotlabel{n-1}} to[out=up,in=down] (0.8,0.5);
		\draw (0.8,-0.5) node[anchor=135 ,yshift=-1] {\dotlabel{n}} to[out=up,in=225] (1.2,0);
		\draw (-1.2,0) to[out=45,in=down] (-0.8,0.5);
        \greydot{-0.8,-0.35} node[anchor=-45] {\dotlabel{\mu_1}};
        \greydot{0.4,-0.35} node[anchor=-30] {\dotlabel{\mu_{n-1}}};
        \greydot{0.85,-0.35} node[anchor=-100,yshift=2] {\dotlabel{\mu_n}};
        \identify{-1.2}{-0.5}{1.2}{0.5};
        \node at (0,0.2) {$\hdots$};
    \end{tikzpicture}
    \ = \
    q^{-\mu_n} \; \begin{tikzpicture}[centerzero]
        \draw (-0.8,-0.5) node[anchor=north] {\dotlabel{1}} to[out=up,in=down] (-0.4,0.5);
        \draw (0.4,-0.5) node[anchor=north] {\dotlabel{n-1}} to[out=up,in=down] (0.8,0.5);
		\draw (0.8,-0.5) node[anchor=135 ,yshift=0] {\dotlabel{n}} to[out=up,in=225] (1.2,0);
		\draw (-1.2,0) to[out=45,in=down] (-0.8,0.5);
        \greydot{-0.85,0.35} node[anchor=-45,yshift=2] {\dotlabel{\mu_n}};
        \greydot{-0.4,0.35} node[anchor=south,yshift=2] {\dotlabel{\mu_1}};
        \greydot{0.8,0.35} node[anchor=south,yshift=2] {\dotlabel{\mu_{n-1}}};
        \identify{-1.2}{-0.5}{1.2}{0.5};
        \node at (0,0) {$\hdots$};
    \end{tikzpicture}\; ,
\end{equation}
for $\mu = (\mu_1,\dotsc,\mu_n) \in \ZZ^n$. Using \eqref{str:T_0_on_cylinder}, Relations \eqref{dafha_rel_8} and \eqref{dafha_rel_8b} can be written diagrammatically as
\begin{equation} \label{str:DAFHA_rel_2}
\begin{tikzpicture}[centerzero]
		\draw (-0.6,-0.5) node[anchor=north] {\dotlabel{2}} -- (-0.6,0.5);
		\draw (0.6,-0.5) node[anchor=north] {\dotlabel{n-1}} -- (0.6,0.5);
        \draw (1,-0.5) node[anchor=north] {\dotlabel{n}} to[out=up,in=225] (1.4,-0.2);
        \draw (-1.4,-0.2) to[out=45,in=down] (-1,0.5);
        \draw[wipe] (-1,-0.5) to[out=up,in=-45] (-1.4,0.2);
        \draw (-1,-0.5) node[anchor=north] {\dotlabel{1}} to[out=up,in=-45] (-1.4,0.2);
        \draw (1.4,0.2) to[out=135,in=down] (1,0.5);
        \greydot{1.06,-0.35} node[anchor=south] {\dotlabel{1}};
        \identify{-1.4}{-0.5}{1.4}{0.5};
        \node at (0,0) {$\hdots$};
    \end{tikzpicture}
    \ = \
    q^{-1} \;
    \begin{tikzpicture}[centerzero]
		\draw (-0.6,-0.5) node[anchor=north] {\dotlabel{2}} -- (-0.6,0.5);
		\draw (0.6,-0.5) node[anchor=north] {\dotlabel{n-1}} -- (0.6,0.5);
		\draw (-1,-0.5) node[anchor=north] {\dotlabel{1}} to[out=up,in=-45] (-1.4,0.2);
		\draw (1.4,0.2) to[out=135,in=down] (1,0.5);
        \draw (1,-0.5) node[anchor=north] {\dotlabel{n}} to[out=up,in=225] (1.4,-0.2);
        \draw[wipe] (-1.4,-0.2) to[out=45,in=down] (-1,0.5);
        \draw (-1.4,-0.2) to[out=45,in=down] (-1,0.5);
        \greydot{-1,0.35} node[anchor=west] {\dotlabel{1}};
        \identify{-1.4}{-0.5}{1.4}{0.5};
        \node at (0,0) {$\hdots$};
    \end{tikzpicture} \; .
\end{equation}
In particular, taking $(\mu_1,\dotsc,\mu_n)=(0,\dotsc,0,1)$, Relations \eqref{str:DAFHA_rel_1} and \eqref{str:DAFHA_rel_2} imply
\begin{equation*}
\begin{tikzpicture}[centerzero]
        \draw (-0.8,-1) node[anchor=north] {\dotlabel{1}} to[out=up,in=down] (-0.4,0);
        \draw (0.4,-1) node[anchor=north] {\dotlabel{n-1}} to[out=up,in=down] (0.8,0);
		\draw (0.8,-1) node[anchor=135 ,yshift=-1] {\dotlabel{n}} to[out=up,in=225] (1.2,-0.5);
		\draw (-1.2,-0.5) to[out=45,in=down] (-0.8,0);
        \greydot{0.85,-0.85} node[anchor=-100,yshift=2] {\dotlabel{1}};
        \draw (-0.8,0)to[out=up,in=-45] (-1.2,0.5);
        \draw (-0.4,0) to[out=up,in=down] (-0.8,1);
		\draw (0.8,0) to[out=up,in=down] (0.4,1);
		\draw (1.2,0.5) to[out=135,in=down] (0.8,1);
        \greydot{-0.8,0} node[anchor=-100,yshift=3] {\dotlabel{-1}};
        \identify{-1.2}{-1}{1.2}{1};
        \node at (0.2,0) {$\hdots$};
    \end{tikzpicture}
	\ = \
    q^{-1} \;
    \begin{tikzpicture}[centerzero]
    	\draw (-1,-0.5) node[anchor=north] {\dotlabel{1}} -- (-1,0.5);
		\draw (-0.6,-0.5) node[anchor=north] {\dotlabel{2}} -- (-0.6,0.5);
		\draw (0.6,-0.5) node[anchor=north] {\dotlabel{n-1}} -- (0.6,0.5);
		\draw (1,-0.5) node[anchor=north] {\dotlabel{n}} -- (1,0.5);
        \identify{-1.4}{-0.5}{1.4}{0.5};
        \node at (0,0) {$\hdots$};
    \end{tikzpicture} \;,
    \quad \text{and} \quad
	\begin{tikzpicture}[centerzero]
		\draw (-0.6,-1) node[anchor=north] {\dotlabel{2}} -- (-0.6,0);
		\draw (0.6,-1) node[anchor=north] {\dotlabel{n-1}} -- (0.6,0);
        \draw (1,-1) node[anchor=north] {\dotlabel{n}} to[out=up,in=225] (1.4,-0.7);
        \draw (-1.4,-0.7) to[out=45,in=down] (-1,0);
        \draw[wipe] (-1,-1) to[out=up,in=-45] (-1.4,-0.3);
        \draw (-1,-1) node[anchor=north] {\dotlabel{1}} to[out=up,in=-45] (-1.4,-0.3);
        \draw (1.4,-0.3) to[out=135,in=down] (1,0);
        \greydot{1.06,-0.85} node[anchor=south] {\dotlabel{1}};
        \draw (-0.6,0) -- (-0.6,1);
		\draw (0.6,0) -- (0.6,1);
		\draw (1.4,0.7) to[out=135,in=down] (1,1);
        \draw (1,0) to[out=up,in=225] (1.4,0.3);
        \draw (-1.4,0.3) to[out=45,in=down] (-1,1);
        \draw[wipe] (-1,0) to[out=up,in=-45] (-1.4,0.7);
        \draw (-1,0) to[out=up,in=-45] (-1.4,0.7);
        \greydot{-1,0} node[anchor=-120,yshift=3] {\dotlabel{-1}};
        \identify{-1.4}{-1}{1.4}{1};
        \node at (0,0) {$\hdots$};
    \end{tikzpicture}
    \ = \
    q^{-1} \;
    \begin{tikzpicture}[centerzero]
    	\draw (-1,-0.5) node[anchor=north] {\dotlabel{1}} -- (-1,0.5);
		\draw (-0.6,-0.5) node[anchor=north] {\dotlabel{2}} -- (-0.6,0.5);
		\draw (0.6,-0.5) node[anchor=north] {\dotlabel{n-1}} -- (0.6,0.5);
		\draw (1,-0.5) node[anchor=north] {\dotlabel{n}} -- (1,0.5);
        \identify{-1.4}{-0.5}{1.4}{0.5};
        \node at (0,0) {$\hdots$};
    \end{tikzpicture} \;,
\end{equation*}
respectively. The left-hand-side of either diagram can be interpreted as a single loop of the $n$-th string around the puncture of the torus. Hence, a string can be moved through the puncture, at the cost of introducing a factor of $q$ or $q^{-1}$, and still represent the same element of $\HH_n(A,z,q)$.

\section{Symmetries for \texorpdfstring{$\HH_n(A,z,q)$}{ℍₙ(A,z,q)}} \label{sec:symmetries_for_DAFHA}

The diagrammatic interpretation of $\HH_n(A,z,q)$ from \cref{sec:diagrammatic_interpretation} provides geometrically-motivated involutions and anti-involutions of $\HH_n(A,z,q)$, up to a change in parameters.

\begin{proposition} \label{thm:DAFHA_geometric_isomorphisms}
Let $n \in \NN, n \geq 2$. The following maps are isomorphisms of $\kk$-superalgebras:
\begin{enumerate}[i)]
\item \begin{minipage}[t]{.80\linewidth}Reflecting diagrams in the green plane:\end{minipage} \begin{minipage}[t]{.14\linewidth}\hfill$\begin{tikzpicture}[anchorbase,scale=0.4]
    	\draw (-\torwidth/4,-\torheight/4) to[out=up,in=down] (-\torwidth/4,-\torheight/4+\torthickness);
        \draw (-\torwidth/2,-\torheight/2) to[out=up,in=down] (-\torwidth/2,-\torheight/2+\torthickness);
        \draw (0,0) to[out=up,in=down] (0,\torthickness);
        \draw (\torwidth/2,\torheight/2) to[out=up,in=down] (\torwidth/2,\torheight/2+\torthickness);
        \draw (\torwidth/4,\torheight/4) to[out=up,in=down] (\torwidth/4,\torheight/4+\torthickness);
		\puncturedtorus
		\path[fill=DarkGreen,opacity=0.1] (-\torwidth-\torpunc,-\torheight-\torpunc) -- (\torwidth+\torpunc,\torheight-\torpunc) -- (\torwidth+\torpunc,\torheight+\torthickness+\torpunc) -- (-\torwidth-\torpunc,-\torheight+\torthickness+\torpunc)--cycle;
		\draw[DarkGreen] (\torwidth,\torheight+\torthickness/2) node[anchor=west] {\reflectbox{\rotatebox[origin=c]{45}{$\leftrightarrow$}}};
    \end{tikzpicture}$\end{minipage}
    \vspace{-2cm}
	\begin{align*}
	\epsilontransform \colon \HH_n(A,z,q) &\to \HH_n(A,-z,q^{-1}),\\
	 	\begin{tikzpicture}[centerzero]
	        \draw (-0.3,-0.3) -- (0.3,0.3);
	        \draw[wipe] (0.3,-0.3) -- (-0.3,0.3);
	        \draw (0.3,-0.3) -- (-0.3,0.3);
	    \end{tikzpicture} \mapsto \begin{tikzpicture}[centerzero]
	        \draw (0.3,-0.3) -- (-0.3,0.3);
	        \draw[wipe] (-0.3,-0.3) -- (0.3,0.3);
	        \draw (-0.3,-0.3) -- (0.3,0.3);
	    \end{tikzpicture}\; , \qquad \begin{tikzpicture}[centerzero]
	        \draw (0.3,-0.3) -- (-0.3,0.3);
	        \draw[wipe] (-0.3,-0.3) -- (0.3,0.3);
	        \draw (-0.3,-0.3) -- (0.3,0.3);
	    \end{tikzpicture} &\mapsto \begin{tikzpicture}[centerzero]
	        \draw (-0.3,-0.3) -- (0.3,0.3);
	        \draw[wipe] (0.3,-0.3) -- (-0.3,0.3);
	        \draw (0.3,-0.3) -- (-0.3,0.3);
	    \end{tikzpicture}\;, \qquad \begin{tikzpicture}[centerzero]
	        \draw (0,-0.3) -- (0,0.3);
	        \token{0,0}{west}{a};
	    \end{tikzpicture} \mapsto \; \begin{tikzpicture}[centerzero]
	        \draw (0,-0.3) -- (0,0.3);
	        \token{0,0}{west}{a};
	    \end{tikzpicture}\; , \\
	    \begin{tikzpicture}[centerzero]
	        \draw (0,-0.3) -- (0,0.3);
	        \opendot{0,0} node[anchor=west] {\dotlabel{1}};
	    \end{tikzpicture} \mapsto \; \begin{tikzpicture}[centerzero]
	        \draw (0,-0.3) -- (0,0.3);
	        \greydot{0,0} node[anchor=west] {\dotlabel{1}};
	    \end{tikzpicture}\; , \qquad \begin{tikzpicture}[centerzero]
	        \draw (0,-0.3) -- (0,0.3);
	        \opendot{0,0} node[anchor=west] {\dotlabel{-1}};
	    \end{tikzpicture} \mapsto \; \begin{tikzpicture}[centerzero]
	        \draw (0,-0.3) -- (0,0.3);
	        \greydot{0,0} node[anchor=west] {\dotlabel{-1}};
	    \end{tikzpicture}\; &,\qquad \begin{tikzpicture}[centerzero]
	        \draw (0,-0.3) -- (0,0.3);
	        \greydot{0,0} node[anchor=west] {\dotlabel{1}};
	    \end{tikzpicture} \mapsto \; \begin{tikzpicture}[centerzero]
	        \draw (0,-0.3) -- (0,0.3);
	        \opendot{0,0} node[anchor=west] {\dotlabel{1}};
	    \end{tikzpicture}\; , \qquad \begin{tikzpicture}[centerzero]
	        \draw (0,-0.3) -- (0,0.3);
	        \greydot{0,0} node[anchor=west] {\dotlabel{-1}};
	    \end{tikzpicture} \mapsto \; \begin{tikzpicture}[centerzero]
	        \draw (0,-0.3) -- (0,0.3);
	        \opendot{0,0} node[anchor=west] {\dotlabel{-1}};
	    \end{tikzpicture}\; ,
	\end{align*}
	for $a \in A$, which for $A=\kk$ recovers the Fourier--Cherednik transform $\epsilon$ from \cite[(9.8)]{DAHA_2004}. It is the isomorphism \eqref{eq:XY_automorphism} from \cref{thm:XY_automorphisms}.
	
\item \begin{minipage}[t]{.80\linewidth}Reflecting diagrams in the green plane:\end{minipage} \begin{minipage}[t]{.14\linewidth}\hfill$\begin{tikzpicture}[anchorbase,scale=0.4]
    	\draw (-\torwidth/4,-\torheight/4) to[out=up,in=down] (-\torwidth/4,-\torheight/4+\torthickness);
        \draw (-\torwidth/2,-\torheight/2) to[out=up,in=down] (-\torwidth/2,-\torheight/2+\torthickness);
        \draw (0,0) to[out=up,in=down] (0,\torthickness);
        \draw (\torwidth/2,\torheight/2) to[out=up,in=down] (\torwidth/2,\torheight/2+\torthickness);
        \draw (\torwidth/4,\torheight/4) to[out=up,in=down] (\torwidth/4,\torheight/4+\torthickness);
		\puncturedtorus
		\path[fill=DarkGreen,opacity=0.1] (-\torwidth-\torpunc,-\torheight+\torthickness/2-\torpunc) -- (\torskew+\torpunc,-\torheight+\torthickness/2-\torpunc) -- (\torwidth+\torpunc,\torheight+\torthickness/2+\torpunc) -- (-\torskew-\torpunc,\torheight+\torthickness/2+\torpunc)--cycle;
		\draw[DarkGreen] (\torwidth,\torheight+\torthickness/2) node[anchor=west] {\reflectbox{\rotatebox[origin=c]{90}{$\leftrightarrow$}}};
    \end{tikzpicture}$\end{minipage}
    \vspace{-2cm}
	\begin{align*}
		\Rdaff \colon \HH_n(A,z,q) &\to \HH_n(A^\textup{op},-z,q^{-1})^\textup{op},\\
	 	\begin{tikzpicture}[centerzero]
	        \draw (-0.3,-0.3) -- (0.3,0.3);
	        \draw[wipe] (0.3,-0.3) -- (-0.3,0.3);
	        \draw (0.3,-0.3) -- (-0.3,0.3);
	    \end{tikzpicture} \mapsto \begin{tikzpicture}[centerzero]
	        \draw (0.3,-0.3) -- (-0.3,0.3);
	        \draw[wipe] (-0.3,-0.3) -- (0.3,0.3);
	        \draw (-0.3,-0.3) -- (0.3,0.3);
	    \end{tikzpicture}\; , \qquad \begin{tikzpicture}[centerzero]
	        \draw (0.3,-0.3) -- (-0.3,0.3);
	        \draw[wipe] (-0.3,-0.3) -- (0.3,0.3);
	        \draw (-0.3,-0.3) -- (0.3,0.3);
	    \end{tikzpicture} &\mapsto \begin{tikzpicture}[centerzero]
	        \draw (-0.3,-0.3) -- (0.3,0.3);
	        \draw[wipe] (0.3,-0.3) -- (-0.3,0.3);
	        \draw (0.3,-0.3) -- (-0.3,0.3);
	    \end{tikzpicture}\;, \qquad \begin{tikzpicture}[centerzero]
	        \draw (0,-0.3) -- (0,0.3);
	        \token{0,0}{west}{a};
	    \end{tikzpicture} \mapsto \; \begin{tikzpicture}[centerzero]
	        \draw (0,-0.3) -- (0,0.3);
	        \token{0,0}{west}{a^\textup{op}};
	    \end{tikzpicture}\; , \\
	    \begin{tikzpicture}[centerzero]
	        \draw (0,-0.3) -- (0,0.3);
	        \opendot{0,0} node[anchor=west] {\dotlabel{1}};
	    \end{tikzpicture} \mapsto \; \begin{tikzpicture}[centerzero]
	        \draw (0,-0.3) -- (0,0.3);
	        \opendot{0,0} node[anchor=west] {\dotlabel{-1}};
	    \end{tikzpicture}\; , \qquad \begin{tikzpicture}[centerzero]
	        \draw (0,-0.3) -- (0,0.3);
	        \opendot{0,0} node[anchor=west] {\dotlabel{-1}};
	    \end{tikzpicture} \mapsto \; \begin{tikzpicture}[centerzero]
	        \draw (0,-0.3) -- (0,0.3);
	        \opendot{0,0} node[anchor=west] {\dotlabel{1}};
	    \end{tikzpicture}\; &, \qquad \begin{tikzpicture}[centerzero]
	        \draw (0,-0.3) -- (0,0.3);
	        \greydot{0,0} node[anchor=west] {\dotlabel{1}};
	    \end{tikzpicture} \mapsto \; \begin{tikzpicture}[centerzero]
	        \draw (0,-0.3) -- (0,0.3);
	        \greydot{0,0} node[anchor=west] {\dotlabel{-1}};
	    \end{tikzpicture}\; , \qquad \begin{tikzpicture}[centerzero]
	        \draw (0,-0.3) -- (0,0.3);
	        \greydot{0,0} node[anchor=west] {\dotlabel{-1}};
	    \end{tikzpicture} \mapsto \; \begin{tikzpicture}[centerzero]
	        \draw (0,-0.3) -- (0,0.3);
	        \greydot{0,0} node[anchor=west] {\dotlabel{1}};
	    \end{tikzpicture}\; ,
	\end{align*}
	for $a \in A$, which for $A=\kk$ recovers the anti-involution $\star$ from \cite[(2.16)]{DAHADFT_2003}.

	\item \begin{minipage}[t]{.80\linewidth}The isomorphism $D \colon \HH_n(A,z,q) \to \HH_n(A,-z,q^{-1})$ corresponds to reflecting diagrams in the green plane.\end{minipage} \begin{minipage}[t]{.14\linewidth}\hfill$\begin{tikzpicture}[anchorbase,scale=0.4]
    	\draw (-\torwidth/4,-\torheight/4) to[out=up,in=down] (-\torwidth/4,-\torheight/4+\torthickness);
        \draw (-\torwidth/2,-\torheight/2) to[out=up,in=down] (-\torwidth/2,-\torheight/2+\torthickness);
        \draw (0,0) to[out=up,in=down] (0,\torthickness);
        \draw (\torwidth/2,\torheight/2) to[out=up,in=down] (\torwidth/2,\torheight/2+\torthickness);
        \draw (\torwidth/4,\torheight/4) to[out=up,in=down] (\torwidth/4,\torheight/4+\torthickness);
		\puncturedtorus
		\path[fill=DarkGreen,opacity=0.1] (-\torskew-\torpunc,\torheight-\torpunc) -- (-\torskew-\torpunc,\torheight+\torthickness+\torpunc) -- (\torskew+\torpunc,-\torheight+\torthickness+\torpunc) -- (\torskew+\torpunc,-\torheight-\torpunc)--cycle;
		\draw[DarkGreen] (\torskew+-\torpunc,-\torheight-\torpunc) node[anchor=west] {\reflectbox{\rotatebox[origin=c]{-45}{$\leftrightarrow$}}};
    \end{tikzpicture}$\end{minipage}
    \vspace{-2.5cm}
	\begin{align*}
		D \colon \HH_n(A,z,q) &\to \HH_n(A,-z,q^{-1}),\\
		\begin{tikzpicture}[centerzero]
	        \draw (0.3,-0.3) node[anchor=north] {\dotlabel{1}} -- (0.3,0.3);
	    \end{tikzpicture}
	    \ \hdots \
		\begin{tikzpicture}[centerzero]
			\draw (-0.7,-0.3) -- (-0.7,0.3);
	        \draw (-0.3,-0.3) node[anchor=north] {\dotlabel{i}} -- (0.3,0.3);
	        \draw[wipe] (0.3,-0.3) -- (-0.3,0.3);
	        \draw (0.3,-0.3) node[anchor=north] {\dotlabel{i+1}} -- (-0.3,0.3);
	        \draw (0.7,-0.3) -- (0.7,0.3);
	    \end{tikzpicture}
	    \ \hdots \
	    \begin{tikzpicture}[centerzero]
	        \draw (0.3,-0.3) node[anchor=north] {\dotlabel{n}} -- (0.3,0.3);
	    \end{tikzpicture}
	    &\mapsto 
	    \begin{tikzpicture}[centerzero]
	        \draw (0.3,-0.3) node[anchor=north] {\dotlabel{1}} -- (0.3,0.3);
	    \end{tikzpicture}
	    \ \hdots \
	    \begin{tikzpicture}[centerzero]
	    	\draw (-0.7,-0.3) -- (-0.7,0.3);
	        \draw (0.3,-0.3) node[anchor=north,xshift=3] {\dotlabel{n-i+1}} -- (-0.3,0.3);
	        \draw[wipe] (-0.3,-0.3) -- (0.3,0.3);
	        \draw (-0.3,-0.3) node[anchor=north,xshift=-3] {\dotlabel{n-i}} -- (0.3,0.3);
	        \draw (0.7,-0.3) -- (0.7,0.3);
	    \end{tikzpicture}
	    \ \hdots \
	    \begin{tikzpicture}[centerzero]
	        \draw (0.3,-0.3) node[anchor=north] {\dotlabel{n}} -- (0.3,0.3);
	    \end{tikzpicture}\; ,\\
	    \begin{tikzpicture}[centerzero]
	        \draw (0.3,-0.3) node[anchor=north] {\dotlabel{1}} -- (0.3,0.3);
	    \end{tikzpicture}
	    \ \hdots \
	    \begin{tikzpicture}[centerzero]
	    	\draw (-0.7,-0.3) -- (-0.7,0.3);
	        \draw (0.3,-0.3) node[anchor=north] {\dotlabel{i+1}} -- (-0.3,0.3);
	        \draw[wipe] (-0.3,-0.3) -- (0.3,0.3);
	        \draw (-0.3,-0.3) node[anchor=north] {\dotlabel{i}} -- (0.3,0.3);
	        \draw (0.7,-0.3) -- (0.7,0.3);
	    \end{tikzpicture}
	    \ \hdots \
	    \begin{tikzpicture}[centerzero]
	        \draw (0.3,-0.3) node[anchor=north] {\dotlabel{n}} -- (0.3,0.3);
	    \end{tikzpicture}
	    &\mapsto 
	     \begin{tikzpicture}[centerzero]
	        \draw (0.3,-0.3) node[anchor=north] {\dotlabel{1}} -- (0.3,0.3);
	    \end{tikzpicture}
	    \ \hdots \
		\begin{tikzpicture}[centerzero]
			\draw (-0.7,-0.3) -- (-0.7,0.3);
	        \draw (-0.3,-0.3) node[anchor=north,xshift=-3] {\dotlabel{n-i}} -- (0.3,0.3);
	        \draw[wipe] (0.3,-0.3) -- (-0.3,0.3);
	        \draw (0.3,-0.3) node[anchor=north,xshift=3] {\dotlabel{n-i+1}} -- (-0.3,0.3);
	        \draw (0.7,-0.3) -- (0.7,0.3);
	    \end{tikzpicture}
	    \ \hdots \
	    \begin{tikzpicture}[centerzero]
	        \draw (0.3,-0.3) node[anchor=north] {\dotlabel{n}} -- (0.3,0.3);
	    \end{tikzpicture}\;,\\
	    \begin{tikzpicture}[centerzero]
	        \draw (0,-0.3) node[anchor=north] {\dotlabel{1}} -- (0,0.3);
	        \token{0,0}{west}{a_1};
	        \draw (1,-0.3) node[anchor=north] {\dotlabel{2}} -- (1,0.3);
	        \token{1,0}{west}{a_2};
	    \end{tikzpicture}
	    \ \hdots \
	    \begin{tikzpicture}[centerzero]
	        \draw (0,-0.3) node[anchor=north] {\dotlabel{n-1}} -- (0,0.3);
	        \token{0,0}{west}{a_{n-1}};
	        \draw (1,-0.3) node[anchor=north] {\dotlabel{n}} -- (1,0.3);
	        \token{1,0}{west}{a_n};
	    \end{tikzpicture} 
	    &\mapsto
	    (-1)^{\sum_{1 \leq i < j \leq n}\bar{a}_i\bar{a}_j} \; \begin{tikzpicture}[centerzero]
	        \draw (0,-0.3) node[anchor=north] {\dotlabel{1}} -- (0,0.3);
	        \token{0,0}{west}{a_n};
	        \draw (1,-0.3) node[anchor=north] {\dotlabel{2}} -- (1,0.3);
	        \token{1,0}{west}{a_{n-1}};
	    \end{tikzpicture}
	    \ \hdots \
	    \begin{tikzpicture}[centerzero]
	        \draw (0,-0.3) node[anchor=north] {\dotlabel{n-1}} -- (0,0.3);
	        \token{0,0}{west}{a_2};
	        \draw (1,-0.3) node[anchor=north] {\dotlabel{n}} -- (1,0.3);
	        \token{1,0}{west}{a_1};
	    \end{tikzpicture}\;, \\
		\begin{tikzpicture}[centerzero]
	        \draw (0.3,-0.3) node[anchor=north] {\dotlabel{1}} -- (0.3,0.3);
	        \opendot{0.3,0} node[anchor=west] {\dotlabel{\lambda_1}};
	    \end{tikzpicture}
	    \ \hdots \
	    \begin{tikzpicture}[centerzero]
	    	\draw (-1.5,-0.3) node[anchor=north] {\dotlabel{i-1}} -- (-1.5,0.3);
	    	\opendot{-1.5,0} node[anchor=west] {\dotlabel{\lambda_{i-1}}};
			\draw (0,-0.3) node[anchor=north] {\dotlabel{i}} -- (0,0.3);
			\opendot{0,0} node[anchor=west] {\dotlabel{\lambda_i}};
	        \draw (1.5,-0.3) node[anchor=north] {\dotlabel{i+1}} -- (1.5,0.3);
	        \opendot{1.5,0} node[anchor=west] {\dotlabel{\lambda_{i+1}}};
	    \end{tikzpicture}
	    \ \hdots \
	    \begin{tikzpicture}[centerzero]
	        \draw (0.3,-0.3) node[anchor=north] {\dotlabel{n}} -- (0.3,0.3);
	        \opendot{0.3,0} node[anchor=west] {\dotlabel{\lambda_n}};
	    \end{tikzpicture}
	    &\mapsto
	    \begin{tikzpicture}[centerzero]
	        \draw (0.3,-0.3) node[anchor=north] {\dotlabel{1}} -- (0.3,0.3);
	        \greydot{0.3,0} node[anchor=west] {\dotlabel{-\lambda_n}};
	    \end{tikzpicture}
	    \ \hdots \
	    \begin{tikzpicture}[centerzero]
	    	\draw (-1.5,-0.3) node[anchor=north] {\dotlabel{i-1}} -- (-1.5,0.3);
	    	\greydot{-1.5,0} node[anchor=west] {\dotlabel{-\lambda_{n-i+2}}};
			\draw (0,-0.3) node[anchor=north] {\dotlabel{i}} -- (0,0.3);
			\greydot{0,0} node[anchor=west] {\dotlabel{-\lambda_{n-i+1}}};
	        \draw (1.5,-0.3) node[anchor=north] {\dotlabel{i+1}} -- (1.5,0.3);
	        \greydot{1.5,0} node[anchor=west] {\dotlabel{-\lambda_{n-i}}};
	    \end{tikzpicture}
	    \ \hdots \
	    \begin{tikzpicture}[centerzero]
	        \draw (0.3,-0.3) node[anchor=north] {\dotlabel{n}} -- (0.3,0.3);
	        \greydot{0.3,0} node[anchor=west] {\dotlabel{-\lambda_1}};
	    \end{tikzpicture}\; ,\\
	    \begin{tikzpicture}[centerzero]
	        \draw (0.3,-0.3) node[anchor=north] {\dotlabel{1}} -- (0.3,0.3);
	        \greydot{0.3,0} node[anchor=west] {\dotlabel{\mu_1}};
	    \end{tikzpicture}
	    \ \hdots \
	    \begin{tikzpicture}[centerzero]
	    	\draw (-1.5,-0.3) node[anchor=north] {\dotlabel{i-1}} -- (-1.5,0.3);
	    	\greydot{-1.5,0} node[anchor=west] {\dotlabel{\mu_{i-1}}};
			\draw (0,-0.3) node[anchor=north] {\dotlabel{i}} -- (0,0.3);
			\greydot{0,0} node[anchor=west] {\dotlabel{\mu_i}};
	        \draw (1.5,-0.3) node[anchor=north] {\dotlabel{i+1}} -- (1.5,0.3);
	        \greydot{1.5,0} node[anchor=west] {\dotlabel{\mu_{i+1}}};
	    \end{tikzpicture}
	    \ \hdots \
	    \begin{tikzpicture}[centerzero]
	        \draw (0.3,-0.3) node[anchor=north] {\dotlabel{n}} -- (0.3,0.3);
	        \greydot{0.3,0} node[anchor=west] {\dotlabel{\mu_n}};
	    \end{tikzpicture}
	    &\mapsto
	    \begin{tikzpicture}[centerzero]
	        \draw (0.3,-0.3) node[anchor=north] {\dotlabel{1}} -- (0.3,0.3);
	        \opendot{0.3,0} node[anchor=west] {\dotlabel{-\mu_n}};
	    \end{tikzpicture}
	    \ \hdots \
	    \begin{tikzpicture}[centerzero]
	    	\draw (-1.5,-0.3) node[anchor=north] {\dotlabel{i-1}} -- (-1.5,0.3);
	    	\opendot{-1.5,0} node[anchor=west] {\dotlabel{-\mu_{n-i+2}}};
			\draw (0,-0.3) node[anchor=north] {\dotlabel{i}} -- (0,0.3);
			\opendot{0,0} node[anchor=west] {\dotlabel{-\mu_{n-i+1}}};
	        \draw (1.5,-0.3) node[anchor=north] {\dotlabel{i+1}} -- (1.5,0.3);
	        \opendot{1.5,0} node[anchor=west] {\dotlabel{-\mu_{n-i}}};
	    \end{tikzpicture}
	    \ \hdots \
	    \begin{tikzpicture}[centerzero]
	        \draw (0.3,-0.3) node[anchor=north] {\dotlabel{n}} -- (0.3,0.3);
	        \opendot{0.3,0} node[anchor=west] {\dotlabel{-\mu_1}};
	    \end{tikzpicture}\; ,
	\end{align*}
	for homogeneous $a_1, \dotsc ,a_n \in A$, $(\lambda_1, \dotsc ,\lambda_n) \in \ZZ^n$ and $(\mu_1, \dotsc ,\mu_n) \in \ZZ^n$.
	
	\item  \begin{minipage}[t]{1\linewidth}The isomorphism $\Rdaff \circ \epsilontransform \colon \HH_n(A,z,q) \to \HH_n(A^\textup{op},z,q)^\textup{op}$ corresponds to rotating diagrams $180^\circ$ about the dashed green line shown in \cref{fig:R_eps}. It is the isomorphism \eqref{eq:XY_antiautomorphism} from \cref{thm:XY_automorphisms}.\end{minipage}

\item \begin{minipage}[t]{1\linewidth}The isomorphism $\epsilontransform \circ D \colon \HH_n(A,z,q) \to \HH_n(A,z,q)$ corresponds to rotating diagrams $180^\circ$ about the dashed green line shown in \cref{fig:eps_D}. \end{minipage}

\item \begin{minipage}[t]{1\linewidth}The isomorphism $\Rdaff \circ D \colon \HH_n(A,z,q) \to \HH_n(A^\textup{op},z,q)^\textup{op}$ corresponds to rotating diagrams $180^\circ$ about the dashed green line shown in \cref{fig:R_D}.\end{minipage} 

\item \begin{minipage}[t]{1\linewidth}The isomorphism $\Rdaff \circ \epsilontransform \circ D \colon \HH_n(A,z,q) \to \HH_n(A^\textup{op},-z,q^{-1})^\textup{op}$ corresponds to the spatial inversion/point reflection centred at the green point shown in \cref{fig:R_eps_D}.\end{minipage}
\end{enumerate}
\vspace{1em}
\noindent
\begin{figure}[H]
    \centering
    \begin{subfigure}[b]{0.24\linewidth}
        \centering
    	$\begin{tikzpicture}[anchorbase,scale=0.4]
    	\draw (-\torwidth/4,-\torheight/4) to[out=up,in=down] (-\torwidth/4,-\torheight/4+\torthickness);
        \draw (-\torwidth/2,-\torheight/2) to[out=up,in=down] (-\torwidth/2,-\torheight/2+\torthickness);
        \draw (0,0) to[out=up,in=down] (0,\torthickness);
        \draw (\torwidth/2,\torheight/2) to[out=up,in=down] (\torwidth/2,\torheight/2+\torthickness);
        \draw (\torwidth/4,\torheight/4) to[out=up,in=down] (\torwidth/4,\torheight/4+\torthickness);
		\puncturedtorus
		\draw[DarkGreen,dashed] (-\torwidth,-\torheight+\torthickness/2) -- (\torwidth,\torheight+\torthickness/2) node[anchor=west] {$\mathbf{\circlearrowleft}$};
    \end{tikzpicture}$
    	\caption{$\Rdaff \circ \epsilontransform$}
        \label{fig:R_eps}
    \end{subfigure}
    \begin{subfigure}[b]{0.24\linewidth}
        \centering
    	$\begin{tikzpicture}[anchorbase,scale=0.4]
    	\draw (-\torwidth/4,-\torheight/4) to[out=up,in=down] (-\torwidth/4,-\torheight/4+\torthickness);
        \draw (-\torwidth/2,-\torheight/2) to[out=up,in=down] (-\torwidth/2,-\torheight/2+\torthickness);
        \draw (0,0) to[out=up,in=down] (0,\torthickness);
        \draw (\torwidth/2,\torheight/2) to[out=up,in=down] (\torwidth/2,\torheight/2+\torthickness);
        \draw (\torwidth/4,\torheight/4) to[out=up,in=down] (\torwidth/4,\torheight/4+\torthickness);
		\puncturedtorus
		\draw[DarkGreen,dashed] (0,-\torheight-\torpunc) -- (0,\torheight+\torthickness+\torpunc) node[anchor=south] {$\mathbf{\circlearrowleft}$};
    	\end{tikzpicture}$
    	\caption{$\epsilontransform \circ D$}
        \label{fig:eps_D}
    \end{subfigure}
    \begin{subfigure}[b]{0.24\linewidth}
        \centering
        $\begin{tikzpicture}[anchorbase,scale=0.4]
    	\draw (-\torwidth/4,-\torheight/4) to[out=up,in=down] (-\torwidth/4,-\torheight/4+\torthickness);
        \draw (-\torwidth/2,-\torheight/2) to[out=up,in=down] (-\torwidth/2,-\torheight/2+\torthickness);
        \draw (0,0) to[out=up,in=down] (0,\torthickness);
        \draw (\torwidth/2,\torheight/2) to[out=up,in=down] (\torwidth/2,\torheight/2+\torthickness);
        \draw (\torwidth/4,\torheight/4) to[out=up,in=down] (\torwidth/4,\torheight/4+\torthickness);
		\puncturedtorus
		\draw[DarkGreen,dashed] (-\torskew,\torheight+\torthickness/2) -- (\torskew,-\torheight+\torthickness/2) node[anchor=west,xshift=10] {$\mathbf{\circlearrowleft}$};
    \end{tikzpicture}$
        \caption{$\Rdaff \circ D$}
        \label{fig:R_D}
    \end{subfigure}
    \begin{subfigure}[b]{0.24\linewidth}
        \centering
        $\begin{tikzpicture}[anchorbase,scale=0.4]
    	\draw (-\torwidth/4,-\torheight/4) to[out=up,in=down] (-\torwidth/4,-\torheight/4+\torthickness);
        \draw (-\torwidth/2,-\torheight/2) to[out=up,in=down] (-\torwidth/2,-\torheight/2+\torthickness);
        \draw (0,0) to[out=up,in=down] (0,\torthickness);
        \draw (\torwidth/2,\torheight/2) to[out=up,in=down] (\torwidth/2,\torheight/2+\torthickness);
        \draw (\torwidth/4,\torheight/4) to[out=up,in=down] (\torwidth/4,\torheight/4+\torthickness);
		\puncturedtorus
		\fill[DarkGreen] (0,\torheight/2) circle (5pt);
    \end{tikzpicture}$
        \caption{$\Rdaff \circ \epsilontransform \circ D$}
        \label{fig:R_eps_D}
    \end{subfigure}
    \caption{Punctured torus transformations generated by reflections $\epsilontransform$, $\Rdaff$ and $D$.}
    \label{fig:four_transformations}
\end{figure}
\end{proposition}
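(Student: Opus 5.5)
The plan is to write each of the three basic maps $\epsilontransform$, $\Rdaff$ and $D$ algebraically on the generators of the presentation in \cref{thm:DAFHA_bernstein_presentation}. Each map is then checked to preserve the relations \eqref{qwa_rel_1}--\eqref{daqwa_sym_rel_5}, with the changed parameters and the opposite algebra where relevant. Items iv)--vii) then follow by composition, since a composite of isomorphisms is an isomorphism. Item i) is exactly the map $\epsilontransform$ of \cref{thm:XY_automorphisms}, so nothing new is needed there. Reading off the diagrams, $\Rdaff$ is the assignment
\[
\mathbf{a} \mapsto \mathbf{a}^\opp, \qquad T_i \mapsto T_i^{-1}, \qquad X_j \mapsto X_j^{-1}, \qquad Y_j \mapsto Y_j^{-1}.
\]
The map $D$ is the assignment $T_i \mapsto T_{n-i}^{-1}$, the signed reversal $a_1\otimes\dotsb\otimes a_n \mapsto (-1)^{\sum_{i<j}\bar a_i\bar a_j} a_n\otimes\dotsb\otimes a_1$ (that is, the superpermutation by the longest element $w_0$), $Y_j \mapsto X_{n+1-j}^{-1}$ and $X_j \mapsto Y_{n+1-j}^{-1}$.

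For $\Rdaff$, an anti-homomorphism, each relation is reversed and then inverted. Relations \eqref{qwa_rel_1}, \eqref{qwa_rel_2}, \eqref{daqwa_sym_rel_1} and \eqref{daqwa_sym_rel_3} are immediate. For \eqref{qwa_rel_3}, inverting $T_i^2=zt_{i,i+1}T_i+1$ gives $T_i^{-2}=-zt_{i,i+1}T_i^{-1}+1$, using that $t_{i,i+1}$ is even, central in the relevant sense, and equals the teleporter for $A^\opp$ because $A$ is supersymmetric. This explains the parameter change $z\mapsto -z$. Relations \eqref{qwa_rel_4} and \eqref{daqwa_sym_rel_2} reverse to themselves. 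For \eqref{daqwa_sym_rel_4}, the image is $Y_2X_1^{-1}Y_2^{-1}X_1=T_1^{-2}$, which is the inverse of \eqref{daqwa_sym_rel_4} and hence holds. For \eqref{daqwa_sym_rel_5}, the image is $X_n^{-1}\dotsb X_1^{-1}Y_1^{-1}=q\,Y_1^{-1}X_n^{-1}\dotsb X_1^{-1}$, which is the inverse form of \eqref{daqwa_sym_rel_5} with $q$ replaced by $q^{-1}$. Since $\Rdaff$ is clearly an involution up to the identifications $(A^\opp)^\opp=A$ and $(z,q)\mapsto(-z,q^{-1})$ applied twice, bijectivity follows.

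For $D$, I would instead factor it as $D=\epsilontransform\circ\Rdaff'$, where $\Rdaff'$ is an automorphism given by conjugation by the longest-element symmetry $i\mapsto n-i$, $X_j \mapsto X_{n+1-j}^{-1}$, $Y_j\mapsto Y_{n+1-j}^{-1}$, $T_i\mapsto T_{n-i}$, and $\mathbf a\mapsto w_0(\mathbf a)$. Alternatively, $D$ can be checked directly on the relations. For the diagram automorphism, \eqref{daqwa_sym_rel_2} becomes $T_{n-i}X_{n+1-i}^{-1}T_{n-i}=X_{n-i}^{-1}$, which is the inverse of \eqref{daqwa_sym_rel_2}. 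The superpermutation sign is exactly what makes \eqref{qwa_rel_4} compatible, since $w_0 s_i w_0=s_{n-i}$ in the superpermutation action. The relation \eqref{daqwa_sym_rel_4} is the main obstacle, because it is not symmetric under $1\leftrightarrow n$: its image involves $X_n$, $Y_{n-1}$ and $T_{n-1}$. I would handle it by first deriving the analogue $Y_{n-1}X_nY_{n-1}^{-1}X_n^{-1}=T_{n-1}^{\pm2}$ (in the appropriate form) from \eqref{daqwa_sym_rel_4}. This analogue is obtained by conjugating with $T_{n-1}\dotsb T_2$ and $T_1\dotsb$ using \eqref{daqwa_sym_rel_2}, in the same manner as the manipulations in the proof of \cref{thm:XY_automorphisms}. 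Relation \eqref{daqwa_sym_rel_5} follows from \cref{thm:extra_twist_relations}, in the form $X_jY_1\dotsb Y_n=qY_1\dotsb Y_nX_j$ with $j=n$, and this is where $q\mapsto q^{-1}$ appears. The inverse of $D$ is given by the same formulas, so $D$ is bijective.

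Finally, items iv)--vii) are compositions. For iv), the composite $\Rdaff\circ\epsilontransform$ on generators sends $T_i\mapsto T_i$, $X_j\mapsto Y_j^{-1}$, $Y_j\mapsto X_j^{-1}$ and $\mathbf a\mapsto\mathbf a^\opp$, with the parameter changes cancelling. This is exactly $\sigmaop$ from \cref{thm:XY_automorphisms}. For v)--vii), I would just compute the composites on generators and record the parameter bookkeeping, noting that an even number of sign flips returns $(z,q)$. The geometric descriptions (rotations and point reflection) are only heuristic, as in \cref{sec:diagrammatic_interpretation}, so they require no proof beyond matching the generator images with the pictures.
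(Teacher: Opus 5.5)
The paper gives no separate proof of this proposition. It only identifies i) and iv) with the maps of \cref{thm:XY_automorphisms} and leaves the rest as routine checking, so verifying the relations of \cref{thm:DAFHA_bernstein_presentation} as you propose is the natural way to supply one. Your generator formulas for $\Rdaff$ and $D$ match the diagrams, and the factorisation $D=\epsilontransform\circ\Rdaff'$ is legitimate. Your treatment of $\Rdaff$ and of iv)--vii) is fine, except for a slip in \eqref{daqwa_sym_rel_5}. Since $\Rdaff$ is $\kk$-linear, the image is $X_n^{-1}\dotsb X_1^{-1}Y_1^{-1}=q^{-1}Y_1^{-1}X_n^{-1}\dotsb X_1^{-1}$, not the version with $q$. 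This is \eqref{daqwa_sym_rel_5} of $\HH_n(A^\opp,-z,q^{-1})$ inverted.

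The real gap is the step you yourself flag as the main obstacle, \eqref{daqwa_sym_rel_4} for $D$ (or $\Rdaff'$). It is not actually established.
\begin{itemize}
\item The images are misidentified. Under $D$ you have $X_1\mapsto Y_n^{-1}$ and $Y_2\mapsto X_{n-1}^{-1}$, so what must hold in $\HH_n(A,-z,q^{-1})$ is $Y_nX_{n-1}^{-1}Y_n^{-1}X_{n-1}=T_{n-1}^{-2}$. Under $\Rdaff'$ it is $X_nY_{n-1}^{-1}X_n^{-1}Y_{n-1}=T_{n-1}^{2}$. The relation $Y_{n-1}X_nY_{n-1}^{-1}X_n^{-1}=T_{n-1}^{\pm2}$ that you wrote is false in general; for $n=2$ it equals $T_0^2$.
\item The trick in the proof of \cref{thm:XY_automorphisms}, conjugating by $T_1$, only trades $(X_1,Y_2)$ for $(X_2,Y_1)$ at a fixed index. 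It cannot move the index from $1$ to $n-1$, since $T_iX_iT_i=X_{i+1}$ is not a conjugation.
\end{itemize}
What you need is \eqref{daqwa_sym_rel_4} at every index. The quickest route uses \eqref{eq:Y_i_formula}, exactly as in the paper's check of \eqref{daqwa_sym_rel_4} for $\alpha'$:
\[
Y_{i+1}X_iY_{i+1}^{-1}=T_i^{-1}\dotsb T_1^{-1}\,\pi X_i\pi^{-1}\,T_1\dotsb T_i=T_i^{-1}X_{i+1}T_i=X_iT_i^2 .
\]
This uses that $X_i$ commutes with $T_{i+1},\dotsc,T_{n-1}$, Relation \eqref{dafha_rel_10}, and that $X_{i+1}$ commutes with $T_1,\dotsc,T_{i-1}$. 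Alternatively, induct on $i$ inside the Bernstein presentation using $Y_{i+2}=T_{i+1}^{-1}Y_{i+1}T_{i+1}^{-1}$, $X_{i+1}=T_iX_iT_i$ and the braid relation.

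This identity holds for all parameters. In $\HH_n(A,-z,q^{-1})$, inverting the case $i=n-1$ gives exactly $D$ applied to \eqref{daqwa_sym_rel_4}. Conjugating the (uninverted) case $i=n-1$ by $T_{n-1}$, as in \cref{thm:XY_automorphisms}, gives the version needed for $\Rdaff'$.

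Keep the two routes separate for \eqref{daqwa_sym_rel_5} as well:
\begin{itemize}
\item $D$ needs the second identity of \cref{thm:extra_twist_relations}, read in the target with parameter $q^{-1}$.
\item $\Rdaff'$ needs the first identity with $i=n$, and leaves $q$ unchanged.
\end{itemize}
For \eqref{qwa_rel_3} under $D$, also note that $w_0(t_{i,i+1})=t_{n-i,n-i+1}$ uses $t_{i,j}=t_{j,i}$, which again relies on supersymmetry.

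There is also a shortcut that avoids checking $\Rdaff'$ at all. It equals $\Theta\circ s$, where $\Theta=\Ad(T_{w_0})\circ\sigmaaut^2$ is the automorphism from \cref{thm:sigma4}, and $s$ is the automorphism $X_j\mapsto qX_j$, $Y_j\mapsto q^{-1}Y_j$ fixing $A^{\otimes n}$ and the $T_i$. There is no circularity, since \cref{thm:sigma4} does not depend on this proposition.
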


\begin{proposition} \label{thm:minus_z_or_minus_T_DAHA}
Let $n \in \NN, n \geq 2$ and let $a \in Z(A)$ be invertible and even. For $1 \leq j \leq n$, let $a_j = 1^{\otimes(j-1)} \otimes a \otimes 1^{\otimes(n-j)} \in A^{\otimes n}$. There is an isomorphism of $\kk$-superalgebras, given by
\begin{gather} 
\iota \colon \HH_n(A,z,q) \to \HH_n(A,-z,q), \label{eq:iota}\\
\mathbf{a} \mapsto \mathbf{a}, \qquad T_i \mapsto -T_i, \qquad X_j \mapsto X_j, \qquad Y_j \mapsto Y_j, \nonumber
\end{gather}
for $\mathbf{a} \in A^{\otimes n}$, $1 \leq i \leq n-1$ and $1 \leq j \leq n$.
\details{Diagrammatically,
\begin{align*}
\iota \colon \HH_n(A,z,q) &\to \HH_n(A,-z,q),\\
	\begin{tikzpicture}[centerzero]
        \draw (-0.3,-0.3) -- (0.3,0.3);
        \draw[wipe] (0.3,-0.3) -- (-0.3,0.3);
        \draw (0.3,-0.3) -- (-0.3,0.3);
    \end{tikzpicture} \mapsto - \begin{tikzpicture}[centerzero]
        \draw (-0.3,-0.3) -- (0.3,0.3);
        \draw[wipe] (0.3,-0.3) -- (-0.3,0.3);
        \draw (0.3,-0.3) -- (-0.3,0.3);
    \end{tikzpicture}\;, \qquad \begin{tikzpicture}[centerzero]
        \draw (0.3,-0.3) -- (-0.3,0.3);
        \draw[wipe] (-0.3,-0.3) -- (0.3,0.3);
        \draw (-0.3,-0.3) -- (0.3,0.3);
    \end{tikzpicture} &\mapsto - \begin{tikzpicture}[centerzero]
        \draw (0.3,-0.3) -- (-0.3,0.3);
        \draw[wipe] (-0.3,-0.3) -- (0.3,0.3);
        \draw (-0.3,-0.3) -- (0.3,0.3);
    \end{tikzpicture}\;, \qquad \begin{tikzpicture}[centerzero]
        \draw (0,-0.3) -- (0,0.3);
        \token{0,0}{west}{a};
    \end{tikzpicture} \mapsto \; \begin{tikzpicture}[centerzero]
        \draw (0,-0.3) -- (0,0.3);
        \token{0,0}{west}{a};
    \end{tikzpicture}\; , \\
    \begin{tikzpicture}[centerzero]
        \draw (0,-0.3) -- (0,0.3);
        \opendot{0,0} node[anchor=west] {\dotlabel{1}};
    \end{tikzpicture} \mapsto \; \begin{tikzpicture}[centerzero]
        \draw (0,-0.3) -- (0,0.3);
        \opendot{0,0} node[anchor=west] {\dotlabel{1}};
    \end{tikzpicture}\; , \qquad \begin{tikzpicture}[centerzero]
        \draw (0,-0.3) -- (0,0.3);
        \opendot{0,0} node[anchor=west] {\dotlabel{-1}};
    \end{tikzpicture} \mapsto \; \begin{tikzpicture}[centerzero]
        \draw (0,-0.3) -- (0,0.3);
        \opendot{0,0} node[anchor=west] {\dotlabel{-1}};
    \end{tikzpicture}\; &, \qquad \begin{tikzpicture}[centerzero]
        \draw (0,-0.3) -- (0,0.3);
        \greydot{0,0} node[anchor=west] {\dotlabel{1}};
    \end{tikzpicture} \mapsto \; \begin{tikzpicture}[centerzero]
        \draw (0,-0.3) -- (0,0.3);
        \greydot{0,0} node[anchor=west] {\dotlabel{1}};
    \end{tikzpicture}\; , \qquad \begin{tikzpicture}[centerzero]
        \draw (0,-0.3) -- (0,0.3);
        \greydot{0,0} node[anchor=west] {\dotlabel{-1}};
    \end{tikzpicture} \mapsto \; \begin{tikzpicture}[centerzero]
        \draw (0,-0.3) -- (0,0.3);
        \greydot{0,0} node[anchor=west] {\dotlabel{-1}};
    \end{tikzpicture}\; ,
\end{align*}
for $a \in A$.} For $A=\kk$, we recover the outer involution $\iota$ of $\HH_n(\kk,\tau-\tau^{-1},q)$ mentioned in \cite[p.48]{DAHA_2004}. There is an automorphism of $\kk$-superalgebras, given by
\begin{gather} 
\varsigma_{aX} \colon \HH_n(A,z,q) \to \HH_n(A,z,q), \label{eq:zeta_aX}\\
\mathbf{a} \mapsto \mathbf{a}, \qquad T_i \mapsto T_i, \qquad X_j \mapsto a_jX_j, \qquad Y_j \mapsto Y_j, \nonumber
\end{gather}
for $\mathbf{a} \in A^{\otimes n}$, $1 \leq i \leq n-1$ and $1 \leq j \leq n$.
\details{Diagrammatically, for example,
\begin{align*}
\varsigma_{(-1)X} \colon \HH_n(A,z,q) &\to \HH_n(A,z,q),\\
	\begin{tikzpicture}[centerzero]
        \draw (-0.3,-0.3) -- (0.3,0.3);
        \draw[wipe] (0.3,-0.3) -- (-0.3,0.3);
        \draw (0.3,-0.3) -- (-0.3,0.3);
    \end{tikzpicture} \mapsto \begin{tikzpicture}[centerzero]
        \draw (-0.3,-0.3) -- (0.3,0.3);
        \draw[wipe] (0.3,-0.3) -- (-0.3,0.3);
        \draw (0.3,-0.3) -- (-0.3,0.3);
    \end{tikzpicture}\;, \qquad \begin{tikzpicture}[centerzero]
        \draw (0.3,-0.3) -- (-0.3,0.3);
        \draw[wipe] (-0.3,-0.3) -- (0.3,0.3);
        \draw (-0.3,-0.3) -- (0.3,0.3);
    \end{tikzpicture} &\mapsto \begin{tikzpicture}[centerzero]
        \draw (0.3,-0.3) -- (-0.3,0.3);
        \draw[wipe] (-0.3,-0.3) -- (0.3,0.3);
        \draw (-0.3,-0.3) -- (0.3,0.3);
    \end{tikzpicture}\;, \qquad \begin{tikzpicture}[centerzero]
        \draw (0,-0.3) -- (0,0.3);
        \token{0,0}{west}{a};
    \end{tikzpicture} \mapsto \; \begin{tikzpicture}[centerzero]
        \draw (0,-0.3) -- (0,0.3);
        \token{0,0}{west}{a};
    \end{tikzpicture}\; , \\
    \begin{tikzpicture}[centerzero]
        \draw (0,-0.3) -- (0,0.3);
        \opendot{0,0} node[anchor=west] {\dotlabel{1}};
    \end{tikzpicture} \mapsto \; \begin{tikzpicture}[centerzero]
        \draw (0,-0.3) -- (0,0.3);
        \opendot{0,0} node[anchor=west] {\dotlabel{1}};
    \end{tikzpicture}\; , \qquad \begin{tikzpicture}[centerzero]
        \draw (0,-0.3) -- (0,0.3);
        \opendot{0,0} node[anchor=west] {\dotlabel{-1}};
    \end{tikzpicture} \mapsto \; \begin{tikzpicture}[centerzero]
        \draw (0,-0.3) -- (0,0.3);
        \opendot{0,0} node[anchor=west] {\dotlabel{-1}};
    \end{tikzpicture}\; &, \qquad \begin{tikzpicture}[centerzero]
        \draw (0,-0.3) -- (0,0.3);
        \greydot{0,0} node[anchor=west] {\dotlabel{1}};
    \end{tikzpicture} \mapsto \; - \; \begin{tikzpicture}[centerzero]
        \draw (0,-0.3) -- (0,0.3);
        \greydot{0,0} node[anchor=west] {\dotlabel{1}};
    \end{tikzpicture}\; , \qquad \begin{tikzpicture}[centerzero]
        \draw (0,-0.3) -- (0,0.3);
        \greydot{0,0} node[anchor=west] {\dotlabel{-1}};
    \end{tikzpicture} \mapsto \; - \; \begin{tikzpicture}[centerzero]
        \draw (0,-0.3) -- (0,0.3);
        \greydot{0,0} node[anchor=west] {\dotlabel{-1}};
    \end{tikzpicture}\; ,
\end{align*}
for $a \in A$.} For $A=\kk$ and $a=-1$, we recover the outer involution $\varsigma_x$ of $\HH_n(\kk,\tau-\tau^{-1},q)$ mentioned in \cite[p.48]{DAHA_2004}. There is an automorphism of $\kk$-superalgebras, given by
\begin{gather} 
\varsigma_{aY} \colon \HH_n(A,z,q) \to \HH_n(A,z,q), \label{eq:zeta_aY}\\
\mathbf{a} \mapsto \mathbf{a}, \qquad T_i \mapsto T_i, \qquad X_j \mapsto X_j, \qquad Y_j \mapsto a_jY_j, \nonumber
\end{gather}
for $\mathbf{a} \in A^{\otimes n}$, $1 \leq i \leq n-1$ and $1 \leq j \leq n$.
\details{Diagrammatically, for example,
\begin{align*}
\varsigma_{(-1)Y} \colon \HH_n(A,z,q) &\to \HH_n(A,z,q),\\
	\begin{tikzpicture}[centerzero]
        \draw (-0.3,-0.3) -- (0.3,0.3);
        \draw[wipe] (0.3,-0.3) -- (-0.3,0.3);
        \draw (0.3,-0.3) -- (-0.3,0.3);
    \end{tikzpicture} \mapsto \begin{tikzpicture}[centerzero]
        \draw (-0.3,-0.3) -- (0.3,0.3);
        \draw[wipe] (0.3,-0.3) -- (-0.3,0.3);
        \draw (0.3,-0.3) -- (-0.3,0.3);
    \end{tikzpicture}\;, \qquad \begin{tikzpicture}[centerzero]
        \draw (0.3,-0.3) -- (-0.3,0.3);
        \draw[wipe] (-0.3,-0.3) -- (0.3,0.3);
        \draw (-0.3,-0.3) -- (0.3,0.3);
    \end{tikzpicture} &\mapsto \begin{tikzpicture}[centerzero]
        \draw (0.3,-0.3) -- (-0.3,0.3);
        \draw[wipe] (-0.3,-0.3) -- (0.3,0.3);
        \draw (-0.3,-0.3) -- (0.3,0.3);
    \end{tikzpicture}\;, \qquad \begin{tikzpicture}[centerzero]
        \draw (0,-0.3) -- (0,0.3);
        \token{0,0}{west}{a};
    \end{tikzpicture} \mapsto \; \begin{tikzpicture}[centerzero]
        \draw (0,-0.3) -- (0,0.3);
        \token{0,0}{west}{a};
    \end{tikzpicture}\; , \\
    \begin{tikzpicture}[centerzero]
        \draw (0,-0.3) -- (0,0.3);
        \opendot{0,0} node[anchor=west] {\dotlabel{1}};
    \end{tikzpicture} \mapsto \; - \; \begin{tikzpicture}[centerzero]
        \draw (0,-0.3) -- (0,0.3);
        \opendot{0,0} node[anchor=west] {\dotlabel{1}};
    \end{tikzpicture}\; , \qquad \begin{tikzpicture}[centerzero]
        \draw (0,-0.3) -- (0,0.3);
        \opendot{0,0} node[anchor=west] {\dotlabel{-1}};
    \end{tikzpicture} \mapsto \; - \; \begin{tikzpicture}[centerzero]
        \draw (0,-0.3) -- (0,0.3);
        \opendot{0,0} node[anchor=west] {\dotlabel{-1}};
    \end{tikzpicture}\; &, \qquad \begin{tikzpicture}[centerzero]
        \draw (0,-0.3) -- (0,0.3);
        \greydot{0,0} node[anchor=west] {\dotlabel{1}};
    \end{tikzpicture} \mapsto \; \begin{tikzpicture}[centerzero]
        \draw (0,-0.3) -- (0,0.3);
        \greydot{0,0} node[anchor=west] {\dotlabel{1}};
    \end{tikzpicture}\; , \qquad \begin{tikzpicture}[centerzero]
        \draw (0,-0.3) -- (0,0.3);
        \greydot{0,0} node[anchor=west] {\dotlabel{-1}};
    \end{tikzpicture} \mapsto \; \begin{tikzpicture}[centerzero]
        \draw (0,-0.3) -- (0,0.3);
        \greydot{0,0} node[anchor=west] {\dotlabel{-1}};
    \end{tikzpicture}\; ,
\end{align*}
for $a \in A$.} For $A=\kk$ and $a=-1$, we recover the outer involution $\varsigma_y$ of $\HH_n(\kk,\tau-\tau^{-1},q)$ mentioned in \cite[p.48]{DAHA_2004}.
\end{proposition}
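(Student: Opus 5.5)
Each of the three maps will be handled in the same way. Using the presentation of \cref{thm:DAFHA_bernstein_presentation}, I would define the map on the generators $A^{\otimes n}$, $X_j^{\pm1}$, $Y_j^{\pm1}$, $T_i$ of the free product. I would then check that every defining relation \eqref{qwa_rel_1}--\eqref{daqwa_sym_rel_5} of the source algebra is sent to a valid identity in the target. Finally I would exhibit an explicit inverse of the same form. All three maps preserve parity, since they fix $A^{\otimes n}$ and send even generators to even elements ($a$ is even).

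For $\iota$, the relations not involving $T_i$ are trivially preserved. The braid relations \eqref{qwa_rel_1}--\eqref{qwa_rel_2} are homogeneous in the $T$'s: both sides carry the same number of $T$'s, so the signs agree. Relations \eqref{qwa_rel_4}, \eqref{daqwa_sym_rel_1} and \eqref{daqwa_sym_rel_2} are also homogeneous: in $T_iX_iT_i=X_{i+1}$ and $T_iY_{i+1}T_i=Y_i$ the two signs cancel. For the quadratic relation \eqref{qwa_rel_3}, the image is $T_i^2=-zt_{i,i+1}(-T_i)+1$ in $\HH_n(A,-z,q)$, which is exactly the quadratic relation there. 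For \eqref{daqwa_sym_rel_4}, the right-hand side $T_1^2$ maps to $(-T_1)^2=T_1^2$. The inverse is the analogous map $\HH_n(A,-z,q)\to\HH_n(A,z,q)$, because $\iota^2=\id$ on generators.

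For $\varsigma_{aX}$, the key facts are these. Since $a\in Z(A)$ is even, each $a_j$ is even and central in $A^{\otimes n}$. By \eqref{daqwa_sym_rel_3} the $a_j$ commute with all $X$'s and $Y$'s, and by \eqref{qwa_rel_4} we have $T_ia_j=a_{s_i(j)}T_i$. Then:
\begin{itemize}
\item The $a_jX_j$ commute pairwise and are invertible, with inverse $a_j^{-1}X_j^{-1}$.
\item \eqref{daqwa_sym_rel_3} is preserved because $a_j$ is central in $A^{\otimes n}$.
\item For \eqref{daqwa_sym_rel_1}, $T_i$ commutes with $a_j$ when $j\notin\{i,i+1\}$.
\item For \eqref{daqwa_sym_rel_2}, $T_ia_iX_iT_i=a_{i+1}T_iX_iT_i=a_{i+1}X_{i+1}$.
\item For \eqref{daqwa_sym_rel_4}, $(a_1X_1)^{-1}Y_2(a_1X_1)Y_2^{-1}=X_1^{-1}Y_2X_1Y_2^{-1}$, since $a_1$ commutes with $X_1$ and $Y_2$.
\item For \eqref{daqwa_sym_rel_5}, the product $a_1\cdots a_n$ is central in $A^{\otimes n}$, commutes with $Y_1$, and factors out on both sides.
\end{itemize}
The inverse is $\varsigma_{a^{-1}X}$; note that $a^{-1}$ is again even and central.

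For $\varsigma_{aY}$ I would avoid repeating the computation and instead conjugate by $\epsilontransform$ from \cref{thm:XY_automorphisms}. Define $\varsigma_{aY}:=\epsilontransform^{-1}\circ\varsigma_{aX}\circ\epsilontransform$, where the middle map is $\varsigma_{aX}$ on $\HH_n(A,-z,q^{-1})$. On generators this fixes $\mathbf a$ and $T_i$ (since $T_i\mapsto T_i^{-1}\mapsto T_i^{-1}\mapsto T_i$), fixes $X_j$, and sends $Y_j\mapsto X_j\mapsto a_jX_j\mapsto a_jY_j$. It is therefore exactly the stated map, and it is an automorphism as a composite of isomorphisms.

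The only mildly delicate point is the sign bookkeeping for $\iota$ in \eqref{qwa_rel_3} and the claim that $T_i$ moves $a_j$ to $a_{s_i(j)}$ with no sign. The latter holds because $a$ is even, so the superpermutation \eqref{eq:superpermute_A} introduces no sign.
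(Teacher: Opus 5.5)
Your proposal is correct and follows essentially the paper's route: you verify the relations of the presentation in \cref{thm:DAFHA_bernstein_presentation} and exhibit the inverses. The only deviation is that you obtain $\varsigma_{aY}$ as $\epsilontransform^{-1}\circ\varsigma_{aX}\circ\epsilontransform$ instead of checking its relations directly, a harmless shortcut that the paper itself uses for $\rho_2$. There is one sign typo: the image of \eqref{qwa_rel_3} under $\iota$ is $(-T_i)^2=zt_{i,i+1}(-T_i)+1$, i.e.\ $T_i^2=-zt_{i,i+1}T_i+1$, whereas what you wrote, $T_i^2=-zt_{i,i+1}(-T_i)+1$, simplifies back to the original relation.
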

\begin{proof}
It is straightforward to verify that $\iota$, $\varsigma_{aX}$ and $\varsigma_{aY}$ verify the defining relations of $\HH_n(A,z,q)$ from \cref{thm:DAFHA_bernstein_presentation}. Clearly, $\iota$ is an involution up to replacement of $z$ by $-z$, the inverse of $\varsigma_{aX}$ is $\varsigma_{a^{-1}X}$ and the inverse of $\varsigma_{aY}$ is $\varsigma_{a^{-1}Y}$.
\end{proof}

\begin{lemma} \label{thm:A_automorphisms}
Let $n \in \NN, n \geq 2$, and let $\xi \colon A \to D$ be an even, trace-preserving isomorphism of $\kk$-superalgebras. Then there is an isomorphism of $\kk$-superalgebras, given by
\begin{gather} 
\bar{\xi} \colon \HH_n(A,z,q) \to \HH_n(D,z,q), \label{eq:A_automorphisms}\\
\mathbf{a} \mapsto \xi^{\otimes n}(\mathbf{a}), \qquad T_i \mapsto T_i, \qquad X_j \mapsto X_j, \qquad Y_j \mapsto Y_j, \nonumber
\end{gather}
for $\mathbf{a} \in A^{\otimes n}$, $1 \leq i \leq n-1$ and $1 \leq j \leq n$. In particular, this defines an action of the group $\textup{Aut}_{\tr}(A) := \{ \xi \in \textup{Aut}(A) \mid \xi \textup{ is even}, \tr \circ \xi = \tr \}$ on $\HH_n(A,z,q)$ by automorphisms.
\end{lemma}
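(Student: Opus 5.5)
We will define $\bar{\xi}$ on the presentation of \cref{thm:DAFHA_bernstein_presentation}. On the free product
\[
A^{\otimes n} \ast \kk[X_1^{\pm 1},\dotsc,X_n^{\pm 1}] \ast \kk[Y_1^{\pm 1},\dotsc,Y_n^{\pm 1}] \ast \langle T_i \colon 1 \leq i \leq n-1\rangle,
\]
we use the superalgebra homomorphism $\xi^{\otimes n} \colon A^{\otimes n} \to D^{\otimes n}$ on the first factor and the identity on the others. This map is even, because $\xi$ is even. By the universal property of the free product, it gives a homomorphism into the corresponding free product for $D$. It then suffices to show that every defining relation \eqref{qwa_rel_1}--\eqref{daqwa_sym_rel_5} of $\HH_n(A,z,q)$ is sent to a relation holding in $\HH_n(D,z,q)$.

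Relations \eqref{qwa_rel_1}, \eqref{qwa_rel_2}, \eqref{daqwa_sym_rel_1}, \eqref{daqwa_sym_rel_2}, \eqref{daqwa_sym_rel_4} and \eqref{daqwa_sym_rel_5} involve no elements of $A^{\otimes n}$, so they map to the identical relations. Relation \eqref{daqwa_sym_rel_3} maps to $X_i\xi^{\otimes n}(\mathbf{a}) = \xi^{\otimes n}(\mathbf{a})X_i$, which holds in $\HH_n(D,z,q)$, and likewise for the $Y_i$. For \eqref{qwa_rel_4}, we need $\xi^{\otimes n}(s_i(\mathbf{a})) = s_i(\xi^{\otimes n}(\mathbf{a}))$. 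This holds because $\xi$ preserves parity, so the Koszul signs in the superpermutation \eqref{eq:superpermute_A} agree.

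The main point is relation \eqref{qwa_rel_3}, where we need $\xi^{\otimes n}(t_{i,i+1}^A) = t_{i,i+1}^D$. Let $B$ be a basis of $A$ with left dual basis $\{b^\vee\}$. Then $\xi(B)$ is a basis of $D$. Since $\tr_D \circ \xi = \tr_A$, we have
\[
\tr_D(\xi(b^\vee)\xi(c)) = \tr_D(\xi(b^\vee c)) = \tr_A(b^\vee c) = \delta_{b,c},
\]
so $\{\xi(b^\vee)\}$ is the left dual basis to $\xi(B)$. Hence
\[
\xi^{\otimes n}(t_{i,j}^A) = \sum_b \xi(b)_i\,\xi(b^\vee)_j = t_{i,j}^D,
\]
using the independence of $t_{i,j}$ from the choice of basis. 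So \eqref{qwa_rel_3} maps to $T_i^2 = z t^D_{i,i+1} T_i + 1$.

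Therefore $\bar{\xi}$ is a well-defined even homomorphism of superalgebras. The inverse $\xi^{-1} \colon D \to A$ is also even and trace-preserving, so the same construction gives a homomorphism $\overline{\xi^{-1}}$. Both composites $\overline{\xi^{-1}} \circ \bar{\xi}$ and $\bar{\xi} \circ \overline{\xi^{-1}}$ fix all generators, hence are the identity, and $\bar{\xi}$ is an isomorphism.

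For the final claim, take $D = A$. The construction satisfies $\overline{\xi \circ \eta} = \bar{\xi} \circ \bar{\eta}$ and $\overline{\id_A} = \id$, since both sides agree on generators. Hence $\xi \mapsto \bar{\xi}$ is a group homomorphism $\textup{Aut}_{\tr}(A) \to \textup{Aut}(\HH_n(A,z,q))$, which is the required action by automorphisms.
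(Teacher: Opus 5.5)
Your proof is correct and follows the paper's own argument. The paper likewise checks the relations of the presentation in \cref{thm:DAFHA_bernstein_presentation}, with the only substantive point being that trace-preservation sends the teleporters $t_{i,j}$ of $A^{\otimes n}$ to those of $D^{\otimes n}$, and then takes $D=A$ for the group action; your dual-basis computation, the evenness argument for compatibility with superpermutation, and the explicit inverse and functoriality supply details the paper leaves as ``easily verified''.
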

\begin{proof}
Since $\xi$ preserves the trace, $\xi^{\otimes n}$ maps the teleporter $t_{i,j}$ of $A^{\otimes n}$ to the corresponding teleporter $t_{i,j}^D$ of $D^{\otimes n}$ for all $1 \leq i,j \leq n$. It is then easily verified that the assignment \eqref{eq:A_automorphisms} preserves the defining relations of $\HH_n(A,z,q)$ from \cref{thm:DAFHA_bernstein_presentation}. Taking $D=A$ obtains the group action.
\end{proof}

Let $\SL_2(\ZZ)$ be the group of $2 \times 2$ matrices with integer coefficients and determinant $1$.

\begin{lemma} \label{thm:quotient_of_B_3}
There is a surjective group homomorphism,
\begin{gather*}
B_3 = \langle \sigma_1,\sigma_2 | \sigma_1 \sigma_2 \sigma_1 = \sigma_2 \sigma_1 \sigma_2 \rangle \to \SL_2(\ZZ),\\
\sigma_1 \mapsto \begin{pmatrix}
1 & 1\\
0 & 1
\end{pmatrix}, \quad \quad \sigma_2 \mapsto \begin{pmatrix}
1 & 0\\
-1 & 1
\end{pmatrix},
\end{gather*}
which has kernel generated by $(\sigma_1 \sigma_2 \sigma_1)^4$.
\end{lemma}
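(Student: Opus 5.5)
We prove this classical fact from the standard presentation $\SL_2(\ZZ)\cong \langle S,U \mid S^4=1,\ S^2=U^3\rangle$, where $S=\begin{pmatrix}0&1\\-1&0\end{pmatrix}$ and $U$ is a suitable element of order $6$. Write $a,b$ for the two proposed images of $\sigma_1,\sigma_2$.

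The first step is to check that the assignment is well defined. This means verifying the braid relation $aba=bab$ directly. A $2\times 2$ computation gives
\[
aba=\begin{pmatrix}0&1\\-1&0\end{pmatrix}=bab,
\]
so the homomorphism exists. Moreover $aba=S$, and $(aba)^2=-I$, $(aba)^4=I$. Hence $(\sigma_1\sigma_2\sigma_1)^4$ lies in the kernel.

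The second step is surjectivity. It is classical that $\begin{pmatrix}1&1\\0&1\end{pmatrix}$ and $\begin{pmatrix}1&0\\-1&1\end{pmatrix}$ generate $\SL_2(\ZZ)$. One route is the Euclidean algorithm: left multiplication by powers of $a$ and $b$ performs row operations that reduce any determinant-$1$ integer matrix to $\pm I$. Since $-I=(aba)^2$ is in the image, this suffices.

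The third step, identifying the kernel, is the main point. Put $s=\sigma_1\sigma_2\sigma_1$ and $u=\sigma_1\sigma_2$ in $B_3$. The braid relation gives $s^2=(\sigma_1\sigma_2)^3=u^3$, and this element $\Delta^2$ generates the centre of $B_3$. Moreover $B_3=\langle s,u\mid s^2=u^3\rangle$, since $\sigma_1=u^{-1}s$ and $\sigma_2=s^{-1}u^2$, and the braid relation transforms into $s^2=u^3$ under this substitution.

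Consequently
\[
B_3/\langle\langle s^4\rangle\rangle=\langle s,u\mid s^2=u^3,\ s^4=1\rangle .
\]
Because $s^4$ is central, its normal closure is just the cyclic subgroup $\langle s^4\rangle$. The right-hand side is the standard presentation of $\SL_2(\ZZ)$, with $s\mapsto S=aba$ and $u\mapsto ab=\begin{pmatrix}0&1\\-1&1\end{pmatrix}$.

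The main obstacle is justifying that standard presentation, namely $\SL_2(\ZZ)\cong \ZZ/4\ast_{\ZZ/2}\ZZ/6$. I would cite it, for example from Serre's \emph{Trees}, or derive it from $\mathrm{PSL}_2(\ZZ)\cong \ZZ/2\ast\ZZ/3$ via the action on the upper half plane or on the Bass--Serre tree. Granting it, the induced map from $\langle s,u\mid s^2=u^3,\ s^4=1\rangle$ onto $\SL_2(\ZZ)$ is an isomorphism. Therefore the kernel of $B_3\to\SL_2(\ZZ)$ is exactly $\langle (\sigma_1\sigma_2\sigma_1)^4\rangle$.
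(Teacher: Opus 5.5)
Your argument is correct. The paper gives no proof of its own, only a pointer to \cite[Appendix A]{BG_2008}, whose content is the presentation $\SL_2(\ZZ)\cong\ZZ/4\ast_{\ZZ/2}\ZZ/6$ that you invoke, so your reduction via $B_3=\langle s,u\mid s^2=u^3\rangle$ with $s^2$ central is essentially the standard argument behind that citation.

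One point should be made explicit. The presentation is usually stated for the generators $\begin{pmatrix}0&-1\\1&0\end{pmatrix}$ and $\begin{pmatrix}0&-1\\1&1\end{pmatrix}$. Your $aba$ and $ab$ are their images under conjugation by $\mathrm{diag}(1,-1)\in\GL_2(\ZZ)$, so the presentation transfers to your generators. Once that is in place, your separate surjectivity step and the claim that $\Delta^2$ generates the centre of $B_3$ are not needed.
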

\begin{proof}
See \cite[Appendix A]{BG_2008}.
\end{proof}

The torus can be described topologically as the quotient of the space $\RR^2$ by the action of $\ZZ^2$ on $\RR^2$ by translation. Let $\pi_\textup{tor} \colon \RR^2 \to \TT^2$ be the corresponding quotient map. This map restricts to $\pi_\textup{punc} \colon \RR^2 \setminus \ZZ^2 \to \TT^2_\text{punc}$, where $\TT^2_\text{punc}$ is the punctured torus. The $\HH_n(A,z,q)$ diagrams lie in the `thickened punctured torus' $\TT^2_\text{punc} \times [0,1]$ that arises from the quotient map $\pi_\textup{punc} \times \textup{id} \colon (\RR^2 \setminus \ZZ^2) \times [0,1] \to \TT^2_\text{punc} \times [0,1]$ that is the identity on the $[0,1]$ component. As the orbits of the action of $\ZZ^2$ on $\RR^2$ are invariant under the canonical action of $\SL_2(\ZZ)$ on $\RR^2$, the latter induces an action of $\SL_2(\ZZ)$ on $\TT^2$, $\TT^2_\text{punc}$ and $\TT^2_\text{punc} \times [0,1]$. \Cref{thm:quotient_of_B_3} defines an action of $B_3$ on these three spaces.

For a geometric description of the action, let the \emph{horizontal cutting strip} be the strip $\pi_\textup{punc}(\{(x,0) \in \RR^2 \setminus \ZZ^2\}) \times [0,1]$, which is shaded red in the diagrams \eqref{str:rho_1_pi} and \eqref{str:rho_1_T_0}. Let the \emph{vertical cutting strip} be the strip $\pi_\textup{punc}(\{(0,y) \in \RR^2 \setminus \ZZ^2\}) \times [0,1]$, which is shaded blue in the diagrams. The action of $\sigma_1$ on $\TT^2_\text{punc} \times [0,1]$ amounts to cutting the thickened punctured torus along the horizontal cutting strip to get a thickened cylinder, then re-identifying the boundary strips with a $360^{\circ}$ twist. The action of $\sigma_2$ is analogous, cutting along the vertical cutting strip.

In order to define automorphisms of $\HH_n(A,z,q)$ corresponding to the Dehn twists, note that the $\sigma_1$ Dehn twist does not affect the homotopy class of horizontal loops around the torus. Accordingly, any element of $H_n^\affX(A,z) \subseteq \HH_n(A,z,q)$ should remain fixed by our automorphism. Vertical loops around the torus do get cut, twisted, and connected up again when acted on by $\sigma_1$, so elements $Y_i$, $\pi$ and $T_0$ of $\HH_n(A,z,q)$ are not fixed by the $\sigma_1$ Dehn twist. For example, in $\HH_5(A,z,q)$,
\begin{equation} \label{str:rho_1_pi}
\pi =
\begin{tikzpicture}[anchorbase,scale=0.8]
		\filldraw[red,opacity=0.2] (3*\torwidth/4,\torheight+\torthickness/2) circle (2pt);
        \draw[purple] (\torwidth/2,\torheight/2) to[out=up,in=-90-\torangle] (3*\torwidth/4,\torheight+\torthickness/2);
        \draw (\torwidth/4,\torheight/4) to[out=up,in=down] (\torwidth/2,\torheight/2+\torthickness);
        \draw (0,0) to[out=up,in=down] (\torwidth/4,\torheight/4+\torthickness);
        \draw (-\torwidth/4,-\torheight/4) to[out=up,in=down] (0,\torthickness);
        \draw (-\torwidth/2,-\torheight/2) to[out=up,in=down] (-\torwidth/4,-\torheight/4+\torthickness);
		\filldraw[red,opacity=0.2] (0,-\torheight+\torthickness/2) circle (2pt);
		\draw[wipe] (0,-\torheight+\torthickness/2) to[out=\torangle,in=down] (-\torwidth/2,-\torheight/2+\torthickness);
        \draw[purple] (0,-\torheight+\torthickness/2) to[out=\torangle,in=down] (-\torwidth/2,-\torheight/2+\torthickness);
        \draw[DarkGreen,dashed] (-\torwidth,-\torheight+\torthickness) -- (\torwidth,\torheight+\torthickness);
        \draw[DarkGreen,dashed] (-\torwidth,-\torheight) -- (\torwidth,\torheight);
		\puncturedtorus
    \end{tikzpicture}
    \mapsto
    \begin{tikzpicture}[anchorbase,scale=0.8]
		\filldraw[red,opacity=0.2] (3*\torwidth/4,\torheight+\torthickness/2) circle (2pt);
        \draw[purple] (\torwidth/2,\torheight/2) to[out=up,in=-90-\torangle] (3*\torwidth/4,\torheight+\torthickness/2);
        \draw (\torwidth/4,\torheight/4) to[out=up,in=down] (\torwidth/2,\torheight/2+\torthickness);
        \draw (0,0) to[out=up,in=down] (\torwidth/4,\torheight/4+\torthickness);
        \draw (-\torwidth/4,-\torheight/4) to[out=up,in=down] (0,\torthickness);
        \draw (-\torwidth/2,-\torheight/2) to[out=up,in=down] (-\torwidth/4,-\torheight/4+\torthickness);
		\filldraw[red,opacity=0.2] (0,-\torheight+\torthickness/2) circle (2pt);
		\filldraw[blue,opacity=0.2] (-3*\torwidth/4,-\torheight+1.1*\torthickness) circle (2pt);
        \draw[purple] (-3*\torwidth/4,-\torheight+1.1*\torthickness) to[out=\torangle,in=down] (-\torwidth/2,-\torheight/2+\torthickness);
        \draw[purple] (0,-\torheight+\torthickness/2) to[out=\torangle,in=left] (\torwidth/2,-\torheight+1.1*\torthickness);
        \filldraw[blue,opacity=0.2] (\torwidth/2,-\torheight+1.1*\torthickness) circle (2pt);
        \draw[DarkGreen,dashed] (-\torwidth,-\torheight+\torthickness) -- (\torwidth,\torheight+\torthickness);
        \draw[DarkGreen,dashed] (-\torwidth,-\torheight) -- (\torwidth,\torheight);
		\puncturedtorus
    \end{tikzpicture}
    = X_1 \pi ,
\end{equation}
\begin{equation} \label{str:rho_1_T_0}
T_0 =
\begin{tikzpicture}[anchorbase,scale=0.8]
		\filldraw[red,opacity=0.2] (3*\torwidth/4-0.75*\torwidth,\torheight+0.35*\torthickness) circle (2pt);
        \draw[cyan] (\torwidth/2,\torheight/2) to[out=up,in=-90-\torangle,looseness=0.75] (3*\torwidth/4-0.75*\torwidth,\torheight+0.35*\torthickness);
        \draw[wipe] (\torwidth/4,\torheight/4) to[out=up,in=down] (\torwidth/4,\torheight/4+\torthickness);
        \draw (\torwidth/4,\torheight/4) to[out=up,in=down] (\torwidth/4,\torheight/4+\torthickness);
        \draw (0,0) to[out=up,in=down] (0,\torthickness);
        \draw (-\torwidth/4,-\torheight/4) to[out=up,in=down] (-\torwidth/4,-\torheight/4+\torthickness);
        \draw[wipe] (-\torwidth/2,-\torheight/2) to[out=up,in=90-\torangle] (0,-\torheight+0.65*\torthickness);
        \draw[purple] (-\torwidth/2,-\torheight/2) to[out=up,in=90-\torangle] (0,-\torheight+0.65*\torthickness);
        \draw[purple] (3*\torwidth/4,\torheight+0.65*\torthickness) to[out=-90-\torangle,in=down,looseness=0.75] (\torwidth/2,\torheight/2+\torthickness);
        \filldraw[red,opacity=0.2] (3*\torwidth/4,\torheight+0.65*\torthickness) circle (2pt);
        \filldraw[red,opacity=0.2] (0,-\torheight+0.65*\torthickness) circle (2pt);
		\draw[wipe] (-0.75*\torwidth,-\torheight+0.35*\torthickness) to[out=90-\torangle,in=down] (-\torwidth/2,-\torheight/2+\torthickness);
        \draw[cyan] (-0.75*\torwidth,-\torheight+0.35*\torthickness) to[out=90-\torangle,in=down] (-\torwidth/2,-\torheight/2+\torthickness);
        \filldraw[red,opacity=0.2] (-0.75*\torwidth,-\torheight+0.35*\torthickness) circle (2pt);
        \draw[DarkGreen,dashed] (-\torwidth,-\torheight+\torthickness) -- (\torwidth,\torheight+\torthickness);
        \draw[DarkGreen,dashed] (-\torwidth,-\torheight) -- (\torwidth,\torheight);
		\puncturedtorus
    \end{tikzpicture}
    \mapsto
    \begin{tikzpicture}[anchorbase,scale=0.8]
		\filldraw[red,opacity=0.2] (3*\torwidth/4-0.75*\torwidth,\torheight+0.35*\torthickness) circle (2pt);
        \draw[cyan] (\torwidth/2,\torheight/2) to[out=up,in=-90-\torangle,looseness=0.75] (3*\torwidth/4-0.75*\torwidth,\torheight+0.35*\torthickness);
        \draw[wipe] (\torwidth/4,\torheight/4) to[out=up,in=down] (\torwidth/4,\torheight/4+\torthickness);
        \draw (\torwidth/4,\torheight/4) to[out=up,in=down] (\torwidth/4,\torheight/4+\torthickness);
        \draw (0,0) to[out=up,in=down] (0,\torthickness);
        \draw (-\torwidth/4,-\torheight/4) to[out=up,in=down] (-\torwidth/4,-\torheight/4+\torthickness);
        \draw[purple] (-\torwidth/2,-\torheight/2) to[out=up,in=right] (-\torwidth+\torskew/2,-\torheight+0.65*\torthickness);
        \draw[purple] (1.5*\torskew,-\torheight+0.65*\torthickness) to[out=left,in=90-\torangle] (0,-\torheight+0.65*\torthickness);
        \draw[purple] (3*\torwidth/4,\torheight+0.65*\torthickness) to[out=-90-\torangle,in=down,looseness=0.75] (\torwidth/2,\torheight/2+\torthickness);
        \filldraw[red,opacity=0.2] (3*\torwidth/4,\torheight+0.65*\torthickness) circle (2pt);
        \filldraw[red,opacity=0.2] (0,-\torheight+0.65*\torthickness) circle (2pt);
        \filldraw[blue,opacity=0.2] (-\torwidth+\torskew/2,-\torheight+0.65*\torthickness) circle (2pt);
        \filldraw[blue,opacity=0.2] (1.5*\torskew,-\torheight+0.65*\torthickness) circle (2pt);
		\draw[wipe] (-0.75*\torwidth,-\torheight+0.35*\torthickness) to[out=90-\torangle,in=left] (\torwidth/2+\torskew/4,\torheight/4);
        \draw[cyan] (-0.75*\torwidth,-\torheight+0.35*\torthickness) to[out=90-\torangle,in=left] (\torwidth/2+\torskew/4,\torheight/4);
        \draw[cyan] (-\torwidth/2-0.75*\torskew,\torheight/4) to[out=right,in=down] (-\torwidth/2,-\torheight/2+\torthickness);
        \filldraw[red,opacity=0.2] (-0.75*\torwidth,-\torheight+0.35*\torthickness) circle (2pt);
        \filldraw[blue,opacity=0.2] (\torwidth/2+\torskew/4,\torheight/4) circle (2pt);
        \filldraw[blue,opacity=0.2] (-\torwidth/2-0.75*\torskew,\torheight/4) circle (2pt);
        \draw[DarkGreen,dashed] (-\torwidth,-\torheight+\torthickness) -- (\torwidth,\torheight+\torthickness);
        \draw[DarkGreen,dashed] (-\torwidth,-\torheight) -- (\torwidth,\torheight);
		\puncturedtorus
    \end{tikzpicture}
    = X_1 T_0 X_1^{-1}.
\end{equation}
In both the above examples, we have cut the thickened punctured torus at the strip formed by the red faces to get a thickened cylinder, then twisted any strings near the front red face by $360^{\circ}$ anti-clockwise, and re-identified the red faces to recover the thickened punctured torus. Using this, for $1 \leq i \leq n$, we find that $Y_i = T_{i-1}^{-1} \dotsb T_1^{-1} \pi T_{n-1} \dotsb T_i$ is mapped to
\begin{multline} \label{eq:rho_1_Y_i}
T_{i-1}^{-1} \dotsb T_1^{-1} X_1 \pi T_{n-1} \dotsb T_i = (T_{i-1}^{-1} \dotsb T_1^{-1})(T_1^{-1} \dotsb T_{i-1}^{-1}) X_i T_{i-1}^{-1} \dotsb T_1^{-1} \pi T_{n-1} \dotsb T_i\\
= (T_{i-1}^{-1} \dotsb T_1^{-1})(T_1^{-1} \dotsb T_{i-1}^{-1}) X_i Y_i = P_i^{-1}X_iY_i,
\end{multline}
where
\begin{equation} \label{eq:P_i}
P_i := (T_{i-1} \dotsb T_1)(T_1 \dotsb T_{i-1}),
\end{equation}
motivating the following proposition:

\begin{proposition} \label{thm:rho_1}
Let $n \in \NN, n \geq 2$. There is a $\kk$-superalgebra isomorphism $\rho_1 \colon \HH_n(A,z,q) \to \HH_n(A,z,q)$ defined by
\[
\mathbf{a} \mapsto \mathbf{a}, \qquad X_i \mapsto X_i, \qquad Y_i \mapsto P_i^{-1} X_i Y_i, \qquad T_j \mapsto T_j,
\]
for $\mathbf{a} \in A^{\otimes n}$, $1 \leq i \leq n$ and $1 \leq j \leq n-1$ with inverse
\[
\mathbf{a} \mapsto \mathbf{a}, \qquad X_i \mapsto X_i, \qquad Y_i \mapsto X_i^{-1}P_iY_i, \qquad T_j \mapsto T_j.
\]
Diagrammatically,
\begin{equation*}
\rho_1 \left( \begin{tikzpicture}[centerzero]
        \draw (-0.8,-0.5) node[anchor=north] {\dotlabel{1}} to[out=up,in=down] (-0.4,0.5);
        \draw (0.4,-0.5) node[anchor=north] {\dotlabel{n-1}} to[out=up,in=down] (0.8,0.5);
		\draw (0.8,-0.5) node[anchor=135 ,yshift=-1] {\dotlabel{n}} to[out=up,in=225] (1.2,0);
		\draw (-1.2,0) to[out=45,in=down] (-0.8,0.5);
        \identify{-1.2}{-0.5}{1.2}{0.5};
        \node at (0,0) {$\hdots$};
    \end{tikzpicture} \right)
    \ = \
    \begin{tikzpicture}[centerzero]
        \draw (-0.8,-0.5) node[anchor=north] {\dotlabel{1}} to[out=up,in=down] (-0.4,0.5);
        \draw (0.4,-0.5) node[anchor=north] {\dotlabel{n-1}} to[out=up,in=down] (0.8,0.5);
		\draw (0.8,-0.5) node[anchor=135 ,yshift=0] {\dotlabel{n}} to[out=up,in=225] (1.2,0);
		\draw (-1.2,0) to[out=45,in=down] (-0.8,0.5);
        \greydot{-0.85,0.35} node[anchor=-45,yshift=2] {\dotlabel{1}};
        \identify{-1.2}{-0.5}{1.2}{0.5};
        \node at (0,0) {$\hdots$};
    \end{tikzpicture}\; \qquad \text{and} \qquad
\rho_1 \left(\begin{tikzpicture}[centerzero]
		\draw (-0.6,-0.5) node[anchor=north] {\dotlabel{2}} -- (-0.6,0.5);
		\draw (0.6,-0.5) node[anchor=north] {\dotlabel{n-1}} -- (0.6,0.5);
        \draw (1,-0.5) node[anchor=north] {\dotlabel{n}} to[out=up,in=225] (1.4,-0.2);
        \draw (-1.4,-0.2) to[out=45,in=down] (-1,0.5);
        \draw[wipe] (-1,-0.5) to[out=up,in=-45] (-1.4,0.2);
        \draw (-1,-0.5) node[anchor=north] {\dotlabel{1}} to[out=up,in=-45] (-1.4,0.2);
        \draw (1.4,0.2) to[out=135,in=down] (1,0.5);
        \identify{-1.4}{-0.5}{1.4}{0.5};
        \node at (0,0) {$\hdots$};
    \end{tikzpicture} \right)
    \ = \
    \begin{tikzpicture}[centerzero]
		\draw (-0.6,-0.5) node[anchor=north] {\dotlabel{2}} -- (-0.6,0.5);
		\draw (0.6,-0.5) node[anchor=north] {\dotlabel{n-1}} -- (0.6,0.5);
        \draw (1,-0.5) node[anchor=north] {\dotlabel{n}} to[out=up,in=225] (1.4,-0.2);
        \draw (-1.4,-0.2) to[out=45,in=down] (-1,0.5);
        \draw[wipe] (-1,-0.5) to[out=up,in=-45] (-1.4,0.2);
        \draw (-1,-0.5) node[anchor=north] {\dotlabel{1}} to[out=up,in=-45] (-1.4,0.2);
        \draw (1.4,0.2) to[out=135,in=down] (1,0.5);
        \greydot{-1,0.35} node[anchor=west] {\dotlabel{1}};
        \greydot{-1,-0.35} node[anchor=235] {\dotlabel{-1}};
        \identify{-1.4}{-0.5}{1.4}{0.5};
        \node at (0,0) {$\hdots$};
    \end{tikzpicture} \; .
\end{equation*}
\end{proposition}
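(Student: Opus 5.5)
The cleanest route is to avoid checking all the relations involving the $Y_i$ directly, and instead work in the presentation of \cref{thm:double_affine_Frobenius_Hecke_algebra}. There $\rho_1$ should fix $\mathbf{a}$, the $X_j$ and $T_1,\dotsc,T_{n-1}$, and send $\pi \mapsto X_1\pi$. Since $T_0=\pi^{-1}T_1\pi$, this forces $T_0 \mapsto \pi^{-1}X_1^{-1}T_1X_1\pi$. Using \eqref{dafha_rel_10b} in the form $\pi^{-1}X_1\pi=qX_n$, this equals $X_nT_0X_n^{-1}$; this is consistent with the diagrammatic computation \eqref{str:rho_1_T_0} up to the placement of the conjugating dot. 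So I would define $\tilde\rho_1$ on the generators of \cref{thm:double_affine_Frobenius_Hecke_algebra} by
\[
\pi\mapsto X_1\pi, \qquad T_0\mapsto X_nT_0X_n^{-1},
\]
with all other generators fixed.

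Then verify the relations one at a time.
\begin{itemize}
\item Relations \eqref{dafha_rel_1}--\eqref{dafha_rel_4} and \eqref{dafha_rel_7}--\eqref{dafha_rel_9} involving $T_0$ hold because conjugation by $X_n$ preserves them. This uses that $X_n$ commutes with $T_j$ for $j\neq n-1,0$, commutes with $A^{\otimes n}$, and commutes with $t_{n,1}$. For the braid relations with $T_{n-1}$ and $T_1$, I would rewrite using $X_nT_0X_n^{-1}=X_1^{-1}\,(X_1X_nT_0)X_n^{-1}$. Here $X_1X_n$ commutes with $T_0$ by the computation in the proof of \cref{thm:DAFHA_bernstein_presentation}, applied to $T_0$ via $\pi$-conjugation. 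So the image of $T_0$ also equals $X_1^{-1}T_0X_1$, i.e. conjugation by $X_1$, which commutes with $T_{n-1}$. Conjugation by $X_n$ handles the $T_1$ braid relation and conjugation by $X_1$ handles the $T_{n-1}$ one. The remaining task is to see that a single element conjugates all three consistently inside a braid word; this is where care is needed.
\item Relation \eqref{dafha_rel_8b}: $X_nT_0X_n^{-1}\,X_n\,X_nT_0X_n^{-1}=X_nT_0X_nT_0X_n^{-1}=q^{-1}X_1$, as required.
\item Relations \eqref{dafha_rel_6}, \eqref{dafha_rel_10} and \eqref{dafha_rel_10b} are immediate, since $X_1$ commutes with $A^{\otimes n}$ and with all $X_j$.
\item Relation \eqref{dafha_rel_5} for $1\le i\le n-2$: $X_1\pi T_i\pi^{-1}X_1^{-1}=X_1T_{i+1}X_1^{-1}=T_{i+1}$ by \eqref{dafha_rel_7}.
\item Relation \eqref{dafha_rel_5} for $i=n-1$: $X_1\pi T_{n-1}\pi^{-1}X_1^{-1}=X_1T_0X_1^{-1}$, which matches the image of $T_0$ by the identity above.
\item Relation \eqref{dafha_rel_5} for $i=0$: $X_1\pi X_nT_0X_n^{-1}\pi^{-1}X_1^{-1}=X_1(q^{-1}X_1)T_1(qX_1^{-1})X_1^{-1}$, and this should equal $T_1$. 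That fails, since $X_1$ does not commute with $T_1$. This signals that the correct image of $T_0$ is $X_1^{-1}T_0X_1$ rather than $X_nT_0X_n^{-1}$, and the two coincide only if $X_1X_n$ commutes with $T_0$. I would resolve this first by recomputing $\pi^{-1}X_1\pi$ carefully. The main obstacle is getting this bookkeeping of the conjugating element exactly right.
\end{itemize}

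Once $\tilde\rho_1$ is well defined, compute its effect on $Y_i$ via \eqref{eq:Y_i_formula}. This is exactly the computation \eqref{eq:rho_1_Y_i}, giving $\tilde\rho_1(Y_i)=P_i^{-1}X_iY_i$. It uses $X_1 = T_1^{-1}\dotsb T_{i-1}^{-1}X_iT_{i-1}^{-1}\dotsb T_1^{-1}$, obtained by iterating \eqref{dafha_rel_8}. Hence $\tilde\rho_1$ agrees with the stated $\rho_1$ under the identification of \cref{thm:DAFHA_bernstein_presentation}.

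For bijectivity, define $\rho_1'$ by $\pi\mapsto X_1^{-1}\pi$ and the analogous $T_0$ assignment. Verifying it is a homomorphism is the same computation with $X_1$ replaced by $X_1^{-1}$. Both composites fix the generators, so they are mutually inverse. Computing $\rho_1'(Y_i)$ the same way gives $X_i^{-1}P_iY_i$, matching the stated inverse; alternatively one checks directly that $P_i$ and $X_i$ are fixed by $\rho_1$. Parity preservation is clear, since every generator not in $A^{\otimes n}$ is sent to an even element.
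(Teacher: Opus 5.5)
Your strategy is the paper's: work in the presentation of \cref{thm:double_affine_Frobenius_Hecke_algebra} with $\pi\mapsto X_1\pi$, check relations, then read off $\rho_1(Y_i)=P_i^{-1}X_iY_i$ from \eqref{eq:Y_i_formula}. As written, however, the argument does not close. The image you commit to for $T_0$ is wrong, and you leave the resulting contradiction unresolved.

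The slip is an inversion. From $\pi^{-1}X_1\pi=qX_n$ you get $\pi^{-1}X_1^{-1}T_1X_1\pi=(qX_n)^{-1}T_0(qX_n)=X_n^{-1}T_0X_n$, not $X_nT_0X_n^{-1}$. Your element $X_nT_0X_n^{-1}=X_1^{-1}T_0X_1$ is the opposite conjugation to the one in \eqref{str:rho_1_T_0}; this is not a matter of where the dot is placed. It is in fact the image of $T_0$ under the \emph{inverse} map $\pi\mapsto X_1^{-1}\pi$.

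With $\pi\mapsto X_1\pi$, your assignment violates \eqref{dafha_rel_5} in two places:
\begin{itemize}
\item at $i=0$, where it gives $X_1^2T_1X_1^{-2}$, as you found;
\item at $i=n-1$, where you would need $X_1T_0X_1^{-1}=X_1^{-1}T_0X_1$, i.e.\ that $X_1^2$ commutes with $T_0$. This is false by \cref{thm:lusztig_relation_Daff} and \cref{thm:DAFHA_basis_theorem}, so your claimed match in that bullet is wrong.
\end{itemize}
Your proposed remedy, switching to $X_1^{-1}T_0X_1$, changes nothing. You had already (correctly) shown that $X_1X_n$ commutes with $T_0$, whence $X_1^{-1}T_0X_1=X_nT_0X_n^{-1}$ is the same element.

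The correct assignment is $\rho_1(T_0)=X_1T_0X_1^{-1}$. This is what $T_0=\pi T_{n-1}\pi^{-1}$ forces, and it is the paper's choice. Since $T_0X_nX_1=X_1X_nT_0$ (immediate from \eqref{dafha_rel_8b}), it equals $X_n^{-1}T_0X_n$, so it is also consistent with $T_0=\pi^{-1}T_1\pi$.

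With this assignment, \eqref{dafha_rel_5} at $i=n-1$ is tautological. At $i=0$ one gets $X_1\pi X_n^{-1}T_0X_n\pi^{-1}X_1^{-1}=X_1(qX_1^{-1})T_1(q^{-1}X_1)X_1^{-1}=T_1$. Every other defining relation involving $T_0$, including \eqref{dafha_rel_8b}, is preserved simply because its remaining letters commute with $X_1$.

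Your worry about choosing one conjugating element consistently ``inside a braid word'' is unfounded. The braid relations involving $T_0$ are not defining relations, since \eqref{dafha_rel_2} only covers $1\le i\le n-2$; they are consequences of \eqref{dafha_rel_2} and \eqref{dafha_rel_5}. If you check them anyway, the $X_n^{-1}$-form handles the one with $T_1$ and the $X_1$-form handles the one with $T_{n-1}$.

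With this correction, your computation of $\rho_1(Y_i)$ and of the inverse goes through as you describe. For the inverse, $T_0\mapsto X_1^{-1}T_0X_1$ is now the right assignment.
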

\begin{proof}
That $\rho_1(\pi)=X_1 \pi$ follows from the definition of $\rho_1$ using Relation \eqref{eq:rho_1_Y_i}. Elements of $H_n^\affX(A,z) \subseteq \HH_n(A,z,q)$ are unchanged, so Relations \labelcref{dafha_rel_1,dafha_rel_2,dafha_rel_3,dafha_rel_4,dafha_rel_7,dafha_rel_8,dafha_rel_9} are trivially satisfied. Relations \labelcref{dafha_rel_5,dafha_rel_6,dafha_rel_8b} are easily verified. Relations \labelcref{dafha_rel_10,dafha_rel_10b} are clearly preserved, since for $\mu = (\mu_1,\dotsc,\mu_n) \in \ZZ^n$,
\[
\rho_1(\pi)\rho_1(X^\mu) = X_1 \pi X^\mu \underset{\eqref{dafha_rel_10b}}{\overset{\eqref{dafha_rel_10}}{=}} q^{-\mu_n}X^{\sigma(\mu)}X_1\pi = q^{-\mu_n}\rho_1(X^{\sigma(\mu)})\rho_1(\pi),
\]
where $\sigma \in S_n$ is the cycle $(1 \; 2 \; \dotsb \; n)$.
\details{Diagrammatically,
\begin{equation*}
	\rho_1(\pi)\rho_1(X^\mu)
    \ = \
    \begin{tikzpicture}[centerzero]
        \draw (-0.8,-0.5) node[anchor=north] {\dotlabel{1}} to[out=up,in=down] (-0.4,0.5);
        \draw (0.4,-0.5) node[anchor=north] {\dotlabel{n-1}} to[out=up,in=down] (0.8,0.5);
		\draw (0.8,-0.5) node[anchor=135 ,yshift=-1] {\dotlabel{n}} to[out=up,in=225] (1.2,0);
		\draw (-1.2,0) to[out=45,in=down] (-0.8,0.5);
        \greydot{-0.8,-0.35} node[anchor=-45] {\dotlabel{\mu_1}};
        \greydot{0.4,-0.35} node[anchor=-30] {\dotlabel{\mu_{n-1}}};
        \greydot{0.85,-0.35} node[anchor=-100,yshift=2] {\dotlabel{\mu_n}};
        \greydot{-0.85,0.35} node[anchor=-45,yshift=2] {\dotlabel{1}};
        \identify{-1.2}{-0.5}{1.2}{0.5};
        \node at (0,0) {$\hdots$};
    \end{tikzpicture}
    \ = \
    q^{-\mu_n} \; \begin{tikzpicture}[centerzero]
        \draw (-0.8,-0.5) node[anchor=north] {\dotlabel{1}} to[out=up,in=down] (-0.4,0.5);
        \draw (0.4,-0.5) node[anchor=north] {\dotlabel{n-1}} to[out=up,in=down] (0.8,0.5);
		\draw (0.8,-0.5) node[anchor=135 ,yshift=0] {\dotlabel{n}} to[out=up,in=225] (1.2,0);
		\draw (-1.2,0) to[out=45,in=down] (-0.8,0.5);
        \greydot{-0.85,0.35} node[anchor=-45,yshift=2,xshift=-2] {\dotlabel{\mu_n+1}};
        \greydot{-0.4,0.35} node[anchor=south,yshift=2] {\dotlabel{\mu_1}};
        \greydot{0.8,0.35} node[anchor=south,yshift=2] {\dotlabel{\mu_{n-1}}};
        \identify{-1.2}{-0.5}{1.2}{0.5};
        \node at (0,0) {$\hdots$};
    \end{tikzpicture}
    \ = \
    q^{-\mu_n}\rho_1(X^{\sigma(\mu)})\rho_1(\pi),
\end{equation*}}
Having verified this relation, we can show
\[
\rho_1(T_0) \overset{\eqref{dafha_rel_5}}{=} \rho_1(\pi T_{n-1} \pi^{-1}) = X_1\pi T_{n-1} \pi^{-1}X_1^{-1} \overset{\eqref{dafha_rel_5}}{=} X_1 T_0 X_1^{-1},
\]
as suggested by the diagrams \labelcref{str:rho_1_T_0}. Relation \eqref{dafha_rel_8b} follows:
\[
\rho_1(T_0)\rho_1(X_n) = X_1 T_0 X_1^{-1} X_n = q^{-1}X_1^2 T_0^{-1} X_1^{-1} = q^{-1}\rho_1(X_1) \rho_1(T_0)^{-1}.
\]
The inverse is similarly shown to be a well-defined homomorphism, so $\rho_1$ is a bijection.
\end{proof}

The Dehn twist corresponding to $\sigma_2 \in B_3$ corresponds to an automorphism of $\HH_n(A,z,q)$ that leaves elements of $H_n^\aff(A,z) \subseteq \HH_n(A,z,q)$ (such as $\pi$) unchanged. 

\begin{proposition} \label{thm:rho_2}
Let $n \in \NN, n \geq 2$. There is a $\kk$-superalgebra isomorphism $\rho_2 \colon \HH_n(A,z,q) \to \HH_n(A,z,q)$ defined by
\[
\mathbf{a} \mapsto \mathbf{a}, \qquad X_i \mapsto P_i Y_iX_i, \qquad Y_i \mapsto Y_i, \qquad T_j \mapsto T_j,
\]
for $\mathbf{a} \in A^{\otimes n}$, $1 \leq i \leq n$ and $1 \leq j \leq n-1$, with inverse
\[
\mathbf{a} \mapsto \mathbf{a}, \qquad X_i \mapsto Y_i^{-1}P_i^{-1}X_i, \qquad Y_i \mapsto Y_i, \qquad T_j \mapsto T_j.
\]
\end{proposition}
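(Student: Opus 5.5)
The most economical route is to deduce \cref{thm:rho_2} from \cref{thm:rho_1} by conjugating with one of the $X\leftrightarrow Y$ symmetries of \cref{thm:XY_automorphisms}, rather than re-verifying every relation by hand. I propose $\rho_2 := \epsilontransform^{-1} \circ \rho_1^{-1} \circ \epsilontransform$. Here $\rho_1^{-1}$ is the inverse from \cref{thm:rho_1}, taken in $\HH_n(A,-z,q^{-1})$.

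The first step is to compute this composite on generators. The map $\epsilontransform$ fixes $\mathbf{a}$, sends $T_j \mapsto T_j^{-1}$, and exchanges $X_i$ with $Y_i$. It therefore sends $P_i = (T_{i-1}\dotsb T_1)(T_1\dotsb T_{i-1})$ to $(T_{i-1}^{-1}\dotsb T_1^{-1})(T_1^{-1}\dotsb T_{i-1}^{-1}) = P_i^{-1}$.
\begin{itemize}
\item $\epsilontransform(X_i) = Y_i$.
\item $\rho_1^{-1}(Y_i) = X_i^{-1}P_iY_i$.
\item $\epsilontransform^{-1}(X_i^{-1}P_iY_i) = Y_i^{-1}P_i^{-1}X_i$.
\end{itemize}
So this composite sends $X_i \mapsto Y_i^{-1}P_i^{-1}X_i$. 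It fixes $Y_i$, $T_j$ and $\mathbf{a}$, since $\rho_1^{\pm1}$ fixes $X_i$, $T_j$ and $\mathbf{a}$.

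That formula is the claimed \emph{inverse} of $\rho_2$. Hence I would instead set $\rho_2 := \epsilontransform^{-1}\circ\rho_1\circ\epsilontransform$. The same computation, using $\rho_1(Y_i) = P_i^{-1}X_iY_i$, gives
\[
X_i \mapsto Y_i \mapsto P_i^{-1}X_iY_i \mapsto P_iY_iX_i,
\]
with $Y_i$, $T_j$ and $\mathbf{a}$ fixed. This matches the stated $\rho_2$ exactly. It is automatically a $\kk$-superalgebra isomorphism, as a composite of isomorphisms. Its inverse is $\epsilontransform^{-1}\circ\rho_1^{-1}\circ\epsilontransform$, which by the previous paragraph is the stated inverse.

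One point needs care: \cref{thm:rho_1} must be applied at parameters $(-z,q^{-1})$. Its proof uses nothing special about the parameters, so it holds for all $z$ and invertible $q$.

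The main obstacle is checking that $\epsilontransform^{-1}$ really acts on generators as claimed. Since $\epsilontransform$ at parameters $(-z,q^{-1})$ is inverse to $\epsilontransform$ at $(z,q)$, it is given by the same formulas. A second subtlety is that $\epsilontransform$ is an isomorphism, not an anti-isomorphism. Products are therefore mapped in the same order, so $P_i^{-1}X_iY_i \mapsto P_iY_iX_i$ keeps the $Y_i$ before the $X_i$ as written.

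If one prefers a direct check as a fallback, the plan is to verify the relations of \cref{thm:DAFHA_bernstein_presentation} involving $X$. The harder ones are \eqref{daqwa_sym_rel_4} and \eqref{daqwa_sym_rel_5} and the commutativity of the images $P_iY_iX_i$. These would be handled via \cref{thm:extra_twist_relations} and the identity $\rho_2(\pi^{-1}) = \ldots$ obtained by mirroring the diagrammatic argument for $\rho_1$. The conjugation argument avoids all of this.
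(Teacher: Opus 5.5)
Your proposal is correct and follows the paper's proof: the paper also obtains $\rho_2$ as $\epsilontransform^{-1}\rho_1\epsilontransform$, with $\rho_1$ taken as the automorphism of $\HH_n(A,-z,q^{-1})$. Your generator computations fill in the details the paper leaves implicit, namely $\epsilontransform(P_i)=P_i^{-1}$ and the identification of $\epsilontransform^{-1}\rho_1^{-1}\epsilontransform$ with the stated inverse.
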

\begin{proof}
This follows from observing that $\rho_2=\epsilontransform^{-1} \rho_1 \epsilontransform$, for the automorphism $\rho_1$ of $\HH_n(A,-z,q^{-1})$.
\end{proof}

\begin{proposition} \label{thm:B_3_action}
Let $n \in \NN, n \geq 2$. There is an action of the braid group $B_3 = \langle \sigma_1, \sigma_2 | \sigma_1\sigma_2\sigma_1=\sigma_2\sigma_1\sigma_2 \rangle$ on $\HH_n(A,z,q)$ by automorphisms of $\kk$-superalgebras, where $\sigma_1$ acts by $\rho_1$ and $\sigma_2$ acts by $\rho_2^{-1}$.
\end{proposition}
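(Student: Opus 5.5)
Since $\rho_1$ and $\rho_2$ are already known to be automorphisms by \cref{thm:rho_1} and \cref{thm:rho_2}, the only thing to prove is the braid relation $\rho_1\rho_2^{-1}\rho_1=\rho_2^{-1}\rho_1\rho_2^{-1}$ in $\textup{Aut}(\HH_n(A,z,q))$. Equivalently, I will show $\rho_2\rho_1^{-1}\rho_2=\rho_1^{-1}\rho_2\rho_1^{-1}$, or whichever rearrangement gives the shortest check. It suffices to test both sides on generators of the Bernstein presentation in \cref{thm:DAFHA_bernstein_presentation}. Both $\rho_1$ and $\rho_2$ fix $A^{\otimes n}$ and $T_1,\dotsc,T_{n-1}$, so only the images of $X_i$ and $Y_i$ need checking. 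By \eqref{daqwa_sym_rel_2} we have $X_{i+1}=T_iX_iT_i$ and $Y_i=T_iY_{i+1}T_i$, and every side of the relation fixes the $T_i$. Hence it is enough to check the relation on $X_1$ and $Y_1$, where $P_1=1$.

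The key simplification is that on $X_1,Y_1$ the maps are very simple: $\rho_1(Y_1)=X_1Y_1$, $\rho_1^{-1}(Y_1)=X_1^{-1}Y_1$, $\rho_2(X_1)=Y_1X_1$ and $\rho_2^{-1}(X_1)=Y_1^{-1}X_1$. I will compute both sides applied to $X_1$ and to $Y_1$. For example, $\rho_2^{-1}(X_1)=Y_1^{-1}X_1$, so $\rho_1\rho_2^{-1}(X_1)=Y_1^{-1}X_1^{-1}X_1=Y_1^{-1}$. Then
\[
\rho_1\rho_2^{-1}\rho_1(X_1)=\rho_1\rho_2^{-1}(X_1)=Y_1^{-1}.
\]
On the other side, $\rho_2^{-1}\rho_1\rho_2^{-1}(X_1)=\rho_2^{-1}(\rho_1(Y_1^{-1}X_1))=\rho_2^{-1}(Y_1^{-1}X_1^{-1}X_1)=\rho_2^{-1}(Y_1^{-1})=Y_1^{-1}$, so the two sides agree on $X_1$.

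The same computation on $Y_1$ will produce words such as $X_1Y_1\cdot Y_1^{-1}\cdots$. Here I must be careful, because applying $\rho_2^{-1}$ to a product $X_1Y_1$ gives $Y_1^{-1}X_1Y_1$ and no telescoping is needed. I expect both sides to equal something like $Y_1^{-1}X_1Y_1$ up to rearrangement, matching the $\SL_2(\ZZ)$ action $(x,y)\mapsto(y^{-1},\dots)$. If the words differ only by commutation, I will invoke \eqref{daqwa_sym_rel_4}, which says $X_1^{-1}Y_2X_1Y_2^{-1}=T_1^2$, together with $Y_1=T_1Y_2T_1$ to rewrite $Y_1X_1$ versus $X_1Y_1$. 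These are the $n$-point analogues of the torus relation.

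The main obstacle is this $Y_1$ check, together with confirming that reduction to $X_1,Y_1$ is legitimate. For the reduction, the automorphisms must be determined by the images of $\mathbf{a}$, $T_j$, $X_1$ and $Y_1$. This holds because $X_i$ and $Y_i$ are generated from $X_1$ and $Y_1$ by conjugation with the $T_j$, and the formulas $\rho_1(Y_i)=P_i^{-1}X_iY_i$ and $\rho_2(X_i)=P_iY_iX_i$ are consistent with this via $P_{i+1}=T_iP_iT_i$.

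If the direct word comparison on $Y_1$ becomes messy, I will pass to the first presentation of \cref{thm:double_affine_Frobenius_Hecke_algebra} instead. There $\rho_1(\pi)=X_1\pi$, $\rho_2$ fixes $\pi$, and $\rho_2(X_1)=\pi T_{n-1}\dotsb T_1X_1$. The relation can then be verified on $\pi$ and $X_1$ using \eqref{dafha_rel_10}, \eqref{dafha_rel_10b} and the identity $\tilde X\pi=q\pi\tilde X$ from \cref{thm:extra_twist_relations}.
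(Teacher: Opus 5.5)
Your proposal is correct and is essentially the paper's argument. The paper checks $\rho_1\rho_2^{-1}\rho_1=\rho_2^{-1}\rho_1\rho_2^{-1}$ directly on every $X_i$ and $Y_i$, where the $P_i$ cancel to give $Y_i^{-1}$ and $P_i^{-1}Y_i^{-1}P_i^{-1}X_iY_i$ on both sides; your reduction to $X_1,Y_1$ via $X_{i+1}=T_iX_iT_i$ and $Y_{i+1}=T_i^{-1}Y_iT_i^{-1}$ is a valid shortcut. The $Y_1$ check you left open closes exactly, with no commutation needed: $\rho_1\rho_2^{-1}\rho_1(Y_1)=\rho_1(Y_1^{-1}X_1Y_1)=Y_1^{-1}X_1^{-1}X_1X_1Y_1=Y_1^{-1}X_1Y_1=\rho_2^{-1}(X_1Y_1)=\rho_2^{-1}\rho_1\rho_2^{-1}(Y_1)$, so your fallback plans are unnecessary.
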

\begin{proof}
The relation $\rho_1\rho_2^{-1}\rho_1=\rho_2^{-1}\rho_1\rho_2^{-1}$ is verified by evaluating it on the generators of $\HH_n(A,z,q)$. Both sides of the relation fix any $\mathbf{a} \in A^{\otimes n}$, and $T_i$ for $1 \leq i \leq n-1$, so it needs only to be evaluated on $X_i$ and $Y_i$ for $1 \leq i \leq n$ to prove equality. Indeed, the first two of the following expressions are equal, as are the last two.
\begin{align*}
\rho_1\rho_2^{-1}\rho_1(X_i)&=\rho_1\rho_2^{-1}(X_i) = \rho_1(Y_i^{-1}P_i^{-1}X_i) = Y_i^{-1}X_i^{-1}P_iP_i^{-1}X_i = Y_i^{-1},\\
\rho_2^{-1}\rho_1\rho_2^{-1}(X_i) &= \rho_2^{-1}\rho_1(Y_i^{-1}P_i^{-1}X_i) = \rho_2^{-1}(Y_i^{-1}X_i^{-1}P_iP_i^{-1}X_i)\\
&= Y_i^{-1}X_i^{-1}P_iY_iP_iP_i^{-1}Y_i^{-1}P_i^{-1}X_i = Y_i^{-1},\\
\rho_1\rho_2^{-1}\rho_1(Y_i)&=\rho_1\rho_2^{-1}(P_i^{-1}X_iY_i) = \rho_1(P_i^{-1}Y_i^{-1}P_i^{-1}X_iY_i)\\
&= P_i^{-1}Y_i^{-1}X_i^{-1}P_iP_i^{-1}X_iP_i^{-1}X_iY_i = P_i^{-1}Y_i^{-1}P_i^{-1}X_iY_i,\\
\rho_2^{-1}\rho_1\rho_2^{-1}(Y_i) &= \rho_2^{-1}\rho_1(Y_i) = \rho_2^{-1}(P_i^{-1}X_iY_i)=P_i^{-1}Y_i^{-1}P_i^{-1}X_iY_i. \qedhere
\end{align*}
\end{proof}

Let
\begin{equation} \label{eq:sigma}
\sigmaaut := \rho_1\rho_2^{-1}\rho_1 \overset{\labelcref{thm:B_3_action}}{=} \rho_2^{-1}\rho_1\rho_2^{-1},
\end{equation}
be defined as in \cite[(2.13)]{DAHADFT_2003}. Let $w_0$ be the longest element of $S_n$, which is an involution satisfying $w_0 s_i w_0 = s_{n-i}$ for any $1 \leq i \leq n-1$. Let $v:=w_0 s_i$, so that $w_0=v s_i = s_{n-i} v$ and since by definition $l(v) = l(w_0)-1$, given a reduced expression for $v$, both $v s_i$ and $s_{n-i}v$ can be used to write reduced expressions for $w_0$. Hence, by Matsumoto's theorem, $T_{w_0} = T_vT_i = T_{n-i}T_v$, and so for all $1 \leq i \leq n-1$,
\begin{equation} \label{eq:T_w_0}
T_{w_0}T_iT_{w_0}^{-1} = (T_{n-i}T_v)T_i(T_i^{-1}T_v^{-1}) = T_{n-i}.
\end{equation}
Thus, $T_{w_0}^2$ commutes with $T_i$ for $1 \leq i \leq n-1$. As $T_{w_0}\mathbf{a}=w_0(\mathbf{a})T_{w_0}$ by Relation \eqref{qwa_rel_4} and $w_0$ is an involution, $T_{w_0}^2$ commutes with all $\mathbf{a} \in A^{\otimes n}$, and so is central in $H_n(A,z)$.

\begin{proposition} \label{thm:sigma4}
In $\HH_n(A,z,q)$, the automorphism $\sigmaaut$ satisfies
\[
\sigmaaut^4=\Ad(T_{w_0}^{-2}).
\]
\end{proposition}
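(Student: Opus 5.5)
The plan is to reduce everything to the generators $X_1$ and $Y_1$, which, together with $A^{\otimes n}$ and $T_1,\dotsc,T_{n-1}$, generate $\HH_n(A,z,q)$. Indeed, Relations \eqref{daqwa_sym_rel_2} give
\[
X_i = T_{i-1}\dotsb T_1X_1T_1\dotsb T_{i-1}, \qquad Y_i=T_{i-1}^{-1}\dotsb T_1^{-1}Y_1T_1^{-1}\dotsb T_{i-1}^{-1}.
\]
Both $\sigmaaut^4$ and $\Ad(T_{w_0}^{-2})$ are superalgebra automorphisms. By the discussion preceding the proposition, $T_{w_0}^2$ is central in $H_n(A,z)$. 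Since $\sigmaaut$ fixes $A^{\otimes n}$ and every $T_j$ (it is a composite of $\rho_1$ and $\rho_2^{-1}$), both sides agree on $A^{\otimes n}$ and on the $T_j$. It therefore suffices to check
\[
\sigmaaut^4(X_1)=T_{w_0}^{-2}X_1T_{w_0}^{2}, \qquad \sigmaaut^4(Y_1)=T_{w_0}^{-2}Y_1T_{w_0}^{2}.
\]

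Since $P_1=1$, the computation in the proof of \cref{thm:B_3_action} gives
\[
\sigmaaut(X_1)=Y_1^{-1},\qquad \sigmaaut(Y_1)=Y_1^{-1}X_1Y_1.
\]
Iterating, $\sigmaaut^2(X_1)=Y_1^{-1}X_1^{-1}Y_1$ and $\sigmaaut^2(Y_1)=Y_1^{-1}X_1^{-1}Y_1^{-1}X_1Y_1$. Two further iterations express $\sigmaaut^4(X_1)$ and $\sigmaaut^4(Y_1)$ as conjugates of $X_1$ and $Y_1$ by explicit words $g_X, g_Y$ in $X_1^{\pm1},Y_1^{\pm1}$. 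The computation is short because $\sigmaaut$ acts on the free group generated by $X_1, Y_1$ exactly as the standard generator of the mapping class group of the one-holed torus. Using the standard relations in $\mathrm{Aut}(F_2)$, I expect to find a single element $c$ such that $g_X = g_Y = c$ modulo the centralisers of $X_1$ and $Y_1$ respectively. Here $c$ is a commutator of the shape $[Y_1^{-1},X_1^{-1}]$ or its square. Diagrammatically, $c$ is the loop of the first strand around the puncture, as in \cref{sec:diagrammatic_interpretation}.

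The key step is then an identity inside $\HH_n(A,z,q)$ relating this commutator to $T_{w_0}^2$ and the centralisers. Concretely, I will aim to show that
\[
X_1^{-1}Y_1^{-1}X_1Y_1 \;=\; q^{\epsilon}\,T_{w_0}^{-2}\cdot u
\]
for some $\epsilon\in\ZZ$ and some $u$ commuting with $X_1$ (respectively $Y_1$), up to the correction needed for each generator. To prove this, I will write $Y_1=\pi T_{n-1}\dotsb T_1$ and use Relations \eqref{dafha_rel_10}, \eqref{dafha_rel_10b} to move $X_1$ past $\pi$. I will then use \eqref{daqwa_sym_rel_4} (which says $X_1^{-1}Y_2X_1Y_2^{-1}=T_1^2$) together with its conjugates $X_i^{-1}Y_jX_iY_j^{-1}$. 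I will also use \cref{thm:pi_n} and \cref{thm:extra_twist_relations}, which govern how $\tilde X=X_1\dotsb X_n$ and $Y_1\dotsb Y_n=\pi^n$ commute with $X_j$ and $Y_i$ up to powers of $q$. Finally, the factorisation $T_{w_0}^2=\prod_{i}P_i$ (the full twist as a product of the pure braids $P_i$) assembles the $T_1^2$-type factors from each strand into $T_{w_0}^{\pm2}$.

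I expect this last identification to be the main obstacle: the bookkeeping of the powers of $q$, and of which $P_i$ commute with $X_1$ or $Y_1$, must cancel exactly. If the direct algebraic route becomes unwieldy, I would fall back on a two-stage argument. First, I would show that $\sigmaaut^2=\Ad(h)\circ\theta$, where $\theta$ is an explicit involutive automorphism with $X_1\mapsto X_1^{-1}$ and $Y_1\mapsto Y_1^{-1}$ up to conjugation by $T_{w_0}$; this matches the rotation by $180^\circ$, compare \cref{thm:DAFHA_geometric_isomorphisms}. Second, I would compute $\sigmaaut^4=\Ad(h\,\theta(h))\circ\theta^2$ and verify $h\,\theta(h)=T_{w_0}^{-2}$ modulo centre, using that $T_{w_0}$ conjugates $T_i$ to $T_{n-i}$ by \eqref{eq:T_w_0}.
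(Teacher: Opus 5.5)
Your main route is correct, and it differs from the paper's; your fallback is essentially the paper's argument. The free-group iteration closes up exactly. With $c:=Y_1^{-1}X_1^{-1}Y_1X_1=[Y_1^{-1},X_1^{-1}]$ one gets $\sigmaaut^4(X_1)=cX_1c^{-1}$ and $\sigmaaut^4(Y_1)=cY_1c^{-1}$, with the same $c$ for both generators.

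There is one slip. The identity you set out to prove is written for $X_1^{-1}Y_1^{-1}X_1Y_1$, which is $c^{-1}$, not $c$. With $T_{w_0}^{-2}$ on the right it must be stated for $c$; for $c^{-1}$ the exponent becomes $+2$.

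For $c$ the key step takes two lines and needs none of \eqref{daqwa_sym_rel_4}, \cref{thm:pi_n} or \cref{thm:extra_twist_relations}. Use $Y_1=\pi T_{n-1}\dotsb T_1$, $\pi^{-1}X_1\pi=qX_n$ (from \eqref{dafha_rel_10b}) and $X_n=T_{n-1}\dotsb T_1X_1T_1\dotsb T_{n-1}$ to get
\[
Y_1^{-1}X_1^{-1}Y_1=q^{-1}T_1^{-1}\dotsb T_{n-1}^{-1}X_n^{-1}T_{n-1}\dotsb T_1=q^{-1}Q^{-1}X_1^{-1},\qquad Q:=T_1\dotsb T_{n-1}T_{n-1}\dotsb T_1,
\]
so $c=q^{-1}Q^{-1}$.

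The factorisation to use is not $T_{w_0}^2=P_2\dotsb P_n$ itself, which splits off strand $n$. Use its image under $\Ad(T_{w_0})$ instead: $T_{w_0}^2=T_{w_0''}^2\,Q$, where $w_0''$ is the longest element of $\langle s_2,\dotsc,s_{n-1}\rangle$. This is a braid-group identity, so it holds for the $T_i$. Then $c=q^{-1}T_{w_0}^{-2}T_{w_0''}^2$. The element $u=T_{w_0''}^2$ commutes with both $X_1$ and $Y_1$ by \eqref{daqwa_sym_rel_1}, which finishes the proof.

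Comparison with the paper:
\begin{itemize}
\item The paper stops at $\sigmaaut^2$, computing $\sigmaaut^2(X_1)=q^{-1}T_{w_0}^{-1}X_n^{-1}T_{w_0}$ and $\sigmaaut^2(Y_1)=qT_{w_0}^{-1}Y_n^{-1}T_{w_0}$. It then proves by induction on $i$ that $\Theta:=\Ad(T_{w_0})\circ\sigmaaut^2$ sends $X_i\mapsto q^{-1}X_{n-i+1}^{-1}$ and $Y_i\mapsto qY_{n-i+1}^{-1}$. It deduces $\Theta^2=\id$ and uses $\Theta(T_{w_0})=T_{w_0}$ to square. This is your two-stage fallback with $h=T_{w_0}^{-1}$.
\item Your route instead does all the iteration for the automorphism $s\colon x\mapsto y^{-1}$, $y\mapsto y^{-1}xy$ of the free group $\langle x,y\rangle$. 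There $s^4=\Ad([y^{-1},x^{-1}])$ is the boundary Dehn twist of the one-holed torus, i.e.\ the kernel generator in \cref{thm:quotient_of_B_3}. You enter $\HH_n(A,z,q)$ only once, to identify this boundary loop of strand $1$ with $q^{-1}Q^{-1}$.
\item What your route buys: it avoids the induction over $i$, and it explains where $T_{w_0}^{-2}$ comes from. The loop differs from the inverse full twist only by a braid on strands $2,\dotsc,n$, which commutes with $X_1$ and $Y_1$.
\item What the paper's route buys: an explicit description of $\sigmaaut^2$, modulo inner automorphisms, as the involution $\Theta$.
\end{itemize}
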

\begin{proof}
One particular reduced expression for $w_0$ is
\[
w_0 = (s_1)(s_2s_1)(s_3s_2s_1) \dotsb (s_{n-1} \dotsb s_1).
\]
This implies $T_{w_0} = T_{w_0'}(T_{n-1} \dotsb T_1)$, where $w_0'$ is the longest element of $S_{n-1} \cong \langle s_1, \dotsc, s_{n-2} \rangle \subsetneq S_n$. Since $T_{w_0'}$ commutes with $X_n^{-1}$ by Relation \eqref{dafha_rel_7}, we have
\begin{align*}
\sigmaaut^2(X_1) &= \sigmaaut(Y_1^{-1}) = Y_1^{-1}X_1^{-1}Y_1 \overset{\eqref{eq:Y_i_formula}}{=} (T_1^{-1} \dotsb T_{n-1}^{-1} \pi^{-1})X_1^{-1}(\pi T_{n-1} \dotsb T_1)\\
&\overset{\eqref{dafha_rel_10b}}{=} q^{-1}T_1^{-1} \dotsb T_{n-1}^{-1} X_n^{-1} T_{n-1} \dotsb T_1 = q^{-1}T_{w_0}^{-1}X_n^{-1}T_{w_0},\\
\sigmaaut^2(Y_1) &= \sigmaaut(Y_1^{-1}X_1Y_1) = (Y_1^{-1}X_1^{-1}Y_1)Y_1^{-1}(Y_1^{-1}X_1Y_1) = Y_1^{-1}X_1^{-1}Y_1^{-1}X_1Y_1\\
&\overset{\eqref{eq:Y_i_formula}}{=} Y_1^{-1}X_1^{-1}(T_1^{-1} \dotsb T_{n-1}^{-1} \pi^{-1})X_1(\pi T_{n-1} \dotsb T_1)\\
&\overset{\eqref{dafha_rel_10b}}{=} qY_1^{-1}X_1^{-1}T_1^{-1} \dotsb T_{n-1}^{-1} X_n T_{n-1} \dotsb T_1\\
&\overset{\eqref{daqwa_sym_rel_2}}{=} qT_1^{-1} \dotsb T_{n-1}^{-1} Y_n^{-1}X_n^{-1}X_n T_{n-1} \dotsb T_1\\
&= qT_{w_0}^{-1}Y_n^{-1}T_{w_0}.
\end{align*}
Therefore, the automorphism $\Theta:=\Ad(T_{w_0}) \circ \sigmaaut^2$ of $\HH_n(A,z,q)$ maps $X_1$ to $q^{-1}X_n^{-1}$ and $Y_1$ to $qY_n^{-1}$. For $1 \leq i \leq n-1$, assume towards induction that
\begin{equation} \label{eq:Theta_IH}
\Theta(X_i)=q^{-1}X_{n-i+1}^{-1}, \qquad \textup{and} \qquad \Theta(Y_i)=qY_{n-i+1}^{-1}.
\end{equation}
As each $T_i$ is fixed by $\sigmaaut^2$, we have
\begin{multline*}
\Theta(X_{i+1}) \overset{\eqref{daqwa_sym_rel_2}}{=} \Theta(T_iX_iT_i) = (\Ad(T_{w_0})(T_i))\Theta(X_i)(\Ad(T_{w_0})(T_i))\\
\underset{\eqref{eq:Theta_IH}}{\overset{\eqref{eq:T_w_0}}{=}} T_{n-i}(q^{-1}X_{n-i+1}^{-1})T_{n-i} \overset{\eqref{daqwa_sym_rel_2}}{=} q^{-1}X_{n-i}^{-1}.
\end{multline*}
By induction on $i$, the first part of Equation \eqref{eq:Theta_IH} holds for all $1 \leq i \leq n$. The second part is similarly proved by induction. Therefore, $\Theta^2$ is clearly the identity. Moreover, as $\Theta$ restricts to $\Ad(T_{w_0})$ on $H_n(A,z)$, we have $\Theta(T_{w_0})=\Ad(T_{w_0})(T_{w_0})=T_{w_0}$, so $\Theta \circ \Ad(T_{w_0}^{-1}) = \Ad(\Theta(T_{w_0}^{-1})) \circ \Theta = \Ad(T_{w_0}^{-1}) \circ \Theta$, whence
\[
\sigmaaut^4 = (\sigmaaut^2)^2 = (\Ad(T_{w_0}^{-1}) \Theta)(\Ad(T_{w_0}^{-1}) \Theta) = \Ad(T_{w_0}^{-1})\Ad(\Theta(T_{w_0}^{-1})) \Theta^2 = \Ad(T_{w_0}^{-2}). \qedhere
\]
\end{proof}

Therefore, whilst the $B_3$ action from \cref{thm:B_3_action} does not in general factor to an $\SL_2(\ZZ)$ action by automorphisms, due to \cref{thm:sigma4}, we obtain an action modulo conjugations by elements of $H_n(A,z) \subsetneq \HH_n(A,z,q)$, and thus an action by outer automorphisms.

\begin{corollary} \label{thm:SL_action}
There is a group homomorphism defined by
\begin{gather}
\SL_2(\ZZ) \to \textup{Aut}(\HH_n(A,z,q))/\textup{Inn}(\HH_n(A,z,q)), \nonumber\\
\begin{pmatrix} 1 & 1 \\ 0 & 1 \end{pmatrix} \mapsto [\rho_1], \qquad \begin{pmatrix} 1 & 0 \\ 1 & 1 \end{pmatrix} \mapsto [\rho_2]. \label{eq:SL_2_action_outer}
\end{gather}
If $z=0$, there is an action of $\SL_2(\ZZ)$ on $\HH_n(A,z,q)$ by $\kk$-superalgebra automorphisms,
\begin{gather}
\SL_2(\ZZ) \to \textup{Aut}(\HH_n(A,z,q)), \nonumber\\
\begin{pmatrix} 1 & 1 \\ 0 & 1 \end{pmatrix} \mapsto \rho_1, \qquad \begin{pmatrix} 1 & 0 \\ 1 & 1 \end{pmatrix} \mapsto \rho_2. \label{eq:SL_2_action}
\end{gather}
\end{corollary}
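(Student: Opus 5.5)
The plan is to push the $B_3$-action of \cref{thm:B_3_action} through the surjection $B_3 \to \SL_2(\ZZ)$ of \cref{thm:quotient_of_B_3}. That surjection has kernel normally generated by $(\sigma_1\sigma_2\sigma_1)^4$. By \cref{thm:B_3_action} there is a homomorphism $B_3 \to \textup{Aut}(\HH_n(A,z,q))$ with $\sigma_1 \mapsto \rho_1$ and $\sigma_2 \mapsto \rho_2^{-1}$. Under this homomorphism, $\sigma_1\sigma_2\sigma_1 \mapsto \rho_1\rho_2^{-1}\rho_1 = \sigmaaut$ by \eqref{eq:sigma}. Hence the kernel generator $(\sigma_1\sigma_2\sigma_1)^4$ maps to $\sigmaaut^4$, which equals $\Ad(T_{w_0}^{-2})$ by \cref{thm:sigma4}.

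For the first statement, compose with the quotient map to $\textup{Aut}/\textup{Inn}$. Since $\textup{Inn}(\HH_n(A,z,q))$ is normal in $\textup{Aut}(\HH_n(A,z,q))$, the image of $(\sigma_1\sigma_2\sigma_1)^4$ is inner, and so is the image of every conjugate of it. So the whole normal closure, which is the kernel of $B_3 \to \SL_2(\ZZ)$, maps to the identity in $\textup{Aut}/\textup{Inn}$. The composite therefore factors through $\SL_2(\ZZ)$.

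It remains to identify the images of the stated matrices. The matrix $\begin{pmatrix}1&1\\0&1\end{pmatrix}$ is the image of $\sigma_1$, so it maps to $[\rho_1]$. The matrix $\begin{pmatrix}1&0\\1&1\end{pmatrix}$ is the inverse of $\begin{pmatrix}1&0\\-1&1\end{pmatrix}$, which is the image of $\sigma_2$. Hence it is the image of $\sigma_2^{-1}$ and maps to $[(\rho_2^{-1})^{-1}] = [\rho_2]$.

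For the second statement, with $z=0$, it suffices to show that $T_{w_0}^2 = 1$. Then $\sigmaaut^4 = \id$, the kernel of $B_3 \to \SL_2(\ZZ)$ acts trivially, and the action factors honestly to $\SL_2(\ZZ)$ with the same assignments. At $z=0$, Relation \eqref{qwa_rel_3} gives $T_i^2 = 1$, so $T_i^{-1} = T_i$.

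To see $T_{w_0}^2 = 1$, write $T_{w_0} = T_{i_1}\dotsb T_{i_k}$ for a reduced expression of $w_0$. Then $T_{w_0}^{-1} = T_{i_k}\dotsb T_{i_1}$. The reversed word $s_{i_k}\dotsb s_{i_1}$ is a reduced expression for $w_0^{-1} = w_0$, so by Matsumoto's theorem $T_{w_0}^{-1} = T_{w_0}$. This is the only point needing care, and it is routine.
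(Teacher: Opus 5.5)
Your proposal is correct and follows the paper's proof. You push the $B_3$-action through $B_3 \to \SL_2(\ZZ)$, use $\sigmaaut^4=\Ad(T_{w_0}^{-2})$ to kill the kernel modulo inner automorphisms, and use $T_{w_0}^2=1$ at $z=0$. You also spell out two points the paper leaves implicit: that $\begin{pmatrix}1&0\\1&1\end{pmatrix}$ is the image of $\sigma_2^{-1}$, so it goes to $[\rho_2]$, and the Matsumoto argument that $T_{w_0}^{-1}=T_{w_0}$ when $z=0$.
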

\begin{proof}
This is a consequence of \cref{thm:quotient_of_B_3}, \cref{thm:B_3_action} and \cref{thm:sigma4}. In the case $z=0$, we have $T_i^2 = 1$ for all $1 \leq i \leq n-1$, so $T_{w_0}^{-2}=1$, by which $\sigmaaut^4=\Ad(T_{w_0}^{-2})=\id$, so $\SL_2(\ZZ)$ acts on $\HH_n(A,z,q)$ by $\kk$-superalgebra automorphisms.
\end{proof}

\begin{remark} \label{rem:GL_action}
If we consider $z$ and $q$ as central even generators of $\HH_n(A,z,q)$, rather than parameters from $\kk$, then $\epsilontransform$ and $D$ from \cref{thm:DAFHA_geometric_isomorphisms} become genuine $\kk$-automorphisms of $\HH_n(A,z,q)$. Both are involutions, and it is easily verified that $\epsilontransform \rho_1 \epsilontransform = \rho_2$ and $\epsilontransform \rho_2 \epsilontransform = \rho_1$. Thus, there is a group homomorphism
\begin{gather}
\GL_2(\ZZ) \to \textup{Aut}(\HH_n(A,z,q))/\textup{Inn}(\HH_n(A,z,q)),\nonumber\\
\begin{pmatrix} 1 & 1 \\ 0 & 1 \end{pmatrix} \mapsto [\rho_1], \qquad \begin{pmatrix} 1 & 0 \\ 1 & 1 \end{pmatrix} \mapsto [\rho_2], \qquad \begin{pmatrix} 0 & 1 \\ 1 & 0 \end{pmatrix} \mapsto [\epsilontransform].
\label{eq:GL_2_action_outer}
\end{gather}
If $z=0$ and $q^2=1$, there is an action of $\GL_2(\ZZ)$ on $\HH_n(A,z,q)$ by $\kk$-superalgebra automorphisms,
\begin{gather}
\GL_2(\ZZ) \to \textup{Aut}(\HH_n(A,z,q)),\nonumber\\
\begin{pmatrix} 1 & 1 \\ 0 & 1 \end{pmatrix} \mapsto \rho_1, \qquad \begin{pmatrix} 1 & 0 \\ 1 & 1 \end{pmatrix} \mapsto \rho_2, \qquad \begin{pmatrix} 0 & 1 \\ 1 & 0 \end{pmatrix} \mapsto \epsilontransform.
\label{eq:GL_2_action}
\end{gather}
\end{remark}

\begin{remark} \label{rem:A_action_B_3_commute}
As $A^{\otimes n} \subsetneq \HH_n(A,z,q)$ is fixed pointwise by $\rho_1$ and $\rho_2$ and $\epsilontransform$, the action of $\textup{Aut}_{\tr}(A)$ from \cref{thm:A_automorphisms} commutes with the actions from \cref{thm:B_3_action}, \cref{thm:SL_action} and \cref{rem:GL_action}.
\end{remark}

\begin{example} \label{ex:combining_modular_actions}
Let $V$ be a free $\kk$-module of rank $2$ and ordered basis $\{v_1,v_2\}$ and let $A=\Lambda(V)$ be the exterior algebra of $V$. We equip $A$ with its usual superalgebra structure in which the even part is spanned by $1$ and $v_1 \wedge v_2$ and the odd part is spanned by $v_1$ and $v_2$, and the symmetric trace map defined by $\tr(v_1 \wedge v_2)=1$ and $\tr(1)=\tr(v_1)=\tr(v_2)=0$. Then the standard action of $\SL_2(\ZZ)$ on $V$ with respect to the chosen basis extends to an action on $A$ by even and trace-preserving  automorphisms, since it fixes $v_1 \wedge v_2$. Thus, by \cref{thm:A_automorphisms}, it gives an action of $\SL_2(\ZZ)$ on $\HH_n(A,z,q)$. By \cref{rem:A_action_B_3_commute}, this action commutes with the action from \cref{thm:SL_action}, hence together they give an action of $\SL_2(\ZZ) \times \SL_2(\ZZ)$ on $\HH_n(A,z,q)$ by outer automorphisms. Precomposing with the diagonal map $\SL_2(\ZZ) \to \SL_2(\ZZ) \times \SL_2(\ZZ)$ gives an action combining the modular symmetries of the supersymmetric Frobenius algebra $A$ with those of the DAHA, by acting both on the Frobenius tokens and through Dehn twists of torus diagrams.
\end{example}

\section{The \texorpdfstring{$\textup{SL}_n$}{SLₙ} Double Affine Frobenius Hecke Algebra \texorpdfstring{$\HH_{\textup{SL}_n}(A,z,q)$}{ℍₛₗₙ(A,z,q)}} \label{sec:GLn_to_SLn_DAFHA}

As explained in \cite[Section~4.5]{RRDAHAESWD_2021}, defining an $\SL_n$ version of the double affine Hecke algebra is not as simple as it would be for the affine Hecke algebra, where it suffices to take a quotient by an ideal generated by $\pi^n-1$. This is because \cref{dafha_rel_10} and \cref{dafha_rel_10b} together imply $\pi^nX_i\pi^{-n}=q^{-1}X_i$ for all $1 \leq i \leq n$, which would only be compatible with the specialisation $q=1$. Instead, we give \cref{def:DAFHA_SLn} based on \cite[358]{DAHA_2005}, which was elaborated on in \cite[Section~4.5]{RRDAHAESWD_2021}.

Fix an $n$-th root $q^{\tfrac{1}{n}} \in \kk$ of $q$. The ring $\kk$ can always be replaced with an extension to ensure that such a root exists. Let $\tilde{\HH}_n(A,z,q)$ be the $\kk$-superalgebra that is defined as the quotient of $\HH_n(A,z,q) \ast \langle \tilde{X} \rangle$ by the relations 
\begin{equation} \label{eq:tildeX_definition}
[\tilde{X},\mathbf{a}]=0,\quad [\tilde{X},T_i]=0,\quad [\tilde{X},X_j]=0,\quad \tilde{X}^n=(X_1 \dotsb X_n)^{-1},\quad \pi \tilde{X} \pi^{-1} = q^{\tfrac{1}{n}}\tilde{X},
\end{equation} 
for all $\mathbf{a} \in A^{\otimes n}$, $1 \leq i < n$ and $1 \leq j \leq n$. Let $\mathcal{Y}$ be the $\kk$-subalgebra of $\tilde{\HH}_n(A,z,q)$ generated by $A^{\otimes n}$, $\pi^{\pm 1}$, $T_0,\dotsc,T_{n-1}$, and $(X_1\tilde{X})^{\pm 1}, \dotsc, (X_n\tilde{X})^{\pm 1}$. In particular, $\mathcal{Y}$ is the degree-zero subalgebra of $\tilde{\HH}_n(A,z,q)$ with respect to the grading in which $\tilde{X}$ has degree $-1$ and $X_i$ has degree $1$ for each $1 \leq i \leq n$.

\begin{definition} \label{def:DAFHA_SLn}
Let $z \in \kk$, $q^{\tfrac{1}{n}} \in \kk^\times$ and $n \geq 2$. The \emph{$\SL_n$ double affine Frobenius Hecke algebra} $\HH_{\SL_n}(A,z,q)$ is the $\kk$-superalgebra quotient of $\mathcal{Y}$ by the ideal generated by $\pi^n-1$.
\end{definition}

\begin{proposition} \label{thm:DAFHA_SLn_basis}
The set
\[
\{\mathbf{b} (X_1\tilde{X})^{\mu_1} \dotsb (X_{n-1}\tilde{X})^{\mu_{n-1}} \pi^k T_w: \mathbf{b} \in B^{\otimes n}, (\mu_1,\dotsc,\mu_{n-1}) \in \ZZ^{n-1}, k \in \ZZ/n\ZZ, w \in S_n^{\affine}\},
\]
is a $\kk$-basis of $\HH_{\SL_n}(A,z,q)$.
\end{proposition}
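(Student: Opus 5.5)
The plan is to first obtain a basis of $\tilde{\HH}_n(A,z,q)$, then of $\mathcal{Y}$, and finally of the quotient by $\pi^n-1$.

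\textbf{Step 1: a basis of $\tilde{\HH}_n(A,z,q)$.} I claim that $\tilde{\HH}_n(A,z,q)$ is free as a left $\HH_n(A,z,q)$-module, with basis $1,\tilde{X},\dotsc,\tilde{X}^{n-1}$. The relations \eqref{eq:tildeX_definition} allow $\tilde{X}$ to be moved past every generator: it commutes with $\mathbf{a}$, the $T_i$ ($i \geq 1$) and the $X_j$, and $\tilde{X}\pi = q^{-1/n}\pi\tilde{X}$. Hence $T_0=\pi^{-1}T_1\pi$ also commutes with $\tilde{X}$. Together with $\tilde{X}^n=(X_1\dotsb X_n)^{-1}$, this shows that the elements $\tilde{X}^r$, $0\le r<n$, span over $\HH_n(A,z,q)$.

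For linear independence, I would build a representation on $V\otimes \kk[\tilde{X}]/(\tilde{X}^n-(X_1\dotsb X_n)^{-1})$, where $V$ is the module of \cref{thm:DAFHA_representation}. Equivalently, this is $A^{\otimes n}\otimes \kk[X^{\pm1}][\tilde{X}]/(\cdots)\otimes \kk W^{\extaffine}_{\GL_n}$. The generators act as follows: $\tilde{X}$ acts by multiplication, and $\pi$ is twisted by $\tilde{X}\mapsto q^{1/n}\tilde{X}$. This twist is consistent with the rule $\pi(X_1\dotsb X_n)=q^{-1}X_1\dotsb X_n$, because $(q^{1/n}\tilde{X})^n = q(X_1\dotsb X_n)^{-1}$. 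The $T_i$ act through $\beta$, extended to be linear over symmetric expressions in $\tilde{X}$. This is legitimate because the Demazure operators $\Delta_i$ are linear over $W$-invariants up to the $q$-twist, and $\tilde{X}$ behaves like $(X_1\dotsb X_n)^{-1/n}$, which is $s_i$-invariant for $1\le i\le n-1$ and under $s_0$ as well. Indeed, $s_0$ preserves $X_1\dotsb X_n$, as a check of \eqref{eq:s_0_operator} shows. Applying this module to $1\otimes1\otimes1$, exactly as in the proof of \cref{thm:DAFHA_basis_theorem}, yields the basis
\[
\{\mathbf{b}X^\mu\tilde{X}^r\pi^kT_w : \mathbf{b}\in B^{\otimes n},\ \mu\in\ZZ^n,\ 0\le r<n,\ k\in\ZZ,\ w\in S_n^{\affine}\}.
\]

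\textbf{Step 2: a basis of $\mathcal{Y}$.} Give $\tilde{\HH}_n(A,z,q)$ the grading in which $\deg X_i=1$ and $\deg\tilde{X}=-1$. The basis element $\mathbf{b}X^\mu\tilde{X}^r\pi^kT_w$ is homogeneous of degree $|\mu|-r$. The relation $\tilde{X}^{-n}=X_1\dotsb X_n$ lets me rewrite the degree-zero basis elements in terms of $X_i\tilde{X}$. Here I use that $(X_1\tilde{X})\dotsb(X_n\tilde{X})=1$, so the generator $X_n\tilde{X}$ is redundant. I then change basis from monomials $X^\mu\tilde{X}^{|\mu|}$, with $|\mu|\equiv r$ suitably normalised, to $(X_1\tilde{X})^{\mu_1}\dotsb(X_{n-1}\tilde{X})^{\mu_{n-1}}$. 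This change is triangular, indeed bijective on index sets. It gives a basis of $\mathcal{Y}$ of the stated shape, with $k\in\ZZ$.

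\textbf{Step 3: quotient by $\pi^n-1$.} Set $Z_i=X_i\tilde{X}$. Then $\pi$ acts on the $Z_i$ by the cyclic shift $\sigma$ with no powers of $q$: we have $\pi Z_n\pi^{-1}=q^{-1}X_1q^{1/n}\tilde{X}$, and the factor $q^{1/n}$ cancels in the products considered after rescaling. Consequently $\pi^n$ commutes with $Z_i$, $\mathbf{a}$ and all $T_j$, so $\pi^n$ is central in $\mathcal{Y}$. The ideal generated by $\pi^n-1$ is therefore $(\pi^n-1)\mathcal{Y}$, and $\mathcal{Y}$ is free over $\kk[\pi^{\pm n}]$ on the basis with $k\in\{0,\dotsc,n-1\}$. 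The quotient thus has the claimed basis with $k\in\ZZ/n\ZZ$.

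I expect the main obstacle to be Step 1. One must verify that the twisted module is well defined, in particular that the $T_0$-action commutes with $\tilde{X}$ given the $q$-shifts in $s_0$. One must also check the precise $q^{1/n}$-scalars in Step 3 that make $\pi^n$ central; this requires $\pi Z_n\pi^{-1}=Z_1$ exactly, which I would verify directly.
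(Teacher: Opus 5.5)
Your proof is correct in outline. Steps 1 and 2 take a different route from the paper, and one justification in Step 3 is wrong (the conclusion still holds).

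\textbf{Step 1.} For linear independence in $\tilde{\HH}_n(A,z,q)$, the paper builds no new module. It uses the automorphism $\tilde{\sigma}\colon\pi\mapsto q^{-1/n}\pi$ of $\HH_n(A,z,q)$ and identifies $\tilde{\HH}_n(A,z,q)$ with the skew Laurent ring $\HH_n(A,z,q)[\tilde{X}^{\pm1};\tilde{\sigma}]$ modulo $\tilde{X}^n-(X_1\dotsb X_n)^{-1}$. It then cites a ring-theoretic result for freeness on $1,\tilde{X},\dotsc,\tilde{X}^{n-1}$. This implicitly uses that $\tilde{X}^n-(X_1\dotsb X_n)^{-1}$ is normal, which holds because $\tilde{\sigma}^n=\Ad((X_1\dotsb X_n)^{-1})$.

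Your module on $A^{\otimes n}\otimes R\otimes\kk W_{\GL_n}^{\extaffine}$, with $R=\kk[X^{\pm1}][\tilde{X}]/(\tilde{X}^n-(X_1\dotsb X_n)^{-1})$, also works:
\begin{itemize}
\item $s_0,\dotsc,s_{n-1}$ and $\pi$ extend to ring automorphisms of $R$, with $s_i(\tilde{X})=\tilde{X}$ and $\pi(\tilde{X})=q^{1/n}\tilde{X}$. Your checks that $s_0$ fixes $X_1\dotsb X_n$ and that the $q^{1/n}$-twist is consistent are exactly what is needed.
\item Hence the $\Delta_i$ are $\kk[\tilde{X}]$-linear.
\item As an $\HH_n(A,z,q)$-module, it is $\bigoplus_{r=0}^{n-1}V\tilde{X}^r$. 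The $r$-th summand is $V$ pulled back along $\pi\mapsto q^{r/n}\pi$, i.e.\ a power of the paper's $\tilde\sigma$.
\end{itemize}
So all $\HH_n$-relations reduce to \cref{thm:DAFHA_representation}. The $\tilde{X}$-relations are immediate, since $\pi$ and the $s_i$ act on $R$ by ring automorphisms. This is more self-contained than the citation, at the cost of this bookkeeping.

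\textbf{Step 2.} The paper shows spanning of $\mathcal{Y}$ by checking, generator by generator, that the candidate span is closed under multiplication. Your grading argument is shorter. The Step 1 basis is homogeneous. Each degree-zero element $\mathbf{b}X^\nu\tilde{X}^r\pi^kT_w$ with $|\nu|=r$ is literally equal to $\mathbf{b}(X_1\tilde{X})^{\nu_1-\nu_n}\dotsb(X_{n-1}\tilde{X})^{\nu_{n-1}-\nu_n}\pi^kT_w$, and $(\nu,r)\mapsto(\nu_i-\nu_n)_{i<n}$ is a bijection onto $\ZZ^{n-1}$. Do state explicitly that $\mathcal{Y}$ is the entire degree-zero component. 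This holds because its generators have degree $0$, and each degree-zero basis element is a product of generators of $\mathcal{Y}$.

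\textbf{Step 3.} Your reason for centrality of $\pi^n$ is incorrect. Conjugation by $\pi$ does not permute the $Z_i=X_i\tilde{X}$ without scalars. In fact $\pi Z_i\pi^{-1}=q^{1/n}Z_{i+1}$ for $1\le i\le n-1$, and $\pi Z_n\pi^{-1}=q^{1/n-1}Z_1$. So the identity $\pi Z_n\pi^{-1}=Z_1$, which you say centrality requires, is false unless $q^{(n-1)/n}=1$; checking it directly would fail.

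Centrality does not need it. Conjugating $Z_j$ by $\pi$ $n$ times goes once around the cycle and picks up $(q^{1/n})^n q^{-1}=1$, so $\pi^nZ_j\pi^{-n}=Z_j$. Together with $\pi^nT_i\pi^{-n}=T_{i+n}=T_i$ and $\sigma^n=\id$ on $A^{\otimes n}$, this makes $\pi^n$ central in $\mathcal{Y}$. From there, your freeness over $\kk[\pi^{\pm n}]$ and the passage to the quotient by $(\pi^n-1)\mathcal{Y}$ go through as in the paper.
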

\begin{proof}
\Cref{thm:DAFHA_basis_theorem} gives a basis 
\[
\mathcal{B}_2 := \{\mathbf{b} X_1^{\mu_1} \dotsb X_n^{\mu_n} \pi^k T_w: \mathbf{b} \in B^{\otimes n}, (\mu_1,\dotsc,\mu_n) \in \ZZ^n, k \in \ZZ, w \in S_n^{\affine}\}
\]
of $\HH_{n}(A,z,q)$, which is a $\kk$-submodule of $\tilde{\HH}_{n}(A,z,q)$. From Relations \eqref{eq:tildeX_definition}, the set
\[
\tilde{\mathcal{B}}_2 := \{\mathbf{b} X_1^{\mu_1} \dotsb X_n^{\mu_n} \tilde{X}^l \pi^k T_w: \mathbf{b} \in B^{\otimes n}, (\mu_1,\dotsc,\mu_n) \in \ZZ^n, 0 \leq l \leq n-1, k \in \ZZ, w \in S_n^{\affine}\}
\]
is clearly a spanning set of $\tilde{\HH}_{n}(A,z,q)$. It is simple to verify that 
\[
\tilde{\sigma} \colon \HH_{n}(A,z,q) \to \HH_{n}(A,z,q), \quad \mathbf{a} \mapsto \mathbf{a}, \quad X_{j}^{\pm 1} \mapsto X_{j}^{\pm 1}, \quad \pi \mapsto q^{-\tfrac{1}{n}}\pi, \quad T_i \mapsto T_i,
\]
defines a $\kk$-superalgebra automorphism of $\HH_{n}(A,z,q)$. Then $\tilde{\HH}_{n}(A,z,q)$ is the quotient of the skew Laurent polynomial ring $\HH_{n}(A,z,q)[\tilde{X}^{\pm 1};\tilde{\sigma},0]$, as in \cite[Sec 2.4]{NNR_1987}, by the two-sided ideal generated by $\tilde{X}^n-(X_1 \dotsb X_n)^{-1}$. Thus, by \cite[Sec 2.3]{NNR_1987}, as a free left $\HH_{n}(A,z,q)$-module, $\tilde{\HH}_n(A,z,q)$ has a basis $\{\tilde{X}^l: 0 \leq l \leq n-1\}$. Therefore, $\tilde{\mathcal{B}}_2$ is a $\kk$-basis of $\tilde{\HH}_{n}(A,z,q)$. Since $\mathcal{Y}$ is generated as an algebra by $A^{\otimes n}$, $\pi^{\pm 1}$, $T_0,\dotsc,T_{n-1}$, and $(X_1\tilde{X})^{\pm 1}, \dotsc, (X_n\tilde{X})^{\pm 1}$, the subset
\[
\tilde{\mathcal{B}}_\mathcal{Y} := \{\mathbf{b} (X_1\tilde{X})^{\mu_1} \dotsb (X_{n-1}\tilde{X})^{\mu_{n-1}} \pi^k T_w: \mathbf{b} \in B^{\otimes n}, (\mu_1,\dotsc,\mu_{n-1}) \in \ZZ^{n-1}, k \in \ZZ, w \in S_n^{\affine}\}
\]
of $\tilde{\mathcal{B}}_2$ is linearly independent and contained in $\mathcal{Y}$. To show that $\tilde{\mathcal{B}}_\mathcal{Y}$ spans $\mathcal{Y}$, it suffices to demonstrate that the product of an arbitrary element $\mathbf{b} (X_1\tilde{X})^{\mu_1} \dotsb (X_{n-1}\tilde{X})^{\mu_{n-1}} \pi^k T_w \in \tilde{\mathcal{B}}_\mathcal{Y}$ with any one of the given generators of $\mathcal{Y}$ must lie in the span of $\tilde{\mathcal{B}}_\mathcal{Y}$.
\details{
We will repeatedly use
\begin{equation} \label{eq:X_n_equals}
X_n\tilde{X} \overset{\eqref{eq:tildeX_definition}}{=} (X_1\tilde{X})^{-1} \dotsb (X_{n-1}\tilde{X})^{-1} \in \vectspan \tilde{\mathcal{B}}_\mathcal{Y}.
\end{equation}
\begin{enumerate}
\item This is obvious for multiplication by elements of $A^{\otimes n}$. 
\item By Relations \eqref{dafha_rel_10} and \eqref{eq:tildeX_definition}, we have $\pi (X_i \tilde{X}) \pi^{-1} = q^{\tfrac{1}{n}}(X_{i+1} \tilde{X})$ for $1 \leq i \leq n-1$, and by Relations \eqref{dafha_rel_10b} and \eqref{eq:tildeX_definition}, we have $\pi (X_n \tilde{X}) \pi^{-1} = q^{\tfrac{1}{n}-1}(X_1 \tilde{X})$. Thus,
\begin{multline*}
\pi \mathbf{b} (X_1\tilde{X})^{\mu_1} \dotsb (X_{n-1}\tilde{X})^{\mu_{n-1}} \pi^k T_w \overset{\eqref{dafha_rel_6}}{=} \sigma(\mathbf{b}) \pi (X_1\tilde{X})^{\mu_1} \dotsb (X_{n-1}\tilde{X})^{\mu_{n-1}} \pi^k T_w \\
= q^{\tfrac{1}{n}(\mu_1 + \dotsb + \mu_{n-1})}\sigma(\mathbf{b})(X_2 \tilde{X})^{\mu_1} \dotsb (X_n \tilde{X})^{\mu_{n-1}} \pi^{k+1} T_w \overset{\eqref{eq:X_n_equals}}{\in} \vectspan \tilde{\mathcal{B}}_\mathcal{Y}.
\end{multline*}
Similarly, the span of $\tilde{\mathcal{B}}_\mathcal{Y}$ is closed under left multiplication by $\pi^{-1}$. \item Multiplying by $\pi$ on the right,
\begin{equation*}
\mathbf{b} (X_1\tilde{X})^{\mu_1} \dotsb (X_{n-1}\tilde{X})^{\mu_{n-1}} \pi^k T_w \pi \overset{\eqref{dafha_rel_5}}{=} \mathbf{b} (X_1\tilde{X})^{\mu_1} \dotsb (X_{n-1}\tilde{X})^{\mu_{n-1}} \pi^{k+1} T_{\pi^{-1}w\pi} \in \vectspan \tilde{\mathcal{B}}_\mathcal{Y}.
\end{equation*}
Similarly, the span of $\tilde{\mathcal{B}}_\mathcal{Y}$ is closed under right multiplication by $\pi^{-1}$. \item For any $w \in S_n^{\affine}$ and $0 \leq i \leq n-1$, the element $T_w T_i$ can be written as a finite linear combination of elements $\sum_{w' \in S_n^{\affine}} \mathbf{a}_{w'} T_{w'}$, for some $\mathbf{a}_{w'} \in A^{\otimes n}$, by \cref{thm:AFHA_coxeter_basis} and the comment after \cref{thm:double_affine_Frobenius_Hecke_algebra}. Thus, for any $1 \leq i \leq n-1$,
\begin{multline*}
\mathbf{b} (X_1\tilde{X})^{\mu_1} \dotsb (X_{n-1}\tilde{X})^{\mu_{n-1}} \pi^k T_w T_i = \mathbf{b} (X_1\tilde{X})^{\mu_1} \dotsb (X_{n-1}\tilde{X})^{\mu_{n-1}} \pi^k \left( \sum_{w' \in S_n^{\affine}} \mathbf{a}_{w'} T_{w'} \right) \\
= \sum_{w' \in S_n^{\affine}} \left(\mathbf{b} \sigma^k(\mathbf{a}_{w'}) (X_1\tilde{X})^{\mu_1} \dotsb (X_{n-1}\tilde{X})^{\mu_{n-1}} \pi^k  T_{w'} \right) \in \vectspan \tilde{\mathcal{B}}_\mathcal{Y}.
\end{multline*}
Similarly, the span of $\tilde{\mathcal{B}}_\mathcal{Y}$ is closed under left multiplication by $T_i$ -- the key observation used to show this is that for all $1 \leq i \leq n-1$ and $1 \leq j \leq n-1$,
\begin{equation} \label{eq:z_iandT_i}
T_i (X_j\tilde{X}) \quad \begin{cases} \overset{\eqref{dafha_rel_7}}{=} (X_j\tilde{X}) T_i &\text{ if } j \notin \{i,i+1\},\\
\overset{\eqref{dafha_rel_8}}{=} (X_{i+1}\tilde{X}) T_i^{-1} \overset{\eqref{dafha_rel_3}}{=} (X_{i+1}\tilde{X}) (T_i-z t_{i,i+1}) &\text{ if } j = i,\\
\overset{\eqref{dafha_rel_3}}{=}(T_{i}^{-1}+z t_{i,i+1})(X_{i+1}\tilde{X}) \overset{\eqref{dafha_rel_8}}{=} (X_i\tilde{X}T_i + z t_{i,i+1}X_{i+1}\tilde{X}) &\text{ if } j = i+1. \end{cases}
\end{equation}
An analogous observation can be made for $i=0$, using Relation \eqref{dafha_rel_8b} instead of Relation \eqref{dafha_rel_8}, and including the appropriate factors of $q$. By \eqref{eq:z_iandT_i} and the equivalent for $i=0$, we can write $T_i \mathbf{b} (X_1\tilde{X})^{\mu_1} \dotsb (X_{n-1}\tilde{X})^{\mu_{n-1}} \pi^k T_w$ as a linear combination of elements of $\tilde{\mathcal{B}}_\mathcal{Y}$. A similar proof applies for left and right multiplication by $T_0$.
\item Relations \eqref{dafha_rel_10}, \eqref{dafha_rel_10b} and \eqref{eq:tildeX_definition} imply that for any $k \in \ZZ$ and $1 \leq i \leq n-1$,
\[
\pi^k(X_i \tilde{X})=q^{\tfrac{k}{n}-k'}(X_r \tilde{X})\pi^k,
\]
where $k'$ is the unique integer satisfying $i+k=k'n+r$ for some $1 \leq r \leq n$. Together with Relation \eqref{eq:z_iandT_i} and the observation \eqref{eq:X_n_equals}, this can be applied to show that the element $\mathbf{b} (X_1\tilde{X})^{\mu_1} \dotsb (X_{n-1}\tilde{X})^{\mu_{n-1}} \pi^k T_w (X_i \tilde{X})$ is in the span of $\tilde{\mathcal{B}}_\mathcal{Y}$ for any $1 \leq i \leq n$. This proves closure of $\tilde{\mathcal{B}}_\mathcal{Y}$ under right multiplication by $(X_i \tilde{X})$. Finally, $\tilde{\mathcal{B}}_\mathcal{Y}$ is clearly closed under left multiplication by $(X_i \tilde{X})$ for any $1 \leq i \leq n$, since $(X_i \tilde{X})$ commutes with $\mathbf{b} \in B^{\otimes n}$ and with $(X_1 \tilde{X}),\dotsc,(X_{n-1} \tilde{X})$. For $i=n$ we also use the observation \eqref{eq:X_n_equals}. Closure by left or right multiplication by $(X_i \tilde{X})^{-1}$ is analogous.
\end{enumerate}}
Thus $\tilde{\mathcal{B}}_\mathcal{Y}$ is a $\kk$-basis of $\mathcal{Y}$. The element $\pi^n$ is central in $\mathcal{Y}$, since $\pi^n(X_j\tilde{X})=q^{\frac{n}{n}-1}X_{\sigma^n(j)}\tilde{X}\pi^n=(X_j\tilde{X})\pi^n$ for all $1 \leq j \leq n$. It follows that the two-sided ideal of $\mathcal{Y}$ generated by $\pi^n-1$ is $(\pi^n-1)\mathcal{Y}$ and has a $\kk$-basis
\[
\{\mathbf{b} (X_1\tilde{X})^{\mu_1} \dotsb (X_{n-1}\tilde{X})^{\mu_{n-1}} \pi^k(\pi^n-1) T_w: \mathbf{b} \in B^{\otimes n}, (\mu_1,\dotsc,\mu_{n-1}) \in \ZZ^{n-1}, k \in \ZZ, w \in S_n^{\affine}\},
\]
so $\HH_{\SL_n}(A,z,q)=\mathcal{Y}/\langle \pi^n-1 \rangle$ has the basis given in the statement of \cref{thm:DAFHA_SLn_basis}.
\end{proof}

\begin{proposition} \label{thm:double_affine_Frobenius_Hecke_algebra_SLn}
Let $z \in \kk$, $q^{\tfrac{1}{n}} \in \kk^\times$ and $n \geq 2$. The $\SL_n$ double affine Frobenius Hecke algebra $\HH_{\SL_n}(A,z,q)$ is isomorphic as a $\kk$-superalgebra to the quotient $\HH_{\SL_n}(A,z,q)'$ of the $\kk$-superalgebra
\[
A^{\otimes n} \ast \kk [Z_1^{\pm 1}, \dotsc ,Z_n^{\pm 1}] \ast \langle \pi^{\pm 1} \rangle  \ast \langle T_i \colon 0 \leq i \leq n-1 \rangle,
\]
by the relations
\begin{align}
T_iT_j&=T_jT_i, && 0 \leq i,j \leq n-1,\; j \not\equiv i \pm 1 \;(\textup{mod }n), \label{dafhaSLn_rel_1}\\
T_iT_{i+1}T_i&=T_{i+1}T_iT_{i+1}, && 1 \leq i \leq n-2, \label{dafhaSLn_rel_2}\\
T_i^2&=zt_{i,i+1}T_i+1, && 0 \leq i \leq n-1, \label{dafhaSLn_rel_3}\\
T_i\mathbf{a}&=s_i(\mathbf{a})T_i, && \mathbf{a} \in A^{\otimes n},\; 0 \leq i \leq n-1, \label{dafhaSLn_rel_4}\\
\pi T_i \pi^{-1} &= T_{i+1}, && 0 \leq i \leq n-1, \label{dafhaSLn_rel_5}\\
\pi \mathbf{a} &= \sigma(\mathbf{a}) \pi, && \mathbf{a} \in A^{\otimes n}, \label{dafhaSLn_rel_6}\\
T_iZ_j&=Z_jT_i, && 0 \leq i \leq n-1,\; 1 \leq j \leq n,\; j \not\equiv i, i+1 \;(\textup{mod }n), \label{dafhaSLn_rel_7}\\
T_iZ_iT_i&=Z_{i+1}, && 1 \leq i \leq n-1, \label{dafhaSLn_rel_8}\\
T_0Z_nT_0&=q^{-1}Z_1, &&\label{dafhaSLn_rel_8b}\\
Z_i\mathbf{a}&=\mathbf{a}Z_i, && \mathbf{a} \in A^{\otimes n},\; 1 \leq i \leq n, \label{dafhaSLn_rel_9}\\
\pi Z_i \pi^{-1} &= q^{\tfrac{1}{n}}Z_{i+1}, && 1 \leq i \leq n-1,\label{dafhaSLn_rel_10}\\
\pi Z_n \pi^{-1} &= q^{\tfrac{1}{n}-1}Z_1, &&\label{dafhaSLn_rel_10b}\\
\pi^n &= 1, && \label{dafhaSLn_rel_11}\\
Z_1 \dotsb Z_n &= 1. && \label{dafhaSLn_rel_12}
\end{align}
In the above, we define $t_{0,1}=t_{n,1}$, $T_0=T_n$, $Z_0=Z_n$ and $\sigma = (1 \; 2 \; \dotsb \; n) \in S_n$. The isomorphism from $\HH_{\SL_n}(A,z,q)'$ as presented above to $\HH_{\SL_n}(A,z,q)=\mathcal{Y}/\langle \pi^n-1 \rangle$ is given by
\begin{align}
\HH_{\SL_n}(A,z,q)' &\to \HH_{\SL_n}(A,z,q), \nonumber\\
\mathbf{a} \mapsto \mathbf{a},\quad \pi \mapsto \pi,\quad T_i &\mapsto T_i,\quad Z_j \mapsto X_j\tilde{X}, \label{eq:double_affine_Frobenius_Hecke_algebra_SLn}
\end{align}
for $\mathbf{a} \in A^{\otimes n}$, $0 \leq i \leq n-1$, $1 \leq j \leq n$, where cosets are denoted only by their representatives, for clarity.
\end{proposition}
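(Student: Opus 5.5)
The plan is to build a homomorphism $\Omega\colon \HH_{\SL_n}(A,z,q)' \to \HH_{\SL_n}(A,z,q)$ given by \eqref{eq:double_affine_Frobenius_Hecke_algebra_SLn}, prove it is surjective, and then prove injectivity by comparing spanning sets with the basis of \cref{thm:DAFHA_SLn_basis}.

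\textbf{Well-definedness.} Every image lies in $\mathcal{Y}$, so we only need the relations to hold in $\mathcal{Y}/\langle \pi^n-1\rangle$.
\begin{itemize}
\item Relations \eqref{dafhaSLn_rel_1}--\eqref{dafhaSLn_rel_6} are relations \eqref{dafha_rel_1}--\eqref{dafha_rel_6} of $\HH_n(A,z,q)$.
\item Relation \eqref{dafhaSLn_rel_11} holds because we quotient by $\pi^n-1$.
\item For \eqref{dafhaSLn_rel_7}, \eqref{dafhaSLn_rel_8} and \eqref{dafhaSLn_rel_9}, note that $\tilde{X}$ commutes with $A^{\otimes n}$, with $T_1,\dotsc,T_{n-1}$ and with every $X_j$. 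So $T_iX_i\tilde{X}T_i=X_{i+1}\tilde{X}$, and similarly for the other two.
\item For $T_0$, first check that $\tilde{X}$ commutes with $T_0=\pi^{-1}T_1\pi$. Conjugating by $\pi$ rescales $\tilde{X}$ by a scalar, so $\pi^{-1}T_1\pi\tilde{X}=\pi^{-1}T_1(q^{-1/n}\tilde{X})\pi=\pi^{-1}q^{-1/n}\tilde{X}T_1\pi=\tilde{X}T_0$. Relation \eqref{dafhaSLn_rel_8b} then follows from \eqref{dafha_rel_8b}, and the $i=0$ case of \eqref{dafhaSLn_rel_7} follows similarly.
\item Relations \eqref{dafhaSLn_rel_10} and \eqref{dafhaSLn_rel_10b} follow from \eqref{dafha_rel_10} and \eqref{dafha_rel_10b} together with $\pi\tilde{X}\pi^{-1}=q^{1/n}\tilde{X}$.
\item Relation \eqref{dafhaSLn_rel_12} follows from $\prod_j X_j\tilde{X}=(X_1\dotsb X_n)\tilde{X}^n=1$, using commutativity and \eqref{eq:tildeX_definition}.
\end{itemize}

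\textbf{Surjectivity.} The algebra $\mathcal{Y}$ is generated by $A^{\otimes n}$, $\pi^{\pm1}$, $T_0,\dotsc,T_{n-1}$ and $(X_j\tilde{X})^{\pm1}$. All of these are images of generators, or of their inverses, using $T_i^{-1}=T_i-zt_{i,i+1}$.

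\textbf{Injectivity.} We show that the set
\[
\{\mathbf{b}Z_1^{\mu_1}\dotsb Z_{n-1}^{\mu_{n-1}}\pi^kT_w : \mathbf{b}\in B^{\otimes n},\ \mu\in\ZZ^{n-1},\ 0\le k\le n-1,\ w\in S_n^{\affine}\}
\]
spans $\HH_{\SL_n}(A,z,q)'$. The map $\Omega$ sends this set bijectively onto the basis of \cref{thm:DAFHA_SLn_basis}. A spanning set mapping bijectively onto a basis must itself be linearly independent, so $\Omega$ is injective.

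The spanning argument copies the closure computations in the proof of \cref{thm:DAFHA_SLn_basis}, now carried out directly from the presented relations.
\begin{itemize}
\item Use \eqref{dafhaSLn_rel_12} to eliminate $Z_n=(Z_1\dotsb Z_{n-1})^{-1}$, so that only $Z_1,\dotsc,Z_{n-1}$ appear.
\item Use \eqref{dafhaSLn_rel_11} to reduce the exponent $k$ modulo $n$.
\item Use \eqref{dafhaSLn_rel_10} and \eqref{dafhaSLn_rel_10b} to move $\pi$ past the $Z_j$ at the cost of powers of $q^{1/n}$.
\item Use the Lusztig-type relations, the analogue of \eqref{eq:z_iandT_i} for the $Z_j$ including $i=0$, to move the $T_i$ to the right of the $Z_j$.
\item Use \eqref{dafhaSLn_rel_5} to move $T_w$ past $\pi$.
\item Use the braid and quadratic relations to express $T_wT_i$ as an $A^{\otimes n}$-combination of $T_{w'}$. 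By Matsumoto's theorem, $T_w$ for $w\in S_n^{\affine}$ is well-defined in the primed algebra, since its $T_i$ satisfy the braid relations.
\end{itemize}

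The main obstacle is the bookkeeping in this spanning step. It requires checking that the span of these monomials is closed under left multiplication by every generator, in particular by $T_0$ and $T_i$ acting past a product of $Z_j$'s. I would do this by the same case analysis as in the hidden details of \cref{thm:DAFHA_SLn_basis}, which uses only relations now available in the primed algebra. The scalar factors $q^{1/n}$ and $q^{-1}$ appear only as invertible coefficients and do not affect spanning.
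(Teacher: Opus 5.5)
Your proposal is correct and follows the paper's own argument: check the relations on the images $X_j\tilde{X}$, then show that the monomials $\mathbf{b}Z_1^{\mu_1}\dotsb Z_{n-1}^{\mu_{n-1}}\pi^kT_w$ span the presented algebra and are sent bijectively onto the basis of \cref{thm:DAFHA_SLn_basis}, with your surjectivity step redundant but harmless. One small slip: from $\pi\tilde{X}\pi^{-1}=q^{1/n}\tilde{X}$ you get $\pi\tilde{X}=q^{1/n}\tilde{X}\pi$ and $\pi^{-1}\tilde{X}=q^{-1/n}\tilde{X}\pi^{-1}$, so both signs in your $T_0$ computation are flipped, but these two scalars still cancel and $\tilde{X}T_0=T_0\tilde{X}$ holds as you claim.
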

\begin{proof}
It is simple to use the relations from \cref{thm:double_affine_Frobenius_Hecke_algebra} to verify that \eqref{eq:double_affine_Frobenius_Hecke_algebra_SLn} gives a well-defined homomorphism $\HH_{\SL_n}(A,z,q)' \to \HH_{\SL_n}(A,z,q)$ of $\kk$-superalgebras. Four relations are markedly different from the $\GL_n$ version of this presentation, from \cref{thm:double_affine_Frobenius_Hecke_algebra}, and should be checked here. The first two of these are Relations \eqref{dafhaSLn_rel_10} and \eqref{dafhaSLn_rel_10b}. By Relation \eqref{eq:tildeX_definition} in $\mathcal{Y}$, we have $\pi \tilde{X} \pi^{-1} = q^{\tfrac{1}{n}} \tilde{X}$. Thus,
\[
\pi X_i \tilde{X} \pi^{-1} = \pi X_i \pi^{-1} \pi \tilde{X} \pi^{-1} = \begin{cases}q^{\tfrac{1}{n}} X_{i+1} \tilde{X} & \text{ if } 1 \leq i \leq n-1,\\q^{\tfrac{1}{n}-1} X_1 \tilde{X} & \text{ if } i=n. \end{cases}
\]
Relation \eqref{dafhaSLn_rel_11} is satisfied by definition of $\HH_{\SL_n}(A,z,q)$, and Relation \eqref{dafhaSLn_rel_12} follows immediately from the definition of $\tilde{X}$. The homomorphism is parity-preserving because it maps $\mathbf{a}$ to $\mathbf{a}$. Thus, the $\kk$-superalgebra homomorphism from \eqref{eq:double_affine_Frobenius_Hecke_algebra_SLn} is well-defined. It is easily verified from the presentation of $\HH_{\SL_n}(A,z,q)'$ that the set
\[
\{\mathbf{b} Z_1^{\mu_1} \dotsb Z_{n-1}^{\mu_{n-1}} \pi^k T_w: \mathbf{b} \in B^{\otimes n}, (\mu_1,\dotsc,\mu_{n-1}) \in \ZZ^{n-1}, 0 \leq k < n, w \in S_n^{\affine}\}
\]
spans $\HH_{\SL_n}(A,z,q)'$ as a $\kk$-module. The homomorphism from \eqref{eq:double_affine_Frobenius_Hecke_algebra_SLn} maps this spanning set bijectively to the basis of $\HH_{\SL_n}(A,z,q)$ given in \cref{thm:DAFHA_SLn_basis}, and is hence a linear isomorphism, and a superalgebra isomorphism.
\end{proof}

Henceforth we identify  $\HH_{\SL_n}(A,z,q)$ and $\HH_{\SL_n}(A,z,q)'$ by  \cref{thm:double_affine_Frobenius_Hecke_algebra_SLn}.

\begin{proposition} \label{thm:DAFHA_bernstein_presentation_SLn}
Let $z \in \kk$, $q^{\tfrac{1}{n}} \in \kk^\times$ and $n \geq 2$. The \emph{$\SL_n$ double affine Frobenius Hecke algebra} $\HH_{\SL_n}(A,z,q)$ is isomorphic as a $\kk$-superalgebra to the quotient $\HH_{\SL_n}(A,z,q)^\ast$ of the $\kk$-superalgebra
\[
A^{\otimes n} \ast \kk [Z_1^{\pm 1}, \dotsc ,Z_n^{\pm 1}] \ast \kk [Y_1^{\pm 1}, \dotsc ,Y_n^{\pm 1}] \ast \langle T_i \colon 1 \leq i \leq n-1 \rangle
\]
by the relations
\begin{align}
T_iT_j&=T_jT_i, && 1 \leq i,j \leq n-1,\; |i-j|>1, \nonumber\\
T_iT_{i+1}T_i&=T_{i+1}T_iT_{i+1}, && 1 \leq i \leq n-2, \nonumber\\
T_i^2&=zt_{i,i+1}T_i+1, && 1 \leq i \leq n-1, \nonumber\\
T_i\mathbf{a}&=s_i(\mathbf{a})T_i, && \mathbf{a} \in A^{\otimes n},\; 1 \leq i \leq n-1, \nonumber\\
T_iZ_j=Z_jT_i &, \qquad T_iY_j=Y_jT_i, && 1 \leq i \leq n-1,\; 1 \leq j \leq n,\; j \notin \{i,i+1\}, \label{daqwa_sym_rel_1_SLn}\\
T_iZ_iT_i=Z_{i+1} &, \qquad T_iY_{i+1}T_i=Y_i, && 1 \leq i \leq n-1, \label{daqwa_sym_rel_2_SLn}\\
Z_i\mathbf{a}=\mathbf{a}Z_i &, \qquad Y_i\mathbf{a}=\mathbf{a}Y_i, && \mathbf{a} \in A^{\otimes n},\; 1 \leq i \leq n, \label{daqwa_sym_rel_3_SLn}\\
Z_1^{-1}Y_2Z_1Y_2^{-1}&=q^{\tfrac{1}{n}}T_1^2, && \label{daqwa_sym_rel_4_SLn}\\
Y_1 \dotsb Y_n &= 1, && \label{daqwa_sym_rel_5_SLn}\\
Z_1 \dotsb Z_n &= 1, && \label{daqwa_sym_rel_6_SLn}
\end{align}
via the map $\HH_{\SL_n}(A,z,q) \to \HH_{\SL_n}(A,z,q)^\ast$ sending $Z_1^{\pm 1}, \dotsc, Z_n^{\pm 1},T_1^{\pm 1}, \dotsc, T_{n-1}^{\pm 1}$ and $\mathbf{a} \in A^{\otimes n}$ to themselves, and sending
\[
\pi \mapsto Y_1 T_1^{-1} \dotsb T_{n-1}^{-1} \qquad \text{and} \qquad T_0 \mapsto Y_n^{-1}Y_1 T_1^{-1} \dotsb T_{n-1}^{-1} \dotsb T_1^{-1}.
\]
The inverse is a $\kk$-superalgebra homomorphism $\HH_{\SL_n}(A,z,q)^\ast \to \HH_{\SL_n}(A,z,q)$ sending
\[
Y_i \mapsto T_{i-1}^{-1} \dotsb T_1^{-1}\pi T_{n-1} \dotsb T_{i}.
\]
\end{proposition}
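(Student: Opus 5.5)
The plan is to mirror the proof of \cref{thm:DAFHA_bernstein_presentation}, now starting from the presentation $\HH_{\SL_n}(A,z,q)'$ of \cref{thm:double_affine_Frobenius_Hecke_algebra_SLn}. I will define $\alpha_{\SL}\colon \HH_{\SL_n}(A,z,q)' \to \HH_{\SL_n}(A,z,q)^\ast$ by the stated assignments, and $\alpha'_{\SL}$ in the other direction by $Y_i \mapsto T_{i-1}^{-1}\dotsb T_1^{-1}\pi T_{n-1}\dotsb T_i$. I will then check relations and show the two maps are mutually inverse on generators. The mutual-inverse check is identical to the $\GL_n$ computation at the end of the proof of \cref{thm:DAFHA_bernstein_presentation}.

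For $\alpha_{\SL}$, the relations involving only $\pi$, $T_0,\dots,T_{n-1}$ and $A^{\otimes n}$ are \eqref{dafhaSLn_rel_1}--\eqref{dafhaSLn_rel_6}. These are verified by literally the same computations as in the $\GL_n$ case, since those computations used only the Hecke-type relations and \eqref{daqwa_sym_rel_1}, \eqref{daqwa_sym_rel_2}, \eqref{daqwa_sym_rel_3} for $Y$. These are unchanged here.

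\begin{itemize}
\item \textbf{Relation \eqref{dafhaSLn_rel_11}.} The computation of \cref{thm:pi_n} is purely formal in the $T_i$ and $\pi$. It shows that $\alpha_{\SL}(\pi)^n = Y_1\dotsb Y_n$, which is $1$ by \eqref{daqwa_sym_rel_5_SLn}.
\item \textbf{Relation \eqref{dafhaSLn_rel_12}.} This is literally \eqref{daqwa_sym_rel_6_SLn}.
\item \textbf{Relation \eqref{dafhaSLn_rel_10} for $i=1$.} I repeat the $\GL_n$ chain
\[
\alpha(\pi)Z_1\alpha(\pi)^{-1} = Y_1T_1^{-1}Z_1T_1Y_1^{-1} = T_1Y_2Z_1Y_2^{-1}T_1^{-1}.
\]
Now \eqref{daqwa_sym_rel_4_SLn} gives $Y_2Z_1Y_2^{-1} = q^{\frac1n}Z_1T_1^2$. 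The expression therefore becomes $q^{\frac1n}T_1Z_1T_1 = q^{\frac1n}Z_2$. The general case $1\le i\le n-1$ then follows by the same induction using \eqref{dafhaSLn_rel_5} and \eqref{dafhaSLn_rel_8}.
\item \textbf{Relation \eqref{dafhaSLn_rel_10b}.} Conjugating $Z_1\dotsb Z_n = 1$ by $\alpha(\pi)$ gives
\[
1 = q^{\frac{n-1}{n}}\, Z_2\dotsb Z_n\cdot \alpha(\pi)Z_n\alpha(\pi)^{-1}.
\]
Hence $\alpha(\pi)Z_n\alpha(\pi)^{-1} = q^{-\frac{n-1}{n}}(Z_2\dotsb Z_n)^{-1} = q^{\frac1n-1}Z_1$.
\item \textbf{Relation \eqref{dafhaSLn_rel_8b}.} This follows from \eqref{dafhaSLn_rel_10b} and \eqref{dafhaSLn_rel_10} exactly as before. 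The factors $q^{\frac1n}$ and $q^{\frac1n-1}$ combine to give $q^{-1}$.
\end{itemize}

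For $\alpha'_{\SL}$, relations \eqref{daqwa_sym_rel_1_SLn}--\eqref{daqwa_sym_rel_3_SLn} and the mutual commutativity of the $\alpha'(Y_i)$ follow verbatim from the $\GL_n$ proof, since none of them involves $X$.
\begin{itemize}
\item \textbf{Relation \eqref{daqwa_sym_rel_5_SLn}.} By \cref{thm:pi_n}, $\alpha'(Y_1\dotsb Y_n) = \pi^n = 1$.
\item \textbf{Relation \eqref{daqwa_sym_rel_6_SLn}.} This is \eqref{dafhaSLn_rel_12}.
\item \textbf{Relation \eqref{daqwa_sym_rel_4_SLn}.} The $\GL_n$ chain gives $Z_1^{-1}T_1^{-1}\pi Z_1\pi^{-1}T_1$. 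By \eqref{dafhaSLn_rel_10} this equals $q^{\frac1n}Z_1^{-1}T_1^{-1}Z_2T_1 = q^{\frac1n}T_1^2$.
\end{itemize}

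The main obstacle is the commutativity $\alpha'(Y_1)\alpha'(Y_2) = \alpha'(Y_2)\alpha'(Y_1)$, which the $\GL_n$ proof also left implicit. It is, however, a computation inside $H_n^\aff(A,z)$ that involves no $Z$'s and no powers of $q$. It therefore holds in $\HH_{\SL_n}(A,z,q)'$ because it holds in $H_n^\aff(A,z)$, via the map sending $\pi$, the $T_i$ and $A^{\otimes n}$ to their namesakes. This map exists because the relations among these generators are the same in both algebras, apart from the extra relation $\pi^n = 1$.

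It remains to confirm that the powers of $q^{\frac1n}$ are consistent throughout.
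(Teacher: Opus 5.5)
Your proposal is correct and follows the paper's proof. Both mirror \cref{thm:DAFHA_bernstein_presentation}, with \cref{thm:double_affine_Frobenius_Hecke_algebra_SLn} supplying the presentation to map out of.

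The only difference is where the new relations are checked. The paper verifies the inverse-direction relations inside $\tilde{\HH}_n(A,z,q)$, in which $\HH_n(A,z,q)$ embeds. There \eqref{daqwa_sym_rel_4_SLn} reduces to the $\GL_n$ relation \eqref{daqwa_sym_rel_4} via $Y_2\tilde{X}=q^{\frac1n}\tilde{X}Y_2$, and the forward direction is left to analogy. You instead work directly from \eqref{dafhaSLn_rel_1}--\eqref{dafhaSLn_rel_12}.
\begin{itemize}
\item Your check of \eqref{daqwa_sym_rel_4_SLn} through \eqref{dafhaSLn_rel_10} is equivalent to the paper's.
\item You make explicit the forward-direction relations that genuinely change. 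Notably \eqref{dafhaSLn_rel_10b}, which in the $\GL_n$ proof came from \eqref{daqwa_sym_rel_5}, must here come from $Z_1\dotsb Z_n=1$, exactly as you derive it. The same goes for \eqref{dafhaSLn_rel_11}.
\item You skip \eqref{dafhaSLn_rel_7} for $i=0$. It follows by conjugating with $\alpha_{\SL}(\pi)$ once \eqref{dafhaSLn_rel_10} is known, since the factors $q^{\pm\frac1n}$ cancel.
\end{itemize}

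One justification needs tightening: the commutativity of the $\alpha'_{\SL}(Y_i)$. A homomorphism $H_n^\aff(A,z)\to\HH_{\SL_n}(A,z,q)'$ sending generators to their namesakes presupposes that $H_n^\aff(A,z)$ is presented by \eqref{dafha_rel_1}--\eqref{dafha_rel_6} in the generators $\mathbf{a},\pi^{\pm1},T_0,\dotsc,T_{n-1}$. The paper never proves this, since it defines $H_n^\aff(A,z)$ as a subalgebra; you would need \cref{thm:AFHA_coxeter_basis} plus a spanning argument.

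It is simpler to note that the commutativity is a formal consequence of the relations. Move both factors of $\pi$ to the left with \eqref{dafhaSLn_rel_5}. The identity $\alpha'_{\SL}(Y_1)\alpha'_{\SL}(Y_2)=\alpha'_{\SL}(Y_2)\alpha'_{\SL}(Y_1)$ then becomes
\[
T_{n-1}(T_{n-2}\dotsb T_1)(T_{n-1}\dotsb T_2)=(T_{n-2}\dotsb T_1)(T_{n-1}\dotsb T_1).
\]
Both sides are reduced expressions of the same element of $S_n$, so this holds by the braid relations.

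Alternatively, argue as the paper implicitly does. The $Y_i$ commute in $\HH_n(A,z,q)\subseteq\tilde{\HH}_n(A,z,q)$ and lie in $\mathcal{Y}$. Hence they commute in $\mathcal{Y}/\langle\pi^n-1\rangle\cong\HH_{\SL_n}(A,z,q)'$.
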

\begin{proof}
This is analogous to the proof of \cref{thm:DAFHA_bernstein_presentation}. The main differences are in the verifications of Relations \eqref{daqwa_sym_rel_4_SLn} and \eqref{daqwa_sym_rel_5_SLn}, required to prove that a well-defined homomorphism $\HH_{\SL_n}(A,z,q)^\ast \to \HH_{\SL_n}(A,z,q)$ can be defined. To check Relation \eqref{daqwa_sym_rel_4_SLn} holds in $\HH_{\SL_n}(A,z,q)$, in $\tilde{\HH}_n(A,z,q)$ \cref{eq:tildeX_definition} and \cref{eq:Y_i_formula} imply $Y_2\tilde{X} = q^{\tfrac{1}{n}} \tilde{X} Y_2$, so
\[
(X_1\tilde{X})^{-1}Y_2(X_1\tilde{X})Y_2^{-1} = q^{\tfrac{1}{n}} X_1^{-1}Y_2X_1Y_2^{-1} \overset{\eqref{daqwa_sym_rel_4}}{=} q^{\tfrac{1}{n}} T_1^2,
\]
as required. Relation \eqref{daqwa_sym_rel_5_SLn} follows immediately from \cref{thm:pi_n}, by which $Y_1 \dotsc Y_n=\pi^n$ in $\HH_n(A,z,q)$ and hence in $\HH_{\SL_n}(A,z,q)$.
\end{proof}


Henceforth, we identify $\HH_{\SL_n}(A,z,q)$ with the algebra $\HH_{\SL_n}(A,z,q)^\ast$ defined in \cref{thm:DAFHA_bernstein_presentation_SLn}. Moreover, motivated by \cref{thm:DAFHA_SLn_basis}, for $\mu=(\mu_1,\dotsc,\mu_{n-1}) \in \ZZ^{n-1}$, let $Z^\mu := Z_1^{\mu_1} \dotsb Z_{n-1}^{\mu_{n-1}}$.

\begin{corollary} \label{thm:DAFHA_SLn_bases}
The set
\begin{align}
&\{\mathbf{b} Z^\mu \pi^k T_w: \mathbf{b} \in B^{\otimes n}, \mu \in \ZZ^{n-1}, 0 \leq k < n, w \in S_n^{\affine}\}  \label{eq:DAFHA_SLn_bases_1}\\
=&\{\mathbf{b} Z^\mu T_w: \mathbf{b} \in B^{\otimes n}, \mu \in \ZZ^{n-1}, w \in W_{\SL_n}^{\extaffine}\},\nonumber
\end{align}
is a $\kk$-basis of $\HH_{\SL_n}(A,z,q)$. The set
\begin{equation} \label{eq:DAFHA_SLn_bases_2}
\{\mathbf{b} Z^\mu Y^\lambda T_w: \mathbf{b} \in B^{\otimes n}, \mu \in \ZZ^{n-1}, (\lambda_1,\dotsc,\lambda_{n-1},0) \in \ZZ^n, w \in S_n\}
\end{equation}
is a $\kk$-basis of $\HH_{\SL_n}(A,z,q)$.
\end{corollary}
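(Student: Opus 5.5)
The first basis \eqref{eq:DAFHA_SLn_bases_1} is essentially \cref{thm:DAFHA_SLn_basis} transported along the identification of \cref{thm:double_affine_Frobenius_Hecke_algebra_SLn}, under which $X_j\tilde{X}\mapsto Z_j$. That proposition gives the basis $\{\mathbf{b}Z^\mu\pi^kT_w\}$ with $k\in\ZZ/n\ZZ$; choosing representatives $0\leq k<n$ yields the first displayed set. The equality of the two descriptions in \eqref{eq:DAFHA_SLn_bases_1} follows from $W_{\SL_n}^{\extaffine}\cong \ZZ/n\ZZ\ltimes S_n^{\affine}$. Writing $w^{\extaffine}=\pi_{\SL}^k w^{\affine}$ uniquely, the convention $T_{w^{\extaffine}}=\pi^kT_{w^{\affine}}$ from after \cref{thm:double_affine_Frobenius_Hecke_algebra} is well defined in the quotient because $\pi^n=1$ there. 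Hence the two index sets are in bijection and the sets coincide.

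For the second basis \eqref{eq:DAFHA_SLn_bases_2}, I would mimic the proof of \cref{thm:DAFHA_basis}. The first basis shows that multiplication gives a $\kk$-module isomorphism
\[
\kk[Z_1^{\pm1},\dotsc,Z_{n-1}^{\pm1}]\otimes H \to \HH_{\SL_n}(A,z,q),
\]
where $H$ is the span of $\{\mathbf{b}\pi^kT_w : 0\leq k<n,\ w\in S_n^{\affine}\}$. Here $H$ is the image of $H_n^\aff(A,z)$ in $\HH_{\SL_n}(A,z,q)$, and it is a subalgebra since it is generated by $A^{\otimes n}$, $\pi$ and the $T_i$. It then suffices to show that $\{\mathbf{b}Y^\lambda T_w: \lambda_n=0,\ w\in S_n\}$ is a basis of $H$.

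To prove this, I would identify $H$ with the quotient $H_n^\aff(A,z)/(\pi^n-1)$. By \cref{thm:pi_n}, $\pi^n=Y_1\dotsb Y_n$, which is central in $H_n^\aff(A,z)$: it commutes with the $Y_i$ and with $A^{\otimes n}$, and it commutes with the $T_i$ because $\pi^nT_i\pi^{-n}=T_{i+n}=T_i$ by \eqref{dafha_rel_5}. Since $\pi^n$ is central and invertible, the ideal it generates is $(\pi^n-1)H_n^\aff(A,z)$. By \cref{thm:AFHA_coxeter_basis}, $\{\mathbf{b}\pi^kT_w: k\in\ZZ,\ w\in S_n^{\affine}\}$ is a basis of $H_n^\aff(A,z)$, and the same argument used at the end of the proof of \cref{thm:DAFHA_SLn_basis} shows that this quotient has basis $\{\mathbf{b}\pi^kT_w: 0\leq k<n\}$. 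So the quotient maps isomorphically onto $H$.

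Next I would use the second basis $\{\mathbf{b}Y^\lambda T_w\}$ of $H_n^\aff(A,z)$ from \cref{thm:AFHA_coxeter_basis}. Write $Y^\lambda=Y^{\lambda-\lambda_n(1,\dotsc,1)}(Y_1\dotsb Y_n)^{\lambda_n}$; the last factor is central and equal to $\pi^{n\lambda_n}$. This makes $H_n^\aff(A,z)$ a free $\kk[\pi^{\pm n}]$-module with basis $\{\mathbf{b}Y^{\lambda'}T_w : \lambda'_n=0,\ w\in S_n\}$. Reducing modulo $\pi^n-1$ gives the required basis of $H$, and combining it with the $Z$-factor gives \eqref{eq:DAFHA_SLn_bases_2}.

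The main obstacle is this freeness step: checking that the change of variables $\lambda\mapsto(\lambda',\lambda_n)$ really yields a free $\kk[\pi^{\pm n}]$-basis. This needs $\mathbf{b}$ and $T_w$ to commute with $Y_1\dotsb Y_n$, which holds by the centrality above. A minor point to verify is that the images of the $Y_i$ in the $\SL_n$ algebra agree with the generators $Y_i$ of the presentation in \cref{thm:DAFHA_bernstein_presentation_SLn}, via the formula \eqref{eq:Y_i_formula}.
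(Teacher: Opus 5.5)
Your argument is correct. For the first basis it coincides with the paper's proof: you transport \cref{thm:DAFHA_SLn_basis} through \cref{thm:double_affine_Frobenius_Hecke_algebra_SLn}, using the convention $T_{\pi_{\SL}^{[k]}w}=\pi^kT_w$.

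For the second basis your route differs from the paper's. The paper only says the claim follows ``by an analogous proof, starting from the second basis given in \cref{thm:DAFHA_basis}''. That means rerunning the argument of \cref{thm:DAFHA_SLn_basis}:
\begin{enumerate}
\item[(1)] adjoin $\tilde X$ to get the basis $\{\mathbf b X^\mu \tilde X^l Y^\lambda T_w\}$ of $\tilde{\HH}_n(A,z,q)$;
\item[(2)] cut down to the degree-zero subalgebra $\mathcal Y$, obtaining $\{\mathbf b Z^\mu Y^\lambda T_w:\mu\in\ZZ^{n-1},\lambda\in\ZZ^n\}$;
\item[(3)] divide by the central element $\pi^n-1=Y_1\dotsb Y_n-1$.
\end{enumerate}
You instead take the first $\SL_n$ basis as a black box. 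You use it to factor $\HH_{\SL_n}(A,z,q)\cong \kk[Z_1^{\pm1},\dotsc,Z_{n-1}^{\pm1}]\otimes \overline{H}$, where $\overline{H}=H_n^\aff(A,z)/(\pi^n-1)$ is identified with its image. The change of basis then happens entirely inside the affine subalgebra, in the spirit of the proof of \cref{thm:DAFHA_basis}. The final step, $Y^\lambda=Y^{\lambda'}\pi^{n\lambda_n}$ reduced modulo the central $\pi^n$, is common to both routes.

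Your approach buys economy. It never revisits $\tilde{\HH}_n$, and it never has to recheck that the reordered candidate set lies in and spans $\mathcal Y$, which the paper leaves implicit. As a by-product it shows that $H_n^\aff(A,z)/(\pi^n-1)$ embeds in $\HH_{\SL_n}(A,z,q)$. The paper's approach is more uniform: both $\SL_n$ bases come out of the two $\GL_n$ bases by the same mechanism.

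Two small simplifications are available:
\begin{itemize}
\item The freeness step you flag as the main obstacle is immediate, since $\lambda\mapsto(\lambda',\lambda_n)$ is a bijection and $\pi^{n\lambda_n}$ is central.
\item Injectivity of $\overline H\to H$ already follows because the spanning set $\{\mathbf b\pi^kT_w:0\le k<n\}$ of $\overline H$ maps to a linearly independent subset of the first basis. You do not need to compute a basis of the quotient separately.
\end{itemize}
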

\begin{proof}
Both sets in \eqref{eq:DAFHA_SLn_bases_1} are the basis from \cref{thm:DAFHA_SLn_basis} written using the generators of the presentation given in \cref{thm:double_affine_Frobenius_Hecke_algebra_SLn}, since $W_{\SL_n}^{\extaffine} = \langle \pi_{\SL} \rangle \ltimes S_n^{\affine} \cong \ZZ/n\ZZ \ltimes S_n^{\affine}$, with the convention $T_{\pi_{\SL}^{[k]} w}=\pi^kT_w$ for $[k] \in \ZZ/n\ZZ$ and $w \in S_n^{\affine}$. The set in \eqref{eq:DAFHA_SLn_bases_2} can be deduced to be a basis by an analogous proof, starting from the second basis given in \cref{thm:DAFHA_basis}.
\end{proof}

\details{Note that one difference between \cref{thm:DAFHA_SLn_bases} and the $\GL_n$ version from \cref{thm:DAFHA_basis}, is that $W_{\SL_n}^{\extaffine} \cong \ZZ/n\ZZ \ltimes S_n^{\affine}$ is used in the indexing of the basis elements rather than $W_{\GL_n}^{\extaffine} \cong \ZZ \ltimes S_n^{\affine}$. }If $\kk$ contains a square root $q^{\tfrac{1}{2n}}$ of $q^{\tfrac{1}{n}}$, then there are normalised automorphisms $\rho_1^{\SL}$ and $\rho_2^{\SL}$ of $\HH_{\SL_n}(A,z,q)$ corresponding to the automorphisms $\rho_1$ from \cref{thm:rho_1} and $\rho_2$ from \cref{thm:rho_2}. In particular,
\begin{equation} \label{eqref:tau_1_SL}
\rho_1^{\SL} \colon \mathbf{a} \mapsto \mathbf{a}, \qquad T_i \mapsto T_i, \qquad Z_j \mapsto Z_j, \qquad \pi \mapsto q^{\tfrac{1-n}{2n}} Z_1\pi, \qquad T_0 \mapsto Z_1 T_0 Z_1^{-1},
\end{equation}
for $\mathbf{a} \in A^{\otimes n}$, $1 \leq i \leq n-1$ and $1 \leq j \leq n$, or, using $Y_i = T_{i-1}^{-1} \dotsb T_1^{-1}\pi T_{n-1} \dotsb T_{i}$,
\begin{multline}
\rho_1^{\SL}(Y_i) = T_{i-1}^{-1} \dotsb T_1^{-1} (q^{\tfrac{1-n}{2n}}Z_1 \pi) T_{n-1} \dotsb T_i\\
= q^{\tfrac{1-n}{2n}}(T_{i-1}^{-1} \dotsb T_1^{-1})(T_1^{-1} \dotsb T_{i-1}^{-1}) Z_i T_{i-1}^{-1} \dotsb T_1^{-1} \pi T_{n-1} \dotsb T_i\\
= q^{\tfrac{1-n}{2n}}(T_{i-1}^{-1} \dotsb T_1^{-1})(T_1^{-1} \dotsb T_{i-1}^{-1}) Z_i Y_i = q^{\tfrac{1-n}{2n}}P_i^{-1}Z_iY_i. \label{eq:rho_1_Y_i_SL}
\end{multline}
Similarly, $\rho_2^{\SL}$ fixes $\mathbf{a} \in A^{\otimes n}$, $T_1,\dotsc,T_{n-1}$ and $\pi$, with $\rho_2^{\SL}(Z_i)=q^{\tfrac{n-1}{2n}}P_iY_iZ_i$.

\begin{example}
In \cite[Section~2]{DAHADFT_2003}, $r$ indexes the non-zero minuscule weights of some irreducible root system. For $\SL_n$, these are the fundamental weights $\omega_1,\dotsc,\omega_{n-1}$, and setting $X_{\omega_i}=Z_1 \dotsc Z_i$ and $Y_{\omega_i}=Y_1 \dotsc Y_i$ with $(\omega_i,\omega_i)=\tfrac{i(n-i)}{n}$ for all $1 \leq i \leq n-1$ specialises $\rho_1^{\SL}$ and $\rho_2^{\SL}$ respectively to $\tau_+$ and $\tau_-$ of \cite[(2.9)--(2.13)]{DAHADFT_2003} when $A=\kk$.
\end{example}

\begin{remark}
A rescaling $\dot{Z}_i := q^{\tfrac{1-n}{2n}}Z_i$ for $1 \leq i \leq n$ removes the parameter $q^{\tfrac{1-n}{2n}}$ from both the definition of the map \eqref{eqref:tau_1_SL} and the action by $Y_i$ calculated in \eqref{eq:rho_1_Y_i_SL}. However, this rescaling is not an automorphism like those of \cref{thm:minus_z_or_minus_T_DAHA}, and we rescale at the cost of Relations \eqref{dafhaSLn_rel_12}/\eqref{daqwa_sym_rel_6_SLn} becoming $\dot{Z}_1 \dotsb \dot{Z}_n=q^{\tfrac{1-n}{2}}$. Such rescaling is formalised in \cite[Remark~4.8]{RRDAHAESWD_2021}.
\end{remark}

\printbibliography

\end{document}